\newcommand{\blue}[1]{{\color{blue}#1}}
\theoremstyle{plain}
\newtheorem{theo}{Theorem}[section]
\newtheorem{lemm}[theo]{Lemma}
\newtheorem{prop}[theo]{Proposition}
\newtheorem{coro}[theo]{Corollary}
\newtheorem*{theo*}{Theorem}
\newtheorem*{prop*}{Proposition}
\theoremstyle{definition}
\newtheorem{defi}[theo]{Definition}
\newtheorem*{defi*}{Definition}
\newtheorem{nota}[theo]{Notation}
\newtheorem*{Nota*}{Notation}
\theoremstyle{remark}
\newtheorem{ex}[theo]{Example}
\newtheorem*{ex*}{Example}
\newtheorem{rema}[theo]{Remark}
\newtheorem{warning}[theo]{Warning}
\makeatletter \@addtoreset{equation}{section} \@addtoreset{equation}{subsection} \makeatother
\newcommand{\Def}[1]{\textbf{\boldmath{#1}}} %
\newcommand{\dfn}{\coloneqq} %
\newcommand{\rescale}[1]{\newline\resizebox{\linewidth}{!}{\begin{minipage}{\linewidth}
#1
\end{minipage}}\vspace{\baselineskip}\\}
\newcommand{\be}{\beta}
\newcommand{\ep}{\epsilon}
\newcommand{\Ga}{\Gamma}
\newcommand{\la}{\lambda}
\newcommand{\Om}{\Omega}
\newcommand{\si}{\sigma}
\newcommand{\te}{\theta}
\newcommand{\F}{\mathbb{F}}
\newcommand{\LL}{\mathbf{L}} %
\newcommand{\N}{\mathbb{N}}
\newcommand{\Z}{\mathbb{Z}}
\newcommand{\op}{\oplus}
\newcommand{\ot}{\otimes}
\newcommand{\x}{\times}
\newcommand{\bu}{\bullet}
\newcommand{\ul}{\underline}
\def\scr{\mathcal}                    
\def\tcirc{\,\tilde\circ\,}
\def\diag{\shorthandoff{;:!?}
\xymatrix}
\def\0{\mathbf 0}
\def\_{\underline}
\def\t{\mathfrak t}
\def\Q{\scr Q}
\def\ı{\mathcal N}
\def\r{{\_r}}
\def\q{{\_q}}
\def\C{\scr C}
\def\∑{\Sigma} %
\newcommand{\Sy}{\Sigma} %
\newcommand{\A}{\mathcal{A}}
\renewcommand{\P}{\mathcal{P}}
\newcommand{\ZZ}{\mathcal{Z}}
\newcommand{\CC}{\mathcal{C}}
\newcommand{\UU}{\mathbb{U}}
\newcommand{\ag}[1]{#1^+} %
\newcommand{\inj}{\hookrightarrow}
\newcommand{\ral}{\xrightarrow} %
\newcommand{\rla}{\rightleftarrows}
\newcommand{\rra}{\rightrightarrows}
\newcommand{\surj}{\twoheadrightarrow}
\newcommand{\Alg}[1]{#1\text{-}\mathrm{Alg}} %
\newcommand{\As}{\operatorname{As}}       
\newcommand{\uAs}{\operatorname{As}_+}       
\newcommand{\cat}[1]{\mathbf{\mathcal{#1}}} %
\newcommand{\Com}{\mathrm{Com}}
\newcommand{\pCom}{\mathrm{p}\mbox{-}\mathrm{Com}}
\newcommand{\Lie}{\operatorname{Lie}}
\newcommand{\RLie}{\operatorname{RLie}}
\newcommand{\Modd}[1]{\mathrm{Mod}_{#1}} %
\newcommand{\lMod}[1]{#1\text{-}\mathrm{Mod}} %
\newcommand{\Set}{\mathrm{Set}}
\newcommand{\Vect}[1]{\mathrm{Vect}_{#1}} %
\newcommand{\ab}{\mathrm{ab}}
\newcommand{\aug}{\mathrm{aug}}
\newcommand{\HH}{\mathrm{HH}} %
\newcommand{\HQ}{\mathrm{HQ}} %
\newcommand{\id}{\mathrm{id}}
\newcommand{\pr}{\mathrm{pr}} %
\newcommand{\Tr}{\mathrm{Tr}}
\newcommand{\triv}{\mathrm{triv}}
\DeclareMathOperator{\Ab}{Ab}
\DeclareMathOperator{\Comp}{Comp}
\DeclareMathOperator{\Der}{Der}
\DeclareMathOperator{\Hom}{Hom}
\newcommand{\Prim}{\mathrm{Prim}}
\begin{document}

\title{Quillen (co)homology of divided power algebras over an operad}
\date{\today}

\author{Ioannis Dokas} 
\address{National and Kapodistrian University of Athens}
\email{iodokas@math.uoa.gr}

\author{Martin Frankland} 
\address{University of Regina}
\email{Martin.Frankland@uregina.ca}

\author{Sacha Ikonicoff}
\address{Université de Strasbourg}
\email{ikonicoff@unistra.fr}

\begin{abstract}
Barr--Beck cohomology, put into the framework of model categories by Quillen, provides a cohomology theory for any algebraic structure, for example Andr\'e--Quillen cohomology of commutative rings. Quillen cohomology has been studied notably for divided power algebras and restricted Lie algebras, both of which are instances of divided power algebras over an operad $\P$: the commutative and Lie operad respectively. In this paper, we investigate the Quillen cohomology of divided power algebras over an operad $\P$, identifying Beck modules, derivations, and K\"ahler differentials in that setup. We also compare the cohomology of divided power algebras over $\P$ with that of $\P$-algebras, and work out some examples.
\end{abstract}

\keywords{divided power algebra, operad, restricted Lie algebra, Beck module, Quillen cohomology, triple cohomology}

\subjclass[2020]{Primary 18M70; Secondary 17B50, 17B56, 13D03}

\maketitle

\tableofcontents

\section{Introduction}\label{sec:Intro}

Barr--Beck cohomology is a cohomology theory for general algebraic structures, based on simplicial resolutions \cite{BarrB69}. 
It recovers (up to a shift in degree) group cohomology, Chevalley--Eilenberg cohomology of Lie algebras, and Hochschild cohomology of algebras over a field; see for instance \cite{Barr96}. 
One important example is Andr\'e--Quillen cohomology of commutative rings \cites{Andre67,Andre74}, put into the framework of model categories by Quillen \cites{Quillen67,Quillen70}, which has various applications in algebra and algebraic geometry. Quillen cohomology later found applications in topology, often in the guise of inputs to spectral sequences \cites{Miller84,Goerss90and} or obstructions in %
realization and classification problems 
\cites{GoerssH00,GoerssH04spa,BlancDG04,Frankland11,BauesB11,BlancJT12}.

\subsubsection*{Divided power algebras}

Divided power structures are algebraic structures which are fairly ubiquitous when working over a field of positive characteristic. N.~Jacobson introduced the concept of restricted Lie algebra to study modular Lie theory \cite{Jacobson62}. A restricted Lie algebra is a Lie algebra equipped with an additional operation called the $p$-map, which satisfies specific relations. The archetypal example of a restricted Lie algebra is an associative algebra in prime characteristic equipped with the Lie bracket given by the commutator,  %
and with the $p$-map given by the $p$-th power. 
Restricted Lie algebras appear notably %
in field theory \cite{Jacobson37}, linear algebraic groups \cite{Borel91}, and the cohomology of the Steenrod algebra \cite{May66res}.

G.~Hochschild, and later B.~Pareigis, defined cohomology groups of %
restricted Lie algebras \cites{Hochschild54,Pareigis68}. In \cite{Dokas04}, the first author developed the %
Quillen cohomology theory for restricted Lie algebras. In contrast to the cases of associative algebras and Lie algebras, this cohomology does not coincide with Hochschild cohomology for restricted Lie algebras. In \cite{Dokas15}, $2$-fold extensions of restricted Lie algebras are classified using Duskin's and Glenn's torsor cohomology \cites{Duskin75,Glenn82} and the work of Cegarra and Aznar \cite{CegarraA86}.

The notion of algebra with divided powers was introduced by H.~Cartan in \cite{Cartan56exp7} to study certain bar constructions of commutative algebras, %
and further 
developed by N.~Roby \cite{Roby65}. A divided power algebra %
is a commutative algebra endowed with additional operations $\gamma_{n}$ which satisfy specific relations. An algebra $A$ over $\mathbb{Q}$  is naturally equipped with a structure of divided powers algebra such that $\gamma_{n}(x)=\frac{x^{n}}{n!}$ for all $x\in A$. Without any restriction on the ground ring, H.~Cartan proves in \cite{Cartan56exp7}  that the homotopy of a simplicial commutative algebra comes naturally equipped with the structure of a divided power algebra. %
Divided power algebras are essential in the theory of crystalline cohomology for schemes introduced by A.~Grothendieck %
\cite{Grothendieck68} 
and developed by P.~Berthelot \cite{Berthelot74}.
The first author studied %
Quillen 
cohomology of %
divided power algebras \cites{Dokas09,Dokas23}.

\subsubsection*{Operads}

Operads are an algebraic device which represent types of algebras, such as associative algebras, commutative algebras, and Lie algebras. The theory of operads has proven to be a powerful tool in studying different categories of algebras in a unified way. For instance, in the seminal book \cite{LodayV12}, J.-L.~Loday and B.~Vallette make a detailed account of many known homological and homotopical theories for algebras over an operad. This includes %
Quillen (co)homology for algebras over an operad, studied in \cites{Livernet98,Fresse98,Milles11,GoerssH00}. In the quadratic case, this is studied through a %
version of Koszul duality due to V.~Ginzburg and M.~Kapranov \cite{GinzburgK94} (see also \cites{GetzlerJ94,Fresse04}). It also includes a %
deformation theory due to M.~Gerstenhaber \cite{Gerstenhaber64}; see \cites{Balavoine97,KontsevichS00,Keller05} for the operadic account. 
Homotopical resolutions of the Gerstenhaber operad led the way to D.~Tamarkin's solution of the Deligne conjecture \cite{Tamarkin98}; see also \cites{GerstenhaberV95hom,Hinich03}. 

The notion of divided power algebras over an operad was introduced by B.~Fresse in \cite{Fresse00}. While usual algebras over an operad $\P$ are algebras over a monad $S(\P)$ built from $\P$ using coinvariant operations, divided power algebras over $\P$ are algebras over a monad $\Gamma(\P)$ which is built using \emph{invariant} operations. This recovers the classical %
divided power algebras of Cartan, and Fresse shows that it also encompasses the %
restricted Lie algebras of Jacobson. Furthermore, Fresse generalises Cartan's result by showing that the homotopy of a simplicial algebra over $\P$ is always equipped with a structure of divided power algebra over $\P$. Divided power algebras were further studied by the third author, who obtained a convenient characterisation for these objects using monomial operations \cite{Ikonicoff20}.

The aim of this article is to study the %
Quillen 
cohomology of divided power algebras over any operad $\P$, using techniques due to Quillen, Barr and Beck. This generalises %
work of the first author \cites{Dokas04,Dokas09,Dokas23}.

\subsection*{Outline and main results} 

Sections~\ref{sec:Prelim} and \ref{sec:divpower} are background sections. In Section~\ref{sec:Prelim}, we recall the basic constructions leading to Quillen homology and cohomology. In Section~\ref{sec:divpower}, we recall a characterisation for divided power algebras obtained by the third author in \cite{Ikonicoff20} which we will use throughout the article. As a preliminary useful result, we obtain the following:
\begin{prop*}[see Proposition~\ref{prop:powersp}]
    The structure of a $\Gamma(\P)$-algebra is entirely determined by monomial operations whose monomial degrees are powers of the characteristic of the base field.
\end{prop*}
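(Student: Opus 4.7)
The plan is to invoke Ikonicoff's characterisation of $\Gamma(\P)$-algebras from \cite{Ikonicoff20} (recalled in \cref{sec:divpower}), which describes the structure via \emph{monomial operations}: to each $\mu \in \P(n)$ and each composition $(r_1,\dots,r_k)$ of $n$ with $\mu$ fixed by the Young subgroup $\Sy_{r_1}\times\cdots\times\Sy_{r_k}$, one associates an operation $(v_1,\dots,v_k)\mapsto \mu(v_1^{r_1},\dots,v_k^{r_k})$, these being subject to a family of splitting/composition relations. I want to show that, over a field of characteristic $p$, every such monomial operation is determined by those whose individual degrees $r_i$ lie in $\{1,p,p^2,p^3,\dots\}$.

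The key tool is the splitting relation available in Ikonicoff's framework: separating the $i$-th slot of a monomial operation of degree $r_i = a+b$ yields an identity of the schematic form
\[
\mu(\dots,v_i^{r_i},\dots) \;=\; \binom{r_i}{a}\, \mu'(\dots,v_i^{a},v_i^{b},\dots),
\]
where $\mu'$ denotes the image of $\mu$ under the partial symmetrisation $\Sy_{r_i} \leftrightarrow \Sy_a\times\Sy_b$ (more precisely, a transfer through the corresponding cosets, automatic from the invariance of $\mu$). The factor $\binom{r_i}{a}$ is exactly the multiplicity coming from symmetrising a divided power. First I would verify this splitting identity directly from the description of the monad $\Gamma(\P)$ in terms of invariants, where it is essentially the diagonal/comultiplication compatibility.

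The reduction then hinges on Kummer's theorem: $\binom{r}{a} \not\equiv 0 \pmod p$ exactly when the base-$p$ addition $a + (r-a) = r$ has no carry. Consequently, if $r \ge 2$ is \emph{not} a power of $p$, its base-$p$ expansion has at least two nonzero digits, and one can choose $0 < a < r$ so that $\binom{r}{a}$ is a unit in $\F_p$. Applying the splitting to such an $r_i$ expresses the given monomial operation as a scalar multiple of a monomial operation with strictly smaller multi-index; iterating (say by induction on $\max_i r_i$ restricted to non-$p$-power values, with secondary induction on $\sum r_i$) reduces every monomial operation to a linear combination of monomial operations whose degrees are all of the form $p^j$ (with $p^0=1$ permitted).

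The main obstacle I anticipate is the bookkeeping at step one: spelling out the splitting relation in the precise form used in \cite{Ikonicoff20}, and checking that the auxiliary operation $\mu'$ obtained after splitting truly lies in the invariants indexed by the refined composition $(r_1,\dots,a,b,\dots,r_k)$, so that the induction can be applied again. Provided this transfer-style identity is in place, the arithmetic of Kummer's theorem and the inductive reduction are essentially formal, and yield the stated characterisation.
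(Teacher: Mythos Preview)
Your approach is correct and is the same as the paper's: both exploit the repetition relation~\ref{relrepet} together with Lucas/Kummer to see that the relevant binomial coefficients are units mod~$p$, so that a monomial operation with a non-$p$-power degree $r_i$ can be rewritten in terms of one with $r_i$ split. One small correction: your secondary induction on $\sum_i r_i$ is vacuous, since this sum stays equal to~$n$ throughout; you want to induct on something like the number (or sum) of non-$p$-power entries instead. The paper sidesteps the induction entirely by carrying out the full $p$-adic decomposition in two explicit passes---first splitting each $r_i$ into its digit-weighted pieces $(r_{i,0}p^0,\dots,r_{i,k_i}p^{k_i})$ (the multinomial coefficient is $\equiv 1$ by Lucas), then splitting each $r_{i,j}p^j$ into $r_{i,j}$ copies of $p^j$ (the coefficient is $r_{i,j}!$, a unit since $r_{i,j}<p$).
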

The rest of the article is divided into three parts.

\textbf{Part~I. General theory.} In Sections~\ref{sec:Amod} to \ref{sec:Quillencohomology}, we develop the ingredients that go into Quillen cohomology for $\Gamma(\P)$-algebras.

In Section~\ref{sec:Amod}, we introduce a new notion of module over a $\Gamma(\P)$-algebra, %
and of abelian $\Gamma(\P)$-algebra, which corresponds to a module over the trivial (or terminal) $\Gamma(\P)$-algebra. 
Section~\ref{sec:BeckModules} is devoted to the proof of the following result:
\begin{theo*}[see Theorem~\ref{thm:Amodule}]
        The data of a Beck module over the $\Gamma(\P)$-algebra $A$ is equivalent to the data of an $A$-module as in Definition~\ref{def:Amodule}.
\end{theo*}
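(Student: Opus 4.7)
The plan is the classical ``square-zero extension'' approach used to identify Beck modules in other algebraic categories. A Beck module over $A$ is by definition an abelian group object $(E,p,u,\mu,\iota)$ in the slice category $\Gamma(\P)\text{-Alg}/A$, meaning a $\Gamma(\P)$-algebra morphism $p\colon E\to A$ together with a section $u\colon A\to E$, a fiberwise sum $\mu\colon E\times_A E\to E$, and a fiberwise inverse $\iota\colon E\to E$, each of which is itself a map of $\Gamma(\P)$-algebras over $A$ and which collectively satisfy the usual commutative group axioms. I would construct functors in both directions between Beck modules and $A$-modules in the sense of \cref{def:Amodule}, and check that they are mutually inverse.

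For the forward direction, given $(E,p,u,\mu,\iota)$, set $M\coloneqq\ker(p)$. Since every fiber $p^{-1}(a)$ is canonically a torsor under $M$ with origin $u(a)$, the underlying vector space of $E$ splits as $A\oplus M$, and the $\Gamma(\P)$-algebra structure of $E$ transports to operations on this direct sum. The requirement that $u$, $\mu$, and $\iota$ be $\Gamma(\P)$-algebra morphisms forces these operations to be ``polynomial of total degree at most one in $M$'': evaluating on pure $A$-inputs reproduces the action on $A$, and any monomial operation applied to inputs including two or more elements of $M$ must vanish. Reading off the coefficient of a single $M$-slot in each monomial operation yields the action maps required by \cref{def:Amodule}, and the relations these action maps must satisfy are pulled back from the monad axioms of $\Gamma(\P)$ on $E$.

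For the converse, given an $A$-module $M$ as in \cref{def:Amodule}, the task is to define a $\Gamma(\P)$-algebra structure on $A\oplus M$ whose projection, diagonal section, and fiberwise sum are all morphisms of $\Gamma(\P)$-algebras. By \Cref{prop:powersp}, it suffices to describe monomial operations whose degree is a power of the characteristic $p$ and to verify the relations among them. For such a monomial operation and an input $(a_1+m_1,\dots,a_n+m_n)\in A\oplus M$, one defines the output as the corresponding operation on the $a_i$ in $A$ plus the sum, over $i$, of the ``linearisation in the $i$-th slot'' applied using the $A$-action on $m_i$ prescribed by \cref{def:Amodule}. This is the step where the main obstacle lies: one must verify that the $\Gamma(\P)$ relations (both the $\P$-operadic relations and the divided-power identities specific to the invariant-operation monad $\Gamma(\P)$) are preserved by the formulas above, and \Cref{prop:powersp} is what makes this verification tractable, since the relations need only be checked on prime-power monomial operations, where the axioms imposed in \cref{def:Amodule} have been tailored precisely to match.

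Once both constructions are in place, the mutual inverseness is essentially formal: starting from a Beck module, the module $\ker(p)$ together with its extracted actions reassembles into the original square-zero extension because the $\Gamma(\P)$-operations on $E$ are determined by their linearisations in each $M$-coordinate together with the restriction to $u(A)$; conversely, from an $A$-module $M$, the kernel of the projection $A\oplus M\to A$ is tautologically $M$ and the action maps read off the constructed operations agree by definition with the input data.
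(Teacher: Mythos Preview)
Your overall architecture is exactly the paper's: build mutually inverse functors $\ker$ and $A\ltimes(-)$ between $(\Alg{\Gamma(\P)}/A)_{\ab}$ and $\lMod{A}$, using the splitting $E\cong A\oplus M$ and the square-zero/semidirect-product formula
\[
\beta_{x,\r}\bigl((a_1,m_1),\dots,(a_s,m_s)\bigr)=\bigl(\beta_{x,\r}(a_1,\dots,a_s),0\bigr)+\sum_{j}\sum_{l+l'=r_j,\;l'>0}\bigl(0,\beta_{x,\r\circ_j(l,l')}(a_1,\dots,a_j,m_j,\dots,a_s)\bigr).
\]
Where your outline diverges from the paper is in the identification of the technical crux, and this is a genuine gap.

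You invoke \Cref{prop:powersp} as the device that makes the verification tractable, claiming the relations in \cref{def:Amodule} are ``tailored'' to prime-power monomial degrees. The paper does \emph{not} use \Cref{prop:powersp} anywhere in the proof of \Cref{thm:Amodule}; the entire argument is carried out for arbitrary $\r$. More importantly, restricting to prime-power degrees does not dissolve the real obstruction. The difficult relations to verify on $A\oplus M$ are \ref{relrepet} (repetition of inputs) and \ref{relcomp} (composition). When you expand either one using the semidirect-product formula above, the two sides do not match term-by-term: there are residual cross-terms that must cancel, and forcing them to cancel is precisely what produces the two ``exotic'' axioms \ref{relAM4} and \ref{relAMspec} in \cref{def:Amodule}. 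In the forward direction the paper \emph{derives} \ref{relAM4} by computing $\beta_{x,\q\circ_{s'}\r}$ on repeated inputs in two ways and subtracting, and derives \ref{relAMspec} by expanding a nested $\beta_{x,\r}(\dots,\beta_{y,\q}(\dots))$ in two ways; in the converse direction it \emph{uses} exactly these axioms to close the gap in \ref{relrepet} and \ref{relcomp} on $A\oplus M$. None of this is about prime powers, and your proposal does not mention these relations at all. Without engaging with \ref{relAM4} and \ref{relAMspec}, the claim that the $\Gamma(\P)$-algebra axioms hold on the semidirect product is unsupported.
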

One side of the equivalence is obtained by building a %
semidirect product for $A$-modules.

In Section~\ref{sec:Universalalgebra}, we build a ring $\UU_{\Gamma(\P)}(A)$ which represents the operations %
that define 
an $A$-module. We obtain the following result:
\begin{theo*}[See Theorem~\ref{thm:BeckMod}]
    The category of $A$-modules is equivalent to the category of left modules over $\UU_{\Gamma(\P)}(A)$. 
\end{theo*}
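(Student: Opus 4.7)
The plan is to exploit the fact that $\UU_{\Gamma(\P)}(A)$ is \emph{constructed} precisely to represent the operations defining an $A$-module, so that the equivalence will amount to unwinding the universal property of a ring presented by generators and relations. I first fix a clean formal setup: an $A$-module structure on an abelian group $M$ (as in \cref{def:Amodule}) consists of a family of $\mathbb{F}$-linear endomorphisms of $M$, one for each formal ``operation symbol'' built from $A$ and the generating operations of $\Gamma(\P)$ acting on one distinguished module input, subject to a family of relations (additivity in the module slot, compatibility with operadic composition, compatibility with the $\Gamma(\P)$-algebra structure of $A$, divided-power identities in characteristic $p$, etc.). Then $\UU_{\Gamma(\P)}(A)$ is the associative, unital $\mathbb{F}$-algebra freely generated by those symbols modulo those relations.

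In the forward direction, starting from an $A$-module $M$, I send each generating symbol to the corresponding endomorphism of $M$. The $A$-module axioms say exactly that the defining relations of $\UU_{\Gamma(\P)}(A)$ hold in $\mathrm{End}_{\mathbb{F}}(M)$, so this extends to a well-defined unital ring homomorphism $\UU_{\Gamma(\P)}(A) \to \mathrm{End}_{\mathbb{F}}(M)$, i.e., a left $\UU_{\Gamma(\P)}(A)$-module structure on $M$. The assignment is natural in $A$-module morphisms, since those are by definition $\mathbb{F}$-linear maps commuting with every such operation, which is exactly the condition to be $\UU_{\Gamma(\P)}(A)$-linear.

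In the reverse direction, starting from a left $\UU_{\Gamma(\P)}(A)$-module $M$, I pull back the module structure along the generators: the image in $\mathrm{End}_{\mathbb{F}}(M)$ of each generating symbol defines an operation on $M$, and the relations satisfied in $\UU_{\Gamma(\P)}(A)$ force these operations to satisfy the $A$-module axioms. The two assignments are mutually inverse on objects because the generators of $\UU_{\Gamma(\P)}(A)$ correspond bijectively to the basic operations defining an $A$-module, and they induce mutually inverse bijections on morphisms for the same reason.

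The main obstacle is bookkeeping: one must identify exactly the right set of generators and relations so that $\UU_{\Gamma(\P)}(A)$ is neither too small nor too large. By \Cref{prop:powersp}, the $\Gamma(\P)$-structure is determined by monomial operations of $p$-power degree, which cuts down the generating set considerably and suggests presenting $\UU_{\Gamma(\P)}(A)$ using symbols $[\mu; a_1, \ldots, a_{n-1}]$ indexed by a monomial operation $\mu$ of arity $n$ and a tuple from $A$ in all but one slot. The subtlety, absent from the plain $\P$-algebra case and responsible for the ``divided'' flavour, is that some of the relations must encode the non-polynomial identities satisfied by the $\gamma$-type operations on $A$ when the distinguished slot is held fixed; once these are correctly written down (guided by the restricted Lie and classical divided power cases of Hochschild--Pareigis and Roby, which will reappear in Sections \ref{sec:ExCom} and \ref{sec:ExLie}), the equivalence follows formally from the universal property of $\UU_{\Gamma(\P)}(A)$.
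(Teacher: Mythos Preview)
Your approach is essentially the paper's: $\UU_{\Gamma(\P)}(A)$ is presented by generators $\beta_{x,\r}(a_1,\dots,a_{s-1},-)$ and relations mirroring the axioms of \cref{def:Amodule}, so the two assignments $\beta_{x,\r}(a_1,\dots,a_{s-1},-)\otimes m \leftrightarrow \beta_{x,\r}(a_1,\dots,a_{s-1},m)$ are tautologically inverse, and morphisms match because both sides are $\F$-linear maps commuting with all such operations.

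One point to tighten: you describe the structure as ``a family of $\F$-linear endomorphisms'' and the resulting map as landing in $\mathrm{End}_{\F}(M)$, but this is not quite right. The operation $\beta_{x,\r}(a_1,\dots,a_{s-1},-)$ acts $p^{r_s}$-semilinearly on $M$ (relation~\ref{relMlambda}), not $\F$-linearly, and correspondingly $\UU_{\Gamma(\P)}(A)$ is only a ring with an $\F$-\emph{bimodule} structure where the left and right $\F$-actions differ by a Frobenius twist; the paper flags this explicitly in the Warning after the construction. So the correct target is $\mathrm{End}_{\Z}(M)$ (with $M$ acquiring its $\F$-structure from the unit $\F\hookrightarrow\UU_{\Gamma(\P)}(A)$), and $\UU_{\Gamma(\P)}(A)$ should not be called an $\F$-algebra unless $\F=\F_p$. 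Once you adjust this, your argument goes through verbatim.
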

In Section~\ref{sec:derivations}, we identify Beck derivations in the category of $A$-modules, and we construct an $A$-module $\Omega_{\Gamma(\P)}(A)$ which represents these derivations.

Section~\ref{sec:Quillencohomology} concludes our general theory by identifying the left adjoint functor to the functor which sends an $A$ module $M$ to the $\Gamma(\P)$-algebra over $A$ obtained by the semidirect product $A\ltimes M$ of $M$ by $A$. More precisely, we obtain:

\begin{theo*}[See Theorem~\ref{thm:Abelianization}]
    The following two functors form an adjoint pair:            
    \[
        \diag@=3.6cm{\Alg{\Gamma(\P)}/A\ar@<3pt>[r]^-{\UU_{\Gamma(\P)}(A)\otimes_{\UU_{\Gamma(\P)}(-)}\Omega_{\Gamma(\P)}(-)}&\lMod{A}.\ar@<3pt>[l]^-{A\ltimes -}}
    \]    
\end{theo*}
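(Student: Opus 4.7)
The plan is to establish the adjunction by producing, for each object $f \colon B \to A$ of $\Alg{\Gamma(\P)}/A$ and each left $A$-module $M$, a natural bijection
\[
\Hom_{\lMod{A}}\!\bigl(\UU_{\Gamma(\P)}(A) \otimes_{\UU_{\Gamma(\P)}(B)} \Omega_{\Gamma(\P)}(B),\ M\bigr) \;\cong\; \Hom_{\Alg{\Gamma(\P)}/A}(B,\ A \ltimes M),
\]
obtained as the composite of three simpler bijections, one for each ingredient already developed in the preceding sections.

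First, I would unpack the right-hand side. A morphism $g \colon B \to A \ltimes M$ in the slice category $\Alg{\Gamma(\P)}/A$ must have first coordinate equal to the structure map $f$, so it is determined by a second coordinate $D \colon B \to M$. By \cref{thm:Amodule} the algebra $A \ltimes M$ incarnates the Beck module structure on $M$ over $A$, and the requirement that $g$ respect the $\Gamma(\P)$-algebra structure translates, term by term, into the Beck derivation axioms identified in \cref{sec:derivations}, once $M$ is viewed as a $B$-module by restriction of scalars along $f$. This gives a natural identification of the right-hand side with $\Der_{\Gamma(\P)}(B, f^{*}M)$.

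Second, the universal property of the Kähler differential module $\Omega_{\Gamma(\P)}(B)$ constructed in \cref{sec:derivations} furnishes
\[
\Der_{\Gamma(\P)}(B, f^{*}M) \;\cong\; \Hom_{\lMod{B}}\!\bigl(\Omega_{\Gamma(\P)}(B),\ f^{*}M\bigr).
\]
Third, \cref{thm:BeckMod} identifies $\lMod{B}$ and $\lMod{A}$ with the categories of left modules over $\UU_{\Gamma(\P)}(B)$ and $\UU_{\Gamma(\P)}(A)$; the morphism $f$ induces a ring homomorphism between them, along which $f^{*}M$ is simply the restriction of scalars of $M$. The classical extension-of-scalars/restriction-of-scalars adjunction then supplies the third bijection. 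Composing the three bijections and checking that each is natural in $B$ and $M$ (automatic from the naturality of each step) completes the proof.

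The main obstacle I expect is the first step: one has to describe the $\Gamma(\P)$-algebra structure on $A \ltimes M$ in sufficient detail (using the semidirect product construction of \cref{sec:BeckModules}) and verify that the component $D$ of a morphism $B \to A \ltimes M$ over $A$ satisfies exactly the Beck derivation conditions of \cref{sec:derivations}, including the Leibniz-type relations forced by each monomial operation. By \cref{prop:powersp}, it suffices to check this for monomial operations whose degree is a power of the characteristic of the base field, which reduces the verification to a manageable list of identities but is nonetheless the substantive content of the argument. A secondary point needing care is the functoriality of $\UU_{\Gamma(\P)}$, which provides the ring map $\UU_{\Gamma(\P)}(B) \to \UU_{\Gamma(\P)}(A)$ used in the third step.
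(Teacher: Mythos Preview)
Your proposal is correct and follows essentially the same route as the paper: the paper's proof combines \cref{prop:Kahler} (your step~2), \cref{lem:Pushforward} (your step~3), and \cref{lem:PushAbel} together with the equivalences of \cref{thm:Amodule} and \cref{thm:BeckMod} (your step~1), which is exactly your three-bijection composite packaged more tersely.

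One remark: the ``main obstacle'' you anticipate in step~1 is not actually an obstacle. In \cref{sec:derivations}, a Beck derivation $d \colon B \to M$ is \emph{defined} as a linear map such that $pr+d \colon B \to A \ltimes M$ is a morphism of $\Gamma(\P)$-algebras over $A$; hence the identification $\Hom_{\Alg{\Gamma(\P)}/A}(B, A\ltimes M) \cong \Der_A(B,M)$ is tautological, and the translation to $\Der_B(B,f^*M)$ follows from the standard fact that pulling back the Beck module $A\ltimes M$ along $f$ yields $B\ltimes f^*M$. No fresh verification of the monomial identities is needed---that work was already absorbed into \cref{thm:Amodule} and \cref{prop:derivation}.
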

This induces an analogue of Quillen's cotangent complex, and by deriving the simplicial extension of this left adjoint functor, we obtain the desired Quillen homology of $A$.

\textbf{Part~II. Examples.} In Sections~\ref{sec:ExCom} and \ref{sec:ExLie}, we apply %
our general theory 
to the examples of classical divided power algebras and of restricted Lie algebras. 
We recover some results obtained by the first author in \cites{Dokas04,Dokas09,Dokas15,Dokas23}.

\textbf{Part~III. Comparisons.} In Sections~\ref{sec:comp} to \ref{sec:Good}, we investigate different comparison maps induced by adjunctions, which allow us to compare our cohomology for $\Gamma(\P)$-algebras to more usual cohomology theories in certain categories of algebras.

We focus on two types of adjunctions involving %
divided power algebras. First, for any operad $\P$, there is an adjunction between the category of $\P$-algebras and the category of $\Gamma(\P)$-algebras. Second, any morphism of operads $f \colon \P \to \Q$ induces an adjunction between the categories of $\Gamma(\P)$- and $\Gamma(\Q)$-algebras, similar to the extension/restriction of scalars adjunction for modules over a ring. In Section~\ref{sec:comp}, we describe the comparison maps induced by both types of adjunctions in a general setting. 

In Section~\ref{sec:compdiv}, we study the first type of adjunction for the operad $\Com$ of commutative algebras, which produces comparison maps between the usual Quillen cohomology of a commutative algebra, and the Quillen cohomology %
of a certain associated divided power algebra. Similarly, in Section~\ref{sec:complie}, we produce a comparison map between the usual Quillen cohomology of a Lie algebra, and the Quillen cohomology %
of a certain associated restricted Lie algebra.

In Section~\ref{sec:compasslie}, we study an adjunction between associative algebras and restricted Lie algebras which is induced by an injection of the operad of Lie algebras into the operad of associative algebras. We obtain the following:
\begin{theo*}[See Theorem~\ref{thm:AssLie}]
Let $L$ be a restricted Lie algebra and $M$ a $u(L)$-bimodule. Then there is an isomorphism
\[
\HQ_{\As}^{*}(u(L); M) \cong \HQ_{\RLie}^{*}(L; {}_{u(L)}M).
\]
\end{theo*}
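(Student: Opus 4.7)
The plan is to invoke the general comparison machinery developed in \cref{sec:comp}, specialised to the injection of operads $\iota \colon \Lie \hookrightarrow \As$. This operad morphism induces an adjunction between $\Gamma(\Lie)$-algebras (i.e.\ restricted Lie algebras) and $\Gamma(\As)$-algebras (i.e.\ associative algebras), and the left adjoint $\iota_{!} \colon \RLie \to \Alg{\As}$ is canonically identified with the restricted universal enveloping algebra functor $L \mapsto u(L)$, by the universal property that characterises $u(L)$.

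The first step is to verify that $u$ behaves well enough to transport cofibrant simplicial resolutions. The generating cofibrations in both categories are built from free objects on a set of generators, and the restricted PBW theorem gives $u(L(V)) \cong T(V)$ when $L(V)$ is the free restricted Lie algebra on a vector space $V$. Consequently, if $L_{\bullet} \to L$ is a cofibrant simplicial resolution in $\RLie$, then $u(L_{\bullet}) \to u(L)$ is a cofibrant simplicial resolution in $\Alg{\As}$, so Quillen cohomology of $u(L)$ can be computed from $u(L_{\bullet})$.

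The second step is to match up the Beck-module data on the two sides. By \cref{thm:Amodule} applied separately to $\P = \Lie$ and $\P = \As$, a Beck module over $u(L) \in \Alg{\As}$ is a $u(L)$-bimodule, whereas a Beck module over $L \in \RLie$ is a left $u(L)$-module. I would then check that the restriction of Beck modules induced by $\iota$ sends a $u(L)$-bimodule $M$ precisely to the left $u(L)$-module ${}_{u(L)}M$ whose action is $x \cdot m = xm - mx$, and that this restriction functor is compatible with the Kähler differentials and universal enveloping rings from Sections~\ref{sec:Universalalgebra} and~\ref{sec:derivations}. This is the heart of the argument and the step I expect to be the main obstacle, as it requires unwinding the identifications of \cref{sec:BeckModules} for $\As$ and $\Lie$, and tracking the adjoint action through the operadic inclusion carefully.

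Granted these two inputs, the conclusion follows formally: at each simplicial degree, the derivation-Kähler-differentials adjunction produces a natural isomorphism
\[
\Hom_{u(L)\text{-bimod}}\!\bigl(\Omega_{\As}(u(L_{\bullet})), M\bigr) \;\cong\; \Der_{\As}\!\bigl(u(L_{\bullet}), M\bigr) \;\cong\; \Der_{\RLie}\!\bigl(L_{\bullet}, {}_{u(L)}M\bigr) \;\cong\; \Hom_{u(L)\text{-mod}}\!\bigl(\Omega_{\RLie}(L_{\bullet}), {}_{u(L)}M\bigr),
\]
the middle isomorphism being the adjunction $(u \dashv (-)_{\mathrm{Lie}})$ at the level of derivations. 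Taking cohomology of the resulting cosimplicial abelian group yields the claimed isomorphism $\HQ_{\As}^{*}(u(L); M) \cong \HQ_{\RLie}^{*}(L; {}_{u(L)}M)$.
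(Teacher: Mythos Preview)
Your approach is essentially the paper's. The paper's proof is two lines: the restricted enveloping algebra functor $u$ preserves weak equivalences (citing \cite{Priddy70pri}*{2.8}), hence by the general comparison criterion \cite{Frankland15}*{Proposition~4.12} the map is an isomorphism. Your first step is the substantive one, and the PBW remark points in the right direction, but note that ``$u$ sends free to free'' only gives that $u(L_\bullet)$ is cofibrant; you still need that $u(L_\bullet) \to u(L)$ is a \emph{weak equivalence}, which is exactly the Priddy input (or, equivalently, the natural restricted PBW filtration together with Dold's theorem, as in the argument of \cref{prop:weak} for good triples in characteristic zero).

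One correction to your second step: a Beck module over $L \in \RLie$ is \emph{not} merely a left $u(L)$-module but a $w(L)$-module, i.e., a pair $(M,f)$ with $M$ a restricted $L$-module and $f \colon M \to M^{L}$ a $p$-semilinear map (\cref{thm:mod}). The lemma immediately preceding \cref{thm:AssLie} computes that $\eta_L^{*}(-)_{\RLie}$ sends the $u(L)$-bimodule $M$ to ${}_{u(L)}M$ equipped with the \emph{trivial} $p$-map $f=0$, so the slip is harmless for the final statement, but your displayed chain of derivation isomorphisms must be read with this full $w(L)$-module structure on ${}_{u(L)}M$ understood.
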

Here, $u(L)$ denotes an analogue of the universal enveloping algebra for restricted Lie algebras, $\HQ_{\As}^{*}$ is the usual Quillen cohomology for associative algebras, and $\HQ_{\RLie}^{*}$ is the Quillen cohomology for restricted Lie algebras. 
This result is analogous to a classical theorem of Cartan and Eilenberg, which gives an isomorphism between Chevalley--Eilenberg cohomology of a Lie algebra~$L$ and Hochschild cohomology of its universal enveloping algebra $U(L)$ \cite{CartanE56}*{\S XIII, Theorem~5.1}.

Finally, in Section~\ref{sec:Good}, we come back to the case where the base field is of characteristic $0$, and we study the case of a good triple of operads $(\C,\A,\P)$. In this case, the notions of $\P$-algebras and $\Gamma(\P)$-algebras coincide. Good triples of operads generalise %
the mutual behaviours of the operads of commutative, of associative, and of Lie algebras. In particular, we see the operad $\P$ as a suboperad $\A$ of a certain type. In this setting, we can generalise the result of the previous section, and we obtain:
\begin{theo*}[See Theorem~\ref{thm:good}]
Let $(\CC,\A,\P)$ be a good triple of operads. Let $P$ be an $\P$-algebra and $M$ a Beck $U(P)$-module. Then we have the following isomorphism 
\[
\HQ_{\Alg{\A}}^{*}(U(P); M) \simeq \HQ_{\Alg{\P}}^{*}(P; {}_{P}M).
\]
\end{theo*}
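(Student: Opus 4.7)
The plan is to extend the argument used in the proof of \cref{thm:AssLie} (the case of the good triple $(\Com,\As,\Lie)$) to an abstract good triple $(\CC,\A,\P)$. The good triple structure provides an operad morphism $\iota\colon\P\to\A$ together with a universal enveloping algebra construction $U\colon\Alg{\P}\to\Alg{\A}$, where $U(P)$ is the free $\A$-algebra generated by the $\P$-algebra $P$. This yields an adjoint pair
\[
\diag{\Alg{\P}\ar@<3pt>[r]^-{U}&\Alg{\A}\ar@<3pt>[l]^-{\iota^{*}}}
\]
in which $\iota^{*}$ restricts an $\A$-algebra structure along $\iota$. Because the base field has characteristic zero, $\Gamma(\P)=\P$ and $\Gamma(\A)=\A$, so this is precisely the adjunction analysed in \cref{sec:comp}, and Quillen cohomology of $\Gamma(\P)$-algebras coincides with the usual operadic Quillen cohomology on both sides.

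First I would check that $(U,\iota^{*})$ is a Quillen adjunction with respect to the standard model structures on $\Alg{\P}$ and $\Alg{\A}$: the restriction $\iota^{*}$ acts as the identity on underlying complexes, hence preserves fibrations and weak equivalences. Next, I would identify the module data under the adjunction. By \cref{thm:Amodule}, applied both on the $\A$-algebra and $\P$-algebra sides (each coinciding with its $\Gamma$-version in characteristic zero), a Beck $U(P)$-module $M$ is encoded by the square-zero extension $U(P)\ltimes M\to U(P)$ in $\Alg{\A}$. Applying $\iota^{*}$ to this extension yields $P\ltimes {}_{P}M\to P$ in $\Alg{\P}$, which encodes precisely the Beck $P$-module ${}_{P}M$ obtained by restricting the $U(P)$-action along $P\hookrightarrow U(P)$.

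Let $Q\to P$ be a cofibrant resolution in $\Alg{\P}$; since $U$ is left Quillen, $U(Q)\to U(P)$ is a cofibrant resolution in $\Alg{\A}$. Using that derivations are computed by maps into the semidirect product, and applying the adjunction, we obtain natural isomorphisms
\begin{align*}
\Der_{\Alg{\A}}(U(Q),M)
&\cong\Hom_{\Alg{\A}/U(P)}\bigl(U(Q),U(P)\ltimes M\bigr)\\
&\cong\Hom_{\Alg{\P}/P}\bigl(Q,\iota^{*}(U(P)\ltimes M)\bigr)\\
&\cong\Hom_{\Alg{\P}/P}\bigl(Q,P\ltimes {}_{P}M\bigr)\\
&\cong\Der_{\Alg{\P}}(Q,{}_{P}M).
\end{align*}
Taking cohomology of the resulting cosimplicial abelian groups then produces the desired isomorphism $\HQ_{\Alg{\A}}^{*}(U(P);M)\cong\HQ_{\Alg{\P}}^{*}(P;{}_{P}M)$.

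The main obstacle is the third isomorphism, namely the identification $\iota^{*}(U(P)\ltimes M)\cong P\ltimes {}_{P}M$ in $\Alg{\P}/P$. This compatibility must be extracted from the good triple axioms: the $\P$-operations on $U(P)\ltimes M$ come through $\iota$ from the $\A$-operations, and on elements of the square-zero ideal $M$ they must collapse to the $P$-module action on $M$ inherited from the $U(P)$-bimodule structure along $P\hookrightarrow U(P)$. Once this structural compatibility is verified—essentially a check that in a good triple the semidirect product is preserved by restriction along $\iota$—the remainder of the argument is a formal Quillen adjunction computation.
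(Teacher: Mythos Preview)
Your formal setup is essentially the one the paper uses, but the argument has a genuine gap at the step ``since $U$ is left Quillen, $U(Q)\to U(P)$ is a cofibrant resolution''. Left Quillen gives you that $U(Q)$ is cofibrant and, via Ken Brown's lemma, that $U$ preserves weak equivalences \emph{between cofibrant objects}. But $P$ is an arbitrary $\P$-algebra (viewed as a constant simplicial object), which is typically not cofibrant, so there is no reason a priori that $U(Q)\to U(P)$ is a weak equivalence. Without this, $U(Q)$ is not a cofibrant replacement of $U(P)$, and your chain of isomorphisms computes the wrong thing on the $\A$-side.

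This is precisely the content the paper supplies in \cref{prop:weak}: using Loday's generalised Poincar\'e--Birkhoff--Witt theorem for a good triple, one gets $gr\,U(L)\cong\ZZ(L)$ for the primitive filtration, and a spectral sequence comparison (together with Dold's theorem) shows that $U$ preserves \emph{all} weak equivalences. That is where the good-triple hypothesis is genuinely consumed; your argument invokes the hypothesis only to get the adjunction, which exists for any operad morphism $\iota\colon\P\to\A$ and cannot by itself yield the isomorphism.

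A secondary point: what you flag as the ``main obstacle'', namely $\iota^{*}(U(P)\ltimes M)\cong P\ltimes{}_{P}M$, is not quite stated correctly and is in fact not the hard part. The object $\iota^{*}(U(P)\ltimes M)$ lives over $GU(P)$, not over $P$; one must pull back along $\eta_{P}\colon P\to GU(P)$. After doing so, the resulting abelian group object over $P$ has kernel ${}_{P}M$ \emph{by definition} (this is exactly how the paper defines ${}_{P}M$ via the pullback square before \cref{thm:good}), so there is nothing further to verify there.
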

Here again, $U(P)$ denotes a certain notion of universal enveloping $\A$-algebra, $\HQ_{\Alg{\A}}^{*}$ is the Quillen cohomology for $\A$-algebras, and $\HQ_{\Alg{\P}}^{*}$ is the Quillen cohomology for $\P$-algebras. 
If we consider the good triple $(\Com, \As, \Lie)$, then Theorem~\ref{thm:good} recovers the Cartan--Eilenberg result in characteristic zero. %

While this last section focusses on the characteristic zero setting, there is %
evidence that %
good triples of operads can be generalised in positive characteristic, and we expect that our result still holds under the correct assumptions. Formulating these assumptions is---to our knowledge---an open problem for future consideration.

\subsection*{Conventions and notations}

All the operads considered in this article will be algebraic, symmetric operads. We assume the reader has a good familiarity with this %
type of operad. For more details, we refer to \cite{LodayV12}.

For this whole article, we fix a base field $\F$ and an operad $\P$ in $\F$-vector spaces that is reduced, i.e., satisfying $\P(0) = 0$. A $\Gamma(\P)$-algebra will be generically denoted $A$.

\begin{nota}
We will denote by
\begin{itemize}
\item $\Vect{\F}$ the category of $\F$-vector spaces.
\item $\Alg{\P}$ the category of $\P$-algebras in $\Vect{\F}$, with forgetful functor $U^{\P}_{\F} \colon \Alg{\P} \to \Vect{\F}$ and its left adjoint $F^{\P}_{\F} \colon \Vect{\F} \to \Alg{\P}$.
\item $\Alg{\Ga(\P)}$ the category of $\Ga(\P)$-algebras in $\Vect{\F}$, with forgetful functor $U^{\Ga(\P)}_{\P} \colon \Alg{\Ga(\P)} \to \Alg{\P}$ and its left adjoint functor, $F^{\Ga(\P)}_{\P} \colon \Alg{\P} \to \Alg{\Ga(\P)}$.
\end{itemize}
\end{nota}

\subsection*{Acknowledgements}

We thank Muriel Livernet for helpful discussions.  
We also thank the anonymous referee for their useful suggestions. 

Frankland acknowledges the support of the Natural Sciences and Engineering Research Council of Canada (NSERC), grant RGPIN-2019-06082. 
Ikonicoff acknowledges the support of the Fields Institute.

\section{Preliminaries on Quillen cohomology}\label{sec:Prelim}

Throughout this section, let $\cat{C}$ be an algebraic category, i.e., a category of models for a finite product sketch (or equivalently, for a many-sorted Lawvere theory). %
More details about %
algebraic categories 
are given in \cite{Frankland15}*{\S 2}. For the purposes of this paper, the categories of $\P$-algebras and $\Ga(\P)$-algebras are examples of algebraic categories, in fact, the one-sorted kind: sets equipped with certain operations satisfying certain equations. Let us recall some terminology, also found in \cite{Barr96} and \cite{Frankland15}*{\S 1.3}.

\begin{defi}\label{def:BeckModule}
For an object $X$ of $\cat{C}$, a \Def{Beck module} over $X$ is an abelian group object in the slice category $\cat{C}/X$. The category of Beck modules over $X$ is denoted $(\cat{C}/X)_{\ab}$. 
\end{defi}

\begin{defi}
The \Def{abelianization} over $X$ is the left adjoint functor $Ab_X \colon \cat{C}/X \to (\cat{C}/X)_{\ab}$ to the forgetful functor $U_X \colon (\cat{C}/X)_{\ab} \to \cat{C}/X$.
\end{defi}

\begin{defi}
For a map $f \colon X \to Y$ in $\cat{C}$, %
the pullback functor $f^* \colon \cat{C}/Y \to \cat{C}/X$ preserves limits and thus induces a functor $f^* \colon (\cat{C}/Y)_{\ab} \to (\cat{C}/X)_{\ab}$, also called \Def{pullback}. The \Def{pushforward} along $f$ is the left adjoint $f_! \colon (\cat{C}/X)_{\ab} \to (\cat{C}/Y)_{\ab}$ of $f^*$. %
\end{defi}

Note that abelianizations and pushforwards exist when $\cat{C}$ is an algebraic category \cite{Frankland15}*{Proposition~3.26}.

\begin{lemm}\label{lem:PushAbel}
Given a map $f \colon X \to Y$ in $\cat{C}$ viewed as an object of the slice category $\cat{C}/Y$, its abelianization is given by
\[
Ab_Y(X \ral{f} Y) \cong f_! (Ab_X X)
\]
where $Ab_X X$ is shorthand for $Ab_X(X \ral{\id} X)$.
\end{lemm}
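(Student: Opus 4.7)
The plan is to deduce this as a formal consequence of the adjunction between the slice category and its abelianization, by combining it with the left adjoint to pullback at the slice level.

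First, I would recall that any morphism $f \colon X \to Y$ in $\cat{C}$ induces a standard adjunction on slice categories
\[
\Si_f \colon \cat{C}/X \rla \cat{C}/Y \colon f^*,
\]
where $\Si_f$ is post-composition with $f$ and $f^*$ is pullback along $f$ (this pullback exists in our algebraic categories). By construction $\Si_f(X \ral{\id} X) = (X \ral{f} Y)$, so the problem reduces to producing an isomorphism of functors $\cat{C}/X \to (\cat{C}/Y)_{\ab}$ of the form
\[
Ab_Y \circ \Si_f \cong f_! \circ Ab_X.
\]

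Next, I would establish the dual commutation on the right adjoint side. Since $f^* \colon \cat{C}/Y \to \cat{C}/X$ preserves limits (being a right adjoint to $\Si_f$), it sends abelian group objects to abelian group objects, and the induced functor on Beck modules is precisely the pullback functor $f^* \colon (\cat{C}/Y)_{\ab} \to (\cat{C}/X)_{\ab}$ defined just before the lemma. In particular, the square
\[
\diag@=1.5cm{(\cat{C}/Y)_{\ab}\ar[r]^-{f^*}\ar[d]_{U_Y}&(\cat{C}/X)_{\ab}\ar[d]^{U_X}\\ \cat{C}/Y\ar[r]^-{f^*}&\cat{C}/X}
\]
commutes strictly (or at worst up to the canonical natural isomorphism coming from the construction of Beck modules as limit-preserving functors).

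Then the isomorphism of left adjoints is automatic: taking left adjoints in both directions turns $U_X \circ f^* \cong f^* \circ U_Y$ into $Ab_Y \circ \Si_f \cong f_! \circ Ab_X$, using that $Ab_X$ is left adjoint to $U_X$, that $f_!$ is left adjoint to pullback on Beck modules by definition, and that adjoints compose. Evaluating this natural isomorphism on the object $(X \ral{\id} X) \in \cat{C}/X$ yields exactly $Ab_Y(X \ral{f} Y) \cong f_!(Ab_X X)$.

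I do not anticipate a genuine obstacle here: the lemma is a formal yoga of adjoints, and the only thing to check carefully is that pullback of Beck modules really is the right adjoint making the square commute on the nose, which is immediate from the way Beck modules and their pullback were set up in the preceding definitions.
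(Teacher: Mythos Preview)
Your argument is correct: the commuting square of right adjoints $U_X \circ f^* \cong f^* \circ U_Y$ yields, upon passing to left adjoints, the isomorphism $Ab_Y \circ \Si_f \cong f_! \circ Ab_X$, and evaluating at $\id_X$ gives the claim. The paper states this lemma without proof, treating it as a standard formal consequence of the definitions; your write-up supplies exactly the expected adjoint-functor yoga, so there is nothing to compare against beyond noting that your approach is the canonical one.
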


\begin{defi}\label{def:Derivation}
Given a Beck module $\pr \colon E \surj X$ over $X$, a (Beck) \Def{derivation} from $X$ to $E$ is a section of $\pr$. The set of derivations is denoted
\rescale{\begin{align*}
	\Der(X,E) \dfn \Hom_{\cat{C}/X}(X \ral{\id} X, E \ral{\pr} X) \cong \Hom_{(\cat{C}/X)_{\ab}}(Ab_X X, E \ral{\pr} X).
\end{align*}}
Note that $\Der(X,E)$ is canonically an abelian group.

The module of \Def{K\"ahler differentials} of $X$ is $\Om_{\cat{C}}(X) \dfn Ab_X X$, which represents derivations. %
\end{defi}

\begin{defi}
The \Def{cotangent complex} $\LL_X$ of $X$ is the derived abelianization of $X$, i.e., the simplicial module over $X$ given by $\LL_X \dfn Ab_X(C_{\bu} \to X)$, where $C_{\bu} \ral{\sim} X$ is a cofibrant replacement of $X$ in $s\cat{C}$, the category of simplicial objects in $\cat{C}$.

Here $s\cat{C}$ is endowed with the standard Quillen model structure \cite{Quillen67}*{\S II.4} \cite{GoerssJ09}*{\S II.5}. %
\end{defi}

\begin{defi}
The $n^{\text{th}}$ \Def{Quillen homology} module of $X$ is the $n^{\text{th}}$ (simplicially) derived functor of abelianization, given by $\HQ_n(X) \dfn \pi_n(\LL_X)$. Note that $\HQ_n(X)$ is a Beck module over $X$. 
\end{defi}

\begin{defi}
The $n^{\text{th}}$ \Def{Quillen cohomology} group of $X$ with coefficients in a module $M$ is the $n^{\text{th}}$ (simplicially) 
derived functor of derivations, given by $\HQ^n(X;M) \dfn \pi^n \Hom(\LL_X,M)$. Note that $\HQ^n(X;M)$ is an abelian group.
\end{defi}

\begin{ex}
Assuming $\cat{C}$ is a one-sorted algebraic category, consider the underlying set functor $U = U^{\cat{C}}_{\Set} \colon \cat{C} \to \Set$ and its left adjoint $F = F^{\cat{C}}_{\Set} \colon \Set \to \cat{C}$. Iterating the free-of-forget comonad $FU$ yields the standard augmented simplicial object
\[
C_{\bu} \dfn (FU)^{\bu+1}(X) \to X,
\]
which is a cofibrant replacement of $X$ in $s\cat{C}$. This was the original approach used by Barr and Beck \cite{BarrB69}, and by M.~Andr\'e in his work on Andr\'e--Quillen cohomology %
\cites{Andre67,Andre74}. 
\end{ex}

\section{Recollections on divided power algebras}\label{sec:divpower}

In this section, we review the general notion of a divided power algebra over an operad $\P$, also called a $\Gamma(\P)$-algebra, due to Fresse \cite{Fresse00}. Throughout this article, we use the characterisation of divided power algebras in terms of monomial operations due to the third author \cite{Ikonicoff20}. We rely heavily on the notation introduced in \cite{Ikonicoff20} for those operations, and operations on partitions of integers.

Recall that an operad $\P$ is (in particular)
a sequence of $\F$-vector spaces $\{\P(n)\}_{n\in\N}$ such that %
$\P(n)$ %
has a right action of $\Sy_n$, the symmetric group %
on $n$ letters. 
An operad is called \Def{reduced} when $\P(0)=0$, and Fresse showed the following:
\begin{prop}[\cite{Fresse00}*{\S 1.1.18}]
    For a reduced operad $\P$, the endofunctor $\Gamma(\P)$ in vector spaces defined on objects by:
    \[
        \Gamma(\P,V)=\bigoplus_{n>0}(\P(n)\otimes V^{\otimes n})^{\∑_n}
    \]
    is equipped with a monad structure.
\end{prop}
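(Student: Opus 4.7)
The plan is to construct the unit $\eta \colon \id \to \Gamma(\P)$ and the multiplication $\mu \colon \Gamma(\P) \circ \Gamma(\P) \to \Gamma(\P)$ explicitly from the operadic structure on $\P$, and then reduce the monad axioms to the associativity and unit axioms of the operad $\P$. The unit is essentially free, and the bulk of the work goes into defining the multiplication so that the output lands in the invariant subspaces $(\P(K)\otimes V^{\otimes K})^{\∑_K}$.

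For the unit, one would set $\eta_V(v) = 1 \otimes v \in (\P(1) \otimes V)^{\∑_1} \subset \Gamma(\P,V)$, where $1 \in \P(1)$ is the operadic identity; this is automatically invariant since $\∑_1$ is trivial.

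For the multiplication, the fundamental input is the operadic composition
\[
\gamma \colon \P(n) \otimes \P(k_1) \otimes \cdots \otimes \P(k_n) \to \P(K), \qquad K = k_1 + \cdots + k_n,
\]
which is equivariant with respect to the block-diagonal embedding $\∑_{k_1} \times \cdots \times \∑_{k_n} \hookrightarrow \∑_K$ and also compatible with the $\∑_n$-action permuting matching input slots. Starting from an invariant element of $(\P(n) \otimes \Gamma(\P,V)^{\otimes n})^{\∑_n}$ and using the inclusion $\Gamma(\P,V) \hookrightarrow \bigoplus_k \P(k) \otimes V^{\otimes k}$, applying $\gamma$ multi-index by multi-index $(k_1,\dots,k_n)$ yields an element of $\P(K) \otimes V^{\otimes K}$ that is invariant under a subgroup $H \subset \∑_K$ built from the within-block permutations and the permutations of equal-size blocks. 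To reach $\Gamma(\P,V)$, one composes with the trace map from $H$-invariants to $\∑_K$-invariants, defined on the wreath-invariant subspace by summing over coset representatives of $\∑_K / H$.

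The main obstacle is verifying well-definedness and associativity. Well-definedness---independence of the chosen presentation of an invariant tensor, and genuine $\∑_K$-invariance of the output---rests on the $\∑$-equivariance of $\gamma$ together with elementary properties of the trace map. Associativity is the heart of the matter: both iterated composites $\Gamma(\P)^{\circ 3}(V) \to \Gamma(\P,V)$ can be unfolded as sums of operadic composites weighted by coset representatives for a nested wreath-like subgroup of $\∑_K$, and operadic associativity $\gamma(p;\gamma(q_i; r_{i,j})) = \gamma(\gamma(p;q_i); r_{i,j})$ combined with transitivity of traces under iterated subgroups forces the two sides to coincide. The unit axioms are immediate from the fact that $1 \in \P(1)$ is a two-sided identity for $\gamma$.
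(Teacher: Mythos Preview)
The paper does not supply a proof of this proposition; it is quoted from \cite{Fresse00}*{\S 1.1.18} as background, so there is no in-paper argument to compare against. Your outline is correct and is essentially Fresse's construction. The multiplication is indeed operadic composition followed by the transfer from the wreath-type subgroup $H=\prod_j \Sy_{r_j}\wr\Sy_j$ up to $\Sy_K$, and these coset sums are exactly what resurface as relation~\ref{relcomp} in \cref{thm:inv}; for $\P=\Com$, the coefficient $\tfrac{(ij)!}{i!(j!)^i}$ in axiom~\eqref{PDeq5} is precisely $\lvert\Sy_{ij}/(\Sy_i\wr\Sy_j)\rvert$. Associativity does reduce to operadic associativity plus transitivity of transfers, once one checks that both iterated composites factor through the \emph{same} doubly-nested wreath subgroup of $\Sy_K$.

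The one hypothesis you never invoke is that $\P$ is reduced, and it is not decorative. In your direct construction it governs the interaction between the outer $\Sy_n$-invariance and the decomposition over multi-indices $(k_1,\dots,k_n)$: when some $k_i=0$, the block-permutation map $\mathrm{Stab}(\underline{k})\to\Sy_K$ collapses (permuting empty blocks does nothing in $\Sy_K$), and the subgroup $H$ no longer matches the invariance you actually have. You should say where reducedness enters.
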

Here, $(\P(n)\otimes V^{\otimes n})^{\∑_n}$ stands for the module of invariants under the diagonal action of $\∑_n$, where the action of $\∑_n$ on $V^{\otimes n}$ is the usual permutation of factors.
\begin{defi}
    A \Def{divided power $\P$-algebra}, or \Def{$\Gamma(\P)$-algebra}, is an algebra over the resulting monad $\Gamma(\P)$.
\end{defi}
    \begin{theo}[\cite{Ikonicoff20}]\label{thm:inv}
    A divided power algebra over a reduced operad $\P$ is a vector space $A$ endowed with a family of operations $\beta_{x,\r} \colon A^{\times s}\to A$, given for all $\r=(r_1,\dots,r_s)$ such that $r_1+\dots+r_s=n$ and $x\in\P(n)^{\∑_\r}$, which satisfy the relations:
  \begin{enumerate}[label=($\beta$\arabic*)]
    \item\label{relperm}$\beta_{x,\r}((a_i)_i)=\beta_{\rho^*\cdot x,\r^\rho}((a_{\rho^{-1}(i)})_{i})$ for all $\rho\in\∑_s$, where $\rho^*$ denotes the block permutation with blocks of size $(r_i)$ associated to $\rho$.
    \item\label{rel0} $\beta_{x,(0,r_1,r_2,\dots,r_s)}(a_0,a_1,\dots,a_s)=\beta_{x,(r_1,r_2,\dots,r_s)}(a_1,\dots,a_s)$.
    \item\label{rellambda} $\beta_{x,\r}(\lambda a_1,a_2,\dots,a_s)=\lambda^{r_1}\beta_{x,\r}(a_1,\dots,a_s)\quad \text{for all } \lambda\in \F$.
    \item\label{relrepet}
    If $r_1+\dots+r_s=n$ and $q_1+\dots+q_{s'}=s$, then
	\[
      \beta_{x,\r}(\underbrace{a_1,\dots,a_1}_{q_1},\underbrace{a_2,\dots,a_2}_{q_2},\dots,\underbrace{a_{s},\dots,a_{s}}_{q_{s'}}) =  \beta_{\big(\sum_{\sigma\in \∑_{\q\rhd\r}/\∑_\r}\sigma\cdot x\big),\ \q\rhd\r}(a_1,a_2,\dots,a_s).
	\]
    \item\label{relsomme} $\beta_{x,\r}(a_0+a_1,\dots,a_s)=\sum_{l+m=r_1}\beta_{x,\r\circ_1(l,m)}(a_0,a_1,\dots,a_s)$.
    \item\label{rellin} $\beta_{\lambda x+y,\r}=\lambda\beta_{x,\r}+\beta_{y,\r}$ , for all $x,y\in\P(n)^{\∑_\r}$,
    \item \label{relunit}$\beta_{1_\P,(1)}(a)=a \quad \text{for all } a\in A$.
    \item \label{relcomp} Let $r_1+\dots+r_s=n$, $x\in \P(n)^{\∑_\r}$ and for all $i\in[s]$, let $q_{i,1}+\dots+q_{i,k_i}=m_i$, $x_i\in\P(m_i)^{\∑_{\q_i}}$ and $(b_{ij})_{1\le j\le k_i}\in A^{\times k_i}$. Denote by $b=(b_{ij})_{i\in[s],j\in[k_i]}$ and for all $i\in[s]$, $b_i=(b_{i,j})_{j\in[k_i]}$. Then:
    \[
      \beta_{x,\r}(\beta_{x_1,\q_1}(b_{1}),\dots,\beta_{x_s,\q_s}(b_{s}))=\beta_{\sum_{\tau}\tau\cdot x \big(x_1^{\times r_1},\dots,x_s^{\times r_s}\big),\r\diamond(\q_i)_{i\in[s]}}(b),
    \]
    where $\r\diamond(\q_i)_{i\in[s]}$ is defined in \cite{Ikonicoff20}, where $\beta_{\cdot,\r\diamond(\q_i)_{i\in[s]}}$ is defined in \cite{Ikonicoff20} and where $\tau$ ranges over $\∑_{\r\diamond(q_i)_{i\in[s]}}/(\prod_{i=1}^{s}\∑_{r_i}\wr \∑_{\q_i})$ in the sum.
  \end{enumerate}
  \end{theo}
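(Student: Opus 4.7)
The plan is to set up an equivalence of categories between $\Ga(\P)$-algebras and vector spaces equipped with operations $\beta_{x,\r}$ satisfying (\ref{relperm})--(\ref{relcomp}), by going back and forth between a structure map $\mu \colon \Ga(\P,A) \to A$ and the family of operations.

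For the forward direction, suppose $(A,\mu)$ is a $\Ga(\P)$-algebra. Given a partition $\r=(r_1,\dots,r_s)$ of $n$, an invariant $x \in \P(n)^{\Si_{\r}}$, and $(a_1,\dots,a_s) \in A^{\times s}$, the element
\[
x \ot a_1^{\ot r_1} \ot a_2^{\ot r_2} \ot \dots \ot a_s^{\ot r_s}
\]
lies in $(\P(n) \ot A^{\ot n})^{\Si_n}$ because $x$ is fixed by the block subgroup $\Si_{\r}$ and the monomial $a_1^{\ot r_1} \ot \cdots \ot a_s^{\ot r_s}$ is manifestly invariant under $\Si_{\r}$. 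I would then define $\beta_{x,\r}(a_1,\dots,a_s)$ to be the image of this element under $\mu$. Each of the eight relations corresponds to an elementary property of this construction: (\ref{relperm}) from equivariance under block permutations, (\ref{rel0}) because an $r_i=0$ part contributes an empty tensor factor, (\ref{rellambda}) from the $r_1$-fold occurrence of $a_1$, (\ref{rellin}) and (\ref{relsomme}) from multilinearity of the tensor product combined with the binomial identity for $(a_0+a_1)^{\ot r_1}$ inside the symmetric tensor, (\ref{relrepet}) from the averaging required to pass from a $\Si_{\r}$-invariant to a $\Si_{\q\rhd\r}$-invariant when factors coincide, (\ref{relunit}) from the unit axiom of the monad $\Ga(\P)$, and finally (\ref{relcomp}) from the composition (associativity) axiom $\mu \circ \Ga(\P,\mu) = \mu \circ \mu_{\Ga(\P)}$, unpacked through Fresse's description of the monad composition on invariants.

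For the backward direction, given a family $\{\beta_{x,\r}\}$ satisfying (\ref{relperm})--(\ref{relcomp}), I need to define a structure map $\mu \colon \Ga(\P,A) \to A$ and verify the monad axioms. The key ingredient is the description, available from the invariant theory underlying \cite{Ikonicoff20}, of $(\P(n) \ot A^{\ot n})^{\Si_n}$ as a quotient of the direct sum, over partitions $\r$ of $n$ with $s$ parts, of $\P(n)^{\Si_\r} \ot A^{\ot s}$, by the relations that identify different presentations of the same invariant when components of the tuple $(a_1,\dots,a_s)$ coincide or vanish. Relation (\ref{rel0}) handles insertion of zero parts, relation (\ref{relrepet}) handles coinciding arguments via averaging over coset representatives of $\Si_{\q\rhd\r}/\Si_\r$, relations (\ref{rellambda}), (\ref{relsomme}) and (\ref{rellin}) guarantee multilinearity in the $a_i$ and in $x$, and (\ref{relperm}) ensures invariance under relabelling of the parts. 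I would then verify that the formula
\[
\mu\bigl(x \ot a_1^{\ot r_1} \ot \cdots \ot a_s^{\ot r_s}\bigr) \dfn \beta_{x,\r}(a_1,\dots,a_s)
\]
is well-defined on the quotient, and that (\ref{relunit}) and (\ref{relcomp}) encode exactly the unit and associativity axioms of an algebra over $\Ga(\P)$.

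The main obstacle lies in the backward direction, specifically in proving that the monomial elements $x \ot a_1^{\ot r_1} \ot \cdots \ot a_s^{\ot r_s}$ with $x \in \P(n)^{\Si_\r}$ span $(\P(n) \ot A^{\ot n})^{\Si_n}$ and that (\ref{relperm})--(\ref{relsomme}) give a complete set of relations among them. In characteristic zero this follows from polarisation and the fact that the norm map is an isomorphism, but in positive characteristic the invariants of $\Si_n$ are subtler than the coinvariants, and one must use the explicit basis of $(\P(n) \ot A^{\ot n})^{\Si_n}$ by orbit sums of tensor monomials, together with the reduction of (\ref{relrepet}) to a transfer-type formula along $\Si_\r \subset \Si_{\q\rhd\r}$. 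Once this presentation of the invariants is in hand, relation (\ref{relcomp}) is then checked against the explicit formula Fresse gives for the monad multiplication of $\Ga(\P)$ on invariants, and the two constructions are easily seen to be mutually inverse, yielding the claimed equivalence.
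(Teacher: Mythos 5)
This theorem is not proved in the paper at all: it is recalled verbatim from \cite{Ikonicoff20}, so there is no internal argument to compare yours against. Judged on its own terms, your proposal has the right overall architecture (translate back and forth between the structure map $\mu \colon \Gamma(\P,A) \to A$ and the family $\beta_{x,\r}$, using a spanning-and-relations presentation of the $\Sigma_n$-invariants by tensor monomials), which is indeed the shape of the argument in the cited reference. But it contains a concrete error at the foundational step of the forward direction.

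You assert that for $x \in \P(n)^{\Sigma_{\r}}$ the element $x \otimes a_1^{\otimes r_1} \otimes \cdots \otimes a_s^{\otimes r_s}$ lies in $(\P(n)\otimes A^{\otimes n})^{\Sigma_n}$ because both factors are fixed by the Young subgroup $\Sigma_{\r}$. That only establishes invariance under $\Sigma_{\r}$, not under all of $\Sigma_n$, and the two are genuinely different: take $\P=\Com$, $n=2$, $\r=(1,1)$, $x=X_2$; then $x\otimes a_1\otimes a_2$ is sent by the transposition to $x\otimes a_2\otimes a_1$, so it is not a $\Sigma_2$-invariant and does not lie in $\Gamma(\Com,A)$. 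The correct definition is
\[
\beta_{x,\r}(a_1,\dots,a_s) \dfn \mu\Bigl(\textstyle\sum_{\sigma\in\Sigma_n/\Sigma_{\r}}\sigma\cdot\bigl(x\otimes a_1^{\otimes r_1}\otimes\cdots\otimes a_s^{\otimes r_s}\bigr)\Bigr),
\]
i.e., one must apply the relative trace from $\Sigma_{\r}$-invariants to $\Sigma_n$-invariants before feeding the element to $\mu$. This is not a cosmetic repair: several of the listed relations are precisely the bookkeeping for these coset sums. Relation \ref{relrepet} records that when arguments coincide the stabiliser of the tensor monomial grows from $\Sigma_{\r}$ to $\Sigma_{\q\rhd\r}$, so the trace over $\Sigma_n/\Sigma_{\r}$ factors through a trace over $\Sigma_n/\Sigma_{\q\rhd\r}$ applied to $\sum_{\sigma\in\Sigma_{\q\rhd\r}/\Sigma_{\r}}\sigma\cdot x$; relation \ref{relsomme} similarly comes from regrouping the expansion of $(a_0+a_1)^{\otimes r_1}$ into orbits; and relation \ref{relcomp} requires comparing iterated traces against Fresse's formula for the monad multiplication, which is where the quotient $\Sigma_{\r\diamond(\q_i)}/\prod_i\Sigma_{r_i}\wr\Sigma_{\q_i}$ arises. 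Without the outer trace none of these come out correctly. Your backward direction is sketched at a level of generality that would survive this correction (the spanning of the invariants by relative traces of monomials on a basis, and completeness of the relations, is exactly the content of the invariant-theoretic lemma you would need to import or prove), but as written the proof rests on a definition of $\beta_{x,\r}$ that does not produce an element of $\Gamma(\P,A)$.
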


We can refine this characterisation if we specify the characteristic of the base field:
\begin{prop}\label{prop:powersp}
    Let $A$ be a $\Gamma(\P)$-algebra over a field $\F$ of characteristic $p$. Then, the operations $\beta_{x,\r}$ are generated by the family of operations $\beta_{x,\r}$ such that all the $r_i$s are powers of $p$.
\end{prop}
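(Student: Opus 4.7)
The plan is to exploit relation \ref{relrepet} with a judicious choice of refined composition. Start with an arbitrary operation $\beta_{y,\ul{s}}$, where $\ul{s}=(s_1,\dots,s_m)$ is a composition of $n$ and $y\in\P(n)^{\∑_{\ul{s}}}$. I would refine $\ul{s}$ by expanding each $s_i$ in base $p$: write $s_i = p^{k_{i,1}} + \dots + p^{k_{i,u_i}}$, allowing repeated powers whenever the $p$-adic digits of $s_i$ exceed $1$. Concatenating these expansions yields a composition $\r$ of $n$ whose entries are all powers of $p$, and setting $\q=(u_1,\dots,u_m)$ one has $\q\rhd\r=\ul{s}$.

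The crucial input is that the index $N=[\∑_{\ul{s}}:\∑_\r]=\prod_{i=1}^{m}\binom{s_i}{p^{k_{i,1}},\dots,p^{k_{i,u_i}}}$ is invertible in $\F$. Indeed, by Kummer's theorem, the $p$-adic valuation of each multinomial coefficient equals the number of carries occurring when adding $p^{k_{i,1}}+\dots+p^{k_{i,u_i}}$ in base $p$; since these summands are precisely the $p$-adic digits of $s_i$ (written out unit by unit), no carry ever occurs, so each multinomial is coprime to $p$.

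Next, set $x=N^{-1}y\in\P(n)$. Because $\∑_\r\subseteq\∑_{\ul{s}}$ and $y$ is $\∑_{\ul{s}}$-invariant, $x$ belongs to $\P(n)^{\∑_\r}$, and the same invariance of $y$ yields
\[
\sum_{\sigma\in\∑_{\ul{s}}/\∑_\r}\sigma\cdot x = N\,x = y.
\]
Applying relation \ref{relrepet} with this data produces the identity
\[
\beta_{x,\r}\bigl(\underbrace{a_1,\dots,a_1}_{u_1},\underbrace{a_2,\dots,a_2}_{u_2},\dots,\underbrace{a_m,\dots,a_m}_{u_m}\bigr)=\beta_{y,\ul{s}}(a_1,\dots,a_m),
\]
which exhibits $\beta_{y,\ul{s}}$ as an evaluation of the power-of-$p$-indexed operation $\beta_{x,\r}$ on repeated arguments, completing the reduction.

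The main step is the number-theoretic one: recognizing that the $p$-adic expansion produces multinomial coefficients free of carries, hence invertible in characteristic $p$. Once this inversion is available, the symmetrization $x=N^{-1}y$ satisfies $\sum_{\sigma}\sigma\cdot x = y$ automatically, and the proposition falls out of a single application of relation \ref{relrepet}.
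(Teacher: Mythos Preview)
Your proof is correct and follows essentially the same strategy as the paper: refine $\ul{s}$ via the base-$p$ expansion, observe that the resulting multinomial index is a $p$-adic unit (you invoke Kummer, the paper invokes Lucas), rescale, and apply relation~\ref{relrepet}. The only cosmetic difference is that the paper carries out the refinement in two stages (first to the parts $r_{i,j}p^j$, then splitting each into $r_{i,j}$ copies of $p^j$) whereas you go directly to the final power-of-$p$ composition in a single application of~\ref{relrepet}; your one-step version is slightly cleaner but not materially different.
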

\begin{proof}
    Let $n\in\N$, $r_1+\dots+r_s=n$, and $x\in\P(n)^{\∑_\r}$. For each $i\in[s]$, denote $r_i=\sum_{j=0}^{k_{r_i}}r_{i,j}p^j$ the mod $p$ expansion of $r_i$. For all $i\in[s]$ and $j\in\{0,\dots,k_i\}$, denote by $R_{ij}=r_{i,j}p^j$. We get a partition $\underline R=(R_{1,0},\dots,R_{1,k_{r_1}},R_{2,0},\dots,R_{s,k_s})$ of $[n]$ into $k_{r_1}+\dots+k_{r_s}+s$ integers. Note that since $\underline R$ is finer than $\r$, $x$ is fixed by the action of $\∑_{\underline R}$. 
    Since $x\in\P(n)^{\r}$, we obtain 
    \[
    \sum_{\sigma\in\∑_{\r}/\∑_{\underline R}}x=\left(\prod_{i=1}^s\binom{r_i}{r_{i,0}p^0,\dots,r_{i,k_{r_i}}p^{k_{r_i}}}\right)x.
    \]
    Using Lucas's Theorem, the product $\prod_{i=1}^k\binom{r_i}{r_{i,0}p^0,\dots,r_{i,k_{r_i}}p^{r_{k_i}}}$ is equal to $1$ modulo $p$. Using relation~\ref{relrepet}, we then get: 
    \[
        \beta_{x,\r}(a_1,\dots,a_s)=\beta_{x,\underline R}(\underbrace{a_1,\dots,a_1}_{k_{r_1}+1},\dots,\underbrace{a_s,\dots,a_s}_{k_{r_s}}).
	\]
    Now, consider the partition 
    \[
    		\underline{Q} \dfn (\underbrace{p^0,\dots,p^0}_{r_{1,0}},\dots,\underbrace{p^{k_{r_1}},\dots,p^{k_{r_1}}}_{r_{1,{k_{r_1}}}},\dots,\underbrace{p^{k_{r_s}},\dots,p^{k_{r_s}}}_{r_{s,{k_{r_s}}}}).
    	\]
    Since $\underline Q$ is finer than $\underline R$, $x$ is stable under the action of $\∑_{\underline Q}$. Using Lucas's Theorem again, we get:
    \begin{align*}
        \sum_{\sigma\in\∑_{\underline R}/\∑_{\underline Q}}\sigma x &= \left(\prod_{i=1}^s\prod_{j=1}^{k_{r_i}}\binom{r_{i,j}p^{j}}{p^j,\dots,p_j}\right)x
        =\left(\prod_{i=1}^s\prod_{j=1}^{k_{r_i}}r_{i,j}!\right)x.
    \end{align*}
    Since $r_{i,j}<p$ for all $i,j$, $r_{i,j}!$ is invertible modulo p. So, we can write:
    \[
        x=\left(\prod_{i=1}^s\prod_{j=1}^{k_{r_i}}\left(r_{i,j}!\right)^{-1}\right)\sum_{\sigma\in\∑_{\underline R}/\∑_{\underline Q}}\sigma x,
    \]
    and so, using relation~\ref{relcomp},
	\[
        \beta_{x,\underline R}(\underbrace{a_1,\dots,a_1}_{k_{r_1}+1},\dots,\underbrace{a_s,\dots,a_s}_{k_{r_s}}) = \\
        \left(\prod_{i=1}^s\prod_{j=1}^{k_{r_i}}\left(r_{i,j}!\right)^{-1}\right)\beta_{x,\underline Q}(\underbrace{a_1,\dots,a_1}_{r_{1,0}+\dots+r_{1,k_{r_1}}},\dots,\underbrace{a_s,\dots,a_s}_{r_{s,0}+\dots+r_{s,k_{r_s}}}).
	\]
    Here, the integers in $\underline Q$ are either $0$ or powers of $p$. Using relation~\ref{rel0} allows us to remove the zeros, and we obtain the result.
\end{proof}

\section{Abelian \texorpdfstring{$\Gamma(\P)$}{Gamma(P)}-algebras, \texorpdfstring{$A$}{A}-modules}\label{sec:Amod}

In this section, we develop the notion of a module over a divided power algebra, given by a certain abelian (or square-zero) extension, analogous to the notion of module over an operadic algebra \cite{LodayV12}*{\S 12.3}. 
In the setting of usual (non-unital) $\F$-algebras, since any vector space can be equipped with the trivial algebra structure, the category of abelian (square-zero) $\F$-algebras coincide with the category of $\F$-vector spaces. An $A$-module then becomes a vector space equipped with an $A$-action. However, we will see that in the category of divided power algebras, abelian $\Gamma(\P)$-algebras correspond to vector spaces equipped with additional internal operations. %
This section contains our candidates for the definitions of abelian $\Gamma(\P)$-algebras and of $A$-modules. %
We will show in Section~\ref{sec:BeckModules} that these are indeed the right definitions (see Theorem~\ref{thm:Amodule}, Corollary~\ref{coro:abelianGammaPalg}).
\begin{defi}\label{def:abeliangammapalg}
        An \Def{abelian} %
        $\Gamma(\P)$-algebra is a $\Gamma(\P)$-algebra $M$ such that all the operations $\beta_{x,\r}$ are trivial as soon as $\r$ contains two non-zero integers. Abelian $\Gamma(\P)$-algebras with $\Gamma(\P)$-algebra morphisms form a category $(\Alg{\Gamma(\P)})_{\mathrm{Ab}}$.
    \end{defi}
    \begin{prop}
        Equivalently, an abelian $\Gamma(\P)$-algebra is a vector space $M$ equipped, for all $n\in\N$ and $x\in\P(n)^{\∑_n}$, with an operation (a set map) $\beta_{x} \colon M\to M$ satisfying:
        \begin{enumerate}[label=(Ab$\beta$\arabic*)]
        \setcounter{enumi}{2}
            \item \label{relAblambda}$\beta_{x}(\lambda m)=\lambda^n\beta_x(m)$,
            \item\label{relAbrepet}Suppose there is $\r\in\Comp_s(n)$, with $s>1$ and $\r$ contains at least two non-zero integers, and $y\in\P(n)$ such that $x=\sum_{\sigma\in\∑_n/\∑_\r}y$. Then $\beta_x=0$,
            \item \label{relAbsomme}$\beta_x(m_1+m_2)=\beta_x(m_1)+\beta_x(m_2)$,
            \item \label{relAblin}$\beta_{\lambda x+ y}(m)=\lambda\beta_x(m_1)+\beta_y(m_2)$,
            \item \label{relAbunit}$\beta_{1_\P}(m)=m$,
            \item \label{relAbcomp}$\beta_x(\beta_y(m))=\beta_{\sum_\tau \tau\cdot x(y,\dots,y)}(m)$, where $y\in\P(l)$ and $\tau$ ranges over $\∑_{nl}/\∑_n\wr\∑_l$.      
        \end{enumerate}
    \end{prop}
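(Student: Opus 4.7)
The plan is to establish a bijection between the data of an abelian $\Gamma(\P)$-algebra structure on $M$ as in \cref{def:abeliangammapalg} and the data of operations $\beta_x$ satisfying (Ab$\beta$3)--(Ab$\beta$8), by restricting and extending along compositions of the form $(n)$. I assume throughout the relations of \cref{thm:inv}.

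First I would describe the forward passage. Given an abelian $\Gamma(\P)$-algebra $M$, relation \ref{rel0} reduces any $\beta_{x,\r}$ to the operation indexed by the composition obtained by removing zero entries. By hypothesis, the remaining non-trivial operations are exactly those $\beta_{x,(n)}$ for $n \geq 1$ and $x \in \P(n)^{\∑_n}$, so the data amounts to a family of unary operations $\beta_x \dfn \beta_{x,(n)}$. I would then check that each of (Ab$\beta$3)--(Ab$\beta$8) is obtained by specialising the corresponding relation of \cref{thm:inv} to $\r = (n)$. Relations (Ab$\beta$3), (Ab$\beta$6), (Ab$\beta$7) are direct specialisations. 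For (Ab$\beta$4), the right-hand side of \ref{relrepet} vanishes by the abelian hypothesis whenever $\r$ has at least two non-zero parts. For (Ab$\beta$5), the sum $\sum_{l+m=n}\beta_{x,(l,m)}(a_0,a_1)$ from \ref{relsomme} collapses to the two boundary terms $l=0$ and $m=0$, giving additivity of $\beta_x$. For (Ab$\beta$8), relation \ref{relcomp} with $s=1$ and $\q_1 = (m)$ reduces to the stated identity with $\tau$ ranging over $\∑_{nm}/(\∑_n \wr \∑_m)$.

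For the reverse passage, given a vector space $M$ equipped with operations $\beta_x$ satisfying (Ab$\beta$3)--(Ab$\beta$8), I would define $\beta_{x,\r}$ for $\r$ with $x \in \P(n)^{\∑_\r}$ by first stripping any zeros (as forced by \ref{rel0}), and then setting $\beta_{x,(n)} \dfn \beta_x$ while defining $\beta_{x,\r} \dfn 0$ whenever $\r$ has at least two non-zero entries. Relations \ref{relperm}, \ref{rel0}, \ref{rellambda}, \ref{rellin}, \ref{relunit} hold directly from the definition (with \ref{relperm} being trivial on singletons and vanishing in the other case). For \ref{relsomme}, only the boundary terms $(r_1,0)$ and $(0,r_1)$ in $\r \circ_1 (l,m)$ survive after stripping zeros to a singleton, and these recover (Ab$\beta$5) when $s=1$; when $\r$ already has two non-zero parts the left-hand side vanishes and so does each term on the right-hand side.

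The main obstacle will be verifying \ref{relrepet} and \ref{relcomp}, because these involve the combinatorial operations $\q \rhd \r$ and $\r \diamond (\q_i)$ of \cite{Ikonicoff20} together with sums over coset representatives. The plan is a case analysis on whether the resulting composition has a unique non-zero entry. For \ref{relrepet}: if $\q \rhd \r$ has multiple non-zero entries, both sides vanish by the vanishing convention combined with (Ab$\beta$4); if $\q \rhd \r = (n)$, then $s=1$ and the identity reduces precisely to (Ab$\beta$4), which is what was designed to capture this case. For \ref{relcomp}: if any $\q_i$ has multiple non-zero parts, then $\beta_{x_i,\q_i}(b_i) = 0$ and the diamond composition $\r \diamond (\q_i)$ inherits multiple non-zero parts, so both sides vanish; if all $\q_i = (m_i)$ but $\r$ has multiple non-zero parts, the left-hand side vanishes and the composite on the right is again of a vanishing form; the remaining case $\r = (n)$ and each $\q_i = (m_i)$ reduces, after iterating, to (Ab$\beta$8). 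The bookkeeping of the coset decompositions in $\sum_\tau$ is the one delicate point, but it follows formally from the fact that in the surviving case $\r = (n)$ the stabiliser subgroup $\∑_n \wr \∑_m$ is exactly the one appearing in (Ab$\beta$8).
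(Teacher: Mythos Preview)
Your approach is essentially identical to the paper's: define $\beta_x := \beta_{x,(n)}$ in one direction, extend by zero on compositions with at least two nonzero parts in the other, and verify \ref{relperm}--\ref{relcomp} case by case. One minor slip: in your reverse check of \ref{relrepet}, the claim ``$\q \rhd \r = (n)$ implies $s = 1$'' should read $s' = 1$, and even then $\r$ may still have several nonzero parts gathered into that single $\q$-block --- but your invocation of (Ab$\beta$4) is exactly what handles this subcase, matching the paper.
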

    \begin{proof}
        Let $M$ be a $\Gamma(\P)$ algebra such that all the operations $\beta_{x,\r}$ are trivial as soon as $\r$ contains two non-zero integers. For all $x\in\P(n)^{\∑_n}$, denote by $\beta_{x} \dfn \beta_{x,(n)}$. We will prove that relations \ref{relAblambda} to \ref{relAbcomp} are satisfied:
         \begin{enumerate}[label=(Ab$\beta$\arabic*)]
        \setcounter{enumi}{2}
            \item is a direct consequence of \ref{rellambda},
            \item is deduced from \ref{relrepet} in the following way: if there is $\r\in\Comp_s(n)$ with $s>1$ and $\r$ contains at least two non-zero integers, and $y\in\P(n)$ such that $x=\sum_{\sigma\in\∑_n/\∑_\r}y$, then
            \begin{equation*}
               \beta_x(m)=\beta_{x,(n)}(m)=\beta_{y,\r}(m,\dots,m)=0.
            \end{equation*}
            \item is deduced from \ref{relsomme} and \ref{rel0} in the following way:
            \begin{align*}
                \beta_x(m+m') &= \beta_{x,(n)}(m+m') = \sum_{i+j=n}\beta_{x,(i,j)}(m,m') \\
                &= \beta_{x,(n,0)}(m,m')+\beta_{x,(0,n)}(m,m')\\
                &= \beta_x(m)+\beta_x(m'). 
            \end{align*}
            \item is a 
            consequence of \ref{rellin},
            \item is a 
            consequence of \ref{relunit},
            \item is a 
            consequence of \ref{relcomp}.
        \end{enumerate}
        Let now $M$ be a vector space equipped, for all $x\in \P(n)^{\∑_n}$, with an operation $\beta_x \colon M\to M$, satisfying the relations \ref{relAblambda} to \ref{relAbcomp}. For all $x\in \P(n)^{\∑_n}$ and $\r\in\Comp_s(n)$, define an $s$-ary operation $\beta_{x,\r}$, such that:
         \[
             \beta_{x,\r}(m_1,\dots,m_s)=\begin{cases}
                 \beta_x(m_i)&\mbox{if }r_i=n,\\
                 0&\mbox{if $\r$ contains at least two non-zero integers.}
             \end{cases}
         \]
         We want to show that $\beta_{x,\r}$ satisfy the relations \ref{relperm} to \ref{relcomp}. Relations \ref{relperm} and \ref{rel0}, are a direct consequence of the definition. Relations \ref{rellambda}, \ref{relsomme}, \ref{rellin}, \ref{relunit} are direct consequences respectively of relations \ref{relAblambda}, \ref{relAbsomme}, \ref{relAblin}, and \ref{relAbunit}.

         To prove relation~\ref{relrepet}, note that the left term of the equality,
         \[
              \beta_{x,\r}(\underbrace{a_1,\dots,a_1}_{q_1},\underbrace{a_2,\dots,a_2}_{q_2},\dots,\underbrace{a_{s},\dots,a_{s}}_{q_{s'}}),
         \]
         is equal to 0 except if $r_i=n$ and $q_j=m$ for certain $i$ and $j$, in which case, it is equal to $\beta_{x}(a_i)$. On the other hand, the right term of the equality, 
         \[
             \beta_{\big(\sum_{\sigma\in \∑_{\q\rhd\r}/\∑_\r}\sigma\cdot x\big),\ \q\rhd\r}(a_1,a_2,\dots,a_s),
         \]
         is 0 unless $\q\rhd\r$ contains a single non-zero integers. But this happens if and only if both $\r$ and $\q$ contain a single non-zero integer, namely, if there is an $i$ and a $j$ such that $r_i=n$ and $q_j=m$, and if so, $\∑_{\q\rhd\r}/\∑_\r$ is trivial, so this term is also equal to $\beta_x(a_i)$.

         To prove relation~\ref{relcomp}, note that the left term in the equality, 
         \[
             \beta_{x,\r}(\beta_{x_1,\q_1}(b_{1}),\dots,\beta_{x_s,\q_s}(b_{s})),
         \]
         Is 0 unless $r_i=n$, $q_{i,j}=m_i$, and in this case, this term is equal to $\beta_x(\beta_{x_i}(b_{i,j}))$. On the other hand, the right term in the equality, 
         \[
             \beta_{\sum_{\tau}\tau\cdot x\big(x_1^{\times r_1},\dots,x_s^{\times r_s}\big)\big),\r\diamond(\q_i)_{i\in[s]}}(b),
         \]
         Is equal to 0 unless $\r\diamond(\q_i)_{i\in[s]}$ has a single non-zero integer. But again, this happens if and only if there exists an $i$ such that $r_i=n$, a $j$ such that $q_{i,j}=m_i$, and in this case, $\r\diamond(\q_i)_{i\in[s]}=(nm_i)$,$\∑_{\r\diamond(q_i)_{i\in[s]}}/(\prod_{i=1}^{s}\∑_{r_i}\wr \∑_{\q_i})=\∑_{nm_i}/\∑_n\wr\∑_{m_i}$, and so, this term is equal to
         \[
             \beta_{\sum_{\tau}\tau x(x_i,\dots,x_i)}(b_{i,j}),
         \]
         where $\tau$ ranges over $\∑_{nm_i}/\∑_n\wr\∑_{m_i}$ in the sum.

         We have left to prove that 
         \[
             \beta_x(\beta_{x_i}(b_{i,j}))=\beta_{\sum_{\tau}\tau x(x_i,\dots,x_i)}(b_{i,j}),
         \]
         where $\tau$ ranges over $\∑_{nm_i}/\∑_n\wr\∑_{m_i}$ in the sum, but that is exactly relation~\ref{relAbcomp}.
    \end{proof}
    \begin{prop}\label{prop:Abpowersp}
        Let $M$ be an abelian $\Gamma(\P)$-algebra over a field $\F$ of characteristic $p$. 
        Then $\beta_{x}(m)=0$ for all $x\in\P(n)^{\∑_n}$, for all $n\neq p^{i}$, $i\in\N$.
    \end{prop}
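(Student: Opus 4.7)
The plan is to specialise the decomposition established in the proof of \Cref{prop:powersp} to the single-input case $s=1$, $\r=(n)$, and then invoke the abelian vanishing hypothesis to kill the resulting operation. Concretely, write the base-$p$ expansion $n = \sum_{j=0}^{k} r_j p^j$ with $0 \le r_j < p$, and consider the partition
\[
\underline{Q} \dfn (\underbrace{p^0,\ldots,p^0}_{r_0}, \underbrace{p^1,\ldots,p^1}_{r_1}, \ldots, \underbrace{p^k,\ldots,p^k}_{r_k}).
\]
Since $n$ is not a power of $p$, either some digit $r_j$ is at least $2$ or at least two digits are nonzero; in either situation $\sum_j r_j \ge 2$, so after using \ref{rel0} to drop zero entries $\underline{Q}$ still has at least two positive parts.

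I would then transport the rewriting from \Cref{prop:powersp} to the present setting. Using the $\Sigma_n$-invariance of $x$ together with relation~\ref{relrepet}, first refine $(n)$ to $\underline{R} \dfn (r_0 p^0,\ldots,r_k p^k)$, and then refine $\underline{R}$ further to $\underline{Q}$. The multinomial coefficients produced at each step are congruent modulo $p$ to $1$ and to $\prod_j r_j!$ respectively by Lucas's theorem, and the latter is invertible in $\F$ since every $r_j < p$. This yields an identity
\[
\beta_x(a) = \beta_{x,(n)}(a) = c \cdot \beta_{x, \underline{Q}}(\underbrace{a, \ldots, a}_{r_0 + \cdots + r_k})
\]
for an explicit $c \in \F^{\times}$ (after discarding any zero entries in $\underline{Q}$ via \ref{rel0}).

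Finally, since $\underline{Q}$ contains at least two nonzero integers, the operation $\beta_{x,\underline{Q}}$ vanishes identically by the very definition of an abelian $\Gamma(\P)$-algebra (\Cref{def:abeliangammapalg}), so $\beta_x(a) = 0$. No real obstacle arises: the argument is essentially a corollary of \Cref{prop:powersp}, and the only point requiring care is verifying that the partition-refinement bookkeeping from that proof passes through cleanly in the single-input case $s=1$ and that $\underline{Q}$ really does have at least two nonzero parts precisely when $n$ fails to be a power of $p$.
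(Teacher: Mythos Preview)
Your argument is correct and follows essentially the same route as the paper. The only cosmetic differences are that the paper skips the intermediate refinement through $\underline{R}$, passing directly from $(n)$ to $\underline{Q}$, and phrases the final vanishing via relation~\ref{relAbrepet} (writing $x=\sum_{\sigma\in\Sigma_n/\Sigma_{\underline{Q}}}\sigma\cdot y$ with $y=(\prod_j r_j!)^{-1}x$) rather than returning to the multi-input operation $\beta_{x,\underline{Q}}$ and \Cref{def:abeliangammapalg}; these are equivalent formulations of the same step.
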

    \begin{proof}
        This is similar to the proof of Proposition~\ref{prop:powersp}. Denote by $n_0p^0+\dots+n_kp^k$ the mod $p$ expansion of $n$. Denote by $\underline Q$ the partition $(\underbrace{p^0,\dots,p^0}_{n_0},\dots,\underbrace{p^k,\dots,p^k}_{n_k})$. Denote by $y=(\prod_{j=0}^kn_j!)^{-1}$. Using Lucas's Theorem, 
        one has:
        \[
            \sum_{\sigma\in\∑_{n}/\∑_{\underline Q}}\sigma\cdot y=\left(\prod_{j=0}^k\binom{n_jp^j}{p^j,\dots,p^j}\right)y=\left(\prod_{j=0}^kn_j!\right)y=x.
        \]
        Either $n$ is a power of $p$, or $\underline Q$ has at least two non-zero integers, which, using relation~\ref{relAbrepet}, implies that $\beta_x=0$.
    \end{proof}
    \begin{coro}
        Let $M$ be an abelian $\Gamma(\P)$-algebra over a field $\F$ of characteristic $p$. Then, the operations $\beta_{x}$ are generated by the family of operations $\beta_x$ such that $x\in \P(p^i)^{\∑_{p^i}}$.
    \end{coro}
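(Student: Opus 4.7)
The plan is to observe that this corollary is an immediate consequence of the preceding \Cref{prop:Abpowersp}. Indeed, that proposition shows that for any $n \in \N$ that is not a power of $p$, and any $x \in \P(n)^{\Sigma_n}$, the operation $\beta_x$ acts as zero on $M$. Consequently, the only operations $\beta_x$ that can be non-trivial are those for which $x \in \P(p^i)^{\Sigma_{p^i}}$ for some $i \in \N$.

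The remaining content is a matter of unpacking the word ``generated.'' First I would fix an arbitrary $n \in \N$ and $x \in \P(n)^{\Sigma_n}$. If $n = p^i$ for some $i$, then $\beta_x$ is already in the prescribed family, so there is nothing to prove. Otherwise, by \Cref{prop:Abpowersp}, $\beta_x = 0$, which is trivially generated by the empty sub-family of the prescribed operations (or, if one prefers, expressed as $0 \cdot \beta_{1_\P}$ using relation~\ref{relAblin}). Either way, $\beta_x$ lies in the linear span of operations $\beta_y$ with $y \in \P(p^i)^{\Sigma_{p^i}}$.

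There is no genuine obstacle here, as the substantive work has already been carried out in the proof of \Cref{prop:Abpowersp} via Lucas's theorem and relation~\ref{relAbrepet}. The corollary merely repackages that statement in the language of generation of operations.
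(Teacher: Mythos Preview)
Your proposal is correct and matches the paper's approach: the corollary is stated without proof, as an immediate consequence of \Cref{prop:Abpowersp}. Your unpacking of the word ``generated'' is exactly the implicit reasoning the paper leaves to the reader.
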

    \begin{defi}\label{def:Amodule}
        A \Def{module} over the $\Gamma(\P)$-algebra $A$ is an abelian $\Gamma(\P)$-algebra $M$ equipped with a divided power action of $A$ on $M$ represented by operations $\beta_{x,\r} \colon A^{\times s-1}\times M\to M$ for all $x\in \P(n)$ and $\r\in\Comp_s(n)$ such that $r_s\neq 0$, satisfying:
        \begin{enumerate}[label=($\beta$AM\arabic*)]
        \setcounter{enumi}{5}
    \item\label{relAMlin} $\beta_{\lambda x+y,\r}=\lambda\beta_{x,\r}+\beta_{y,\r}$ , for all $x,y\in\P(n)^{\∑_\r}$,
    \item \label{relAMcompM}If $s=1$, then $\beta_{x,(n)}(m)=\beta_{x}(m)$.
    \item \label{relAMcomp} Let $r_1+\dots+r_s=n$, $x\in \P(n)^{\∑_\r}$ and for all $i\in[s]$, let $q_{i,1}+\dots+q_{i,u_i}=k_i$, $x_i\in\P(k_i)^{\∑_{\q_i}}$. Let $(b_{ij})_{1\le j\le u_i}\in A^{\times u_i}$ for $i\in[s-1]$ and $b_{s,1},\dots,b_{s,u_s-1}\in A$. For all $i\in[s-1]$, denote by $b_i=(b_{i,j})_{j\in[u_i]}$. Then:
    \begin{multline*}
        \beta_{x,\r}(\beta_{x_1,\q_1}(b_1),\dots,\beta_{x_{s-1},\q_{s-1}}(b_{s-1}),\beta_{x_s,\q_s}(b_{s,1},\dots,b_{s,u_s-1},m))\\=\beta_{\sum_{\tau}\tau\cdot x(x_1^{r_1},\dots,x_s^{\times r_s}),\r\diamond(\q_1,\dots,\q_s)}(b_{1,1},\dots,b_{1,u_1},\dots,b_{s,u_s-1},m),
    \end{multline*}
    where $\r\diamond(\q_i)_{i\in[s]}$ is defined in \cite{Ikonicoff20} and where $\tau$ ranges over $\∑_{\r\diamond\q}/\prod_{i=1}^s\∑_{r_i}\wr\∑_{\q_i})$ in the sum.
        \end{enumerate}
        For $s>1$, the two sets of relations: 
        \begin{enumerate}[label=($\beta$A\arabic*)]
        \item\label{relAperm}$\beta_{x,\r}((a_i)_i,m)=\beta_{\rho^*\cdot x,\r^\rho}((a_{\rho^{-1}(i)})_{i},m)$ for all $\rho\in\∑_{s-1}$, where $\rho^*$ denotes the block permutation with blocks of size $(r_i)$ associated to $\rho$.
	    \item\label{relA0} 
	    $\beta_{x,(0,r_1,r_2,\dots,r_s)}(a_0,a_1,\dots,a_{s-1},m)=\beta_{x,(r_1,r_2,\dots,r_s)}(a_1,\dots,a_{s-1},m)$
	    \item\label{relAlambda} $\beta_{x,\r}(\lambda a_1,a_2,\dots,a_{s-1},m)=\lambda^{r_1}\beta_{x,\r}(a_1,\dots,a_{s-1},m)$ for all $\lambda\in \F$.
	    \item\label{relArepet}
    If $\r\in \Comp_{s}(n)$ and $\q\in \Comp_{s'}(s-1)$, then
	\[
      \beta_{x,\r}(\underbrace{a_1,\dots,a_1}_{q_1},\underbrace{a_2,\dots,a_2}_{q_2},\dots,\underbrace{a_{s-1},\dots,a_{s-1}}_{q_{s'}},m) = \\
       \beta_{\big(\sum_{\sigma\in \∑_{\q'\rhd\r}/\∑_\r}\sigma\cdot x\big),\ \q'\rhd\r}(a_1,a_2,\dots,a_{s-1},m),
	\]
    where $\q'=(\q,0)$,
	    \item\label{relAsomme} 
	    $\beta_{x,\r}(a_0+a_1,\dots,a_{s-1},m)=\sum_{l+l'=r_1}\beta_{x,\r\circ_1(l,l')}(a_0,a_1,\dots,a_{s-1},m)$,
        \end{enumerate}
        and:
        \begin{enumerate}[label=($\beta$M\arabic*)]
    \setcounter{enumi}{2}
    \item\label{relMlambda} $\beta_{x,\r}( a_1,a_2,\dots,a_{s-1},\lambda m)=\lambda^{r_1}\beta_{x,\r}(a_1,\dots,a_{s-1},m) \quad \text{for all } \lambda\in \F$.
    \item\label{relMrepet}
    Suppose there is $\q\in\Comp_{s'}(r_s)$ with $s'>0$, where $\q$ contains at least two non-zero integers, and there is a $y\in\P(n)^{\∑_{1}^{\times n-r_s}\times \∑_\q}$ such that $x=\sum_{\sigma\in\∑_{n-r_s}\times\∑_{r_s}/\∑_{1}^{\times n-r_s}\times \∑_\q}y$, then $\beta_{x,\r}(a_1,\dots,a_{s-1},m)=0$,
    \item\label{relMsomme} 
	\[
		\beta_{x,\r}(a_1,\dots,a_{s-1},m_1+m_2) = 
		\beta_{x,\r}(a_1,\dots,a_{s-1},m_1)+\beta_{x,\r}(a_1,\dots,a_{s-1},m_2),
	\] 
        \end{enumerate}
        And the two additional relations:
        \begin{enumerate}[label=($\beta$AM4)]
            \item \label{relAM4} Let $s,s'>0$, $\q\in\Comp_{s'}(n)$, and $\r\in \Comp_s(\q)$. Let $x\in \P(n)^{\∑_{\q\circ_i\r}}$. Then,
            \rescale{\begin{align*}
                \sum_{k=1}^{q_i}\beta_{\sum_{\rho\in \∑_1^{\times (q_1+\cdots+q_{i-1)}}\times \overline E_{\r,k}\times \∑_1^{\times(q_{i+1}+\cdots+q_{s'})}}\rho\cdot \sigma_i^* x,\q\circ_i(q_i-k,k)}(a_1,\dots,a_{s'},m)=0,
            \end{align*}}
            where $\sigma_i^*$ is the block permutation for blocks of size $q_1,q_2,\dots,q_{s'}$ associated to the transposition of $i$ and $s'$ in $\∑_{s'}$, and where:
            \rescale{\begin{align*}
                \overline E_{\r,k}=\left\{\rho'\in Sh(\r): \left|\{\rho'(r_1+\cdots+r_j):j\in[s]\}\cap\{q_{i}-k+1,\dots,q_{i}\}\right|\ge 2\right\},
            \end{align*}}
            where $Sh(\r)$ is the set of $(r_1,\dots,r_s)$-shuffles.
        \end{enumerate}
        \begin{enumerate}[label=($\beta$AM9)]
            \item \label{relAMspec} Let $r_1+\dots+r_s=n$, $x\in \P(n)^{\∑_\r}$, let $q_{1}+\dots+q_{u}=k$, $y\in\P(k)^{\∑_{\q}}$. Let $(a_i)_{i\in[s+u]}\in A^{\times s+u}$ and $m\in M$. Denote by $z=x\left(1_\P^{\times n-r_s},y^{\times r_s}\right)\in\P(n+r_s(k-1))$ Then,
            \begin{multline*}
                \sum_{t+t'=r_sq_{u},t'>0}\beta_{\sum_\tau \tau z,\r\diamond_s(\q)\circ_{s+u}(t,t')}(a_1,\dots,a_{s+u},m)=\\
                \sum_{\lambda+\lambda'=r_s,\lambda'>0}\sum_{l+l'=q_{u},l'>0}\beta_{\sum_{\sigma'}\sigma' z,R_{\lambda\lambda'}^{ll'}}(a_1,\dots,a_{s+u},m),
            \end{multline*}
            where, for all $l,l',\lambda,\lambda'$, $R_{\lambda\lambda'}^{ll'}=((R_{ij})_{i\in[s],j\in[u_s]},R_{s,u+1})$ is the partition of $[n+r_s(k-1)]$ into $s+u+1$ parts such that $R_{i}=(\r\diamond_s(\q)_{i}$ for all $i\in[s+u-1]$,  and such that:
	\[
        R_{s+u} = \left\{ n-r_s+k-q_u+\alpha k+\gamma,\alpha\in[\lambda], \gamma\in[q_u] \right\} \cup  
        \left\{ \lambda k+(k-q_u)+\alpha k+\gamma, \alpha\in[\lambda'],\gamma\in[l] \right\}
	\]        
    \[
        R_{s+u+1} = \left\{ n-r_s+\lambda k+(k-l')+\alpha k+\gamma,\alpha\in[\lambda'],\gamma\in[l'] \right\},
    \]
where $\tau$ ranges over $\∑_{\r\diamond_s\q}/\left(\prod_{i=1}^{s-1}\∑_{r_i}\right)\times\∑_{r_s}\wr\∑_{\q}$ in the sum, and where $\sigma'$ ranges over $\∑_{R_{\lambda\lambda'}^{ll'}}/\left(\prod_{i=1}^{s-1}\∑_{r_i}\right)\times\∑_{\lambda}\wr\∑_\q\times\∑_{\lambda'}\wr\∑_{\q\circ_u(l,l')}$ in the sum.
        \end{enumerate}
        Modules over a $\Gamma(\P)$-algebra $A$ form a category $\lMod{A}$.
    \end{defi}
    \begin{nota}\label{nota:perm}
        We will use the notation:
        \[
            \beta_{x,\r}(a_1,\dots,a_{i-1},m,a_{i+1},\dots,a_s) \dfn \beta_{\sigma_i^*,\r^{\sigma_i}}(a_1,\dots,a_{i-1},a_{i+1},\dots,a_s,m),
        \]
        where $\sigma_i\in\∑_s$ sends $i$ to the $s$-th spot and keeps all other indices in order, that is:
        \[
            \sigma_i(j)=\begin{cases}
                j,&\mbox{if }j<i,\\
                s,&\mbox{if }j=i,\\
                j-1,&\mbox{if }j>i,
            \end{cases}
        \]
        and where $\sigma_i^*\in\∑_n$ is the block permutation with blocks of size $(r_i)_{i\in[s]}$ associated to $\sigma_i$.
    \end{nota}
    Once again, we can %
    refine this characterisation if we specify the characteristic of the base field:
    \begin{prop}\label{prop:AMpowersp}
        Let $M$ be an $A$-module over a field $\F$ of characteristic $p$. Then the operations $\beta_{x,\r}(a_1,\dots,a_{s-1},m)$ are generated by the operations $\beta_{x,\r}(a_1,\dots,a_{s-1},m)$ where all the $r_i$s are powers of $p$, and $\beta_{x,\r}(a_1,\dots,a_{s-1},m)=0$ when $r_s$ is not a power of $p$.
    \end{prop}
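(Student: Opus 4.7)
The plan is to proceed in two steps, one for the module slot and one for the action slots, mirroring the proofs of \Cref{prop:powersp} and \Cref{prop:Abpowersp}. Throughout, by \ref{relAMlin} we may assume that $x\in\P(n)^{\∑_\r}$.

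\textbf{Step 1 (vanishing when $r_s$ is not a power of $p$).} Write the mod $p$ expansion $r_s=\sum_{j=0}^{k} r_{s,j} p^j$ with $0\le r_{s,j}<p$, and let $\underline{R}_s=(r_{s,0}p^0,\dots,r_{s,k}p^k)$ (zero parts included). Because $\underline{R}_s$ is a refinement of the one-part partition $(r_s)$, the element $x$ is fixed by $\∑_{1^{n-r_s}}\times\∑_{\underline{R}_s}$. Summing over the cosets $\∑_{r_s}/\∑_{\underline{R}_s}$ and applying Lucas's theorem gives
\[
\sum_{\sigma\in\∑_{r_s}/\∑_{\underline{R}_s}}\sigma\cdot x\ =\ \binom{r_s}{r_{s,0}p^0,\dots,r_{s,k}p^k}\,x\ \equiv\ x\pmod{p}.
\]
If $r_s$ is not a power of $p$, the expansion $\underline{R}_s$ contains at least two non-zero integers, so the hypothesis of \ref{relMrepet} is satisfied with $y=x$ and $\q=\underline{R}_s$. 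Hence $\beta_{x,\r}(a_1,\dots,a_{s-1},m)=0$, which is the second assertion.

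\textbf{Step 2 (reduction when $r_s$ is a power of $p$).} Assume now that $r_s=p^{k_s}$. For each $i\in[s-1]$, expand $r_i=\sum_{j=0}^{k_i} r_{i,j} p^j$ in base $p$ and let $\underline{R}=(r_{1,0}p^0,\dots,r_{1,k_1}p^{k_1},\dots,r_{s-1,0}p^0,\dots,r_{s-1,k_{s-1}}p^{k_{s-1}},r_s)$. Since $\underline{R}$ refines $\r$, the element $x$ is fixed by $\∑_{\underline{R}}$, and Lucas's theorem again gives
\[
\sum_{\sigma\in\∑_\r/\∑_{\underline{R}}}\sigma\cdot x\ =\ \left(\prod_{i=1}^{s-1}\binom{r_i}{r_{i,0}p^0,\dots,r_{i,k_i}p^{k_i}}\right) x\ \equiv\ x\pmod{p}.
\]
Applying \ref{relArepet} with $\q'$ equal to the appropriate composition of $(1,\dots,1,0)$ gives
\[
\beta_{x,\r}(a_1,\dots,a_{s-1},m)=\beta_{x,\underline{R}}\bigl(\underbrace{a_1,\dots,a_1}_{k_1+1},\dots,\underbrace{a_{s-1},\dots,a_{s-1}}_{k_{s-1}+1},m\bigr).
\]
Then, exactly as in the proof of \Cref{prop:powersp}, we further refine each $r_{i,j}p^j$ into $r_{i,j}$ copies of $p^j$ to obtain a partition $\underline{Q}$ whose non-zero entries are powers of $p$; another application of Lucas's theorem together with the invertibility of the factors $r_{i,j}!$ mod $p$, combined with \ref{relArepet}, lets us rewrite $\beta_{x,\underline{R}}$ as a scalar multiple of $\beta_{x,\underline{Q}}$ with suitably repeated entries. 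Finally, \ref{relA0} removes the zero parts coming from indices $(i,j)$ with $r_{i,j}=0$, yielding an expression in which every part of the resulting partition is a power of $p$ (with the last part $r_s=p^{k_s}$ untouched).

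\textbf{Main obstacle.} The calculations are parallel to those of \Cref{prop:powersp} and \Cref{prop:Abpowersp}, so there is no new combinatorial ingredient beyond Lucas's theorem. The only point requiring care is the asymmetric role of the module slot: we must apply \ref{relArepet} (and \ref{relA0}) to the first $s-1$ slots but \ref{relMrepet} to the last slot, and we must not try to duplicate the argument of $m$. Once the relations are invoked in the correct slot, the proof goes through as above.
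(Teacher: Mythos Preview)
Your approach matches the paper's: the paper omits the proof and simply says it is ``very similar to that of \cref{prop:powersp,prop:Abpowersp}'', which is exactly the two-step reduction (module slot via \ref{relMrepet}, action slots via \ref{relArepet}) that you carry out.

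There is, however, a genuine gap in Step~1. You assert that ``if $r_s$ is not a power of $p$, the expansion $\underline{R}_s$ contains at least two non-zero integers'', but this is false: take $r_s = c\,p^{j}$ with $2\le c<p$ (e.g.\ $r_s=2$, $p=3$). Then the base-$p$ expansion has a single nonzero digit $c$, so $\underline{R}_s=(c\,p^{j})$ has only one nonzero part and \ref{relMrepet} does not apply. The fix is the one you already used in Step~2 (and the one used in the proof of \Cref{prop:Abpowersp}): refine $\underline{R}_s$ further to $\underline{Q}_s$, replacing each part $r_{s,j}p^{j}$ by $r_{s,j}$ copies of $p^{j}$. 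Since $r_{s,j}!$ is invertible mod $p$, setting $y=\bigl(\prod_j r_{s,j}!\bigr)^{-1}x$ gives $x=\sum_{\sigma\in\Sigma_{r_s}/\Sigma_{\underline Q_s}}\sigma\cdot y$, and now $\underline{Q}_s$ has at least two nonzero parts whenever $r_s$ is not a power of $p$, so \ref{relMrepet} applies and yields the desired vanishing. With this correction, your argument is complete and coincides with the paper's intended proof.
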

    \begin{proof}
        The proof, very similar to that of Propositions~\ref{prop:powersp} and \ref{prop:Abpowersp}, is omitted. 
    \end{proof}

\section{Beck modules over \texorpdfstring{$A$}{A}}\label{sec:BeckModules}

This section is devoted to %
a description 
of Beck modules over a $\Gamma(\P)$-algebra. 
Recall from Definition~\ref{def:BeckModule} that a Beck module over an object $A$ is 
an abelian group object over $A$; 
see \cite{Beck67}, \cite{Quillen70}*{\S 2--4}, \cite{Barr96}*{\S 6}, or \cite{Frankland10qui}*{\S A} for more background. 
The main result of this section, Theorem~\ref{thm:Amodule}, is that the notion of Beck module over a $\Gamma(\P)$-algebra $A$ corresponds to the notion of $A$-module from Definition~\ref{def:Amodule}. As a particular case, %
we obtain Corollary~\ref{coro:abelianGammaPalg}, which shows that the notion of abelian $\Gamma(\P)$-algebra from Definition~\ref{def:abeliangammapalg} corresponds to the notion of an abelian group object in the category of $\Gamma(\P)$-algebras.

    \begin{rema}
        If $\diag{B\ar[r]^{p}&A}$ and $\diag{C\ar[r]^{p'}&A}$ are $\Gamma(\P)$-algebras over $A$, we can endow $B\times_A C$ with a structure of $\Gamma(\P)$-algebra over $A$. The map $B \times_A C \to A$ is given by $p \times_A p'$, and the evaluation map $\P\tcirc (B\times_AC)\to B\times_AC$ is given by universal property of the pullback, using the two maps $\P\tcirc(B\times_A C)\to \P\tcirc B\to B$ and $\P\tcirc(B\times_A C)\to\P\tcirc C\to C$. This means that for $s$ elements $(b_1,c_1),\dots,(b_s,c_s,)\in B\times C$ such that $p(b_i)=p'(c_i)$,
    \[
        \beta_{x,\r}\left((b_1,c_1),\dots,(b_s,c_s)\right)=(\beta_{x,\r}(b_1,\dots,b_s),\beta_{x,\r}(c_1,\dots,c_s)).
    \]
    \end{rema}

\begin{theo}\label{thm:Amodule}
        The data of a Beck module over the $\Gamma(\P)$-algebra $A$ is equivalent to the data of an $A$-module as in Definition~\ref{def:Amodule}.
    \end{theo}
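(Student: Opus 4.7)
The plan is to establish the equivalence by building a semidirect product functor $A\ltimes(-)\colon \lMod{A}\to(\Alg{\Gamma(\P)}/A)_{\mathrm{ab}}$ and its inverse, then verify that both compositions are the identity. Both functors will send the underlying vector space to $A\oplus M$; everything comes down to showing the $\Gamma(\P)$-algebra structure and the abelian group structure over $A$ determine, and are determined by, the module operations of \cref{def:Amodule}.

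From modules to Beck modules: given an $A$-module $M$, we define on the vector space $A\oplus M$ the operation
\[
\beta_{x,\r}\bigl((a_1,m_1),\dots,(a_s,m_s)\bigr) \dfn \Bigl(\beta^A_{x,\r}(a_1,\dots,a_s),\ \sum_{i=1}^{s}\beta^M_{x,\r}(a_1,\dots,\widehat{a_i},\dots,a_s,m_i)\Bigr),
\]
where on the right we use the convention from \cref{nota:perm} to move each $m_i$ to the last slot. This is exactly what \ref{relsomme} forces once we declare $(a_i,m_i)=(a_i,0)+(0,m_i)$ and impose that any term with two or more $M$-entries in distinct positive-degree slots vanishes; the latter is ensured by \ref{relMrepet}, \ref{relMsomme} and the abelianness of $M$. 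The projection $A\ltimes M\to A$ is the obvious one, with zero section $a\mapsto(a,0)$, addition $(a,m)+_A(a,m')=(a,m+m')$ and negation $(a,m)\mapsto(a,-m)$; these maps will be $\Gamma(\P)$-morphisms over $A$ by construction, giving $A\ltimes M$ the structure of an abelian group object in $\Alg{\Gamma(\P)}/A$.

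From Beck modules to modules: given an abelian group object $\pr\colon E\twoheadrightarrow A$ with zero section $e\colon A\to E$ and fiberwise addition $\mu\colon E\times_A E\to E$, the section $e$ identifies $E\cong A\oplus M$ as vector spaces, where $M\dfn\ker(\pr)$. Transport the $\Gamma(\P)$-structure on $E$ along this isomorphism, and read off the module operations by taking the $M$-component of $\beta_{x,\r}^E(e(a_1),\dots,e(a_{s-1}),(0,m))$. Abelianness of $M$ as a $\Gamma(\P)$-algebra, relation \ref{relMrepet}, and relation \ref{relMsomme} all follow from the requirement that $\mu\colon E\times_A E\to E$ be a morphism of $\Gamma(\P)$-algebras: comparing $\beta^E_{x,\r}$ applied to $\mu$-sums with the expansion given by \ref{relsomme} of $\Gamma(\P)$ forces the vanishing of all operations whose last-slot partition is not concentrated in a single slot. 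Relation \ref{relAMcomp} is inherited directly from \ref{relcomp} on $E$, while \ref{relAMcompM} is the unit law \ref{relunit}.

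The main obstacle will be the two redundancy relations \ref{relAM4} and \ref{relAMspec}. These do not come from a single axiom on $E$ but rather encode the agreement between two different evaluation orders: either one first applies \ref{relsomme} to split a $(0,m)$ away from an $(a,0)$ and then applies \ref{relrepet} or \ref{relcomp}, or one does the reverse, and the $M$-components of the two results must coincide. To handle these, our plan is a careful combinatorial bookkeeping: expand $\beta^E_{x,\r}$ on tuples of the form $(a_i,0)+(0,m)$ using \ref{relcomp} on $E$, track which subsets of shuffle/partition indices send their distinguished slot to an $M$-entry, and match the two sums slot-by-slot. The expressions $\overline E_{\r,k}$ in \ref{relAM4} and the partition $R_{\lambda\lambda'}^{ll'}$ in \ref{relAMspec} are designed precisely to enumerate the surviving terms in this expansion, so both relations will come out as tautologies once the expansion is carried out. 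Mutual inverseness of the two functors is then automatic: starting from $M$ and extracting the module structure of $A\ltimes M$ recovers $M$ by construction, while starting from $E$, all structure on $A\oplus M$ is determined by its values on pure $A$-tuples and on tuples with a single $M$-entry in the last slot thanks to \ref{relAperm}, \ref{relsomme} and \ref{relMrepet}.
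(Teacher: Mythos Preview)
Your overall architecture matches the paper's: build a semidirect product functor and a kernel functor, and show they are mutually inverse. However, your explicit formula for the $\Gamma(\P)$-structure on $A\ltimes M$ is wrong, and this is not a cosmetic slip --- with your formula the object $A\ltimes M$ is \emph{not} a $\Gamma(\P)$-algebra.

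You write
\[
\beta_{x,\r}\bigl((a_1,m_1),\dots,(a_s,m_s)\bigr)=\Bigl(\beta^A_{x,\r}(a_1,\dots,a_s),\ \sum_{i=1}^{s}\beta^M_{x,\r}(a_1,\dots,\widehat{a_i},\dots,a_s,m_i)\Bigr),
\]
i.e.\ you completely \emph{replace} $a_i$ by $m_i$ in the $i$-th slot. But carrying out the very argument you sketch --- expand $(a_i,m_i)=(a_i,0)+(0,m_i)$ via \ref{relsomme} and kill all terms with two $M$-entries --- does \emph{not} give this. It gives, for each $i$, an inner sum over splittings $l+l'=r_i$ with $l'>0$:
\[
\sum_{i=1}^{s}\ \sum_{\substack{l+l'=r_i\\ l'>0}}\beta_{x,\r\circ_i(l,l')}(a_1,\dots,a_{i-1},a_i,m_i,a_{i+1},\dots,a_s),
\]
where the partition is refined to length $s{+}1$ and \emph{both} $a_i$ (in degree $l$) and $m_i$ (in degree $l'$) appear. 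Your formula is only the $l=0$ term of this sum. Concretely, take $\P=\Com$, $x=X_2$, $\r=(2)$: the correct answer is $\gamma_2(a,m)=(\gamma_2(a),\,am+\gamma_2(m))$, whereas your formula gives $(\gamma_2(a),\gamma_2(m))$, missing the cross term $am$. With your formula, \ref{relsomme} already fails on $A\ltimes M$ (try $\beta_{X_2,(2)}((a,0)+(0,m))$), so neither the check that $A\ltimes M$ is a $\Gamma(\P)$-algebra nor the mutual inverseness argument can go through.

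The paper's proof uses exactly the corrected formula (their equation $(**)$), and then the verification of \ref{relrepet} and \ref{relcomp} on $A\ltimes M$ is precisely what produces relations \ref{relAM4} and \ref{relAMspec}; your description of these as ``agreement between two evaluation orders'' is accurate, but you cannot reach them from the wrong semidirect product. Fix the formula to include the inner splitting sum and the rest of your plan lines up with the paper.
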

    \begin{proof}
        We will define a pair of functors:
        \[
            \ker  \colon  \left(\Alg{\Gamma(\P)}/A\right)_{\ab} \to \lMod{A},
        \]
        and
        \[
            A\ltimes -  \colon  \lMod{A} \to \left(\Alg{\Gamma(\P)}/A\right)_{\ab},
        \]
        and show that these are inverse to each other.
        \paragraph{Let us prove that the data of a Beck module over $A$ yields an $A$-module.} Let $\diag{B\ar[r]^{pr}& A}$ be a $\Gamma(\P)$-algebra over $A$. Denote by $M \dfn \ker(p)$. The data of a Beck module structure on $B$ is equivalent to the data of a map $\diag{A\ar[r]^z& B}$, and of a multiplication map $\diag{B\times_A B\ar[r]^-\mu& B}$ over $A$, satisfying the following conditions:
        \begin{enumerate}[label=(\arabic*)]
            \item $z$ is a map of $\Gamma(\P)$-algebras over $A$,\\
            \item $\mu$ is a map of $\Gamma(\P)$-algebras over $A$,
            \item $\mu$ is commutative,
            \item $z$ is a unit map for the multiplication $\mu$.
        \end{enumerate}
        Condition (1) implies that $z$ splits $pr$, so we have a split exact sequence of vector spaces:
        \[
            \diag{\0\ar[r]& M\ar[r]& B\ar[r]^{pr}& A\ar@/^/[l]^{z}\ar[r]&\0},
        \]
        which provides us with a linear isomorphism $B\cong A\oplus M$.

        The fact that $pr$ is a map of $\Gamma(\P)$-algebras implies that
        \begin{align*}
          pr(\beta_{x,\r}((a_1,m_1),\dots,(a_s,m_s))) &= \beta_{x,\r}(pr(a_1,m_1),\dots,pr(a_s,m_s)) \\
          &= \beta_{x,\r}(a_1,\dots,a_s), 
        \end{align*}
        so there exists a set map $\nabla_{x,\r} \colon A^{\times s}\times M^{\times s}\to M$ such that:
        \rescale{\begin{align*}
            \beta_{x,\r}((a_1,m_1),\dots,(a_s,m_s))=\left(\beta_{x,\r}(a_1,\dots,a_s),\nabla_{x,\r}(a_1,\dots,a_s,m_1,\dots,m_s)\right).
        \end{align*}}
        Note that, as a vector space, $B\times_A B\cong A\oplus M\oplus M$. One can rewrite condition (4) as the equality $\mu(a,m,0)=\mu(a,0,m)=(a,m)$. Classical computations on Beck modules for abelian groups show that, in fact, $\mu(a,m,m')=(a,m+m')$,
        from which the commutativity of $\mu$ is in fact necessary.

        Inspecting the $\Gamma(\P)$-algebra structure on $B\times_A B$ (see previous section), this implies that:
        \rescale{\begin{align*}
             &\mu\left(\beta_{x,\r}\left((a_1,m_1,m'_1),\dots,(a_s,m_s,m'_s)\right)\right)=\\
             &\bigg(\beta_{x,\r}(a_1,\dots,a_s),\nabla_{x,\r}(a_1,\dots,a_s,m_1,\dots,m_s)+\nabla_{x,\r}(a_1\dots,a_s,m'_1,\dots,m'_s)\bigg).
         \end{align*}} 
        However, since $\mu$ is a $\Gamma(\P)$-algebra map, one also has:
		\[
            \mu(\beta_{x,\r}\left((a_1,m_1,m'_1),\dots,(a_s,m_s,m'_s)\right)) = 
            \left(\beta_{x,\r}(a_1,\dots,a_s),\nabla_{x,\r}(a_1,\dots,a_s,m_1+m'_1,\dots,m_s+m'_s)\right)
		\]
        %
        %
        %
        %
        %
        %
        And hence,
		\[
            \nabla_{x,\r}(a_1,\dots,a_s,m_1+m'_1,\dots,m_s+m'_s) = 
            \nabla_{x,\r}(a_1,\dots,a_s,m_1,\dots,m_s)+\nabla_{x,\r}(a_1\dots,a_s,m'_1,\dots,m'_s).
		\]
        This implies that
        \begin{equation}
            \label{E1}\tag{E1}\nabla_{x,\r}(a_1,\dots,a_s,m_1,\dots,m_s)=\sum_{i=0}^s\nabla_{x,\r}(a_1,\dots,a_s,0,\dots,0,m_i,0,\dots,0),
        \end{equation}
        and that $\nabla_{x,\r}(a_1,\dots,a_s,0,\dots,0)=0$, which is also a consequence of condition (1). From relation~\ref{rellambda}, we also obtain:
        \begin{equation*}
            \beta_{x,\r}((a_1,m_1),\dots,(a_{i-1},m_{i-1}),0,(a_{i+1},m_{i+1}),\dots,(a_s,m_s))=0,
        \end{equation*}
        for all $i$ such that $r_i>0$, which also implies:
        \begin{equation}
            \label{E3}\tag{E3}\nabla_{x,\r}(a_1,\dots,a_{i-1},0,a_{i+1},\dots,a_s,m_1,\dots,m_{i-1},0,m_{i+1},\dots,m_s)=0.
        \end{equation}
        Using the decomposition $(a,m)=(a,0)+(0,m)$ and the relation~\ref{relsomme}, one then gets:
%
%
            %
%
		\[
            \beta_{x,\r}\left((a_1,m_1),\ldots,(a_s,m_s)\right) = 
            \sum_{(l_i+l'_i=r_i)_{i}}\beta_{x,(l_1,l'_1,l_2,\ldots,l_s,l'_s)}\left( (a_1,0),(0,m_1),(a_2,0)\ldots,(a_s,0),(0,m_s) \right).
		\]
        Using relations~\ref{rellambda} and \ref{rel0}, we then get:
         \rescale{\begin{align*}
            &\beta_{x,\r} \left( (a_1,m_1),\ldots,(a_s,m_s) \right) = \left( \beta_{x,\r}(a_1,\ldots,a_s),0 \right) + \\
            &\sum_{(l_i+l'_i=r_i,l'_i>0)_{i}}\bigg(0,\nabla_{x,(l_1,l'_1,l_2,\ldots,l_s,l'_s)}(a_1,0,a_2,0,\ldots,a_s,0,0,m_1,0,m_2,\ldots,0,m_s)\bigg),
        \end{align*}}
        which, using~\eqref{E1} is equal to:
        \rescale{\begin{align*}
             &\left( \beta_{x,\r}(a_1,\dots,a_s), 0 \right) + \\ 
             &\sum_{(l_i+l'_i=r_i,l'_i>0)_{i}}\sum_{j=1}^{s}\bigg(0,\nabla_{x,(l_1,l'_1,l_2,\dots,l_s,l'_s)}(a_1,0,a_2,0,\dots,a_s,0,0,0,\dots,0,\underbrace{m_j}_{(2s+2j)^{\mbox{th}}},0,\dots,0)\bigg).
        \end{align*}}
        Using~\eqref{E3}, this is equal to:
        \rescale{\begin{align*}
             &\left( \beta_{x,\r}(a_1,\dots,a_s), 0 \right) + \\
             &\sum_{j=1}^s\sum_{l+l'=r_j,l'>0}(0,\nabla_{x,\r\circ_j(l,l')}(a_1,\dots,a_{j},0,a_{j+1},\dots,a_s,0,\dots,0,\underbrace{m_j}_{(s+j+2)^{\mbox{th}}},0,\dots,0)).
        \end{align*}}
        For all $j$, denote by $\sigma_j\in\∑_s$ the permutation that moves $j$ to the last spot and keeps all the other numbers in order, and denote by $\sigma_j^{*}\in\∑_n$ the block permutation of the blocks of size $(r_i)_i$ associated to $\sigma_j$. Using \ref{relperm}, one can see that 
        \begin{multline*}
            \nabla_{x,\r\circ_j(l,l')}(a_1,\dots,a_{j},0,a_{j+1},\dots,a_s,0,\dots,0,\underbrace{m_j}_{(s+j+2)^{\mbox{th}}},0,\dots,0)=\\\nabla_{\sigma_j^*x,\r^{\sigma_j}\circ_s(l,l')}(a_1,\dots,a_{j-1},a_{j+1},\dots,a_s,a_j,0,\dots,0,\underbrace{m_j}_{(2s+2)^{\mbox{th}}}).
        \end{multline*}
        For all $y\in\P(k)$, $b_1,\dots,b_{s-1}\in A$, $m\in M$ and $q_1+\dots+q_s=k$ such that $q_s>0$, we set:
        \begin{equation}
           \tag{$*$}\label{*} \beta_{y,\q}(b_1,\dots,b_{s-1},m) \dfn \nabla_{y,\q}(b_1,\dots,b_{s-1},0,\dots,0,\underbrace{m}_{(2q)^{\mbox{th}}}).
        \end{equation}
        From what precedes, we finally get the explicit $\Gamma(\P)$-algebra structure on $A\oplus M$ by:
        \begin{multline}
            \beta_{x,\r}\left((a_1,m_1),\dots,(a_s,m_s)\right) = \left( \beta_{x,\r}(a_1,\dots,a_s),0 \right) + \notag \\
            \sum_{j=1}^s\sum_{l+l'=r_j,l'>0}\left(0,\beta_{\sigma_j^*x,(\r^{\sigma_j})\circ_s(l,l')}(a_1,\dots,a_{j-1},a_{j+1},\dots,a_s,a_j,m_j)\right).\label{**}\tag{$**$}
        \end{multline}
        We claim that $M$ is an abelian $\Gamma(\P)$-algebra as defined in \ref{def:abeliangammapalg} and that the assignment~\eqref{*} endows $M$ with an $A$-module structure as defined in~\ref{def:Amodule}.

        \paragraph*{Let us show that $M$ is an abelian algebra:}
        Suppose $\r$ has at least two non-zero integers. For the sake of clarity we will assume that $s=2$. Then we want to prove that $\beta_{x,(r_1,r_2)}(m_1,m_2)=0$. In $B=A\oplus M$, this can be rewritten $\beta_{x,(r_1,r_2)}\left((0,m_1),(0,m_2)\right)$, which, using~\eqref{E1} and~\eqref{E3}, is equal to $0$.
        \paragraph*{Let us show that $M$ is an $A$-module:}
        Using the assignment~\eqref{*}, note that
        \[
            \beta_{y,\q}\left((b_1,0),\dots,(b_{s-1},0),(0,m)\right)=\left(0,\beta_{y,\q}(b_1,\dots,b_{s-1},m)\right).
        \]
        Then, the relations we have to verify for this assignment to equip $M$ with an $A$-module structure all are implied by the relations \ref{relperm} to \ref{relcomp} in $B$. 

        Only relations~\ref{relAM4} and~\ref{relAMspec} are not 
        straightforward.

        Let us show that relation~\ref{relAM4} is satisfied. Using relation\ref{relperm}, it suffices to show~\ref{relAM4} for $i=s'$ Let $s,s'>0$, $\q\in\Comp_{s'}(n)$, and $\r\in \Comp_s(q_{s'})$. Let $x\in \P(n)^{\∑_{\q\circ_{s'}\r}}$. We want to show:
\[
    \sum_{k=1}^{q_{s'}}\beta_{\sum_{\rho\in \∑_1^{\times (n-q_{s'})}\times \overline E_{\r,k}}\rho x,\q\circ_{s'}(q_{s'}-k,k)}(a_1,\dots,a_{s'},m)=0.
\]
        On one hand, using the assignment~\eqref{**}, we have:
        \begin{multline*}
            \beta_{x,\q\circ_{s'} \r}((a_1,0),\dots,(a_{s'-1},0),\underbrace{(a_{s'},m),\dots,(a_{s'},m)}_{s}) = 
            \left( \beta_{x,\q\circ_{s'} \r}(a_1,\dots,a_{s'-1},\underbrace{a_{s'},\dots, a_{s'}}_{s}),0 \right) + \\
            \sum_{j=1}^s\sum_{l+l'=r_j,l'>0}\left(0,\beta_{\tau_j^*x,\q\circ_{s'}\left(\r^{\tau_j}\right)\circ_s(l,l')}(a_1,\dots,a_{s'-1},\underbrace{a_{s'},\dots,a_{s'}}_{s},m)\right),
        \end{multline*}
                where $\tau^*_j\in\∑_1^{\times(n-q_{s'})}\times\∑_{q_{s'}}$ denotes the block permutation for blocks of size $(r_1,\dots,r_s)$ associated with the transposition of $j$ and $s$ in $\∑_s$.
        Using relations~\ref{relrepet} on $A$,~\ref{relArepet} on $M$, and re-indexing $k=l'$, $l=r_j-k$ we then get:
        \begin{multline}\label{betaAM4.1}
            \beta_{x,\q\circ_{s'} \r}((a_1,0),\dots,(a_{s'-1},0),\underbrace{(a_{s'},m),\dots,(a_{s'},m)}_{s}) = 
            \left(\beta_{\sum_{\rho\in \∑_{\q}/\∑_{\q\circ_{s'}\r}}\sigma x,\q\circ_{s'} \r}(a_1,\dots,a_{s'-1},a_{s'}),0\right) + \\
            \sum_{j=1}^s\sum_{k=1}^{r_j}\left(0,\beta_{\sum_{\∑_{(\q\circ_{s'}\r)\circ_{s'+j-1}(r_j-k,k))}}\rho \tau_j^*x,\q\circ_{s'}(q_{s'}-k,k)}(a_1,\dots,a_{s'},m)\right).
        \end{multline}
        On the other hand, using relation \ref{relrepet} in $B$, we get:
	\[
         \beta_{x,\q\circ_{s'} \r}((a_1,0),\dots,(a_{s'-1},0),\underbrace{(a_{s'},m),\dots,(a_{s'},m)}_{s}) = 
         \beta_{\sum_{\sigma\in \∑_{\q}/\∑_{\q\circ_{s'}\r}}\sigma x, \q}\left((a_1,0),\dots,(a_{s'-1},0),(a_{s'},m)\right).
	\]
Then, the assignment \eqref{**} gives:
\begin{multline}\label{betaAM4.2}
    \beta_{x,\q\circ_{s'} \r}((a_1,0),\dots,(a_{s'-1},0),\underbrace{(a_{s'},m),\dots,(a_{s'},m)}_{s}) = \\
    (\beta_{\sum_{\sigma\in \∑_{\q}/\∑_{\q\circ_{s'}\r}}\sigma x, \q}(a_1,\dots,a_{s'}),0) +  
    \sum_{k=1}^{q_j}(0,\beta_{\sum_{\sigma\in \∑_{\q}/\∑_{\q\circ_{s'}\r}}\sigma x, \q\circ_{s'}(q_{s'}-k,k)}(a_1,\dots,a_{s'},m)).
\end{multline}
Subtracting equation~\ref{betaAM4.2} from equation~\ref{betaAM4.1}, and projecting onto $M$, we get:
\begin{multline*}
    \left(\sum_{k=1}^{q_j}\beta_{\sum_{\sigma\in \∑_{\q}/\∑_{\q\circ_{s'}\r}}\sigma x, \q\circ_{s'}(q_{s'}-k,k)}(a_1,\dots,a_{s'},m)\right)-\\\left(\sum_{j=1}^s\sum_{k=1}^{r_j}\beta_{\sum_{\∑_{(\q\circ_{s'}\r)\circ_{s'+j-1}(r_j-k,k))}}\rho \tau_j^*x,\q\circ_{s'}(q_{s'}-k,k)}(a_1,\dots,a_{s'},m)\right)=0.
\end{multline*}
Observe that a set of representative for $\∑_{\q}/\∑_{\q\circ_{s'}\r}$ is given by $\∑_1^{\times n-q_{s'}}\times Sh(\r)$, and, if $k\le r_j$ a set of representative for 
\[
	\∑_{\q\circ_{s'} (q_{s'}-k,k)}/\∑_{(\q\circ_{s'}\r)\circ_{s'+j-1}(r_j-k,k))}
\]
is given by:
\[
    \∑_1^{\times n-q_{s'}}\times Sh(r_1,\dots,r_{j-1},r_{j+1},\dots,r_{s},r_j-k)\times\∑_1^{\times k}.
\]
Then, observe that, for all $j\in [s]$,
\begin{multline*}
     \{\rho\tau_j^*:\rho\in\∑_1^{\times n-q_{s'}}\times Sh(r_1,\dots,r_{j-1},r_{j+1},\dots,r_{s},r_j-k)\times\∑_1^{\times k}\}=\\\∑_1^{\times n-q_{s'}}\times\{\rho'\in Sh(\r):\rho'(r_1+\dots+r_j-k+1)=q_{s'}-k+1\},
\end{multline*}
since both sets are the sets of permutations of $\∑_n$ which shuffle the last blocks of size $r_1,\dots, r_s$ and send $n-q_{s'}+r_1+\dots+r_{j}-k+l$ on $n-k+l$ for all $l\in[k]$. Using relation \ref{relAMlin}, we then get:
\[
    \sum_{k=1}^{q_{s'}}\beta_{\sum_{\rho\in \∑_{1}^{\times n-q_{s'}}\times \overline{F}_{\r,k}}}\rho x,\q\circ_{s'}(q_{s'}-k,k)(a_1,\dots,a_{s'},m)=0,
\]
where 
\rescale{\begin{align*}
	\overline{F}_{\r,k} = Sh(\r)\setminus\left(\coprod_{j\in[s]}\{\rho'\in Sh(\r):\rho'(r_1+\dots+r_j-k+1)=q_{s'}-k+1\}\right).
\end{align*}}
Note that, if $k>r_j$, then $\{\rho'\in Sh(\r):\rho'(r_1+\dots+r_j-k+1)=q_{s'}-k+1\}=\emptyset$.

It remains to show that $\overline{F}_{\r,k}=\overline{E}_{\r,k}$. But, for all $\rho'\in Sh(\r)$, the condition for $\rho'$ not to satisfy $\rho'(r_1+\dots+r_j-k+1)=q_{s'}-k+1$ for any $j\in [s]$ is equivalent to asking that $(\rho')^{-1}(\{q_{s'}-k+1,\dots,q_{s'}\})$ intersects at least two of the blocks $(r_1,\dots,r_s)$, and this happens if and only if $\rho'$ satisfies 
\[
    \left|\{\rho'(r_1+\cdots+r_j):j\in[s]\}\cap\{q_{i}-k+1,\dots,q_{i}\}\right| \geq 2.
\]
Thus, we have shown that relation~\ref{relAM4} is satisfied.

Let us now show that relation~\ref{relAMspec} is satisfied. Let $r_1+\dots+r_s=n$, $x\in \P(n)^{\∑_\r}$, let $q_{1}+\dots+q_{u}=k$, $y\in\P(k)^{\∑_{\q}}$. Let $(a_i)_{i\in[s+u]}\in A^{\times s+u}$ and $m\in M$. Denote by $z=x\left(1_\P^{\times n-r_s},y^{\times r_s}\right)\in\P(n+r_s(k-1))$. 
        On the one hand, using \eqref{**}, observe that:
        \begin{align*}
            &\beta_{x,\r} \left( (a_1,0),\dots,(a_{s-1},0),\beta_{y,\q}((a_s,0),\dots,(a_{s+u-1},0),(a_{s+u},m)) \right) \\
            &= \beta_{x,\r} \Bigg( (a_1,0),\dots,(a_{s-1},0),\sum_{l+l'=q_s,l'>0}(\beta_{y,\q}(a_s,\dots,a_{s+u-1}), 
            \beta_{y,\q\circ_u(l,l')}(a_s,\dots,a_{s+u},m)) \Bigg) \\
            &= \left( \beta_{x,\r}(a_1,\dots,a_{s-1},\beta_{y,\q}(a_s,\dots,a_{s+u})),0 \right) + \\ 
            \quad &\sum_{\lambda,\lambda'} \Bigg( 0,\beta_{x,\r\circ_s(\lambda,\lambda')} \Bigg( a_1,\dots,a_{s-1},\beta_{y,\q}(a_s,\dots,a_{s+u}), 
            \sum_{l+l'=q_s,l'>0}\beta_{y,\q\circ_u(l,l')}(a_s,\dots,a_{s+u},m) \Bigg) \Bigg),
        \end{align*}
        where $(\lambda,\lambda')$ runs over the pairs of non-negative integers satisfying $\lambda+\lambda'=r_s,\lambda'>0$. Using relation~\ref{relcomp} on $A$ gives us:
        \rescale{\begin{align*}
            (\beta_{x,\r}(a_1,\dots,a_{s-1},\beta_{y,\q}(a_s,\dots,a_{s+u})),0)= \left(\beta_{\sum_\tau \tau z,\r\diamond_s(\q)}(a_1,\dots,a_{s+u}),0\right).
        \end{align*}}
        Using relation~\ref{relMsomme} and \ref{relAMcomp} on $M$, we get
        \rescale{\begin{align*}
        	&\sum_{\lambda,\lambda'}\left(0,\beta_{x,\r\circ_s(\lambda,\lambda')}\left(a_1,\dots,a_{s-1},\beta_{y,\q}(a_s,\dots,a_{s+u}),\sum_{l+l'=q_s,l'>0}\beta_{y,\q\circ_u(l,l')}(a_s,\dots,a_{s+u},m)\right)\right)\\
             &= \sum_{\lambda,\lambda'}\sum_{l+l'=q_s,l'>0}\left(0,\beta_{\sum_\sigma\sigma z,(\r\circ_s(\lambda,\lambda')\diamond((1)^{\times s-1},\q,\q\circ_u(l,l')))}(a_1,\dots,a_{s+u},a_s,\dots,a_{s+u},m)\right),
        \end{align*}}
        where $\sigma$ ranges over $\∑_{(\r\circ_s(\lambda,\lambda')\diamond_s(\q\circ_u(l,l')))}/\left(\prod_{i=1}^{s-1}\∑_{r_i}\right)\times \∑_{\lambda}\wr\∑_\q\times\∑_{\lambda'}\wr\∑_{\q\circ_u(l,l')}$ in the sum.
        Observe the repetition of input. Using relation~\ref{relArepet}, and noting that:
        \begin{multline*}
            \sum_{\sigma''\in\∑_{R_{\lambda\lambda'}^{ll'}}/\∑_{(\r\circ_s(\lambda,\lambda')\diamond((1)^{\times s-1},\q,\q\circ_u(l,l')))}} \sum_{\sigma\in\∑_{(\r\circ_s(\lambda,\lambda')\diamond_s(\q\circ_u(l,l')))}/\left(\prod_{i=1}^{s-1}\∑_{r_i}\right)\times \∑_{(\lambda,\lambda')}\wr\∑_{\q\circ_u(l,l')}} \sigma''\sigma z \\
            = \sum_{\sigma'\in \∑_{R_{\lambda\lambda'}^{ll'}}/\left(\prod_{i=1}^{s-1}\∑_{r_i}\right)\times \∑_{\lambda}\wr\∑_\q\times\∑_{\lambda'}\wr\∑_{\q\circ_u(l,l')}} \sigma'z,
        \end{multline*}
        we get
        \begin{multline*}
            \sum_{\lambda,\lambda'} \Bigg( 0,\beta_{x,\r\circ_s(\lambda,\lambda')} \Bigg( a_1,\dots,a_{s-1},\beta_{y,\q}(a_s,\dots,a_{s+u}), 
            \sum_{l+l'=q_s,l'>0}\beta_{y,\q\circ_u(l,l')}(a_s,\dots,a_{s+u},m) \Bigg) \Bigg) \\
            = \sum_{\lambda+\lambda'=r_s,\lambda'>0}\sum_{l+l'=q_{u},l'>0}\beta_{\sum_{\sigma'}\sigma' z,R_{\lambda\lambda'}^{ll'}}(a_1,\dots,a_{s+u},m).
        \end{multline*}               
        Finally, we obtain the equality:
        \begin{multline}\label{part1spec}
            \beta_{x,\r} \left( (a_1,0),\dots,(a_{s-1},0),\beta_{y,\q}((a_s,0),\dots,(a_{s+u-1},0),(a_{s+u},m)) \right) = \\
            \left(\beta_{\sum_\tau \tau z,\r\diamond_s(\q)}(a_1,\dots,a_{s+u}),0\right) + 
            \sum_{\lambda+\lambda'=r_s,\lambda'>0}\sum_{l+l'=q_{u},l'>0}\beta_{\sum_{\sigma'}\sigma' z,R_{\lambda\lambda'}^{ll'}}(a_1,\dots,a_{s+u},m).
        \end{multline}
        On the other hand, using relation~\ref{relcomp},
        \begin{multline*}
            \beta_{x,\r} \left( (a_1,0),\dots,(a_{s-1},0),\beta_{y,\q}((a_s,0),\dots,(a_{s+u-1},0),(a_{s+u},m)) \right)\\
            = \beta_{\sum_\tau \tau z,\r\diamond_s(\q)}((a_1,0),\dots,(a_{s+u-1},0),(a_{s+u},m)),
        \end{multline*}
        where $\tau$ ranges over $\∑_{\r\diamond_s\q}/\left(\prod_{i=1}^{s-1}\∑_{r_i}\right)\times\∑_{r_s}\wr\∑_{\q}$ in the sum. Using \eqref{**}, we then get:
        \begin{multline}\label{part2spec}
            \beta_{x,\r} \left( (a_1,0),\dots,(a_{s-1},0),\beta_{y,\q}((a_s,0),\dots,(a_{s+u-1},0),(a_{s+u},m)) \right) = \\
            \left(\beta_{\sum_\tau \tau z,\r\diamond_s(\q)}(a_1,\dots,a_{s+u}),0\right) + 
            \sum_{t+t'=r_sq_{u},t'>0} \left( 0,\beta_{\sum_\tau \tau z,\r\diamond_s(\q)\circ_{s+u}(t,t')}(a_1,\dots,a_{s+u},m) \right).
        \end{multline}
        Comparing the equalities~\eqref{part1spec} and \eqref{part2spec}, and projecting onto $M$, yields the relation~\ref{relAMspec}. 

        Assigning to each Beck module $\diag{B\ar[r]^{p}&A}$, the abelian $\Gamma(\P)$-algebra $\ker(p)$ with the above $A$ module structure provides a functor:
        \[
            \ker  \colon  \left(\Alg{\Gamma(\P)}/A\right)_{\ab} \to \lMod{A}.
        \]
        \paragraph{Let us now prove that the data of an $A$-module yields a Beck module over $A$.} Let $M$ be an $A$-module. Consider the vector space $B \dfn A\oplus M$, equipped with:
        \begin{enumerate}[label=(\arabic*)]
            \item \label{assignment1}For all $\r\in\Comp_s(n)$, $x\in \P(n)^{\∑_\r}$, an operation $\beta_{x,\r}$ defined as in \eqref{**},
            \item A map $pr \colon B\to A$ given by the projection orthogonally to $M$,
            \item A map $z \colon A\to B$ given by the natural injection,
            \item A map $\mu \colon B\times_AB\to A$ given by $\mu(a,m,m')=(a,m+m')$.
        \end{enumerate}
        We claim that:
        \begin{enumerate}[label=(\Alph*)]
            \item \label{condA}The assignment~\ref{assignment1} endows $B$ with a $\Gamma(\P)$-algebra structure,
            \item \label{condB}$pr$ is a $\Gamma(\P)$-algebra map,
            \item \label{condC}$z$ and $\mu$ are maps of $\Gamma(\P)$-algebras over $A$, and finally,
            \item \label{condD}$z$ and $\mu$ endow $B$ with the structure of a Beck module in $A$.
        \end{enumerate}
        Assuming~\ref{condA}, conditions~\ref{condB}, \ref{condC} and \ref{condD} are obvious. To prove~\ref{condA} we have to check that the operations defined in~\ref{assignment1} satisfy the conditions~\ref{relperm} to \ref{relcomp}. Using Notation~\ref{nota:perm}, we can rewrite~\eqref{**} as:
        \begin{multline*}
            \beta_{x,\r} \left( (a_1,m_1),\dots,(a_s,m_s) \right) = \left( \beta_{x,\r}(a_1,\dots,a_s), 0 \right) + \\
            \sum_{j=1}^s\sum_{l+l'=r_j,l'>0}\left(0,\beta_{x,\r\circ_j(l,l')}(a_1,\dots,a_{j},m_j,a_{j+1},\dots,a_s)\right).
        \end{multline*}
        Then, relation~\ref{relperm} is directly deduced from \ref{relperm} on $A$ and \ref{relAperm}. Relation \ref{rel0} is deduced using \ref{rel0} on $A$ and using the fact that there is no positive $l,l'$ with $l+l'=0$ such that $l'>0$. Relation \ref{rellambda} is a direct consequence of \ref{rellambda} on $A$, \ref{relAlambda} and \ref{relMlambda} on $M$.

        Let us prove relation~\ref{relrepet}. For clarity we can assume $s'=1$. Then $q_1=s$, and we need to prove that:
        \[
            \beta_{x,\r}\left((a,m),\dots,(a,m)\right)=\beta_{\sum_{\sigma\in\∑_n/\∑_\r}\sigma\cdot x,(n)}\left((a,m)\right).
        \]
        Using \eqref{**}, one has:
	\[
            \beta_{\sum_{\sigma\in\∑_n/\∑_\r}\sigma\cdot x,(n)}\left((a,m)\right) = \left( \beta_{\sum_{\sigma\in\∑_n/\∑_\r}\sigma x,(n)}(a),0\right) + 
            \sum_{l+l'=n,l'>0}\left(0,\beta_{\sum_{\sigma\in\∑_n/\∑_\r}\sigma x,(l,l')}(a,m)\right),
	\]
        which, re-indexing $k=l'$, is equal to 
        \[
        		\left( \beta_{\sum_{\sigma\in\∑_n/\∑_\r}\sigma x,(n)}(a),0\right)+\sum_{k=1}^n\left(0,\beta_{\sum_{\sigma\in\∑_n/\∑_\r}\sigma x,(n-k,k)}(a,m)\right).
        \]
        Using relation \ref{relAM4}, this becomes: 
        \[
        		\left( \beta_{\sum_{\sigma\in\∑_n/\∑_\r}\sigma x,(n)}(a),0 \right) + \sum_{k=1}^n \left( 0,\beta_{\sum_{\rho}\rho x,(n-k,k)}(a,m) \right),
        	\]
        	where $\rho$ runs over the set of shuffles $Sh(\r)$ satisfying:
       \[
           \left|\{\rho'(r_1+\cdots+r_j):j\in[s]\}\cap\{q_{i}-k+1,\dots,q_{i}\}\right|=1.
       \] 
       From what precedes, this set is equal to $\coprod_{j\in[s]}\{\rho'\in Sh(\r):\rho'(r_1+\dots+r_j-k+1)=n-k+1\}$, which is equal to $\coprod_{j\in[s]}\{\rho\sigma_j^*:\rho\in Sh(r_1,\dots,r_{j-1},r_{j+1},\dots,r_{s},r_j-k)\times\∑_1^{\times k}\}$.

       The set $Sh(r_1,\dots,r_{j-1},r_{j+1},\dots,r_{s},r_j-k)\times\∑_1^{\times k}$ is empty if $k>r_j$, and otherwise, it is a set of representative for $\∑_{n-k}\times\∑_k/\∑_{\r^{\sigma_j}\circ_s(r_j-k,k)}$. So, the above sum is equal to:
       \begin{multline*}
            \beta_{x,\r}\left((a,m),\dots,(a,m)\right) = \\
            \left( \beta_{\sum_{\sigma\in\∑_n/\∑_\r}\sigma x,(n)}(a),0\right) + \sum_{j=1}^s\sum_{k=1}^n\left(0,\beta_{\sum_{\rho\in \∑_{n-k}\times\∑_k/\∑_{\r^{\sigma_j}\circ_s(r_j-k,k)}}\rho \sigma_j^*x,(n-k,k)}(a,m)\right).
       \end{multline*}
       Using relation \ref{relrepet} on $A$ and relation \ref{relArepet} on $M$, we get:
	\[
            \beta_{x,\r}\left((a,m),\dots,(a,m)\right) = 
            \left( \beta_{ x,\r}(a,\dots,a),0\right) + \sum_{j=1}^s\sum_{k=1}^n\left(0,\beta_{\sigma_j^*x,\r^{\sigma}\circ_s(r_j-k,s)}(a,\dots,a,m)\right),
	\]
       and, using the assignment \eqref{*}, we finally obtain:
       \[
            \beta_{x,\r}\left((a,m),\dots,(a,m)\right)=\beta_{\sum_{\sigma\in\∑_n/\∑_\r}\sigma\cdot x,(n)}\left((a,m)\right).
       \]
        Let us prove relation~\ref{relsomme}. For clarity, we will assume $s=2$. On the one hand, we have:
        \begin{multline*}
            \beta_{x,\r} \left( (a,m)+(b,m'),(a_2,m_2) \right) = \left( \beta_{x,\r}(a+b,a_2),0 \right) + \\
            \sum_{l+l'=r_1,l'>0}\left(0,\beta_{x,\r\circ_1(l,l')}(a+b,m+m',a_2)\right) + 
            \sum_{l+l'=r_2,l'>0}\left(0,\beta_{x,\r\circ_2(l,l')}(a+b,a_2,m_2)\right),
        \end{multline*}
         which, using relations~\ref{relsomme} on $A$, \ref{relMsomme} on $M$ and \ref{relAsomme} on $M$, is equal to:
        \rescale{\begin{align*}
             &\sum_{k_1+k_2=r_1} \left( \beta_{x,\r\circ_1(k_1,k_2)}(a,b,a_2),0 \right)\\
             +& \sum_{l+l'=r_1,l'>0} \sum_{k_1+k_2=l} \left( 0,\beta_{x,\r\circ_1(k_1,k_2,l')}(a,b,m,a_2)+\beta_{x,\r\circ_1(k_1,k_2,l')}(a,b,m',a_2) \right)\\
             +& \sum_{k_1+k_2=r_1} \sum_{l+l'=r_2,l'>0} \left( 0,\beta_{x,(\r\circ_2(l,l'))\circ_1(k_1,k_2)}(a,b,a_2,m_2) \right).
         \end{align*}}
         Note that the middle summand can be rewritten, changing the indices:
	\[
		\sum_{k_1+k_2+k_3=r_1,k_3>0} \left( 0,\beta_{x,\r\circ_1(k_1,k_2,k_3)}(a,b,m,a_2)+\beta_{x,\r\circ_1(k_1,k_2,k_3)}(a,b,m',a_2) \right).
	\]
             On the other hand, note that the assignment~\eqref{**} gives:
         \begin{align*}
              &\sum_{k_1+k_2=r_1} \beta_{x,\r\circ_1(k_1,k_2)}\big((a,m),(b,m'),(a_2,m_2)\big) \\
              = &\sum_{k_1+k_2=r_1}(\beta_{x,\r\circ_1(k_1,k_2)}(a,b,a_2),0) 
              + \sum_{k_1+k_2=r_1}\sum_{l+l'=k_1,l'>0}\beta_{x,\r\circ_1(l,l',k_2)}(a,m,b,a_2),0) \\
              &+ \sum_{k_1+k_2=r_1}\sum_{l+l'=k_2,l'>0}\beta_{x,\r\circ_1(k_1,l,l')}(a,b,m',a_2),0) \\
              &+ \sum_{k_1+k_2=r_1}\sum_{l+l'=r_2,l'>0}\left(0,\beta_{x,(\r\circ_2(l,l'))\circ_1(k_1,k_2)}(a,b,a_2,m_2)\right).
         \end{align*}
         The sum of the second and third summands can be rewritten:
         \rescale{\begin{align*}
         	           \sum_{k_1+k_2+k_3=r_1,k_3>0}\left(0,\beta_{x,\r\circ_1(k_1,k_2,k_3)}(a,b,m,a_2) + \beta_{x,\r\circ_1(k_1,k_2,k_3)}(a,b,m',a_2)\right).
         \end{align*}}
	   This proves that relation~\ref{relsomme} holds.

       Relation~\ref{rellin} is a %
       consequence of relation~\ref{rellin} on $A$ and \ref{relAMlin} on $M$. Relation~\ref{relunit} is a %
       consequence of relation~\ref{relunit} on $A$, \ref{relAMcompM} and \ref{relAbunit} on $M$.

       Let us now prove relation~\ref{relcomp}. For simplicity, we assume that $s=1$. 
       We want to prove that:
	\[
            \beta_{x,(n)}\left(\beta_{y,\q}((a_1,m_1),\dots,(a_{u},m_{u}))\right) = 
            \beta_{\sum_{\tau}\tau\cdot x\left(y^{\times n}\right),(n)\diamond(\q)} \left( (a_1,m_1),\dots,(a_{u},m_{u}) \right),
	\]
        where $\tau$ ranges over $\∑_{(n)\diamond(\q)}/\∑_{(n)}\wr\∑_{\q}$ in the sum. Here, $x\in\P(n)^{\∑_n}$, $\q\in\Comp_{u}(k)$, and $y\in\P(k)^{\∑_{\q}}$. 
        On the one hand, using the assignment \eqref{**},
\begin{multline*}
     \beta_{x,(n)} \left( \beta_{y,\q}((a_1,m_1),\dots,(a_{u},m_{u})) \right) =
      \left( \beta_{x,(n)}\left(\beta_{y,\q}(a_1,\dots,a_{u})\right),0 \right) + \\
     \sum_{\lambda+\lambda'=n,\lambda'>0} \left( 0,\beta_{x,(\lambda+\lambda')} \left( \beta_{y,\q}(a_1,\dots,a_{u}),\sum_{j=1}^u\sum_{l+l'=q_j,l'>0}\beta_{y,\q\circ_j(l,l')}(a_1,\dots,a_j,m_j,a_{j+1},\dots,a_u) \right) \right).
\end{multline*}
Using relation~\ref{relcomp} on $A$ and \ref{relMsomme} on $M$, we then get:
\begin{multline}\label{spec3}
	\beta_{x,(n)}\left(\beta_{y,\q}((a_1,m_1),\dots,(a_{u},m_{u}))\right) = 
	\left(\beta_{\sum_{\tau}\tau\cdot x\left(y^{\times n}\right),(n)\diamond(\q)}(a_1,\dots,a_u),0\right) + \\
	\sum_{j=1}^u\sum_{\lambda+\lambda'=n,\lambda'>0}\sum_{l+l'=q_j,l'>0}\bigg(0,\beta_{x,(\lambda+\lambda')}\bigg(\beta_{y,\q}(a_1,\dots,a_{u}), \beta_{y,\q\circ_j(l,l')}(a_1,\dots,a_j,m_j,a_{j+1},\dots,a_u)\bigg)\bigg)
\end{multline}
On the other hand, using the assignment \eqref{**},
\begin{multline}\label{spec4}
    \beta_{\sum_{\tau}\tau\cdot x\left(y^{\times n}\right),(n)\diamond(\q)}((a_1,m_1),\dots,(a_{u},m_{u})) = 
    \left(\beta_{\sum_{\tau}\tau\cdot x\left(y^{\times n}\right),(n)\diamond(\q)}(a_1,\dots,a_u),0\right) + \\
    \sum_{j=1}^{u}\sum_{t+t'=nq_j,t'>0}\left(0,\beta_{\sum_{\tau}\tau\cdot x\left(y^{\times n}\right),\left((n)\diamond(\q)\right)\circ_j(t,t')}(a_1,\dots,a_{j},m_j,a_{j+1},\dots,a_u)\right).
\end{multline}
Comparing the expressions~\eqref{spec3} and~\eqref{spec4}, we then have left to prove that in $M$, for all $j\in[u]$,
\begin{multline*}
    \sum_{\lambda+\lambda'=n,\lambda'>0}\sum_{l+l'=q_j,l'>0}\beta_{x,(\lambda+\lambda')}\bigg(\beta_{y,\q}(a_1,\dots,a_{u}), 
    \beta_{y,\q\circ_j(l,l')}(a_1,\dots,a_j,m_j,a_{j+1},\dots,a_u)\bigg) = \\ 
    \sum_{t+t'=nq_j,t'>0}\beta_{\sum_{\tau}\tau\cdot x\left(y^{\times n}\right),\left((n)\diamond(\q)\right)\circ_j(t,t')}(a_1,\dots,a_{j},m_j,a_{j+1},\dots,a_u).
\end{multline*}
We can, without loss of generality, suppose that $j=u$. Then, we are left to prove that:
\rescale{\begin{align*}
    &\sum_{\lambda+\lambda'=n,\lambda'>0}\sum_{l+l'=q_u,l'>0}\beta_{x,(\lambda+\lambda')}\left(\beta_{y,\q}(a_1,\dots,a_{u}),\beta_{y,\q\circ_u(l,l')}(a_1,\dots,a_u,m_u)\right)\\&=\sum_{t+t'=nq_u,t'>0}\beta_{\sum_{\tau}\tau\cdot x\left(y^{\times n}\right),\left((n)\diamond(\q)\right)\circ_j(t,t')}(a_1,\dots,a_{u},m_u).
\end{align*}}
Using relation~\ref{relAMcomp} on the left hand side of the equality gives us:
\rescale{\begin{align*}
    &\sum_{\lambda+\lambda'=n,\lambda'>0}\sum_{l+l'=q_u,l'>0}\beta_{x,(\lambda+\lambda')}\left(\beta_{y,\q}(a_1,\dots,a_{u}),\beta_{y,\q\circ_u(l,l')}(a_1,\dots,a_u,m_u)\right)=\\
    &\sum_{\lambda+\lambda'=n,\lambda'>0}\sum_{l+l'=q_u,l'>0}\beta_{\sum_{\sigma}x(y^{\times n}),(\lambda,\lambda')\diamond(\q,\q\circ_u(l,l'))}(a_1,\dots,a_u,a_1,\dots,a_u,m_u),
    \end{align*}}
    where $\sigma$ ranges over $\∑_{(\lambda,\lambda')\diamond(\q,\q\circ_u(l,l'))}/\∑_{\lambda}\wr\∑_\q\times\∑_{\lambda'}\wr\∑_{\q\circ_u(l,l')}$ in the sum.

    Denote by $R_{\lambda,\lambda'}^{l,l'}$ the partition of $kn$ into $u+1$ parts such that, for all $i\in[u-1]$, $(R_{\lambda,\lambda'}^{l,l'})_i=\left((n)\diamond(\q)\right)_i$, and such that:
	\[
        (R_{\lambda,\lambda'}^{l,l'})_u = \left\{ k-q_u+\alpha k+\gamma,\alpha\in[\lambda], \gamma\in[q_u] \right\} \cup 
        \left\{ \lambda k+(k-q_u)+\alpha k+\gamma, \alpha\in[\lambda'],\gamma\in[l] \right\}
	\]
    \[
        (R_{\lambda,\lambda'}^{l,l'})_{u+1} = \left\{ \lambda k+(k-l')+\alpha k+\gamma,\alpha\in[\lambda'],\gamma\in[l'] \right\}.
    \]
    Then, using relation~\ref{relArepet}, and noting that:
	\begin{align*}
            &\sum_{\sigma''\in\∑_{R_{\lambda\lambda'}^{ll'}}/\∑_{(\lambda,\lambda')\diamond(\q,\q\circ_u(l,l'))}}\sigma \sum_{\∑_{(\lambda,\lambda')\diamond(\q,\q\circ_u(l,l'))}/\∑_{\lambda}\wr\∑_\q\times\∑_{\lambda'}\wr\∑_{\q\circ_u(l,l')}}\sigma x(y^{\times n}) \\
            &= \sum_{\sigma'\in \∑_{R_{\lambda\lambda'}^{ll'}}/\∑_{\lambda}\wr\∑_\q\times\∑_{\lambda'}\wr\∑_{\q\circ_u(l,l')}}\sigma'x(y^{\times n}), 
	\end{align*}
    we then get:
	\begin{align*}
        &\sum_{\lambda+\lambda'=n,\lambda'>0} \sum_{l+l'=q_u,l'>0}\beta_{x,(\lambda+\lambda')}\left(\beta_{y,\q}(a_1,\dots,a_{u}),\beta_{y,\q\circ_u(l,l')}(a_1,\dots,a_u,m_u)\right) \\
        &= \sum_{\lambda+\lambda'=n,\lambda'>0}\sum_{l+l'=q_u,l'>0}\beta_{\sum_{\sigma'}\sigma' x(y^{\times n}),R_{\lambda\lambda'}^{ll'}}(a_1,\dots,a_u,m_u).
	\end{align*}
    We have left to prove the equality:
    \begin{multline*}
        \sum_{\lambda+\lambda'=n,\lambda'>0} \sum_{l+l'=q_u,l'>0} \beta_{\sum_{\sigma'}\sigma' x(y^{\times n}),R_{\lambda\lambda'}^{ll'}}(a_1,\dots,a_u,m_u)\\
        = \sum_{t+t'=nq_u,t'>0} \beta_{\sum_{\tau}\tau\cdot x\left(y^{\times n}\right),\left((n)\diamond(\q)\right)\circ_j(t,t')}(a_1,\dots,a_{u},m_u).
    \end{multline*}
    But this is a particular case of the relation~\ref{relAMspec}.

    Assigning to each $A$-module $M$ the Beck module $A\ltimes M$ obtained by equipping the vector space $A\oplus M$ with the divided power $\P$-algebra structure given by \eqref{**} provides a functor:
    \[
            A\ltimes -  \colon  \lMod{A} \to \left(\Alg{\Gamma(\P)}/A\right)_{\ab},
    \]
    One can readily check that $\ker$ and $A\ltimes -$ are inverse to each other.
\end{proof}
    \begin{defi}
           If $M$ is an $A$-module, the Beck module $A\ltimes M$ defined above is called the \Def{semidirect product} of $A$ and $M$.
    \end{defi}
Let us now turn to the particular case of abelian $\Gamma(\P)$-algebras. 
Abelian group objects in the category of $\Gamma(\P)$-algebras are 
Beck modules over the terminal object. %
Since $\P$ is reduced, this terminal object is always the zero algebra $0$. Then, we obtain the following:
\begin{coro}\label{coro:abelianGammaPalg}
There is an equivalence of categories between the category of abelian group objects in $\Gamma(\P)$-algebras %
and the category of abelian $\Gamma(\P)$-algebras in the sense of Definition~\ref{def:abeliangammapalg}, that is:
\[
(\Alg{\Ga(\P)})_{\ab} \cong (\Alg{\Ga(\P)})_{\mathrm{Ab}}.
\]    
\end{coro}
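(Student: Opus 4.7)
The plan is to reduce the statement directly to \Cref{thm:Amodule} applied to the terminal $\Gamma(\P)$-algebra. First, I would justify that the terminal object in $\Alg{\Gamma(\P)}$ is the zero algebra $0$: since $\P$ is reduced, $\Gamma(\P,0) = \bigoplus_{n>0}(\P(n) \otimes 0^{\otimes n})^{\Sy_n} = 0$, so the zero vector space carries a unique (trivial) $\Gamma(\P)$-algebra structure, and every $\Gamma(\P)$-algebra admits a unique morphism to it. By definition, an abelian group object in $\Alg{\Gamma(\P)}$ is an abelian group object in the slice $\Alg{\Gamma(\P)}/0$, which is the same as $(\Alg{\Gamma(\P)}/0)_{\ab}$.

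Next, I would invoke \Cref{thm:Amodule} with $A = 0$ to obtain a natural equivalence $(\Alg{\Gamma(\P)}/0)_{\ab} \simeq \lMod{0}$. The remaining work is then to identify $\lMod{0}$ with $(\Alg{\Gamma(\P)})_{\mathrm{Ab}}$. Unfolding \Cref{def:Amodule} with $A = 0$: the action operations $\beta_{x,\r}(a_1,\dots,a_{s-1},m)$ must, by relation~\ref{relAlambda} applied with $\lambda = 0$ (or equivalently by evaluating with $a_i = 0$ and using~\ref{relA0}), reduce to the case $s = 1$. Indeed, for any $\r$ with $s > 1$, plugging all $a_i = 0$ and applying \ref{relA0} iteratively collapses $\beta_{x,\r}$ to $\beta_{x,(r_s)}(m)$ if $\r = (0,\dots,0,r_s)$ and to $0$ otherwise (by the argument already used for abelian algebras: via \ref{relMrepet} and \ref{relArepet}). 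Relation~\ref{relAMcompM} identifies the remaining operations $\beta_{x,(n)}(m)$ with the internal operations $\beta_x(m)$.

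Consequently, the data of a $0$-module $M$ reduces precisely to that of an abelian $\Gamma(\P)$-algebra: the underlying abelian $\Gamma(\P)$-algebra structure of $M$ (relations \ref{relAblambda}--\ref{relAbcomp}), and no additional nontrivial action operations. Conversely, an abelian $\Gamma(\P)$-algebra $M$ becomes a $0$-module by setting $\beta_{x,\r}(0,\dots,0,m) = \beta_{x,(r_s)}(m)$ if $\r = (0,\dots,0,r_s)$ and zero otherwise; the compatibility relations \ref{relAM4} and \ref{relAMspec} degenerate trivially because all $a_i = 0$ forces every term involving $\beta_{\bullet,\q}$ on $A$ with $\q$ having more than one nonzero entry to vanish by~\ref{rellambda}.

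The only potential obstacle is the bookkeeping in checking that the module-theoretic relations \ref{relAM4} and \ref{relAMspec}, which are the most intricate, actually collapse to the already-verified relations for abelian $\Gamma(\P)$-algebras in the case $A = 0$; this is a direct degeneration argument, but it requires care to confirm that no residual constraint on $M$ beyond those of \Cref{def:abeliangammapalg} survives. Assuming this routine check, the two equivalences compose to give the desired isomorphism of categories $(\Alg{\Gamma(\P)})_{\ab} \cong (\Alg{\Gamma(\P)})_{\mathrm{Ab}}$.
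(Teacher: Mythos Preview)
Your proposal is correct and follows essentially the same route as the paper: reduce to \Cref{thm:Amodule} at $A=0$, then use \ref{relAlambda} (with $\lambda=0$) and \ref{relA0} to see that the action operations of a $0$-module are determined by the abelian $\Gamma(\P)$-algebra structure via \ref{relAMcompM}. The paper's proof is slightly terser---it does not explicitly revisit \ref{relAM4} and \ref{relAMspec} for the converse direction---but the argument is the same, and your invocation of \ref{relMrepet} and \ref{relArepet} is not actually needed here.
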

\begin{proof}
    Applying Theorem~\ref{thm:Amodule} in the case $A=0$, the data of %
an abelian group object in $\Gamma(\P)$-algebras 
is equivalent to the data of a module over the $\Gamma(\P)$-algebra $0$. Such a $0$-module $M$ is an abelian $\Gamma(\P)$-algebra equipped with operations $\beta_{x,(r_1,\dots,r_s)}(0,\dots,0,-) \colon M \to M$. If $s>1$ and if at least one of the $r_i$ for $i<s$ is non-zero, then relation~\ref{relAlambda} implies that  $\beta_{x,(r_1,\dots,r_s)}(0,\dots,0,m)=0$ for all $M$. If $s>1$ and $r_i=0$ for all $i<s$, relation~\ref{relA0} shows that $\beta_{x,(r_1,\dots,r_s)}(0,\dots,0,m)=\beta_{x,(r_s)}(m)$. Finally, when $s=1$, relation~\ref{relAMcompM} shows that $\beta_{x,(r_s)}(m)=\beta_x(m)$ is provided by the structure of abelian $\Gamma(\P)$-algebra on $M$. Thus, a $0$-module is an abelian $\Gamma(\P)$-algebra equipped with additionnal operations which are all trivial. %
\end{proof}

\section{Universal enveloping algebra}\label{sec:Universalalgebra}

In this section, we 
show that the category of modules over a $\Gamma(\P)$-algebra $A$ is equivalent to the category of left modules over an associative %
ring 
$\UU_{\Gamma(\P)}(A)$; see %
Proposition~\ref{prop:univenvalg}. In practice, %
$\UU_{\Gamma(\P)}(A)$ is %
the %
ring 
containing all the operations $\beta_x(-)$ in the definition of an abelian $\Gamma(\P)$-algebra (see Definition~\ref{def:abeliangammapalg}), and all the operations $\beta_{x,\r}(a_1,\dots,a_{s-1},-)$ in the definition of an $A$-module (see Definition~\ref{def:Amodule}). Here, the hyphen ``$-$'' is considered as a placeholder. %
The multiplication $\mu$ of the %
ring 
$\UU_{\Gamma(\P)}(A)$ is %
the composition of %
operations. %

For this section, we fix the base field $\F$ of characteristic $p$.

        \begin{defi}\label{def:UnivEnvAlg}
            Let $\UU_{\Gamma(\P)}(A)$ be the quotient of the $\F$-vector space spanned by the set of symbols $\beta_{x,\r}(a_1,\dots,a_{s-1},-)$, for all $\r=(r_1,\dots,r_s)$, such that $r_1+\dots+r_s=n$, $x\in\P(n)^{\∑_\r}$, and $a_1,\dots,a_{s-1}\in A$, by the vector subspace spanned by the elements:
            \begin{enumerate}
                \item 
                $\beta_{\lambda x+y,\r}(a_1,\dots,a_{s-1},-)-\left(\lambda\beta_{x,\r}(a_1,\dots,a_{s-1},-)+\beta_{y,\r}(a_1,\dots,a_{s-1},-)\right)$
                for all $x,y\in\P(n)^{\r}$, $\lambda\in\F$.
                \item $\beta_{x,\r}(a_1,\dots,a_{s-1},-)-\beta_{\rho^*\cdot x,\r^\rho}(a_{\rho^{-1}(1)},\dots,a_{\rho^{-1}(s-1)},-)$ for all $\rho\in\∑_{s-1}$, where $\rho^*$ denotes the block permutation with blocks of size $(r_i)$ associated to $\rho$.
                \item $\beta_{x,(0,r_1,r_2,\dots,r_s)}(a_0,a_1,\dots,a_{s-1},-)-\beta_{x,(r_1,r_2,\dots,r_s)}(a_1,\dots,a_{s-1},-)$.
                \item $\beta_{x,\r}(\lambda a_1,a_2,\dots,a_{s-1},m)-\lambda^{r_1}\beta_{x,\r}(a_1,\dots,a_{s-1},m) \quad \text{for all } \lambda\in \F$.
                \item For all $\r\in \Comp_{s}(n)$ and $\q\in \Comp_{s'}(s-1)$, the element
	\[
                \beta_{x,\r}(\underbrace{a_1,\dots,a_1}_{q_1},\underbrace{a_2,\dots,a_2}_{q_2},\dots,\underbrace{a_{s-1},\dots,a_{s-1}}_{q_{s'}},-) 
                - \beta_{\big(\sum_{\sigma\in \∑_{\q'\rhd\r}/\∑_\r}\sigma\cdot x\big),\ \q'\rhd\r}(a_1,a_2,\dots,a_{s-1},-),
	\]
                where $\q'=(\q,0)$.
                \item 
                $\beta_{x,\r}(a_0+a_1,\dots,a_{s-1},-)-\left(\sum_{l+l'=r_1}\beta_{x,\r\circ_1(l,l')}(a_0,a_1,\dots,a_{s-1},-)\right)$.
                \item \label{item:InvariantSum} $\beta_{x,\r}(a_1,\dots,a_{s-1},-)$ for all $x$ such that there exists $q_1+\dots+q_u=r_s$, $y\in\P(n)^{\∑_{\r\circ_s\q}}$, such that $x=\sum_{\sigma\in \∑_{n-r_s}\times\∑_{r_s}/\∑_{n-r_s}\times \∑_{\q}}\sigma y$, and at least two of the $q_j$'s are non-zero.
                 \item For all $s,s'>0$, $\q\in\Comp_{s'}(n)$, $\r\in \Comp_s(\q)$, $x\in \P(n)^{\∑_{\q\circ_i\r}}$, the sum:
            \begin{equation*}
                \sum_{k=1}^{q_i}\beta_{\sum_{\rho\in \∑_1^{\times (q_1+\cdots+q_{i-1)}}\times \overline E_{\r,k}\times \∑_1^{\times(q_{i+1}+\cdots+q_{s'})}}\rho\cdot \sigma_i^* x,\q\circ_i(q_i-k,k)}(a_1,\dots,a_{s'},-),
            \end{equation*}
            where $\sigma_i^*$ is the block permutation for blocks of size $q_1,q_2,\dots,q_{s'}$ associated to the transposition of $i$ and $s'$ in $\∑_{s'}$, and where:
            \rescale{\begin{align*}
            	                \overline E_{\r,k}=\left\{\rho'\in Sh(\r): \left|\{\rho'(r_1+\cdots+r_j):j\in[s]\}\cap\{q_{i}-k+1,\dots,q_{i}\}\right|\ge 2\right\},
            \end{align*}}
            where $Sh(\r)$ is the set of $(r_1,\dots,r_s)$-shuffles.
                \item For all $r_1+\dots+r_s=n$, $x\in \P(n)^{\∑_{\r}}$, let $q_{1}+\dots+q_{u}=k$, $y\in\P(k)^{\∑_{\q}}$, $(a_i)_{i\in[s+u]}\in A^{\times s+u}$, the element:
                \begin{multline*}
                    \left(\sum_{t+t'=r_sq_{u},t'>0}\beta_{\sum_\tau \tau z,\r\diamond_s(\q)\circ_{s+u}(t,t')}(a_1,\dots,a_{s+u},-)\right)- 
                    \\
                \left(\sum_{\lambda+\lambda'=r_s,\lambda'>0}\sum_{l+l'=q_{u},l'>0}\beta_{\sum_{\sigma'}\sigma' z,R_{\lambda\lambda'}^{ll'}}(a_1,\dots,a_{s+u},-)\right),
                \end{multline*}
                where $z=x\left(1_\P^{\times n-r_s},y^{\times r_s}\right)\in\P(n+r_s(k-1))$, $R_{\lambda\lambda'}^{ll'}$ is defined in \ref{def:Amodule}, where $\tau$ ranges over $\∑_{\r\diamond_s\q}/\left(\prod_{i=1}^{s-1}\∑_{r_i}\right)\times\∑_{r_s}\wr\∑_{\q}$ in the sum, and where $\sigma'$ ranges over $\∑_{R_{\lambda\lambda'}^{ll'}}/\left(\prod_{i=1}^{s-1}\∑_{r_i}\right)\times\∑_{\lambda}\wr\∑_\q\times\∑_{\lambda'}\wr\∑_{\q\circ_u(l,l')}$ in the sum.
            \end{enumerate}
        \end{defi}
        \begin{nota}
            Following Notation~\ref{nota:perm}, we will use the notation: 
			\[            
			\beta_{x,\r}(a_1,\dots,a_{i-1},-,a_{i+1},\dots,a_s) \dfn \beta_{\sigma_i^*,\r^{\sigma_i}}(a_1,\dots,a_{i-1},a_{i+1},\dots,a_s,-).
			\]
        \end{nota}
        
        \begin{prop}\label{prop:EnvAlgpowerp}
            The vector space $\UU_{\Gamma(\P)}(A)$ is %
            spanned by the symbols $\beta_{x,\r}(a_1,\dots,a_{s-1},-)$ where all the integers in $\r$ are powers of $p$, and $\beta_{x,\r}(a_1,\dots,a_{s-1},-)=0$ in $\UU_{\Gamma(\P)}(A)$ if $r_s$ is not a power of $p$.
        \end{prop}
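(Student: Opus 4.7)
The plan is to adapt the strategy of \Cref{prop:powersp} and \Cref{prop:Abpowersp} to the universal enveloping algebra setting. Relations (5) and (7) in \Cref{def:UnivEnvAlg} are the $\UU_{\Gamma(\P)}(A)$-counterparts of the $A$-module relations \ref{relArepet} and \ref{relMrepet}, and the same combinatorial ingredient---Lucas's Theorem applied to the base-$p$ expansions of the $r_i$'s---drives both reductions.

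I would first establish the vanishing in the case when $r_s$ is not a power of $p$. Writing the base-$p$ expansion $r_s=\sum_{j=0}^{k}r_{s,j}p^j$, I form the partition $\underline{Q}=(\underbrace{p^0,\dots,p^0}_{r_{s,0}},\dots,\underbrace{p^k,\dots,p^k}_{r_{s,k}})$ of $r_s$, which has at least two non-zero entries by hypothesis. Since $x\in\P(n)^{\Sigma_\r}$ is a fortiori $\Sigma_{n-r_s}\times\Sigma_{\underline{Q}}$-invariant, and the multinomial $[\Sigma_{r_s}:\Sigma_{\underline{Q}}]$ reduces modulo $p$ to $\prod_j r_{s,j}!$ by Lucas's Theorem, which is a unit in $\F$ since each $r_{s,j}<p$, a straightforward rescaling expresses $x$ as $\sum_\sigma\sigma y$ for some $y\in\P(n)^{\Sigma_{\r\circ_s\underline{Q}}}$ and $\sigma$ ranging over $\Sigma_{n-r_s}\times\Sigma_{r_s}/\Sigma_{n-r_s}\times\Sigma_{\underline{Q}}$. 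This is exactly the hypothesis of relation (7), so $\beta_{x,\r}(a_1,\dots,a_{s-1},-)$ vanishes in $\UU_{\Gamma(\P)}(A)$.

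Next, assuming $r_s$ is a power of $p$, I would reduce the first $s-1$ integers to powers of $p$ via the two-step refinement used in \Cref{prop:powersp}. Let $\underline{R}$ refine $\r$ at positions $i<s$ by splitting each $r_i=\sum_j r_{i,j}p^j$ into the blocks $r_{i,j}p^j$, leaving $r_s$ intact; then let $\underline{Q}'$ further refine $\underline{R}$ by splitting each $r_{i,j}p^j$ into $r_{i,j}$ copies of $p^j$. At both stages the relevant index is a unit in $\F$---by Lucas's Theorem for the first refinement, and because $r_{i,j}!$ is invertible modulo $p$ for the second---so $x$ equals a sum of coset translates of a rescaled copy of itself. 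Relation (5) of \Cref{def:UnivEnvAlg} then rewrites $\beta_{x,\r}(a_1,\dots,a_{s-1},-)$ as a generator indexed by the finer partition, with each $a_i$ repeated the appropriate number of times, while relation (3) eliminates any zero parts that appear.

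The main obstacle will be tracking the repetition multiplicities and reindexing that arise as the partition is refined; in particular, the rewriting must be compatible with the permutation relation (2) at each step. However, no essentially new ingredient beyond those in \Cref{prop:powersp,prop:AMpowersp} is required, and once the partitions are set up correctly the underlying group-theoretic calculation proceeds routinely.
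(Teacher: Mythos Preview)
Your proposal is correct and follows essentially the same approach as the paper: the paper's proof simply states that the result is equivalent to \Cref{prop:AMpowersp}, whose (omitted) proof is in turn said to mirror \Cref{prop:powersp,prop:Abpowersp}, exactly the adaptation you carry out. Your identification of relations~(5) and~(7) of \Cref{def:UnivEnvAlg} as the relevant substitutes for \ref{relArepet} and \ref{relMrepet}, together with the Lucas-theorem computation of the multinomial indices, is precisely the intended argument.
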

        
        \begin{proof}
            This is equivalent to Proposition~\ref{prop:AMpowersp}.
        \end{proof}
        
        Viewing the $\F$-vector space structure of $\UU_{\Gamma(\P)}(A)$ as a left $\F$-action, 
        we endow $\UU_{\Gamma(\P)}(A)$ with a right $\F$-action by setting, for all $\lambda\in\F$,
        	\[
            	\beta_{x,\r}(a_1,\dots,a_{s-1},-)\cdot \lambda = \lambda^{r_s} \beta_{x,\r}(a_1,\dots,a_{s-1},-).
        	\]
            
        \begin{rema}
        	It is important here to restrict ourselves to the case where $r_s$ is a power of $p$. Otherwise, the preceding definition is not linear in $\lambda\in\F$. This is made possible by Proposition~\ref{prop:EnvAlgpowerp}.
        \end{rema}

        We then equip $\UU_{\Gamma(\P)}(A)$ with a multiplication:
        	\[
        		\mu \colon \UU_{\Gamma(\P)}(A) {}_\F\!\otimes_\F \UU_{\Gamma(\P)}(A) \to \UU_{\Gamma(\P)}(A)
        	\]
        by setting:
            \begin{multline*}
                \mu\left(\beta_{x,\r}(a_1,\dots,a_{s-1},-)\otimes  \beta_{y,\q}(b_1,\dots,b_{u-1},-)\right) = \\
                \beta_{\sum_{\tau}\tau\cdot x(1_{\P}^{n-r_s},y^{\times r_s}),\r\diamond_s\q_s}(a_1,\dots,a_{s-1},b_1,\dots,b_{u-1},-),
            \end{multline*}
            where $\tau$ ranges over $\∑_{\r\diamond_s\q_s}/\left(\prod_{i=1}^{s-1}\∑_{r_i}\right)\times\∑_{r_s}\wr\∑_\q$ in the sum, and extending $\mu$ %
            as an $\F$-bimodule homomorphism, which is well-defined in light of the equations in Definition~\ref{def:UnivEnvAlg}. 
            Note that we used both the right and left $\F$-actions on $\UU_{\Gamma(\P)}(A)$, that is: $(x \cdot \la) \ot y = x \otimes (\la y)$ in the tensor product. 
            
        \begin{lemm}
            The multiplication $\mu$ endows $\UU_{\Gamma(\P)}(A)$ with the structure of a %
            unital ring.
        \end{lemm}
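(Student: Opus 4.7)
The plan is to verify three things in order: that $\mu$ is well-defined on the quotient, that it is associative, and that it admits a two-sided unit. The unit candidate is $\mathbf{1} \dfn \beta_{1_\P,(1)}(-)$, the symbol arising from the operadic unit $1_\P \in \P(1)$.

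The unit property is nearly immediate. Composing with $\mathbf{1}$ on the right substitutes $1_\P$ into the placeholder slot of $x$, leaving $x$ unchanged, and the partition $\r \diamond_s (1)$ reduces to $\r$. On the left, $(1) \diamond_1 \q = \q$, and the sum over the coset $\Sy_{\q}/(\Sy_1 \wr \Sy_{\q})$ collapses to a single term, yielding $\beta_{y,\q}$ unchanged. No nontrivial identity from \Cref{def:UnivEnvAlg} is needed.

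For well-definedness, since $\mu$ is specified on generators and extended $\F$-bilinearly, I would verify compatibility with each of the nine families of relations in \Cref{def:UnivEnvAlg}, tested separately on the left and right tensor factors. Relations (1)--(6), which encode $\F$-linearity, $\Sy$-equivariance, vanishing of arity-zero slots, and the basic substitution rules, pull back to standard properties of the operadic composition $x(1_\P^{\times n-r_s}, y^{\times r_s})$ in $\P$. Relations (7)--(9)---the vanishing on nontrivial shuffles, the two-step composition identity~\ref{relAM4}, and the identity~\ref{relAMspec}---are exactly the conditions already shown to be preserved under the semidirect-product construction in the proof of \Cref{thm:Amodule}, and I would mirror those computations to transfer them to $\UU_{\Gamma(\P)}(A)$.

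For associativity, given three generating symbols $\alpha = \beta_{x,\r}(\ldots,-)$, $\beta = \beta_{y,\q}(\ldots,-)$, and $\gamma = \beta_{z,\underline{t}}(\ldots,-)$, the equality of $\mu(\mu(\alpha\otimes\beta)\otimes\gamma)$ and $\mu(\alpha\otimes\mu(\beta\otimes\gamma))$ reduces, via the operadic associativity identity
\[
\bigl(x(1_\P^{\times n-r_s}, y^{\times r_s})\bigr)\bigl(1_\P^{\times N-R}, z^{\times R}\bigr) = x\bigl(1_\P^{\times n-r_s}, y(1_\P^{\times k-q_u}, z^{\times q_u})^{\times r_s}\bigr),
\]
to a double-coset identity reindexing the nested sums over $\Sy/(\cdots)\wr(\cdots)$ as a single sum over a three-fold wreath-product coset. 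The main obstacle is precisely this combinatorial bookkeeping: tracking how the $\diamond$-operation on partitions from \cite{Ikonicoff20} and the wreath-product coset decompositions interact under iteration, so that both bracketings produce the same element $\beta_{w, \r\diamond\q\diamond\underline{t}}(\ldots,-)$. No new conceptual input beyond operadic associativity and the partition calculus from \cite{Ikonicoff20} is required; a cleaner but less direct alternative would be to defer the argument to \Cref{thm:BeckMod} by identifying $\UU_{\Gamma(\P)}(A)$ with the endomorphism ring of a suitable free object in $\lMod{A}$, from which associativity and unitality follow formally.
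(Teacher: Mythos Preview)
Your direct-verification outline is a viable strategy, but it differs substantially from the paper's proof. The paper sidesteps the combinatorial bookkeeping entirely by constructing an auxiliary $\Gamma(\P)$-algebra $B$: take the free $\Gamma(\P)$-algebra $\Gamma(\P, A \oplus \F v)$ on $A$ together with one new generator $v$, and quotient by the ideal generated by the families of elements corresponding to relations (7) and (9) of \cref{def:UnivEnvAlg} with $v$ in the placeholder slot. There is then an injective linear map $f \colon \UU_{\Gamma(\P)}(A) \to B$ sending $\beta_{x,\r}(a_1,\ldots,a_{s-1},-)$ to $\beta_{x,\r}(a_1,\ldots,a_{s-1},v)$, and under $f$ the product $\mu(\mathfrak{t}\otimes\mathfrak{s})$ is carried to the honest composition of the corresponding $\beta$-operations evaluated on $v$ in $B$. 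Associativity of $\mu$ is then inherited from associativity of composition in $B$, which holds automatically because $B$ is an algebra over the monad $\Gamma(\P)$. This buys you associativity (and essentially well-definedness) in one stroke, with no wreath-product coset reindexing required; your approach would work but demands precisely the partition calculus you flag as the main obstacle.

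One genuine issue: your proposed alternative of deferring to \Cref{thm:BeckMod} is circular. That theorem identifies $\lMod{A}$ with left modules over the \emph{ring} $\UU_{\Gamma(\P)}(A)$, and its statement and proof presuppose the ring structure you are trying to establish here. You cannot invoke it.
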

        \begin{proof}
            Let $V$ be the $1$-dimensional $\F$-vector space spanned by $v$. Consider the $\Gamma(\P)$-algebra $B$ obtained as a quotient of the free $\Gamma(\P)$-algebra $\Gamma(\P,A\oplus V)$ by the $\Gamma(\P)$-ideal generated by the two following family of elements:
            \begin{itemize}
                \item $\beta_{x,\r}(a_1,\dots,a_{s-1},v)$ for all $x$ such that there exists $q_1+\dots+q_u=r_s$, 
                $y\in\P(n)^{\∑_{r_1}\times\dots\times\∑_{r_{s-1}}\times\∑_\q}$ such that $x=\sum_{\sigma\in \∑_{n-r_s}\times\∑_{r_s}/\∑_{n-r_s}\times \∑_q}\sigma y$,
                \item For all $r_1+\dots+r_s=n$, $x\in \P(n)^{\∑_{\r}}$, let $q_{1}+\dots+q_{u}=k$, $y\in\P(k)^{\∑_{\q}}$, $(a_i)_{i\in[s+u]}\in A^{\times s+u}$, the element:
                \begin{multline*}
                    \left(\sum_{t+t'=r_sq_{u},t'>0}\beta_{\sum_\tau \tau z,\r\diamond_s(\q)\circ_{s+u}(t,t')}(a_1,\dots,a_{s+u},v)\right) - \\
                		\left(\sum_{\lambda+\lambda'=r_s,\lambda'>0}\sum_{l+l'=q_{u},l'>0}\beta_{\sum_{\sigma'}\sigma' z,R_{\lambda\lambda'}^{ll'}}(a_1,\dots,a_{s+u},v)\right),
                \end{multline*}
                where $z=x\left(1_\P^{\times n-r_s},y^{\times r_s}\right)\in\P(n+r_s(k-1))$, $R_{\lambda\lambda'}^{ll'}$ is defined in Definition~\ref{def:Amodule}, where $\tau$ ranges over $\∑_{\r\diamond_s\q}/\left(\prod_{i=1}^{s-1}\∑_{r_i}\right)\times\∑_{r_s}\wr\∑_{\q}$ in the sum, and where $\sigma'$ ranges over $\∑_{R_{\lambda\lambda'}^{ll'}}/\left(\prod_{i=1}^{s-1}\∑_{r_i}\right)\times\∑_{\lambda}\wr\∑_\q\times\∑_{\lambda'}\wr\∑_{\q\circ_u(l,l')}$ in the sum. 
            \end{itemize}
            There is a linear map $f \colon \UU_{\Gamma(\P)}(A)\to B$ such that:
            \[
                f(\beta_{x,\r}(a_1,\dots,a_{s-1},-))=\beta_{x,\r}(a_1,\dots,a_{s-1},v).
            \]
            Indeed, the relations \ref{relperm} to \ref{relcomp} on $B$, and quotienting by the above family of elements ensure that $f$ is well defined. The map $f$ is clearly injective. Note that, for all pair of elements in $\UU_{\Gamma(\P)}(A)$ of the type:
            \[
                \t=\lambda\beta_{x,\r}(a_1,\dots,a_{s-1},-),\quad \mathfrak s=\lambda'\beta_{y,\q}(b_1,\dots,b_{u-1},-),
            \]
            One has:
            \[
                f(\mu(\t\otimes\mathfrak s))=\lambda\beta_{x,\r}(a_1,\dots,a_{s-1},\lambda'\beta_{y,\q}(b_1,\dots,b_{u-1},v)).
            \]
            So, the associativity of $\mu$ is a consequence of the associativity of the composition of operations $\beta$ in $B$, which is guaranteed by the associativity of the monad $\Gamma(\P)$.

            The element $\beta_{1_\P,(1)}(-)$ is clearly a unit for $\mu$.
        \end{proof}
        
\begin{warning}
The ring $\UU_{\Gamma(\P)}(A)$ is an $\F$-vector space but need \emph{not} be an $\F$-algebra. The multiplication $\mu$ satisfies:
\begin{align*}
&\mu \left( \beta_{x,\r}(a_1,\dots,a_{s-1},-) \otimes \left( \la \beta_{y,\q}(b_1,\dots,b_{u-1},-) \right) \right)\\ 
&= \mu \left( \left( \beta_{x,\r}(a_1,\dots,a_{s-1},-) \cdot \la \right) \otimes \beta_{y,\q}(b_1,\dots,b_{u-1},-) \right)\\ 
&= \mu \left( \left( \la^{r_s} \beta_{x,\r}(a_1,\dots,a_{s-1},-) \right) \otimes \beta_{y,\q}(b_1,\dots,b_{u-1},-) \right)\\
&= \la^{r_s} \mu \left( \left( \beta_{x,\r}(a_1,\dots,a_{s-1},-) \right) \otimes \beta_{y,\q}(b_1,\dots,b_{u-1},-) \right).
\end{align*}
A %
concrete example is given by the enveloping algebra $V(A) = \UU_{\Gamma(\Com)}(A)$ studied in Section~\ref{sec:ExCom}. 
In the special case $\F = \F_p$, the equality $\la^{r_s} = \la$ holds (since $r_s$ is a power of $p$), and thus $\UU_{\Gamma(\P)}(A)$ \emph{is} an $\F$-algebra.
\end{warning}        
        
        \begin{nota}
            We will allow the notation:
            \[
                \beta_{x,\r}(a_1,\dots,a_{j-1},-,a_{j+1},\dots,a_s) \dfn \beta_{\sigma_jx,\r^{\sigma_j^*}}(a_1,\dots,a_{j-1},a_{j+1},\dots,a_s,-)
            \]
            in $\UU_{\Gamma(\P)}(A)$.
        \end{nota}

        \begin{defi}
            Denote by $\UU_{\mathrm{Ab}}(\Gamma(\P))\subset\UU_{\Gamma(\P)}(A)$ the vector subspace spanned by the elements $\beta_{x,(n)}(-)$. Note that $\UU_{\mathrm{Ab}}(\Gamma(\P))$ is a unital %
            subring %
            and does not depend on $A$.
        \end{defi}
        Note that any abelian $\Gamma(\P)$-algebra $M$ can be equipped with a left $\UU_{\mathrm{Ab}}(\Gamma(\P))$ action by:
        \[
            \beta_{x,(n)}\otimes m\mapsto \beta_x(m),
        \]
        and conversely, any left $\UU_{\mathrm{Ab}}(\Gamma(\P))$-module $M$ has a structure of abelian $\Gamma(\P)$-algebra given by:
        \[
            \beta_{x}(m) \dfn \beta_{x,(n)}(-)\cdot m.
        \]
        Since those assignments are %
        inverse to one another, we obtain:
        \begin{prop}\label{prop:univenvalg}
            The category of abelian $\Gamma(\P)$-algebra is equivalent to the category of left modules over $\UU_{\mathrm{Ab}}(\Gamma(\P))$. 
        \end{prop}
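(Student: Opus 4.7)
The plan is to exhibit explicit functors in both directions and verify they are mutually inverse, essentially making precise the two assignments already sketched immediately before the statement.

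\textbf{Forward direction.} Given an abelian $\Gamma(\P)$-algebra $M$, I would define a left action of $\UU_{\mathrm{Ab}}(\Gamma(\P))$ on $M$ on generators by $\beta_{x,(n)}(-)\cdot m := \beta_x(m)$, extended linearly. To see this descends to the quotient defining $\UU_{\Gamma(\P)}(A)$ restricted to $s=1$, I would check that each defining relation of \Cref{def:UnivEnvAlg}, specialised to $\r=(n)$, holds under this assignment by invoking one of the abelian axioms: relation (1) corresponds to \ref{relAblin}, relation (7) matches \ref{relAbrepet}, and relations (2)--(6), (8), (9) either become vacuous when $s=1$ (no $a_i$ arguments are present, no non-trivial shuffles arise, and the sums of type~(9) collapse) or reduce to identities already imposed. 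For associativity of the action, the multiplication formula specialised to $s=u=1$, $r_s=n$, $q_s=k$ gives
\[
\mu\bigl(\beta_{x,(n)}(-)\otimes\beta_{y,(k)}(-)\bigr)=\beta_{\sum_\tau\tau\cdot x(y^{\times n}),(nk)}(-),
\]
with $\tau$ ranging over $\∑_{nk}/\∑_n\wr\∑_k$, which is exactly the composition appearing in \ref{relAbcomp}. The unit $\beta_{1_\P,(1)}(-)$ acts as identity by \ref{relAbunit}. The right $\F$-action $\beta_{x,(n)}(-)\cdot\lambda = \lambda^n\beta_{x,(n)}(-)$ together with the embedding $\F\hookrightarrow\UU_{\mathrm{Ab}}(\Gamma(\P))$, $\lambda\mapsto\lambda\beta_{1_\P,(1)}(-)$, then forces $\beta_x(\lambda m)=\lambda^n\beta_x(m)$, matching \ref{relAblambda}.

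\textbf{Reverse direction.} Given a left $\UU_{\mathrm{Ab}}(\Gamma(\P))$-module $M$, I would set $\beta_x(m):=\beta_{x,(n)}(-)\cdot m$ for $x\in\P(n)^{\∑_n}$, inheriting the $\F$-vector space structure from the subring $\F\hookrightarrow\UU_{\mathrm{Ab}}(\Gamma(\P))$ mentioned above. The axioms \ref{relAblambda}--\ref{relAbcomp} would then be verified by reading the same correspondences backwards: additivity of the action gives \ref{relAbsomme}; $\F$-linearity in the generator gives \ref{relAblin}; the unit and associativity of $\mu$ give \ref{relAbunit} and \ref{relAbcomp}; relation (7) of \Cref{def:UnivEnvAlg} in $\UU_{\mathrm{Ab}}(\Gamma(\P))$ enforces \ref{relAbrepet}; and the right $\F$-action relation yields \ref{relAblambda}.

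\textbf{Equivalence and morphisms.} The two constructions are mutually inverse on underlying sets by definition, as both instructions merely rename $\beta_x$ and $\beta_{x,(n)}(-)\cdot$ for one another. A morphism of abelian $\Gamma(\P)$-algebras is a linear map commuting with every $\beta_x$, while a morphism of $\UU_{\mathrm{Ab}}(\Gamma(\P))$-modules is an additive map commuting with each generator $\beta_{x,(n)}(-)$; since the generators of the ring are precisely the $\beta_{x,(n)}(-)$, the two notions coincide, yielding an isomorphism of categories.

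\textbf{Main obstacle.} The real bookkeeping is in showing that no defining relation of $\UU_{\Gamma(\P)}(A)$, once restricted to $\r=(n)$ and $s=1$, imposes any constraint beyond \ref{relAblambda}--\ref{relAbcomp}. Relations (8) and (9) in particular look formidable, but their $s=s'=1$ specialisations collapse dramatically: the indexing sets $\overline E_{\r,k}$ become empty for $\r=(n)$, and the partitions $R_{\lambda\lambda'}^{ll'}$ degenerate, so these relations reduce either to tautologies or to consequences of \ref{relAbcomp}. Once this reduction is carried out, the rest of the proof is purely formal.
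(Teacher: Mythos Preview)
Your proposal is correct and follows exactly the approach the paper takes: the paper's entire argument consists of the two displayed assignments immediately preceding the proposition, together with the sentence ``Since those assignments are inverse to one another, we obtain.'' You have simply fleshed out the verification that the paper leaves implicit, including the bookkeeping on relations~(8) and~(9) that the paper does not mention at all.
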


        \begin{coro}
            Let $V$ be an $\F$-vector space. Then, $\UU_{\mathrm{Ab}}(\Gamma(\P))\otimes V$ is the free abelian $\Gamma(\P)$-algebra generated by $V$.
        \end{coro}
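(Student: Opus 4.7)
The proof is a direct application of \Cref{prop:univenvalg} combined with the standard free-module construction. Writing $R := \UU_{\mathrm{Ab}}(\Gamma(\P))$, the equivalence of \Cref{prop:univenvalg} between abelian $\Gamma(\P)$-algebras and left $R$-modules commutes with the forgetful functor to $\Vect{\F}$, since on both sides the underlying vector space is recovered from the left $\F$-action. It therefore suffices to check that $R \otimes_\F V$, equipped with its obvious left $R$-module structure, is the free left $R$-module generated by $V$.

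The plan is to form $R \otimes_\F V$ using the right $\F$-action on $R$ (defined in the section by $\beta_{x,(n)}(-) \cdot \lambda = \lambda^n \beta_{x,(n)}(-)$) balanced against the given left $\F$-action on $V$. Left multiplication in $R$ descends to a well-defined left $R$-action on $R \otimes_\F V$ because it manifestly respects the tensor relation. The unit map $\eta \colon V \to R \otimes_\F V$, $v \mapsto 1_R \otimes v$, is $\F$-linear, since the left and right $\F$-actions coincide on the unit $1_R = \beta_{1_\P,(1)}(-)$ (where $r_s=1$ and thus $\lambda^{r_s} = \lambda$). For any abelian $\Gamma(\P)$-algebra $M$, viewed as a left $R$-module via \Cref{prop:univenvalg}, and any $\F$-linear map $f \colon V \to M$, the assignment $r \otimes v \mapsto r \cdot f(v)$ yields the unique $R$-linear extension of $f$ along $\eta$.

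The only point that requires attention is that this assignment descends from $R \times V$ to $R \otimes_\F V$: concretely, one checks that $(r \cdot \lambda) \otimes v$ and $r \otimes (\lambda v)$ have the same image in $M$, which follows from comparing $\lambda^n f(v)$ to $f(\lambda v) = \lambda f(v)$ after noting that the left $R$-action on $M$ restricted to $\F \cdot 1_R$ recovers the given $\F$-action on $M$. The possible obstacle flagged by the warning, namely that $R$ is an $\F$-bimodule but not an $\F$-algebra (the left and right $\F$-actions differ on operations $\beta_{x,(n)}(-)$ with $n > 1$), causes no difficulty here, because the asymmetry disappears precisely on the unit $1_R$, which is what controls the universal property of $\eta$.
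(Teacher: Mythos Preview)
Your overall plan is correct and is essentially what the paper has in mind (the paper records this as an immediate corollary of \Cref{prop:univenvalg} without further proof). However, the verification in your last paragraph does not go through as written. You must check $(r\cdot\lambda)\cdot f(v)=r\cdot f(\lambda v)$ for every $r=\beta_{x,(n)}(-)$, not only for $r=1_R$, and your phrase ``comparing $\lambda^{n} f(v)$ to $f(\lambda v)=\lambda f(v)$'' reads as if one must identify $\lambda^{n}f(v)$ with $\lambda f(v)$, which is false over a general~$\F$. The correct argument is that the right $\F$-action on $R$ coincides with right ring-multiplication by $\lambda\,1_R$, so by associativity of the $R$-action together with your own observation that $\F\cdot 1_R$ acts on $M$ as the given $\F$-structure,
\[
(r\cdot\lambda)\cdot f(v)=r\cdot\bigl((\lambda 1_R)\cdot f(v)\bigr)=r\cdot(\lambda f(v))=r\cdot f(\lambda v).
\]
Equivalently, compute both sides directly via relation~\ref{relAblambda}: each equals $\lambda^{n}\beta_x(f(v))$. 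Your closing remark that ``the asymmetry disappears precisely on the unit $1_R$'' is therefore misleading: the asymmetry is present for every $n>1$, and what makes the balancing work for general $r$ is the compatibility of the right $\F$-action with the ring multiplication (equivalently, relation~\ref{relAblambda}), not its vanishing on~$1_R$.
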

        Similarly, any $A$-module $M$ can be equipped with a left $\UU_{\Gamma(\P)}(A)$ action by:
        \[
            \beta_{x,\r}(a_1,\dots,a_{s-1},-)\otimes m\mapsto \beta_{x,\r}(a_1,\dots,a_{s-1},m),
        \]
        and conversely, any left $\UU_{\Gamma(\P)}(A)$-module $M$ has an $A$-module structure given by:
        \[
            \beta_{x,\r}(a_1,\dots,a_{s-1},m) \dfn \beta_{x,\r}(a_1,\dots,a_{s-1},-) \cdot m.
        \]
        Once again, those assignments are %
        inverse to one another, and we obtain:
        \begin{theo}\label{thm:BeckMod}
            The category of $A$-modules is equivalent to the category of left modules over $\UU_{\Gamma(\P)}(A)$. 
        \end{theo}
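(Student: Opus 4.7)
The plan is to exhibit the two functors explicitly and then observe that they are mutually inverse. Given an $A$-module $M$, I would define a left action of the ring $\UU_{\Gamma(\P)}(A)$ on $M$ by
\[
\beta_{x,\r}(a_1,\dots,a_{s-1},-)\cdot m \;\dfn\; \beta_{x,\r}(a_1,\dots,a_{s-1},m),
\]
extended $\F$-linearly on the left, and define the reverse assignment by the same formula read in the opposite direction. Since both are the ``same'' identification of generators, once each direction is shown to be well-defined, they are automatically mutually inverse.

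For well-definedness of the left action, I must check two things: that the prospective action descends to the quotient defining $\UU_{\Gamma(\P)}(A)$, and that it respects the multiplication $\mu$ and the unit. The first is a term-by-term verification: relations (1)--(9) in \Cref{def:UnivEnvAlg} correspond respectively to the axioms \ref{relAMlin}, \ref{relAperm}, \ref{relA0}, \ref{relAlambda}, \ref{relArepet}, \ref{relAsomme}, \ref{relMrepet}, \ref{relAM4} and \ref{relAMspec} from \Cref{def:Amodule}, so each relator acts as the zero map. Compatibility with $\mu$ on a generating pair $\beta_{x,\r}(a_1,\dots,a_{s-1},-)\otimes\beta_{y,\q}(b_1,\dots,b_{u-1},-)$ is exactly axiom \ref{relAMcomp} specialised to nesting in the final input slot (all outer operations $\beta_{x_i,\q_i}$ for $i<s$ being trivial), and the unit $\beta_{1_\P,(1)}(-)$ acts as the identity by \ref{relAMcompM} together with \ref{relAbunit}.

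Conversely, starting from a left $\UU_{\Gamma(\P)}(A)$-module $M$, the restriction along the inclusion $\UU_{\mathrm{Ab}}(\Gamma(\P))\hookrightarrow\UU_{\Gamma(\P)}(A)$ endows $M$ with an abelian $\Gamma(\P)$-algebra structure by \Cref{prop:univenvalg}. The remaining $A$-module axioms are then read off from the defining relations of $\UU_{\Gamma(\P)}(A)$, with three exceptions deserving comment: axiom \ref{relMsomme} follows from additivity of the module action; axiom \ref{relMlambda}, namely $\beta_{x,\r}(a_1,\dots,a_{s-1},\la m)=\la^{r_s}\beta_{x,\r}(a_1,\dots,a_{s-1},m)$, is enforced by the right $\F$-action $\beta_{x,\r}(a_1,\dots,a_{s-1},-)\cdot\la=\la^{r_s}\beta_{x,\r}(a_1,\dots,a_{s-1},-)$ together with the balancing identity $(u\cdot\la)\cdot m=u\cdot(\la m)$; and the full composition axiom \ref{relAMcomp} reduces, via the $\Gamma(\P)$-algebra composition already holding in $A$ for the inner operations $\beta_{x_i,\q_i}(b_i)$ with $i<s$, to the special case encoded by $\mu$.

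The step I expect to be most delicate is this last reduction: matching the definition of $\mu$---which by construction only handles composition in the final input slot---with the fully general composition axiom \ref{relAMcomp}. The idea is to first evaluate each $\beta_{x_i,\q_i}(b_i)$ for $i<s$ as an element $a'_i$ of $A$ using the ambient $\Gamma(\P)$-algebra structure, then apply $\mu$ to the pair $\beta_{x,\r}(a'_1,\dots,a'_{s-1},-)\otimes \beta_{x_s,\q_s}(b_{s,1},\dots,b_{s,u_s-1},-)$, and finally expand the resulting single operation using relations \ref{rellin} and \ref{relcomp} in $A$ (encoded in $\UU_{\Gamma(\P)}(A)$ by relation (1) and by the definition of $\mu$) to recover the right-hand side of \ref{relAMcomp}. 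This is a bookkeeping computation on block permutations and partitions which closely parallels the verification of the associativity of $\mu$ carried out via the auxiliary $\Gamma(\P)$-algebra $B$ above. Once this reduction is in hand, both assignments visibly agree on generators and so are inverse to each other.
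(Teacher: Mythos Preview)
Your proposal is correct and follows exactly the same approach as the paper: the paper simply writes down the two assignments $\beta_{x,\r}(a_1,\dots,a_{s-1},-)\otimes m\mapsto \beta_{x,\r}(a_1,\dots,a_{s-1},m)$ and its inverse, declares that they are mutually inverse, and states the theorem. Your write-up is in fact more thorough than the paper's, which does not spell out the matching of relations (1)--(9) with the $A$-module axioms, nor the reduction of the full composition axiom \ref{relAMcomp} to the special case encoded by~$\mu$.
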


        \begin{coro}
            Let $V$ be an $\F$-vector space. Then, $\UU_{\Gamma(\P)}(A) \otimes_{\F} V$ is the free $A$-module generated by $V$.
        \end{coro}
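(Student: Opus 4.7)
The plan is to deduce this corollary directly from \Cref{thm:BeckMod}, which gives an equivalence between the category $\lMod{A}$ of $A$-modules and the category $\lMod{\UU_{\Gamma(\P)}(A)}$ of left $\UU_{\Gamma(\P)}(A)$-modules. Under this equivalence, the forgetful functor $\lMod{A} \to \Vect{\F}$ agrees with the ordinary forgetful functor $\lMod{\UU_{\Gamma(\P)}(A)} \to \Vect{\F}$, since both send a module to its underlying $\F$-vector space. Thus it suffices to identify the left adjoint of the latter.

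The candidate for the free $\UU_{\Gamma(\P)}(A)$-module on $V$ is the tensor product $\UU_{\Gamma(\P)}(A) \otimes_{\F} V$, formed using the right $\F$-action on $\UU_{\Gamma(\P)}(A)$ defined earlier (where $\beta_{x,\r}(a_1,\dots,a_{s-1},-)\cdot\la = \la^{r_s}\beta_{x,\r}(a_1,\dots,a_{s-1},-)$) and the left $\F$-action on $V$. First I would verify that left multiplication on the first factor descends to a well-defined left $\UU_{\Gamma(\P)}(A)$-action on this tensor product: for $u,u' \in \UU_{\Gamma(\P)}(A)$, $\la \in \F$ and $v\in V$, one needs $u \cdot (u' \ot \la v) = u \cdot (u'\cdot \la \ot v)$, which follows from the computation already recorded in the warning preceding the corollary, namely $\mu(u \ot \la u') = \mu(u \cdot \la \ot u')$ applied to the first factor.

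Next I would check the universal property. Given any left $\UU_{\Gamma(\P)}(A)$-module $M$, restriction to $1_{\UU} \ot V$ yields a map
\[
\Hom_{\lMod{\UU_{\Gamma(\P)}(A)}}(\UU_{\Gamma(\P)}(A) \otimes_{\F} V, M) \longrightarrow \Hom_{\Vect{\F}}(V, M),
\]
with inverse sending an $\F$-linear $f \colon V \to M$ to the map $u \ot v \mapsto u \cdot f(v)$. The well-definedness of this inverse reduces again to the compatibility between left multiplication, the right $\F$-action on $\UU_{\Gamma(\P)}(A)$, and the left $\F$-action on $M$, which is ensured by \Cref{prop:EnvAlgpowerp} combined with relation~\ref{relAlambda} on $M$.

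Combining these observations with the equivalence $\lMod{A} \simeq \lMod{\UU_{\Gamma(\P)}(A)}$ of \Cref{thm:BeckMod} yields the corollary. The main obstacle to watch for is precisely the fact, emphasized in the warning, that $\UU_{\Gamma(\P)}(A)$ is not an $\F$-algebra in general, so the tensor product $\UU_{\Gamma(\P)}(A) \otimes_{\F} V$ must be interpreted as a tensor product of $\F$-bimodules where the two $\F$-actions on $\UU_{\Gamma(\P)}(A)$ differ by an $r_s$-th power; once this bookkeeping is set up correctly, the argument is the standard free-forgetful adjunction.
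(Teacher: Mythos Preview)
Your proposal is correct and follows the same route as the paper: the corollary is stated there as an immediate consequence of \Cref{thm:BeckMod}, and your write-up simply spells out the standard free-forgetful adjunction while handling the bookkeeping around the twisted right $\F$-action highlighted in the warning. One minor point: the relation you want for the well-definedness of $u\otimes v \mapsto u\cdot f(v)$ is \ref{relMlambda} (the $\F$-action on the module slot), not \ref{relAlambda}; otherwise the argument is exactly what the paper has in mind.
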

        \begin{coro}
            Let $M$ be an  abelian $\Gamma(\P)$-algebra. Then 
            \[
            		\UU_{\Gamma(\P)}(A) \otimes_{\UU_{\mathrm{Ab}(\Gamma(\P))}} M
            \]
            is the free $A$-module generated by $M$.
        \end{coro}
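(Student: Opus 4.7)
The plan is to reduce the statement to the standard extension--restriction of scalars adjunction along the subring inclusion $\UU_{\mathrm{Ab}}(\Gamma(\P)) \hookrightarrow \UU_{\Gamma(\P)}(A)$.

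First, I would combine \Cref{thm:BeckMod} with \Cref{prop:univenvalg} to reformulate the claim entirely at the level of modules over rings: the category of $A$-modules is equivalent to left $\UU_{\Gamma(\P)}(A)$-modules, and the category of abelian $\Gamma(\P)$-algebras is equivalent to left $\UU_{\mathrm{Ab}}(\Gamma(\P))$-modules. A direct comparison of actions then shows that, under these equivalences, the forgetful functor sending an $A$-module to its underlying abelian $\Gamma(\P)$-algebra corresponds precisely to the restriction-of-scalars functor along $\UU_{\mathrm{Ab}}(\Gamma(\P)) \subset \UU_{\Gamma(\P)}(A)$: restricting the action of $\UU_{\Gamma(\P)}(A)$ on a module $N$ to the subring recovers exactly the operations $\beta_{x,(n)}(-) \cdot n = \beta_x(n)$ which define the underlying abelian $\Gamma(\P)$-algebra structure on $N$.

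Next, I would observe that the multiplication $\mu$ makes $\UU_{\Gamma(\P)}(A)$ into a $(\UU_{\Gamma(\P)}(A), \UU_{\mathrm{Ab}}(\Gamma(\P)))$-bimodule, so the tensor product $\UU_{\Gamma(\P)}(A) \otimes_{\UU_{\mathrm{Ab}}(\Gamma(\P))} M$ acquires a left $\UU_{\Gamma(\P)}(A)$-action by left multiplication on the first factor, hence an $A$-module structure via \Cref{thm:BeckMod}. The standard Hom--tensor adjunction for modules over rings then yields, for any $A$-module $N$, a natural bijection
\[
\Hom_{\UU_{\Gamma(\P)}(A)}\bigl(\UU_{\Gamma(\P)}(A) \otimes_{\UU_{\mathrm{Ab}}(\Gamma(\P))} M,\ N\bigr) \cong \Hom_{\UU_{\mathrm{Ab}}(\Gamma(\P))}(M, N),
\]
which translates back to the universal property defining the free $A$-module on the abelian $\Gamma(\P)$-algebra $M$.

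The whole argument is thus essentially formal once the ring-theoretic viewpoint of the previous section is in place, and in that sense there is no real obstacle. The one point deserving a brief verification is that $\UU_{\Gamma(\P)}(A)$ is only a ring and not in general an $\F$-algebra (cf.\ the warning in this section); however, because the tensor product is taken over the subring $\UU_{\mathrm{Ab}}(\Gamma(\P))$ rather than over $\F$, the absence of a central $\F$-action plays no substantive role, and the usual extension-of-scalars machinery applies verbatim.
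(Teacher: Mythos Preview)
Your proposal is correct and matches the paper's intended argument: the paper states this corollary without proof, immediately before noting that the ring homomorphisms $\F \to \UU_{\mathrm{Ab}(\Gamma(\P))} \to \UU_{\Gamma(\P)}(A)$ yield the commuting free/forgetful adjunctions with right adjoints given by restriction of scalars. Your reduction to the standard extension--restriction adjunction along the subring inclusion is exactly this, and your remark that the non-$\F$-algebra issue is harmless because the tensor is over the subring is a correct and welcome sanity check.
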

        The sequence of ring homomorphisms $\F \to \UU_{\mathrm{Ab}(\Gamma(\P))} \to \UU_{\Gamma(\P)}(A)$ then yields a commutative diagram of free/forgetful adjunctions:
        \[
            \diag @C+1.5cm @R+1cm {
            \Vect{\F} \ar@<3pt>[r]^-{\UU_{\mathrm{Ab}(\Gamma(\P))}\otimes -} \ar@<3pt>[rd]^-{\UU_{\Gamma(\P)}(A)\otimes-} &(\Alg{\Gamma(\P)})_{\mathrm{Ab}} \ar@<3pt>[d]^{\UU_{\Gamma(\P)}(A)\otimes_{\UU_{\mathrm{Ab}(\Gamma(\P))}}-} \ar@<3pt>[l]\\
            & \lMod{A}\ar@<3pt>[u]\ar@<3pt>[ul]
            }
        \]
        where the right adjoints are usual restriction of scalars.
        
Recall from \cite{LodayV12}*{\S 12.3.4} the description for the enveloping algebra $\UU_\P(A)$ associated to a $\P$-algebra (without divided powers) $A$. It has a set of generators denoted by $\nu(a_1,\dots,a_k;1)$ in \cite{LodayV12}, for all $\nu\in\P(k+1)$, $a_1,\dots,a_k\in A$. 
The norm map induces a monad morphism $\Tr \colon S(\P) \to \Gamma(\P)$; see \cite{Fresse00}*{\S 1.1.14}. This map has been translated in terms of divided power operations by the third author in \cite{Ikonicoff23}*{Remark~6.3}. In consequence, we get the following: 

\begin{prop}\label{prop:MapEnvAlg}
    For all $\Gamma(\P)$-algebra $A$, the norm map induces a natural $\F$-linear ring homomorphism 
$\te \colon \UU_{\P}(A)\to \UU_{\Gamma(\P)}(A)$, sending $\nu(a_1,\dots,a_k;1)\in \UU_{\P}(A)$ to $\beta_{\nu,(1,\dots,1)}(a_1,\dots,a_k,-)\in \UU_{\Gamma(\P)}(A)$.
\end{prop}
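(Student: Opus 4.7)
The plan is to define $\te$ on the generators of $\UU_{\P}(A)$ by the prescribed formula, show it is well-defined by checking it sends each defining relation of $\UU_{\P}(A)$ to zero in $\UU_{\Gamma(\P)}(A)$, and then verify compatibility with the multiplication and unit. Naturality in $A$ will then be automatic, since the assignment $\nu(a_{1},\dots,a_{k};1)\mapsto\beta_{\nu,(1,\dots,1)}(a_{1},\dots,a_{k},-)$ is expressed entirely in terms of operations which are natural in $A$.

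First I would recall from \cite{LodayV12}*{\S 12.3.4} the presentation of $\UU_{\P}(A)$: the generators $\nu(a_{1},\dots,a_{k};1)$ are subject to $\F$-linearity in $\nu$, $\∑_{k}$-equivariance $\nu(a_{1},\dots,a_{k};1) = (\si\cdot\nu)(a_{\si^{-1}(1)},\dots,a_{\si^{-1}(k)};1)$ for $\si\in\∑_{k}$ (with the placeholder fixed), multilinearity and additivity in each $a_{i}$ entry, and the compatibility relation identifying any occurrence of a $\P$-operation on the $a_{i}$s (i.e.\ a composition in the last slot must reproduce the $\P$-algebra structure of $A$). Each of these corresponds to one of the generating families in \Cref{def:UnivEnvAlg}: $\F$-linearity corresponds to relation (1); $\∑_{k}$-equivariance corresponds to relation (2) restricted to the fully refined partition $\r=(1,\dots,1)$ where $\∑_{\r}$ is trivial so $\P(n)^{\∑_{\r}}=\P(n)$; additivity and scalar rescaling in each $a_{i}$ reduce to relations (6) and (4) with $r_{i}=1$; and the compatibility with the $\P$-action on $A$ is obtained from relation (9) specialised at $\r=(1,\dots,1)$ and $\q=(1,\dots,1)$, in which case the condition $r_{s}=1$ forces $\la'=1$, $l'$ equals some $q_{j}$, and the double sum collapses into the single composition clause defining the $\P$-algebra action.

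Next I would verify that $\te$ is a ring homomorphism. The unit $1_{\P}(;1)\in\UU_{\P}(A)$ maps to $\beta_{1_{\P},(1)}(-)$, which is the unit of $\UU_{\Gamma(\P)}(A)$. For multiplication, in $\UU_{\P}(A)$ one has
\[
\mu(;1)(a_{1},\dots,a_{k};1)\cdot\nu(b_{1},\dots,b_{l};1) = \bigl(\mu\circ_{k+1}\nu\bigr)(a_{1},\dots,a_{k},b_{1},\dots,b_{l};1),
\]
up to the usual $\∑_{k+l}$-equivariance. The formula for the multiplication in $\UU_{\Gamma(\P)}(A)$ specialised to $\r=(1,\dots,1)$, $\q=(1,\dots,1)$, and $r_{s}=1$, becomes
\[
\mu\bigl(\beta_{\mu,(1,\dots,1)}(a_{1},\dots,a_{k},-)\ot\beta_{\nu,(1,\dots,1)}(b_{1},\dots,b_{l},-)\bigr) = \beta_{\sum_{\tau}\tau\cdot(\mu\circ_{k+1}\nu),(1,\dots,1)}(a_{1},\dots,a_{k},b_{1},\dots,b_{l},-),
\]
where $\tau$ ranges over $\∑_{k+l}/(\∑_{k}\times\∑_{l})$ since $(1,\dots,1)\diamond_{k+1}(1,\dots,1)=(1,\dots,1)$ of length $k+l+1$. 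The $\∑_{k}\times\∑_{l}$-invariance of the sum $\sum_{\tau}\tau\cdot(\mu\circ_{k+1}\nu)$ together with relation (2) for $\r=(1,\dots,1)$ shows that this equals $\te$ applied to the product in $\UU_{\P}(A)$, as desired.

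The main obstacle I anticipate is bookkeeping the shuffle sums in the multiplication formula and verifying that the norm-map reinterpretation of $\Tr\colon S(\P)\to\Gamma(\P)$ established in \cite{Ikonicoff23}*{Remark~6.3} is exactly the identification $\nu\otimes(a_{1},\dots,a_{k})\mapsto\beta_{\nu,(1,\dots,1)}(a_{1},\dots,a_{k})$. Once this identification is in place, all the compatibilities above become tautological reformulations of the fact that $\Tr$ is a monad morphism, and the naturality of $\te$ follows from the naturality of $\Tr$.
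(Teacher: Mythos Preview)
Your approach is correct and is in fact more detailed than the paper's treatment: the paper simply records the proposition as an immediate consequence of the fact that the norm map $\Tr\colon S(\P)\to\Gamma(\P)$ is a monad morphism together with its translation into divided power operations in \cite{Ikonicoff23}*{Remark~6.3}, without writing out any verifications. Your direct check of well-definedness and multiplicativity is a reasonable and self-contained alternative, and your final paragraph correctly identifies that everything collapses to the monad-morphism statement once the identification $\nu\otimes(a_1,\dots,a_k)\mapsto\beta_{\nu,(1,\dots,1)}(a_1,\dots,a_k)$ is in place.

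There is one computational slip in your multiplicativity check. When $\r=(1,\dots,1)$ of length $k+1$ and $\q=(1,\dots,1)$ of length $l+1$, the index set for $\tau$ is
\[
\Sigma_{\r\diamond_s\q}\big/\Bigl(\textstyle\prod_{i=1}^{s-1}\Sigma_{r_i}\Bigr)\times\Sigma_{r_s}\wr\Sigma_{\q},
\]
and here \emph{both} numerator and denominator are trivial: $\Sigma_{\r\diamond_s\q}=\Sigma_{(1,\dots,1)}$ is the trivial Young subgroup, and $\Sigma_1\wr\Sigma_{(1,\dots,1)}$ is trivial as well. So there is exactly one term in the sum, not a shuffle sum over $\Sigma_{k+l}/(\Sigma_k\times\Sigma_l)$, and one obtains directly
\[
\mu\bigl(\beta_{\mu,(1,\dots,1)}(a_1,\dots,a_k,-)\otimes\beta_{\nu,(1,\dots,1)}(b_1,\dots,b_l,-)\bigr)=\beta_{\mu\circ_{k+1}\nu,(1,\dots,1)}(a_1,\dots,a_k,b_1,\dots,b_l,-).
\]
This is exactly $\te$ of the product, so your conclusion stands but the argument is simpler than you wrote.
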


Note that here, in the expression $\UU_{\P}(A)$, we have omitted the forgetful functor $\Alg{\Gamma(\P)}\to \Alg{\P}$ which is induced by the norm map as well. 

\section{Derivations, Module of Kähler differentials}\label{sec:derivations}

One of the key notions for André--Quillen cohomology of~$A$ with coefficients in a Beck module~$M$ is the group of (Beck) derivations $\Der(A,M)$, as in Definition~\ref{def:Derivation}. See \cite{Quillen67}*{\S 2.5}, \cite{Frankland10qui}, or \cite{Beck67} for more background. 
In the case of algebras over an operad $\P$, Beck derivations correspond %
to a natural notion of algebraic derivations; see \cite{GoerssH00}*{Proposition 2.2} and \cite{LodayV12}*{\S 12.3.7}. %
The set of derivations is then represented by a module $\Omega_{\P}(A)$ called the module of %
Kähler differentials 
of~$A$; see \cite{LodayV12}*{\S 12.3.8}. 
In this section, we identify the set of Beck derivations over a $\Gamma(\P)$-algebra~$A$, as well as %
the $A$-module of Kähler differentials $\Omega_{\Gamma(\P)}(A)$ in the divided power setting.

        \begin{defi}
           Let $M$ be an $A$-module, and $\diag{B\ar[r]^{pr}&A}$ be a $\Gamma(\P)$-algebra over $A$. A \Def{Beck derivation}, or simply \Def{derivation} from $B$ to $M$ is a linear map $d \colon B\to M$ such that $pr+d \colon B\to A\ltimes M$ is a morphism of $\Gamma(\P)$-algebras. We denote by $\Der_A(B,M)$ the vector space of derivations from $B$ to $M$. We obtain a bifunctor
           \[
               \Der_A \colon \left(\Alg{\Gamma(\P)}/A\right)^{\mathrm{op}}\times\lMod{A}\to \Vect{\F}.
           \]
        \end{defi}
        \begin{prop}\label{prop:derivation}
        A Beck derivation is a linear map $d \colon B\to M$ such that:
	\[
            d(\beta_{x,\r}\left(b_1,\dots,b_s\right)) = 
            \sum_{j=1}^s\sum_{l+l'=r_j,l'>0}\beta_{x,\r\circ_j(l,l')}(pr(b_1),\dots,pr(b_{j}),d(b_j),pr(b_{j+1}),\dots,pr(b_s)).
	\]
    \end{prop}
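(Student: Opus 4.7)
The plan is to translate the condition that $pr + d \colon B \to A \ltimes M$ be a morphism of $\Gamma(\P)$-algebras into componentwise equations in $A \oplus M$, using the explicit description of the divided power structure on the semidirect product $A \ltimes M$ established in the proof of \Cref{thm:Amodule}. Recall formula~\eqref{**}, which in the notation of \Cref{nota:perm} reads
\[
\beta_{x,\r}\bigl((a_1,m_1),\dots,(a_s,m_s)\bigr) = \bigl(\beta_{x,\r}(a_1,\dots,a_s),\, 0\bigr) + \sum_{j=1}^{s} \sum_{\substack{l+l'=r_j \\ l' > 0}} \bigl(0,\, \beta_{x,\r\circ_j(l,l')}(a_1,\dots,a_j,m_j,a_{j+1},\dots,a_s)\bigr).
\]

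By definition, a linear map $d \colon B \to M$ is a Beck derivation exactly when, for every $x \in \P(n)^{\∑_\r}$ and every $b_1, \dots, b_s \in B$, the identity
\[
(pr+d)\bigl(\beta_{x,\r}(b_1,\dots,b_s)\bigr) = \beta_{x,\r}\bigl((pr(b_1),d(b_1)), \dots, (pr(b_s),d(b_s))\bigr)
\]
holds in $A \oplus M$. I would compare the two sides componentwise. The $A$-component of the left-hand side equals $\beta_{x,\r}(pr(b_1),\dots,pr(b_s))$ since $pr$ is itself a $\Gamma(\P)$-morphism, and this matches the $A$-component of the right-hand side by~\eqref{**}. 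So the equation reduces to an equality of $M$-components; substituting $(a_i,m_i) = (pr(b_i),d(b_i))$ into the $M$-summand of~\eqref{**} yields exactly the formula claimed in the statement.

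The converse direction is symmetric: given any linear $d \colon B \to M$ satisfying the stated formula, the same componentwise computation run backwards shows that $pr + d$ respects every divided power operation, hence is a $\Gamma(\P)$-algebra morphism, and so $d$ is a Beck derivation. I do not expect a genuine obstacle here; the proposition is essentially a restatement of the morphism condition via the explicit structure of $A \ltimes M$, and all of the combinatorial work has already been absorbed in establishing~\eqref{**}. The only mild care needed is notational, in matching the form of~\eqref{**}, where the module input sits at the last slot via $\sigma_j^{*}$, with the form in the proposition, where $d(b_j)$ appears between $pr(b_j)$ and $pr(b_{j+1})$; this is exactly what the placeholder convention of \Cref{nota:perm} enables.
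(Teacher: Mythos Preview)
Your proposal is correct and follows essentially the same approach as the paper: unwind the $\Gamma(\P)$-morphism condition for $pr+d$ using formula~\eqref{**} on $A\ltimes M$, observe that the $A$-component is automatic since $pr$ is a morphism, and read off the $M$-component. The paper's proof is terser but identical in substance; your explicit mention of the converse direction and the notational matching via \Cref{nota:perm} is a fair elaboration of what the paper leaves implicit.
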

    \begin{proof}
        The map $pr+d$ is a $\Gamma(\P)$-algebra if and only if:
        \begin{align*}
            &(pr+d)(\beta_{x,\r}\left(b_1,\dots,b_s\right)) 
            = \beta_{x,\r}\left((pr(b_1),d(b_1)),\dots,(pr(b_s),d(b_s))\right) \\
            &= \Bigg(\beta_{x,\r}\left(pr(b_1),\dots,pr(b_s)\right), 
            \sum_{j=1}^s\sum_{l,l'}\beta_{x,\r\circ_j(l,l')}(pr(b_1),\dots,pr(b_{j}),d(b_j),pr(b_{j+1}),\dots,pr(b_s))\Bigg),
        \end{align*}
        where $l,l'$ runs over the pairs of non-negative integers such that $l+l'=r_j$ and $l'>0$, hence the result.
    \end{proof}
        \begin{defi}\label{def:Kahler}
            For any $\Gamma(\P)$-algebra $A$, denote by $dA$ the underlying vector space of $A$. Elements of $dA$ are denoted by $da$ for $a\in A$.

            The \Def{module of Kähler differentials} of $A$ is the following coequalizer in the category of $A$-modules:
            \[
                 \diag{\UU_{\Gamma(\P)}(A)\otimes \Gamma(\P,A) \ar@<3pt>[r]^{\gamma}\ar@<-3pt>[r]_{\UU_{\Gamma(\P)}(A)\otimes ev_A}&\UU_{\Gamma(\P)}(A)\otimes dA \ar@{->>}[r]&\Omega_{\Gamma(\P)}(A)},
            \] 
            where $\gamma \colon \UU_{\Gamma(\P)}(A)\otimes \Gamma(\P,A)\to \UU_{\Gamma(\P)}(A)\otimes dA$ is given by:
            \rescale{\begin{align*}
                              &\beta_{x,\r}(a_1,\dots,a_{s-1},-)\otimes \lambda \beta_{y,\q}(b_1,\dots,b_{u})\mapsto \\
                              &\lambda^{r_s}\sum_{j=1}^u\sum_{l+l'=q_j,l'>0}\mu\left(\beta_{x,\r}(a_1,\dots,a_{s-1},-)\otimes \beta_{y,\q\circ_j(l,l')}(b_1,\dots,b_{j},-,b_{j+1},\dots,b_{u})\right)\otimes db_j,
                        \end{align*}}
            and where $ev_A \colon \Gamma(\P,A)\to A$ is the structural $\Gamma(\P)$-algebra evaluation map of $A$.
        \end{defi}
        Let us describe $\Omega_{\Gamma(\P)}(A)$ in more detail. For an element of the type 
        \[
			\beta_{x,\r}(a_1,\dots,a_{a-1},-)\otimes da \in \UU_{\Gamma(\P)}(A)\otimes dA,
		\]
		we will denote by $\beta_{x,\r}(a_1,\dots,a_{a-1},da)$ its image in $\Omega_{\Gamma(\P)}(A)$. Following Notation~\ref{nota:perm}, we will use the notation:
		\[
			\beta_{x,\r}(a_1,\dots,a_{i-1},da,a_{i+1},\dots,a_s) \dfn \beta_{\sigma_i^*,\r^{\sigma_i}}(a_1,\dots,a_{i-1},a_{i+1},\dots,a_s,da).
		\]
        Then, by definition, $\Omega_{\Gamma(\P)}(A)$ is an $A$-module generated by the elements $\beta_{x,\r}(a_1,\dots,a_{a-1},da)$ under certain relations, including the following:
        \begin{equation}\label{eq:derivomega}
        		d\left(\beta_{x,\r}(a_1,\dots,a_s)\right) = 
        		\sum_{i=1}^s\sum_{l+l'=r_i,l'>0}\beta_{x,\r\circ_i(l,l')}(a_1,\dots,a_{j-1},a_j,da_j,a_{j+1},\dots,a_s).
        \end{equation}

\begin{defi}
        The \Def{universal derivation} of $A$ is the linear map $d \colon A\to\Omega_{\Gamma(\P)}(A)$ induced by the identity map $A\to dA$.
\end{defi}
As a consequence of the relation~\eqref{eq:derivomega}, we deduce:
        \begin{prop}
            The universal derivation $d \colon A\to\Omega_{\Gamma(\P)}(A)$ is a Beck derivation.
        \end{prop}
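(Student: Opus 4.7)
The plan is to reduce the statement to a direct application of \Cref{prop:derivation} combined with the relation~\eqref{eq:derivomega} that holds by construction in $\Omega_{\Gamma(\P)}(A)$. Concretely, one views $A$ as an object of $\Alg{\Gamma(\P)}/A$ via the identity, so that $pr = \id_A$, and one wishes to show that the linear map $d \colon A \to \Omega_{\Gamma(\P)}(A)$ sending $a$ to $da$ satisfies the Leibniz-type formula of \Cref{prop:derivation}:
\[
d(\beta_{x,\r}(a_1,\dots,a_s)) = \sum_{j=1}^s \sum_{\substack{l+l'=r_j \\ l'>0}} \beta_{x,\r\circ_j(l,l')}(a_1,\dots,a_{j-1},a_j,da_j,a_{j+1},\dots,a_s).
\]

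The first step is to recognise that this formula is exactly equation~\eqref{eq:derivomega}, which is imposed in $\Omega_{\Gamma(\P)}(A)$ by the coequalizer of \Cref{def:Kahler}. To confirm this, I would apply both parallel arrows to the element $\beta_{1_\P,(1)}(-) \otimes \beta_{x,\r}(a_1,\dots,a_s) \in \UU_{\Gamma(\P)}(A) \otimes \Gamma(\P,A)$. The map $\UU_{\Gamma(\P)}(A) \otimes ev_A$ sends this element to $\beta_{1_\P,(1)}(-) \otimes \beta_{x,\r}(a_1,\dots,a_s) \in \UU_{\Gamma(\P)}(A) \otimes dA$, whose image in $\Omega_{\Gamma(\P)}(A)$ is $d(\beta_{x,\r}(a_1,\dots,a_s))$. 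The map $\gamma$, on the same element, produces (taking $\lambda = 1$, $r_s = 1$ for the ``outer'' symbol, so that $\mu$ reduces to the identity on the $\UU$-factor) precisely
\[
\sum_{j=1}^s \sum_{\substack{l+l'=r_j \\ l'>0}} \beta_{x,\r\circ_j(l,l')}(a_1,\dots,a_{j-1},a_j,-,a_{j+1},\dots,a_s) \otimes da_j,
\]
whose image in $\Omega_{\Gamma(\P)}(A)$ is the right-hand side of the Leibniz formula. Since the two images must agree in the coequalizer, the required identity holds.

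Having established the Leibniz identity, the proposition follows immediately from \Cref{prop:derivation}. There is no essential obstacle here: the whole point of the coequalizer construction of $\Omega_{\Gamma(\P)}(A)$ is precisely to force the universal derivation to satisfy this relation, so the argument is a pure book-keeping unfolding of the definition. The only minor subtlety worth flagging in a written-out proof is the compatibility of the notational convention $\beta_{x,\r}(\dots,da,\dots)$ (inserting $da$ in an intermediate slot) with the block-permutation reordering from \Cref{nota:perm}, which ensures that the indexing in $\gamma$ matches the indexing of the formula in \Cref{prop:derivation}.
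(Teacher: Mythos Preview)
Your proposal is correct and follows essentially the same approach as the paper: both reduce the claim to observing that relation~\eqref{eq:derivomega} is precisely the Beck derivation condition of \Cref{prop:derivation} with $pr=\id_A$. The paper simply asserts that \eqref{eq:derivomega} is one of the relations imposed by the coequalizer, whereas you go slightly further and verify this explicitly by evaluating both arrows on $\beta_{1_\P,(1)}(-)\otimes\beta_{x,\r}(a_1,\dots,a_s)$; this extra check is welcome but not a different method.
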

        The following result justifies the term ``universal'' derivation, and completes the analogy with the classical module of Kähler differentials \cite{LodayV12}*{12.3.19}:
        \begin{prop}\label{prop:Kahler}
            $\Omega_{\Gamma(\P)}(A)$ represents the functor $\lMod{A}\to \Ab$ sending $M$ to the abelian group of derivations from $A$ to $M$.
        \end{prop}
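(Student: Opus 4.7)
The plan is to construct natural mutually inverse bijections
\[
\Phi \colon \Hom_{\lMod{A}}(\Omega_{\Gamma(\P)}(A), M) \rightleftarrows \Der(A, M) \colon \Psi
\]
where $\Phi(\phi) = \phi \circ d$ for the universal derivation $d \colon A \to \Omega_{\Gamma(\P)}(A)$. That $\Phi$ lands in $\Der(A,M)$ is immediate from the preceding proposition: composing a Beck derivation with an $A$-module map yields a Beck derivation.

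To build $\Psi$, start from a derivation $\delta \colon A \to M$. Since $\UU_{\Gamma(\P)}(A) \otimes dA$ is the free $A$-module on the vector space $dA$ (by the corollary following \Cref{thm:BeckMod}), the $\F$-linear map $dA \to M$, $da \mapsto \delta(a)$, extends uniquely to an $A$-module map
\[
\widetilde\delta \colon \UU_{\Gamma(\P)}(A) \otimes dA \longrightarrow M, \qquad \beta_{x,\r}(a_1,\dots,a_{s-1},-) \otimes da \longmapsto \beta_{x,\r}(a_1,\dots,a_{s-1},\delta(a)).
\]
The key step is to verify that $\widetilde\delta$ coequalizes the two parallel arrows defining $\Omega_{\Gamma(\P)}(A)$ in \Cref{def:Kahler}, and hence descends to an $A$-module map $\Psi(\delta) \colon \Omega_{\Gamma(\P)}(A) \to M$. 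Unwinding, for a generator $\beta_{x,\r}(a_1,\dots,a_{s-1},-) \otimes \beta_{y,\q}(b_1,\dots,b_u)$ of $\UU_{\Gamma(\P)}(A) \otimes \Gamma(\P,A)$, the image under $\widetilde\delta \circ (\UU_{\Gamma(\P)}(A) \otimes ev_A)$ is $\beta_{x,\r}(a_1,\dots,a_{s-1}, \delta(\beta_{y,\q}(b_1,\dots,b_u)))$, while the image under $\widetilde\delta \circ \gamma$ is
\[
\sum_{j=1}^u \sum_{\substack{l+l'=q_j \\ l'>0}} \beta_{x,\r}\bigl(a_1,\dots,a_{s-1},\, \beta_{y,\q\circ_j(l,l')}(b_1,\dots,b_j,\delta(b_j),b_{j+1},\dots,b_u)\bigr),
\]
after applying the multiplication in $\UU_{\Gamma(\P)}(A)$ and the $A$-action on $M$. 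The Beck derivation formula of \Cref{prop:derivation} applied to $\delta$ on the input $\beta_{y,\q}(b_1,\dots,b_u)$ says precisely that these two expressions agree inside $M$, after applying the operation $\beta_{x,\r}(a_1,\dots,a_{s-1},-)$ to both sides. Thus $\widetilde\delta$ factors through the coequalizer, yielding $\Psi(\delta)$.

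Finally, $\Phi$ and $\Psi$ are mutually inverse: $\Phi(\Psi(\delta))(a) = \Psi(\delta)(da) = \delta(a)$, while for an $A$-module map $\phi \colon \Omega_{\Gamma(\P)}(A) \to M$, both $\Psi(\Phi(\phi))$ and $\phi$ agree on the generators $da = d(a)$ of $\Omega_{\Gamma(\P)}(A)$ as an $A$-module, hence coincide. Naturality in $M$ is evident from the formulas. The only real content, and the main obstacle, is the coequalizer check in the middle paragraph; but this is essentially a rewriting of the derivation property and is enforced precisely because the defining map $\gamma$ of \Cref{def:Kahler} was designed to encode it.
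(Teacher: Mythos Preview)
Your proof is correct and follows essentially the same approach as the paper's: both construct the bijection by precomposing with the universal derivation in one direction and, in the other, extending a derivation to the free $A$-module $\UU_{\Gamma(\P)}(A)\otimes dA$ and checking it factors through the coequalizer. You spell out the coequalizer check in more detail than the paper does (the paper simply asserts that the derivation property ensures passage to the coequalizer), but the argument is the same.
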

        \begin{proof}
            We have to show that for all $A$-modules $M$, there is a linear bijection, natural in $M$:
            \[
                \Hom_{\lMod{A}}(\Omega_{\Gamma(\P)}(A),M)\cong \Der_A(A,M).
            \]
            Let $f \colon \Omega_{\Gamma(\P)}(A)\to M$ be an $A$-module morphism. Consider the linear map $D \colon A\to M$ given by $D=f\circ d$, where $d \colon A\to \Omega_{\Gamma(\P)}(A)$ is the universal derivation. Since the universal derivation is a derivation, and since $f$ is an $A$-module morphism, $D$ is a derivation. The assignment $f\mapsto D$ yields a linear map
            \[
            		\phi \colon \Hom_{\lMod{A}}(\Omega_{\Gamma(\P)}(A),M) \to \Der_A(A,M),
            	\]
            	natural in $M$.

            Conversely, let $D \colon A\to M$ be a derivation. We can consider $D$ as a linear map $dA\to M$. This extends uniquely into an $A$-module morphism $\UU_{\Gamma(\P)}(A)\otimes dA\to M$. The fact that $D$ is a derivation ensures that this passes to the coequalizer into an $A$-module morphism $f \colon \Omega_{\Gamma(\P)}(A)\to M$. This assignment $D\mapsto f$ yields a linear map 
            \[
            		\psi \colon \Der_A(A,M) \to \Hom_{\lMod{A}}(\Omega_{\Gamma(\P)}(A),M).
            	\]
            	It is straightforward %
            to check that $\phi$ and $\psi$ are mutually inverse and natural in $M$.
        \end{proof}

	To conclude this section, let us link this new notion of Kähler differentials for a $\Gamma(\P)$-algebra to the usual notion of Kähler differentials for a $\P$-algebra. Recall that $U^{\Ga(\P)}_{\P} \colon \Alg{\Ga(\P)} \to \Alg{\P}$ denotes the forgetful functor from $\Gamma(\P)$-algebras to $\P$-algebras. Then, if $A$ is a $\Gamma(\P)$-algebra, $\Omega_{\P}(A) \dfn \Omega_{\P}(U^{\Ga(\P)}_{\P}(A))$ is the usual $U^{\Ga(\P)}_{\P}(A)$-module of Kähler differentials of the $\P$-algebra $U^{\Ga(\P)}_{\P}(A)$, as in \cite{LodayV12}*{12.3.8}. %
        \begin{prop}
            The map $\te\otimes dA \colon \UU_{\P}(A)\otimes dA\to \UU_{\Gamma(\P)}(A)\otimes dA$, where $\te$ is defined in Proposition~\ref{prop:MapEnvAlg}, induces a map $\te \colon \Omega_{\P}(A)\to \Omega_{\Gamma(\P)}(A)$ %
            given by
            \[
            	\nu(a_1,\dots,da_i,\dots,a_s) \mapsto \beta_{\nu,(1,\dots,1)}(a_1,\dots,da_i,\dots,a_s).
            \]
        \end{prop}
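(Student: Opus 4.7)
The plan is to lift $\te\otimes dA$ through the two coequalizer presentations of the Kähler modules. Recall that $\Omega_{\Gamma(\P)}(A)$ is defined as the coequalizer of the pair
\[
\UU_{\Gamma(\P)}(A)\otimes\Gamma(\P,A)\rightrightarrows \UU_{\Gamma(\P)}(A)\otimes dA,
\]
one arm being induced by the evaluation $ev_A\colon\Gamma(\P,A)\to A$, the other by the map $\gamma$ recorded in \Cref{def:Kahler}. Exactly the same presentation with $S(\P)$ and $\UU_{\P}(A)$ in place of $\Gamma(\P)$ and $\UU_{\Gamma(\P)}(A)$ describes $\Omega_{\P}(A)$ (see \cite{LodayV12}*{\S12.3.8}). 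Thus it suffices to exhibit a commutative square
\[
\diag{
\UU_{\P}(A)\otimes S(\P,A)\ar@<3pt>[r]\ar@<-3pt>[r]\ar[d]_{\te\otimes\Tr_A}&\UU_{\P}(A)\otimes dA\ar[d]^{\te\otimes dA}\\
\UU_{\Gamma(\P)}(A)\otimes\Gamma(\P,A)\ar@<3pt>[r]\ar@<-3pt>[r]&\UU_{\Gamma(\P)}(A)\otimes dA
}
\]
for both parallel arrows; passage to coequalizers then produces the desired factorisation.

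The commutation with the evaluation maps is immediate from naturality of the trace $\Tr\colon S(\P)\to\Gamma(\P)$: since $A$ is a $\Gamma(\P)$-algebra, the $\P$-algebra structure on $U^{\Gamma(\P)}_{\P}(A)$ satisfies $ev_A^{\Gamma(\P)}\circ \Tr_A=ev_A^{\P}$, and $\te$ is a ring map by \Cref{prop:MapEnvAlg}. For the commutation with the two $\gamma$ maps, one picks a generator $\nu(a_1,\dots,a_{s-1};1)\otimes \nu'(b_1,\dots,b_u)\in\UU_{\P}(A)\otimes S(\P,A)$ and computes both routes of the square. Going right then down uses $\te(\nu(a_1,\dots,a_{s-1};1))=\beta_{\nu,(1,\dots,1)}(a_1,\dots,a_{s-1},-)$ together with the fact that $\Tr_A(\nu'(b_1,\dots,b_u))$ corresponds to $\beta_{\nu',(1,\dots,1)}(b_1,\dots,b_u)$ under the characterisation of \Cref{thm:inv}. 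Going down then right uses the explicit formula for $\gamma$ from \Cref{def:Kahler} applied to $\beta_{\nu',(1,\dots,1)}$, which only produces summands with compositions $\q\circ_j(l,l')=(1,\dots,1,0,\dots,0,1,0,\dots,0)$, i.e., compositions that collapse (via \ref{rel0} and \ref{relunit}) to the classical $\P$-algebraic derivation formula. Matching the two outputs term by term amounts to recognising that the multiplication of $\UU_{\Gamma(\P)}(A)$ on these very elementary compositions reduces, by the defining relations~\ref{rel0} and~\ref{relunit}, to the operadic composition in $\P$ that defines multiplication in $\UU_{\P}(A)$.

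The main obstacle will therefore be this last bookkeeping identification: one must verify that the sum $\sum_{\tau}\tau\cdot x(1_{\P}^{\times n-r_s},y^{\times r_s})$ appearing in the multiplication of $\UU_{\Gamma(\P)}(A)$ becomes, when all relevant multiplicities are $1$, exactly the operadic composition $x\circ_s y$ used in $\UU_{\P}(A)$. Granting this, the derivation formula on both sides matches index by index and the square commutes. The induced map $\te\colon\Omega_{\P}(A)\to\Omega_{\Gamma(\P)}(A)$ then sends the class of $\nu(a_1,\dots,da_i,\dots,a_s)$ to the class of $\beta_{\nu,(1,\dots,1)}(a_1,\dots,da_i,\dots,a_s)$, as claimed.
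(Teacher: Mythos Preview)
Your proposal is correct and follows the same approach as the paper, which simply asserts in one line that ``one readily checks that the given map passes to the coequalizers.'' You have spelled out what that check entails---matching the two parallel pairs through $\te\otimes\Tr_A$ and $\te\otimes dA$---and correctly identified that the only nontrivial point is the collapse of the $\UU_{\Gamma(\P)}(A)$-multiplication formula to plain operadic composition when all multiplicities equal $1$, which holds because the indexing quotient $\Sigma_{\r\diamond_s\q}/\bigl(\prod_i\Sigma_{r_i}\times\Sigma_{r_s}\wr\Sigma_{\q}\bigr)$ is trivial in that case.
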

        
        \begin{proof}
        	One readily checks that the given map passes to the coequalizers.
        \end{proof}

\section{Quillen cohomology}\label{sec:Quillencohomology}

We now have all the ingredients to describe Quillen (co)homology of $\Ga(\P)$-algebras, as listed in Section~\ref{sec:Prelim}.

\begin{lemm}\label{lem:Pushforward}
Let $f \colon B \to A$ be a morphism of $\Ga(\P)$-algebras. Via the equivalence from Theorem~\ref{thm:Amodule}, the pushforward along $f$ is given by $f_!(M) = \UU_{\Gamma(\P)}(A) \otimes_{\UU_{\Gamma(\P)}(B)} M$.
\end{lemm}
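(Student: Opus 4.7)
The plan is to reduce the computation of $f_!$ to the standard extension-of-scalars adjunction, using the equivalence of \cref{thm:BeckMod} between $A$-modules and left $\UU_{\Gamma(\P)}(A)$-modules.

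First, I would verify that the construction $\UU_{\Gamma(\P)}(-)$ is functorial. A morphism $f \colon B \to A$ of $\Gamma(\P)$-algebras should induce a ring homomorphism
\[
\UU_{\Gamma(\P)}(f) \colon \UU_{\Gamma(\P)}(B) \to \UU_{\Gamma(\P)}(A)
\]
sending a generator $\beta_{x,\r}(b_1,\dots,b_{s-1},-)$ to $\beta_{x,\r}(f(b_1),\dots,f(b_{s-1}),-)$. Well-definedness amounts to checking that each of the nine defining relations in \cref{def:UnivEnvAlg} is preserved under this substitution, which holds because all those relations are purely operadic and $f$ commutes with the divided-power operations by assumption. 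Multiplicativity with respect to the composition product $\mu$ is immediate from the defining formula, and one must also check that both the left and the twisted right $\F$-actions are respected (the relevant exponent $r_s$ is intrinsic to the generator and is unaffected by $f$).

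Second, I would identify $f^*$ explicitly under the equivalence of \cref{thm:Amodule}. Given an $A$-module $M$, the pullback of the Beck module $A \ltimes M \surj A$ along $f$ is $(A \ltimes M) \times_A B$, which as a $\Gamma(\P)$-algebra over $B$ is naturally isomorphic to $B \ltimes f^*M$, where $f^*M$ denotes the vector space $M$ equipped with the $B$-module structure
\[
\beta_{x,\r}^{f^*M}(b_1,\dots,b_{s-1},m) \dfn \beta_{x,\r}^M(f(b_1),\dots,f(b_{s-1}),m).
\]
Via the equivalence of \cref{thm:BeckMod}, this is precisely the restriction of scalars along $\UU_{\Gamma(\P)}(f)$. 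The axioms required of $f^*M$ (the relations \ref{relAperm}--\ref{relAMspec}) follow from the corresponding ones for $M$ after substituting $f(b_i)$ for $b_i$.

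Finally, the left adjoint of restriction of scalars along a ring homomorphism $\varphi \colon R \to S$ is, by the standard Hom-tensor adjunction, the extension of scalars $S \otimes_R -$. Applied to $\varphi = \UU_{\Gamma(\P)}(f)$, this yields $\UU_{\Gamma(\P)}(A) \otimes_{\UU_{\Gamma(\P)}(B)} M$ as the left adjoint to $f^*$, and by uniqueness of left adjoints this agrees with the categorical pushforward $f_!$. The main obstacle is the first step: while intuitively clear, the well-definedness of $\UU_{\Gamma(\P)}(f)$ requires a careful (though essentially mechanical) inspection of each relation; once that is established, the remaining steps follow formally from the equivalence $\lMod{A} \simeq \UU_{\Gamma(\P)}(A)\text{-Mod}$ of \cref{thm:BeckMod}.
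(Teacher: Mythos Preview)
Your proposal is correct and follows essentially the same approach as the paper: functoriality of $\UU_{\Gamma(\P)}(-)$, identification of $f^*$ with restriction of scalars via \cref{thm:BeckMod}, and the classical extension/restriction adjunction. The paper's proof is terser, simply asserting the functoriality and invoking the equivalence from \cref{thm:BeckMod}, whereas you spell out the explicit formula for $\UU_{\Gamma(\P)}(f)$ and the pullback computation; but the content is the same.
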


\begin{proof}
The universal enveloping algebra provides a functor 
\[
\UU_{\Gamma(\P)} \colon \Alg{\Gamma(\P)} \to \Alg{\uAs}
\]
to unital rings. The ring homomorphism $\UU_{\Gamma(\P)}(f) \colon \UU_{\Gamma(\P)}(B) \to \UU_{\Gamma(\P)}(A)$ makes $\UU_{\Gamma(\P)}(A)$ into a right $\UU_{\Gamma(\P)}(B)$-module. 
Via the equivalence from Theorem~\ref{thm:BeckMod}, the pushforward adjunction
\[
	\diag{\lMod{B}\ar@<3pt>[r]^{f_!}&\lMod{A}\ar@<3pt>[l]^{f^*}}
\]
corresponds to the classical restriction/extension of scalars along $\UU_{\Gamma(\P)}(f)$.
\end{proof}

\begin{theo}\label{thm:Abelianization}
Let $A$ be a $\Ga(\P)$-algebra.
\begin{enumerate}
	\item The following two functors form an adjoint pair:
    \[
    	\xymatrix @C+2.6cm {
    	\Alg{\Gamma(\P)}/A \ar@<3pt>[r]^-{\UU_{\Gamma(\P)}(A)\otimes_{\UU_{\Gamma(\P)}(-)}\Omega_{\Gamma(\P)}(-)} & \lMod{A}. \ar@<3pt>[l]^-{A\ltimes -}
    	}
        \diag@=3.5cm{}
    \]
	\item The adjunction simplicially prolongs to a Quillen pair.
	\item Let $C_{\bullet} \ral{\sim} A$ be a simplicial resolution of $A$, i.e., a cofibrant replacement in $s(\Alg{\Gamma(\P)})$. The cotangent complex of $A$ is the simplicial $A$-module
\[
\LL_A = \UU_{\Gamma(\P)}(A) \otimes_{\UU_{\Gamma(\P)}(C_{\bullet})} \Omega_{\Gamma(\P)}(C_{\bullet}).
\]
\end{enumerate}
\end{theo}

In particular, the~$n$\textsuperscript{th} Quillen homology module of $A$ is the $A$-module $\HQ_n(A) = \pi_n(\LL_A)$. The~$n$\textsuperscript{th} Quillen cohomology group of $A$ with coefficients in an $A$-module $M$ is the abelian group 
\[
\HQ^n(A;M) = \pi^n \left( \Hom_{\lMod{A}}(\LL_A,M) \right).
\]

\begin{proof}
(1) Via the equivalences from Theorems~\ref{thm:Amodule} and \ref{thm:BeckMod}, the statement follows from combining Proposition~\ref{prop:Kahler}, Lemma~\ref{lem:Pushforward}, and Lemma~\ref{lem:PushAbel}.

(2) Via the equivalence from Theorem~\ref{thm:BeckMod}, the right adjoint $A\ltimes -$ is the forgetful functor
\[
\left( \Alg{\Gamma(\P)}/A \right)_{\ab} \to \Alg{\Gamma(\P)}/A.
\]
The claim then follows from \cite{Frankland15}*{Proposition~3.40}. 
Part~(3) follows from part~(1).
\end{proof}

\section{Example: The operad \texorpdfstring{$\Com$}{Com}}\label{sec:ExCom}

In this section, we apply our general constructions for Beck modules, universal enveloping algebra, Beck derivations and Kähler differentials to divided power algebras over the operad $\Com$ of associative, commutative (non-unital) algebras. We check that these correspond to the constructions obtained by the first author in \cites{Dokas09,Dokas23}.

We will divide this section into two subsections. In %
Section~\ref{sec:ComClassical}, 
we recall the definition of classical divided power algebras. We refer the reader to \cites{Cartan56exp7,Roby65} and \cite{Berthelot74}*{\S I} for more details. We then review the characterisation of Beck modules, universal enveloping algebra, Beck derivations and Kähler differentials for these objects obtained by the first author \cites{Dokas09,Dokas23}. 
In %
Section~\ref{sec:ComOperadic}, 
we show how these characterisations correspond to those given in this article for a general operad $\P$, once we set $\P=\Com$.

\subsection{Classical definition and state of the art}\label{sec:ComClassical}

\begin{defi}\label{def:PDring}
Let $(A,I)$ be a commutative ring together with an ideal $I\subset A$. \Def{A system of divided powers} on $I$ is a collection of maps  $\gamma_{i} \colon  I\to A$, where $i\geq 0$, such that the following identities hold:
\begin{align}
\gamma_{0}(a) &=1, \label{PDeq0}\\
\gamma_{1}(a) &=a, \;\;\; \gamma_{i}(a)\in I, \; i\geq 1,\label{PDeq1}\\
\gamma_{i}(a+b) &=\sum_{k=0}^{k=i}\gamma_{k}(a)\gamma_{i-k}(b), \; a,b\in I,\; i\geq 0,  \label{PDeq2}\\
\gamma_{i}(ab) &=a^{i}\gamma_{i}(b),\; a\in A, \; i\geq 0,\label{PDeq3}\\
\gamma_{i}(a)\gamma_{j}(a) &=\frac{(i+j)!}{i!j!}\gamma_{i+j}(a), \; a\in I,\; i,j \geq 0,\label{PDeq4}\\
\gamma_{i}(\gamma_{j}(a)) &=\frac{(ij)!}{i!(j!)^{i}}\gamma_{ij}(a), \; a\in I,\; i\geq 0,\; j\geq 1.\label{PDeq5}
\end{align}
We say that $(I,\gamma)$ is a \Def{PD ideal} of $A$. We call \Def{divided power ring} the data of a triple $(A,I,\gamma)$ where $A$ is a ring, and $(I, \gamma)$ is a PD ideal. A morphism of divided power rings 
\[
f \colon  (A,I,\gamma) \to  (B,J,\delta)
\]
is a ring homomorphism $f \colon  A\to A'$ such that $f(I)\subset J$ and such that $f(\gamma_{i}(a))=\delta_{i}(f(a))$ for all $i\geq 0$ and $a\in I$. 
\end{defi}

Note that the identities~$\eqref{PDeq1}$ and $\eqref{PDeq4}$ imply that $a^{n}=n!\gamma_{n}(a)$, where $n\in \mathbf{N}$ and $a\in I$. In particular, in a divided power ring of characteristic $0$, $\gamma_n(a)=\frac{a^n}{n!}$ for all $a\in I$. This justifies the name ``divided powers''. In prime characteristic $p$, one has $a^{p}=0$, for all $a\in I$.

\begin{nota}
Given an augmented $\F$-algebra $A$ with augmentation $\epsilon \colon A \to \F$, denote $A_+ = \ker(\epsilon \colon A \to \F)$. 
 
Given a non-unital $\F$-algebra $B$, denote by $\ag{B} = \F \op B$ the augmented algebra obtained by formally adjoining a unit.

We follow the notation $A_+ = \ker(\epsilon \colon A \to \F)$ %
used in \cite{Soublin87} and \cite{Dokas09}, though some authors use a different notation, notably $\overline{A}$ in \cite{LodayV12}*{\S 1.1}.
\end{nota}

We now %
assume that the base field $\F$ has 
characteristic $p\neq 0$. We restrict to the case of divided power rings $(A,I,\gamma)$ such that $A$ is an augmented $\F$-algebra $A = \F \oplus A_{+}$, and $I=A_+$. 
In this setting, Soublin showed that the divided power structure is entirely determined by a map $\pi$ playing the role of $\gamma_p$.

\begin{prop}[\cite{Soublin87}*{Théorème~1}]\label{prop:Soublin}
    The category of divided power augmented $\F$-algebras $(A,A_+,\gamma)$ is equivalent to the category $\pCom$ with objects the pairs $(A,\pi)$, where $A$ is an augmented algebra with augmentation ideal $A_+$, and where $\pi \colon A_+\to A_+$ is a set map satisfying:
    \begin{align}
a^{p} &= 0,\;a\in A_{+},  \label{DPpeq1}\\
\pi(a+b) &=\pi(a)+\pi(b)+\sum_{k=1}^{k=p-1}\dfrac{(-1)^k}{k}a^{k}b^{p-k},\;a,b\in A_{+}, \label{DPpeq2}\\
\pi(ab)  &=0,\; a,b\in A_{+}, \label{DPpeq3}\\
\pi(\lambda a) &=\lambda^{p}\pi(a),\;a\in A_{+}, \lambda \in \F ,\label{DPpeq4}
\end{align}
and where morphisms $\alpha \colon (A,\pi)\to (A',\pi')$ are homomorphism of augmented algebras $\alpha \colon  A\to A'$ such that $\alpha\circ \pi=\pi'\circ \alpha$.
\end{prop}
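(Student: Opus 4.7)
The plan is to exhibit mutually inverse functors between the two categories. The forward functor $\Phi$ sends $(A, A_+, \gamma)$ to $(A, \gamma_p)$. I would verify the four relations of $\pCom$ in turn. Relation~\eqref{DPpeq1} follows from $a^p = p! \gamma_p(a) = 0$ in characteristic $p$; relation~\eqref{DPpeq4} is~\eqref{PDeq3} applied to a scalar $\lambda \in \F$; and relation~\eqref{DPpeq3} combines~\eqref{PDeq3} with~\eqref{DPpeq1}. The key identity is~\eqref{DPpeq2}: substituting $\gamma_k(a) = a^k/k!$ for $1 \le k \le p-1$ in the expansion $\gamma_p(a+b) = \sum_{k=0}^p \gamma_k(a)\gamma_{p-k}(b)$ coming from~\eqref{PDeq2}, and using the congruence $1/(k!(p-k)!) \equiv (-1)^k/k \bmod p$ (which follows from Wilson's theorem together with $\binom{p-1}{k-1} \equiv (-1)^{k-1} \bmod p$), one obtains precisely the claimed formula.

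For the inverse functor $\Psi$, given $(A, \pi)$, I would reconstruct the divided powers using the base-$p$ expansion. Writing $n = n_0 + n_1 p + \dots + n_k p^k$ with $0 \le n_i < p$, and letting $\pi^i$ denote the $i$-fold iterate of $\pi$ with $\pi^0 = \id$, Lucas's theorem ensures that the coefficients encountered when iterating~\eqref{PDeq5} at $(p, p^{i-1})$ are units in $\F$; I would define scalars $f_i \in \F^\times$ so that $\gamma_{p^i}(a) := f_i \pi^i(a)$ satisfies~\eqref{PDeq5}, then set
\[
\gamma_n(a) := \frac{a^{n_0}}{n_0!} \cdot \frac{\gamma_p(a)^{n_1}}{n_1!} \cdot \frac{\gamma_{p^2}(a)^{n_2}}{n_2!} \cdots \frac{\gamma_{p^k}(a)^{n_k}}{n_k!}.
\]
All denominators are invertible in $\F$ since $n_i < p$. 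This formula is forced by iterating~\eqref{PDeq4} combined with the reconstruction of $\gamma_{p^i}$.

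The main obstacle will be verifying that $\gamma$ defined this way satisfies all of~\eqref{PDeq0}--\eqref{PDeq5}. Relations~\eqref{PDeq0} and~\eqref{PDeq1} are immediate from the formula; relation~\eqref{PDeq4} reduces, via the formula, to the multiplicativity of base-$p$ digit binomials that is Lucas's theorem for $\binom{i+j}{i}$; relation~\eqref{PDeq5} likewise follows from the construction of $f_i$ and iterated Lucas-type computations. Relation~\eqref{PDeq3} uses~\eqref{DPpeq3} and~\eqref{DPpeq4} iteratively: $\pi(ab) = 0$ for $a, b \in A_+$ forces $\gamma_{p^i}(ab) = 0$ for all $i \ge 1$, so only the $a^{n_0}$ factor from the formula contributes, which is handled by~\eqref{DPpeq4}. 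The subtlest axiom is~\eqref{PDeq2}: I would argue by induction on $n$, with~\eqref{DPpeq2} as the critical base case at $n = p$, and propagate via~\eqref{PDeq4} together with repeated applications of~\eqref{DPpeq2} to expand $\gamma_{p^i}(a+b) = f_i \pi^i(a+b)$ at each level. The compositions are then identities: $\Phi \circ \Psi = \id$ since $\gamma_p = \pi$ holds by construction, and $\Psi \circ \Phi = \id$ since any PD structure is forced by~\eqref{PDeq4} and~\eqref{PDeq5} to coincide with the reconstruction formula. Functoriality on morphisms is automatic, since a morphism of augmented $\F$-algebras commuting with $\pi$ commutes with every $\gamma_n$ via the explicit formula.
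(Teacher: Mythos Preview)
The paper does not give its own proof of this proposition; it is stated with a citation to Soublin and no argument is supplied. So there is nothing in the paper to compare your proposal against directly.

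That said, your outline is the standard one and is essentially correct. The forward functor $\Phi$ is handled cleanly; your coefficient identity $1/(k!(p-k)!) \equiv (-1)^k/k \pmod p$ is right and your justification via Wilson and $\binom{p-1}{k-1} \equiv (-1)^{k-1}$ is valid. For the inverse functor $\Psi$, the base-$p$ reconstruction formula is the correct one, and your treatment of \eqref{PDeq4} via Lucas together with $x^p = 0$ for $x \in A_+$ (to kill the carry case) is right.

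Two places deserve more care. First, for \eqref{PDeq5} the claim that the coefficient $\frac{(p^i)!}{p!\,((p^{i-1})!)^p}$ is a unit is not Lucas's theorem per se; it is a $p$-adic valuation count (or Kummer's theorem), and you should say so. More importantly, verifying \eqref{PDeq5} for \emph{all} pairs $(i,j)$, not just $(p, p^{i-1})$, is genuinely more work than your sketch suggests. Second, your verification of \eqref{PDeq3} tacitly uses additivity to split $a = \lambda + a'$ with $\lambda \in \F$ and $a' \in A_+$, which relies on \eqref{PDeq2}; you should make the logical order explicit so there is no circularity. The inductive verification of \eqref{PDeq2} you allude to is the real technical heart of the argument and would need to be written out in full---repeatedly expanding $\pi^i(a+b)$ via \eqref{DPpeq2} and controlling the cross terms---rather than just asserted.
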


\subsubsection*{Beck modules} 
Fix a divided power augmented $\F$-algebra $A$, or equivalently, an object $(A,\pi)$ of the category $\pCom$ defined above. Then, the first author obtained the following characterisation for Beck $A$-modules:

\begin{theo}[\cite{Dokas09}*{Theorem~3.3}]
The category of Beck $A$-modules is
equivalent to the category $\mathcal{M}$ whose objects are pairs
$(M,\pi)$ where $M$ is a $A$-module and $\pi \colon  M \to
M$ is a $p$-semilinear map such that $\pi(am)=0$ for all $a\in A_{+}$ and $m\in M$ and whose morphisms $(M,\pi) \to
(M',\pi')$ are $A$-module homomorphisms $\alpha \colon  M \to M'$ such that $\pi'\circ \alpha = \alpha\circ \pi$.
\end{theo}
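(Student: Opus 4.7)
The plan is to combine \Cref{thm:Amodule} with Soublin's equivalence (\Cref{prop:Soublin}), applied to the semidirect product $A \ltimes M$. By \Cref{thm:Amodule}, the category of Beck $A$-modules is equivalent to $\lMod{A}$ from \Cref{def:Amodule}, so it suffices to exhibit an equivalence $\lMod{A} \simeq \mathcal{M}$.

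Starting from $M \in \lMod{A}$, the key observation is that the semidirect product $A \ltimes M$ is an augmented $\F$-algebra with a divided power structure, so by Soublin's equivalence it carries a $p$-operation $\pi_{A \ltimes M}$ on the augmentation ideal $A_+ \oplus M$, satisfying relations~\eqref{DPpeq1}--\eqref{DPpeq4}. Since the projection $A \ltimes M \surj A$ is a morphism in $\pCom$, its kernel $M$ is preserved, and one defines $\pi_M := \pi_{A \ltimes M}|_M \colon M \to M$. The three properties of $\mathcal{M}$ then transfer directly: $p$-semilinearity from~\eqref{DPpeq4}; additivity from~\eqref{DPpeq2} (since all cross terms $m_1^k m_2^{p-k}$ vanish because $M \cdot M = 0$ in the semidirect product); and $\pi_M(am) = 0$ for $a \in A_+$ from~\eqref{DPpeq3} applied to the product $a \cdot m$, which is a product of two elements of $(A \ltimes M)_+$.

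Conversely, given $(M, \pi_M) \in \mathcal{M}$, I would extend to a $p$-operation on $A \oplus M$ by setting
\[
\pi(a+m) = \pi(a) + \pi_M(m) + \frac{(-1)^{p-1}}{p-1}\, a^{p-1} m,
\]
the remaining cross terms $a^k m^{p-k}$ with $p - k \geq 2$ being killed by $M \cdot M = 0$. A routine verification of~\eqref{DPpeq1}--\eqref{DPpeq4} for this $\pi$, using the $A$-module axioms and the three assumed properties of $\pi_M$, produces by \Cref{prop:Soublin} a $\Gamma(\Com)$-algebra structure on $A \ltimes M$ over $A$, hence a Beck module. Functoriality is then straightforward: the condition $\pi_{M'} \alpha = \alpha \pi_M$ on an $A$-module map $\alpha$ is exactly what is needed for the induced map $A \oplus M \to A \oplus M'$ to preserve the $p$-operation.

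The main point to be careful about is justifying that these two assignments are mutually inverse. The delicate step is that an abelian $\Gamma(\Com)$-algebra carries divided power operations $\gamma_{p^i}$ for every $i \geq 1$, whereas $\mathcal{M}$ only records the single operation $\pi_M = \gamma_p$. This is resolved by iterating~\eqref{PDeq5}: a short Legendre-formula computation shows that $\frac{(p^{i+1})!}{p!\,(p^i!)^p}$ is a unit modulo $p$, so $\gamma_{p^{i+1}}(m)$ is a nonzero scalar multiple of $\pi_M(\gamma_{p^i}(m))$. All higher divided powers on $M$ are therefore determined by $\pi_M$ alone, and the equivalence $\lMod{A} \simeq \mathcal{M}$ follows.
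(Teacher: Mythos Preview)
Your proof is correct. The paper does not prove this theorem directly (it is quoted from \cite{Dokas09}), but its Section on the operadic point of view recovers the statement by a different route: it specialises \Cref{def:Amodule} to $\P=\Com$, identifies the multiplication with $\beta_{X_2,(1,1)}$ and $\pi$ with $\beta_{X_p,(p)}$, and then verifies the vanishing $\pi(bm)=0$ by computing that relation~\ref{relAMcomp} produces a coefficient $p!=0$. Your argument instead stays on the classical side: you pass through the semidirect product $A\ltimes M$ and apply Soublin's characterisation (\Cref{prop:Soublin}) as a black box to extract and reinsert the $p$-operation on $M$. This is essentially the original strategy of \cite{Dokas09}. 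The trade-off is that you avoid unpacking the operadic relations of \Cref{def:Amodule}, at the cost of the ``routine verification'' of \eqref{DPpeq1}--\eqref{DPpeq4} for your extended $\pi$, which is straightforward but not entirely trivial (the cross terms in \eqref{DPpeq2} require a small polynomial identity).

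One remark: the ``delicate step'' you flag about the higher $\gamma_{p^i}$ is already absorbed by Soublin's theorem. Since \Cref{prop:Soublin} asserts that the single map $\pi=\gamma_p$ determines the entire divided power structure, once your extended $\pi$ on $A_+\oplus M$ satisfies \eqref{DPpeq1}--\eqref{DPpeq4}, the full $\Gamma(\Com)$-structure it induces via Soublin must agree with the one coming from $\lMod{A}$, and the round-trip is automatically the identity. Your Legendre computation is correct and illustrative, but not logically required.
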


\subsubsection*{Enveloping algebra} 
Let us now build the universal enveloping algebra of $A$, which is the representing object for the category of Beck $A$-modules. Let $R_{f}$ be the polynomial ring consisting of the set of polynomials $\sum_{i=0}^{i=m}\lambda_{i}f^{i}$ where $\lambda_{i} \in \F$, $f$ is an indeterminate and $f\lambda=\lambda^{p}f$.
 
We define the ring $V(A)$ as the ring whose underlying $\F$-vector space is the tensor product $V(A) \dfn A\otimes_{\F} R_{f}$ and the multiplication is given by:
\begin{align*}
(a\otimes 1)(a'\otimes 1)&=(aa'\otimes 1),\;\;a,a'\in A,\\
(1\otimes q)(1\otimes q')&=(1\otimes qq'),\;\; q,q'\in R_{f},\\
(a\otimes 1)(1\otimes f) &=(a\otimes f),\; a\in A,\\
(1\otimes f)(a\otimes 1)&=0,\; a\in A_{+},\\
(1\otimes f)(\lambda \otimes 1)&=(\lambda^{p}\otimes f),\;\lambda \in \F.
\end{align*}
Then, the first author obtained the following:
\begin{theo}[\cite{Dokas09}*{Theorem~3.4}]\label{thm:modCom}
The category of Beck $A$-modules is equivalent to the category of left $V(A)$-modules. 
\end{theo}

\subsubsection*{Beck derivations} 
Let $A'\in p$-${\rm Com}/A$ be a divided powers algebra over $A$ and $(M,\pi)$ a Beck $A$-module. The first author obtained the following:
\begin{prop}[\cite{Dokas09}*{Theorem~3.2}]
    The abelian group of Beck derivations of $A'$ into $M$ is given by \\
    \resizebox{\linewidth}{!}{\begin{minipage}{\linewidth}
	\begin{align*}
		\Der_{p}(A',(M,\pi))=\{d\in \Der(A',M)|\; d(\pi(a))=\pi(d(a))-a^{p-1}d(a),\,a\in A'_{+}\}.
	\end{align*}
\end{minipage}}

\end{prop}

\subsubsection*{Kähler differentials} 
We then get the following characterisation for the module of Kähler differentials:
\begin{theo}[\cite{Dokas09}*{Theorem~4.1}]\label{thm:KahlerDiv}
    The module of Kähler differentials for the augmented divided power $\F$-algebra $A$ is the $V(A)$-module $\Omega_{\pCom}(B)$ with the following presentation: the generators are the symbols $da$ for $a \in A$, and the relations are
    \begin{align}
        \label{DPpK1} d(\lambda a + \mu b)&=\lambda da + \mu db,\\
        \label{DPpK2}d(ab)&=adb+bda,\\
        \label{DPpK3}d(\pi(c))&=fdc-c^{p-1}dc,
    \end{align}
    where $a,b\in A$, $\lambda,\mu\in\F$, and $c\in A_+$.
\end{theo}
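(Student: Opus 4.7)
The plan is to derive this presentation by specializing the general theory of \cref{sec:Universalalgebra,sec:derivations} to the operad $\Com$, using the identification $\UU_{\Gamma(\Com)}(A) \cong V(A)$ obtained by comparing \cref{def:UnivEnvAlg} for $\P = \Com$ with the definition of $V(A)$ given above. By \cref{def:Kahler} and \cref{prop:Kahler}, $\Omega_{\Gamma(\Com)}(A)$ is the $V(A)$-module representing Beck derivations, generated by symbols $da$ for $a \in A$ with relations encoded by applying \cref{prop:derivation} to each $\Gamma(\Com)$-operation. The $\F$-linearity of $d$ (relation \eqref{DPpK1}) is built into the coequalizer definition.

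Since $\Com(n)^{\Sigma_n}$ is one-dimensional, spanned by the fully symmetric product $\mu_n$, \cref{prop:powersp} reduces attention to partitions $\r$ consisting of powers of $p$. The composition relation~\ref{relcomp} combined with the divided-power identity $\gamma_{p^2} \propto \gamma_p \circ \gamma_p$ (with nonzero proportionality in $\F$ by a Legendre/Lucas valuation count applied to \eqref{PDeq5}) further reduces the generating operations to the binary product $\beta_{\mu_2,(1,1)}(a,b) = ab$ and the $p$-th divided power $\pi(c) := \beta_{\mu_p,(p)}(c)$. Applying \cref{prop:derivation} to the product immediately yields the Leibniz rule $d(ab) = a\,db + b\,da$, which is exactly \eqref{DPpK2}.

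Applying \cref{prop:derivation} to $\pi(c)$ gives
\[
d(\pi(c)) = \sum_{l+l' = p,\, l' > 0} \beta_{\mu_p,(l,l')}(c, dc).
\]
By \cref{prop:AMpowersp}, every summand with $l'$ not a power of $p$ vanishes, leaving $l' \in \{1, p\}$. The $l' = p$ term reduces via~\ref{relA0} to $\beta_{\mu_p,(p)}(dc)$, which is precisely the action $f \cdot dc$ in the $V(A)$-module structure. For the $l' = 1$ term, relation~\ref{relArepet} applied to the refinement of $(p-1,1)$ by $(1^{p-1},1)$, together with the $\Sigma_p$-invariance of $\mu_p$, yields
\[
c^{p-1} \cdot dc \;=\; \beta_{\mu_p,(1^{p-1},1)}(c,\ldots,c,dc) \;=\; [\Sigma_{(p-1,1)} : \Sigma_{(1^p)}]\,\beta_{\mu_p,(p-1,1)}(c, dc) \;=\; (p-1)!\,\beta_{\mu_p,(p-1,1)}(c, dc),
\]
so Wilson's theorem $(p-1)! \equiv -1 \pmod p$ gives $\beta_{\mu_p,(p-1,1)}(c, dc) = -c^{p-1} \cdot dc$. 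Summing produces $d(\pi(c)) = f\,dc - c^{p-1}\,dc$, recovering \eqref{DPpK3}.

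The main obstacle is tracking the sign in the $l' = 1$ calculation, which rests on the symmetrization count $[\Sigma_{(p-1,1)} : \Sigma_{(1^p)}] = (p-1)!$ from~\ref{relArepet} combined with Wilson's theorem. One also needs to verify that no further relations come from the higher operations $\beta_{\mu_{p^i},(p^i)}$ with $i \geq 2$: since these factor through iterated composition of $\pi$ and the product via~\ref{relcomp} and \eqref{PDeq5}, applying the derivation rule to them is a formal consequence of the Leibniz rule and \eqref{DPpK3}, introducing no new constraints on $\Omega_{\Gamma(\Com)}(A)$. This completes the identification with the $V(A)$-module in the statement.
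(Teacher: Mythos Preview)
Your proposal is correct and follows essentially the same route as the paper's own derivation in \cref{sec:ComOperadic}: both specialize \cref{def:Kahler} and \cref{prop:derivation} to the generating operations $\beta_{X_2,(1,1)}$ and $\beta_{X_p,(p)}$, show the intermediate terms $\beta_{X_p,(p-i,i)}$ for $2\le i\le p-1$ vanish by the relation underlying \cref{prop:AMpowersp}, and extract the sign via Wilson's theorem. The only cosmetic difference is that the paper obtains the $(p-1,1)$ term by factoring $\beta_{X_p,(p-1,1)}(b,-)=\beta_{X_2,(1,1)}(\gamma_{p-1}(b),-)$ and using $\gamma_{p-1}(b)=\tfrac{b^{p-1}}{(p-1)!}$, whereas you reach the same $(p-1)!$ factor through \ref{relArepet}.
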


\subsection{Operadic point of view}\label{sec:ComOperadic}

Let us now show how we recover these notions from Sections\ref{sec:BeckModules}, \ref{sec:Universalalgebra} and \ref{sec:derivations}. Recall that the operad $\Com$ of non-unital commutative algebras is defined as a symmetric sequence by
    \[
        \Com(n)=\begin{cases}
            T_n,&\mbox{if }n>0,\\
            \0,&\mbox{if }n=0,
        \end{cases}
    \] 
    where $T_n$ denotes the trivial representation of dimension 1 generated by an element we denote by $X_n$, and with partial compositions given by $X_n\circ_iX_k=X_{n+k-1}$. We get the following:

    \begin{theo}[\cite{Fresse00}*{Proposition~1.2.3}]
        A $\Gamma(\Com)$-algebra is a (non-unital) associative, commutative algebra $B$ equipped, for all $i>0$, with a set map $\gamma_i \colon B\to B$ satisfying the relations \eqref{PDeq1} to \eqref{PDeq5} from Definition~\ref{def:PDring}.
    \end{theo}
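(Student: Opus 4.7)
The plan is to apply \Cref{thm:inv} in the special case $\P=\Com$, observing that because $\Com(n)^{\Sigma_{\underline r}}$ is one-dimensional for every composition $\underline r$ of $n$, the family of operations $\beta_{x,\underline r}$ collapses dramatically. More precisely, for each composition $\underline r=(r_1,\dots,r_s)$ of $n$, the space $\Com(n)^{\Sigma_{\underline r}}$ is spanned by $X_n$, so there is a single operation $\beta_{\underline r}:=\beta_{X_n,\underline r}$ up to scalar. Relation~\ref{rellin} ensures linearity in~$X_n$, and the other $\beta$-relations will translate directly into classical identities once we set
\[
ab := \beta_{X_2,(1,1)}(a,b), \qquad \gamma_n(a) := \beta_{X_n,(n)}(a).
\]

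From $\Gamma(\Com)$-algebra to divided power algebra: I would first derive commutativity and associativity of the binary product from \ref{relperm} and \ref{relcomp} applied to $X_2$, using that $X_2\circ_1 X_2 = X_3 = X_2\circ_2 X_2$ in $\Com$. The divided power axioms then translate as follows: \eqref{PDeq1} is exactly \ref{relunit}; \eqref{PDeq3}, namely $\gamma_n(\lambda a)=\lambda^n\gamma_n(a)$, is a direct instance of \ref{rellambda}; \eqref{PDeq2} follows from \ref{relsomme} applied to $\beta_{X_n,(n)}(a+b)$ together with the identification $\beta_{X_n,(k,n-k)}(a,b) = \gamma_k(a)\gamma_{n-k}(b)$ (which itself comes from repeatedly applying \ref{relcomp} in $\Com$ where the composed symmetric invariants are forced to be $X_n$); \eqref{PDeq4} is a special case of \ref{relrepet} with $\underline q=(2)$ and $\underline r=(i,j)$, since the index of $\Sigma_{(i,j)}$ in $\Sigma_{i+j}$ is exactly $\binom{i+j}{i}$; and \eqref{PDeq5} follows from \ref{relcomp} applied to $\beta_{X_i,(i)}(\beta_{X_j,(j)}(a),\dots,\beta_{X_j,(j)}(a))$, after computing the cardinality of $\Sigma_{ij}/(\Sigma_i\wr\Sigma_j)$ to recover the multinomial coefficient $(ij)!/(i!(j!)^i)$.

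From divided power algebra to $\Gamma(\Com)$-algebra: Given a classical divided power algebra $(B,\gamma)$, I would define
\[
\beta_{X_n,\underline r}(a_1,\dots,a_s) := \gamma_{r_1}(a_1)\cdots\gamma_{r_s}(a_s),
\]
extended linearly in the first slot. By \Cref{prop:powersp} it would in fact suffice to define these operations when each $r_i$ is a power of the characteristic, but it is actually just as easy to define them for all $\underline r$. Relations \ref{relperm}, \ref{rel0}, \ref{rellambda}, \ref{rellin}, and \ref{relunit} are immediate from the commutativity of the product and from~\eqref{PDeq1}--\eqref{PDeq3}. Relation \ref{relsomme} unfolds into repeated applications of~\eqref{PDeq2}; relation \ref{relrepet} is exactly~\eqref{PDeq4} extended by induction to arbitrary refinements (the binomial coefficients assemble into the index $[\Sigma_{\underline q \rhd\underline r}:\Sigma_{\underline r}]$); and relation \ref{relcomp} follows from iterated use of~\eqref{PDeq5} together with~\eqref{PDeq4}, after carefully counting cosets in $\Sigma_{\underline r\diamond(\underline q_i)}/\prod\Sigma_{r_i}\wr\Sigma_{\underline q_i}$.

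The main obstacle will be the bookkeeping required for relation \ref{relcomp} in the reverse direction: expanding $\gamma_{r_1}(\gamma_{q_{1,1}}(b_{1,1})\cdots \gamma_{q_{1,k_1}}(b_{1,k_1}))\cdots$ via repeated applications of~\eqref{PDeq5} and~\eqref{PDeq4} and matching the resulting binomial/multinomial coefficients with the combinatorial index $[\Sigma_{\underline r\diamond(\underline q_i)}:\prod\Sigma_{r_i}\wr\Sigma_{\underline q_i}]$. Once this combinatorial identity is checked, the two constructions are manifestly mutually inverse at the level of underlying data (the multiplication is $\beta_{X_2,(1,1)}$ and the divided powers are $\beta_{X_n,(n)}$), and the equivalence of categories follows.
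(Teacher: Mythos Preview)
The paper does not prove this theorem; it is stated as a result of Fresse and cited without proof. Your proposal to derive it from \Cref{thm:inv} is a sound and natural approach within the paper's own framework, and indeed the paper's subsequent discussion in \S\ref{sec:ComOperadic} uses exactly the dictionary you set up ($ab=\beta_{X_2,(1,1)}(a,b)$ and $\gamma_n=\beta_{X_n,(n)}$, with the general $\beta_{X_n,\underline r}$ given by products of $\gamma_{r_i}$), citing \cite{Ikonicoff20} for the details. So your route is different from the paper only in that you are supplying an argument where the paper defers to the literature; the approach itself is the expected one.

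One small correction: you identify \eqref{PDeq3} with the scalar identity $\gamma_n(\lambda a)=\lambda^n\gamma_n(a)$, but in \cref{def:PDring} relation~\eqref{PDeq3} is $\gamma_i(ab)=a^i\gamma_i(b)$ with $a\in A$. In the non-unital setting this reads $\gamma_n(bb')=b^n\gamma_n(b')$ for $b,b'\in B$, and the scalar case alone does not cover it. The missing step is a short application of \ref{relcomp}: computing $\beta_{X_n,(n)}\bigl(\beta_{X_2,(1,1)}(b,b')\bigr)$ yields $\beta_{n!\,X_{2n},(n,n)}(b,b')=n!\,\gamma_n(b)\gamma_n(b')$, and then $b^n=n!\,\gamma_n(b)$ gives the claim. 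This is routine, so the gap is cosmetic rather than substantive.
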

    Note that there is a slight abuse of notation here: relation~\eqref{PDeq2} should be replaced by: 
    \[
        \gamma_{i}(a+b)=\gamma_i(a)+\gamma_i(b)+\sum_{k=1}^{k=i-1}\gamma_{k}(a)\gamma_{i-k}(b).
    \]
    The category of $\Gamma(\Com)$-algebras is then equivalent to the category of divided power augmented $\F$-algebras. Indeed, if $B$ is a $\Gamma(\Com)$-algebra, then $\ag{B} = \F \oplus B$ is equipped with a unique structure of divided power augmented $\F$-algebra $(\ag{B},B,\delta)$ such that $\delta_i(b)=\gamma_i(b)$ for all $i>0$ and $b\in B$. Conversely, if $(A,A_+,\gamma)$ is an augmented divided power $\F$-algebra, then, the collection of maps $\gamma_i$ for $i>0$ equips $A_+$ with a structure of $\Gamma(\Com)$-algebra as above. Following Soublin's theorem, a $\Gamma(\Com)$-algebra is equivalently characterised as a (non-unital) associative, commutative algebra $B$ equipped with a map $\pi \colon B\to B$ satisfying relations \eqref{DPpeq1} to \eqref{DPpeq4} of Proposition~\ref{prop:Soublin}.

    Let us now use the characterisation of $\Gamma(\Com)$-algebra from Theorem~\ref{thm:inv}. Following \cite{Ikonicoff20}, if $B$ is a $\Gamma(\Com)$-algebra, its multiplication is given by $\beta_{X_2,(1,1)}$, and $\pi$, which represents the divided power operation $\gamma_p$, is given by $\beta_{X_p,(p)}$.

\subsubsection*{Abelian algebras, Beck modules} 
Following Definition~\ref{def:abeliangammapalg}, an abelian $\Gamma(\Com)$-algebra $M$ is a vector space $M$ endowed with a trivial multiplication $mm'=0$ and a semilinear map $\pi \colon M\to M$ satisfying $\pi(\lambda m)=\lambda^p\pi(m)$. 

    Following Definition~\ref{def:Amodule}, a $B$-module $M$ becomes a module $M$ over the commutative algebra $B$, endowed with a trivial multiplication, and a semilinear map $\pi \colon M\to M$ satisfying $\pi(\lambda m)=\lambda^p\pi(m)$ and such that $\pi(bm)=0$. This last property comes from relation~\ref{relAMcomp}, noticing that $\vert\∑_{(p)\diamond(1,1)}/\∑_p\wr(\∑_{(1,1)})\vert=p!$, and using \ref{relAMlin}:
    \begin{align*}
        \pi(bm)&=\beta_{X_p,(p)}(\beta_{X_2,(1,1)}(b,m)),\\
        &=\beta_{p!X_{2p},(p,p)}(b,m),\\
        &=p!\beta_{X_{2p},(p,p)}(b,m)=0.
    \end{align*}

\subsubsection*{Enveloping algebra} 
From Definition~\ref{def:UnivEnvAlg} the universal enveloping algebra $\UU_{\Gamma(\Com)}(B)$ is spanned by symbols
\[
\beta_{X_n,\r}(b_1,\dots,b_{s-1},-) \quad \text{with} \quad r_1+\dots+r_s=n.
\]
We can reduce this generating family: using the relations of Definition~\ref{def:UnivEnvAlg}, and the same reasoning as in \cite{Ikonicoff20}*{\S 3.3} and \cite{Soublin87}, $\UU_{\Gamma(\Com)}(B)$ is spanned by symbols $\beta_{X_1,(1)}(-)$ (the unit), $\beta_{X_2,(2)}(b,-)$ for $b\in B$, and $\beta_{X_p,(p)}(-)$.

We can then show that there is an isomorphism between $\UU_{\Gamma(\Com)}(B)$ and $V(\ag{B})$, where $V(B)$ was defined in Section \ref{sec:ComClassical}. This isomorphism sends $\beta_{X_1,(1)}(-)$ to $1\otimes 1$, $\beta_{X_2,(1,1)}(b,-)\in \UU_{\Gamma(\Com)}(B)$ to $b\otimes 1$, and $\beta_{X_p,(p)}(-)\in \UU_{\Gamma(\Com)}(B)$ to $1\otimes f$.

Note that $B$ injects into $\UU_{\Gamma(\Com)}(B)$, by identifying each $b\in B$ to $\beta_{X_2,(1,1)}(b,-)$. For a $B$-module (i.e., a $\UU_{\Gamma(\Com)}(B)$-module) $M$, we will denote by $bm:=\beta_{X_2,(1,1)}(b,m)=\beta_{X_2,(1,1)}(b,-)\cdot m$, and $\pi(m):=\beta_{X_p,(p)}(m)$, for $b\in B$, $m\in M$.

\subsubsection*{Beck derivations} 
By Proposition~\ref{prop:derivation}, a Beck derivation $d \colon B\to M$ is a linear map $d \colon B\to M$ satisfying $d(ab) = a db + b da$ and
\[
    d(\pi(b))=\sum_{i=1}^{p}\beta_{X_p,(p-i,i)}(b,d(b)).
\]
Note that, for all $i$ such that $2\le i\le p-1$,
\[
    X_p=\frac{1}{i!}i!X_p=\frac{1}{i!}\sum_{\tau\in\∑_{p-i}\times \∑_i/\∑_{p-i}\times \∑_1^{\times i}}\tau X_p,
\] 
and so,
\begin{align*}
    \beta_{X_p,(p-i,i)}(b,db)&=\beta_{\frac{1}{i!}\sum_{\tau\in\∑_{p-i}\times \∑_i/\∑_{p-i}\times \∑_1^{\times i}}\tau X_p,(p-i,i)}(b,db)\\
    &=\frac{1}{i!}\beta_{\sum_{\tau\in\∑_{p-i}\times \∑_i/\∑_{p-i}\times \∑_1^{\times i}}\tau X_p,(p-i,i)}(b,db),
\end{align*}
and according to relation~\ref{relMrepet}, $\beta_{\sum_{\tau\in\∑_{p-i}\times \∑_i/\∑_{p-i}\times \∑_1^{\times i}}\tau X_p,(p-i,i)}(b,db)=0$. So,
\begin{align*}
     d(\pi(b))&=\beta_{X_p,(p-1,1)}(b,db)+\beta_{X_p,(0,p)}(b,db),\\
     &=\beta_{X_2,(1,1)}(\beta_{X_{p-1},(p-1)}(b),db)+\beta_{X_p,(p)}(db),\\
     &=\frac{b^{p-1}}{(p-1)!}db+\pi(db).
\end{align*}
Since in characteristic $p$, $(p-1)!=-1$ (see Wilson's Theorem \cite{DummitF04}*{\S 13.5 Exercise~6}), one has:
    \[
         d(\pi(b))=\pi(d(b))-b^{p-1}d(b).
    \]
We recover the characterisation of Beck derivations given in \cite{Dokas09}.

\subsubsection*{Kähler Differentials} 
    By Definition~\ref{def:Kahler}, the module of Kähler differentials $\Omega_{\Gamma(\Com)}(B)$ of $B$ is the $\UU_{\Gamma(\Com)}(B)$-module generated by elements $db$ for $b\in B$, linear in $b$, under the relations:
    \begin{align*}
        d(ab) &= d(\beta_{X_2,(1,1)}(a,b)) \\
        &= \beta_{X_2,(1,1)}(-,b)\otimes da + \beta_{X_2,(1,1)}(a,-)\otimes db \\
        &= bda+adb,\\
        d(\pi(b)) &= d(\beta_{X_p,(p)}(b)) = \sum_{i=1}^p\beta_{X_p,(p-i,i)}(b,-)\otimes db.
    \end{align*}
    For this last relation, note that the term corresponding to $i=1$ is:
    \begin{align*}
        \beta_{X_p,(p-1,1)}(b,-)\otimes b &= \beta_{X_2,(1,1)}(\beta_{X_{p-1},(p-1)}(b),-)\otimes db \\
        &= \frac{1}{(p-1)!}b^{p-1}db =-b^{p-1}db.
    \end{align*}
    The term corresponding to $i=p$ is 
    \[
         \beta_{X_p,(0,p)}(b,-)\otimes b=\beta_{X_p,(p)}(-)\otimes db=\pi(db).
    \]
    For all $i\in\{2,\dots, p-1\}$, we have again 
    \[
    		X_p = i\frac{1}{i}X_p = \sum_{\sigma\in\∑_{p-i}\times \∑_i/\∑_{p-i}\times \∑_1\times\∑_{i-1}}\frac{1}{i}X_p.
    	\]
    	So, $\beta_{X_p,(p-i,i)}(b,-)=\sum_{\sigma\in\∑_{p-i}\times \∑_i/\∑_{p-i}\times \∑_1\times\∑_{i-1}}\frac{1}{i}X_p$ is one of the elements described in Definition~\ref{def:UnivEnvAlg} in point~\eqref{item:InvariantSum} 
    \[
         d(\pi(b))=\pi(db)-b^{p-1}db.
    \]
    We can show that this module of Kähler differentials $\Omega_{\Gamma(\Com)}(B)$ is isomorphic to the module of Kähler differentials $\Omega_{\pCom}(\ag{B})$ of the augmented divided power $\F$-algebra $\ag{B}$. This isomorphism is transparent on elements $db, a db$, and sends $\pi(db)$ to $f\cdot db$.

\section{Example: The operad \texorpdfstring{$\Lie$}{Lie}}\label{sec:ExLie}

In this section, %
we specialize our general constructions to  
the operad $\Lie$ of Lie algebras. We check that %
the resulting constructions 
correspond to %
those 
obtained by the first author in \cite{Dokas04}. 

As in Section~\ref{sec:ExCom}, we divide the section into two subsections. In %
Section~\ref{sec:LieClassical}, 
we recall the definition of restricted Lie algebras. We refer the reader to \cite{Jacobson62} for more details. We then review the characterisation of Beck modules and Beck derivations for these objects obtained by the first author \cite{Dokas04}. 
In %
Section~\ref{sec:LieOperadic}, 
we show how these characterisations correspond to those given in this article for a general operad $\P$, once we set $\P=\Lie$.

\subsection{Classical definition and state of the art}\label{sec:LieClassical}

We assume that the base field $\F$ has characteristic $p\neq 0$.

\begin{defi}[\cite{Jacobson62}*{\S V.7}]\label{def:Jac}
A \Def{restricted Lie algebra} $L=(L,(-)^{[p]})$ over $\F$ is a Lie
algebra over $\F$ together with a map $(-)^{[p]} \colon L \to L$ called the $p$-map such that the following relations
hold

\begin{align}
\label{RLeq1}(\alpha l)^{[p]} &=  \alpha^{p}\;l^{[p]}\\
\label{RLeq2}[l,{l'}^{[p]}]     &= [\cdots[l,\underbrace{{l'}],{l'}],\cdots,{l'}}_{p}] \\{}
\label{RLeq3}(l+{l'})^{[p]} &= l^{[p]}+{l'}^{[p]}+\sum_{i=1}^{p-1}s_{i}(l,{l'})
\end{align}
where $i s_{i}(l,{l'})$ is the coefficient of $\lambda^{i-1}$ in
$ad^{p-1}_{\lambda l+{l'}}(l)$. Here, $ad_{l} \colon  L\to L$ denotes the
adjoint representation given by $ad_{l}({l'}):=[{l'},l]$, $l,{l'} \in L,\;
\alpha \in \F$. A Lie algebra homomorphism $f \colon  L\to L'$ is
called \Def{restricted} if $f(l^{[p]})=f(l)^{[p]}$. We denote by
$\RLie$ the category of restricted Lie algebras over $\F$.
\end{defi}

\begin{ex}
Let $A$ be any associative algebra over a field $\F$. We denote by $A_{\Lie}$ the induced Lie
algebra with the bracket given by $[l,{l'}]:=l{l'}-{l'}l$, for all $l,{l'}\in
A$. Then $(A_{\Lie}, (-)^{[p]})$ is a restricted Lie algebra where $(-)^{[p]}$
is the $p$-th power $l\mapsto l^{p}$. Thus, there is a
functor 
\[
(-)_{\RLie} \colon  \As \to \RLie
\]
from the
category of associative algebras to the category of restricted Lie
algebras.
\end{ex}

\begin{ex}
Let $\rm{G}$ be an algebraic group over $\F$. The associated Lie
algebra $Lie(\rm{G})$ of $\rm{G}$ is endowed with the structure of
restricted Lie algebra \cite{Borel91}*{\S I.3} \cite{Waterhouse79}*{\S 12.1}.
\end{ex}

\subsubsection*{Beck modules} 
Let $L$ be a Lie algebra over $\F$. A \Def{Lie module} over $L$ is a $\F$-vector space $M$ equipped with a $\F$-bilinear map $L\otimes_{\F}M\to M$:  $l \ot m \mapsto lm$ such that
\[
[l,l']m = l(l'm)-l'(lm),\;\text{for all}\;l,l'\in L \; \text{and}\; m\in M.
\]

\begin{defi} 
Let $L$ be a restricted Lie algebra over $\F$. A Lie $L$-module $M$ is
called \Def{restricted} if $l^{[p]} m=(\underbrace{l(\cdots (l(l}_{p} m)\cdots)$. 

Let $M$ be a restricted $L$-module. We denote by $M^{L}$, the following $L$-submodule of $M$:
\[
    M^{L} = \{ m\in M : lm=0 \text{ for all } l\in L \}.
\]
\end{defi}

The first author obtained the following characterisation for Beck $L$-modules.

\begin{theo}[\cite{Dokas04}*{Theorem~1.7}]
The category of Beck $L$-modules is
equivalent to the category $\mathcal{M}$ whose objects are pairs
$(M,f)$ where $M$ is a restricted $L$-module and $f \colon M \to M^{L}$ is a $p$-semilinear map from $M$ into its submodule of
invariants $M^{L}$ and whose morphisms $(M_{1},f_{1}) \to (M_{2},f_{2})$ are $L$ homomorphisms $\alpha  \colon  M_{1} \to M_{2}$ such that $f_{2}\circ \alpha = \alpha\circ f_{1}$.
\end{theo}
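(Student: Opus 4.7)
The plan is to specialise \Cref{thm:Amodule} to $\P = \Lie$, exactly parallel to the treatment of $\Com$ in \Cref{sec:ComOperadic}. First I would recall from \cite{Fresse00} that $\Gamma(\Lie)$-algebras are precisely restricted Lie algebras, with the bracket given by $[l,l'] = \beta_{c,(1,1)}(l,l')$ for the generator $c \in \Lie(2)$, and the $p$-map given by $l^{[p]} = \beta_{y,(p)}(l)$ for the distinguished invariant $y \in \Lie(p)^{\Sigma_p}$ representing $\mathrm{ad}^{p-1}$.

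By \Cref{thm:Amodule}, a Beck $L$-module is an $L$-module in the sense of \Cref{def:Amodule}, and by \Cref{prop:AMpowersp} this structure is generated by operations $\beta_{x,\r}(l_1,\dots,l_{s-1},-)$ whose parts $r_i$ are all powers of $p$. Since $\Lie$ is generated under operadic composition by its binary bracket, relation \ref{relAMcomp} lets me reduce all such operations to iterated compositions of two principal generators: the Lie action $l \cdot m \dfn \beta_{c,(1,1)}(l,m)$ and the set map $f(m) \dfn \beta_{y,(p)}(m)$. This identifies the underlying data of a Beck $L$-module with the underlying data of an object $(M,f)$ of $\mathcal{M}$.

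Next I would verify that these generators satisfy the defining relations of $\mathcal{M}$. The Lie module identity $[l,l'] \cdot m = l \cdot (l' \cdot m) - l' \cdot (l \cdot m)$ follows from \ref{relAMcomp} together with the Jacobi relation in $\Lie(3)$; additivity, bilinearity, and the $p$-semilinearity $f(\la m) = \la^p f(m)$ come from \ref{rellin}, \ref{relAMlin}, \ref{relAsomme}, \ref{relMsomme}, and \ref{relMlambda}; and the restricted condition $l^{[p]} \cdot m = l \cdot (l \cdot (\cdots (l \cdot m)))$ ($p$ factors) follows from \ref{relAMcomp} applied to $\beta_{c,(1,1)}(\beta_{y,(p)}(l),m)$, interpreting the symmetrised invariant $\sum_\tau \tau \cdot c(y,1_\P) \in \Lie(p+1)^{\Sigma_p \times \Sigma_1}$ as the iterated adjoint $\mathrm{ad}_l^{p}$.

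The main obstacle will be the invariance condition $f(m) \in M^L$, equivalently $l \cdot f(m) = 0$. Via \ref{relAMcomp}, this reduces to showing that the symmetrised element $\sum_\tau \tau \cdot c(1_\P, y) \in \Lie(p+1)^{\Sigma_1 \times \Sigma_p}$ can be rewritten as $\sum_\sigma \sigma \cdot z$ for some $z \in \Lie(p+1)^{\Sigma_1 \times \Sigma_\q}$ with $\q$ a partition of $p$ containing at least two non-zero parts, so that relation \ref{relMrepet} forces vanishing. This rewriting will require a careful Jacobi-based computation in $\Lie(p+1)$ together with arithmetic modulo $p$ (Lucas's and Wilson's theorems), in the spirit of the corresponding step for $\Com$. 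Once this invariance is settled, the inverse construction assigning to $(M,f) \in \mathcal{M}$ the Beck module with the two prescribed generators (extended to all divided power operations via \ref{relAMcomp}), as well as the compatibility of morphisms, are immediate, and the two assignments are mutually inverse by construction.
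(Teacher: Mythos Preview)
Your proposal is correct and matches the paper's approach in \cref{sec:LieOperadic}, which likewise derives this theorem by specialising \Cref{thm:Amodule} and \Cref{def:Amodule} to $\P=\Lie$, identifying the bracket with $\beta_{[-,-],(1,1)}$ and the $p$-map with $\beta_{F_p,(p)}$ for the standard invariant $F_p\in\Lie(p)^{\Sigma_p}$.

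One small simplification: the invariance step $l\cdot f(m)=0$ is easier than you anticipate. The paper does not need Lucas's or Wilson's theorems here. Applying \ref{relAMcomp} to $\beta_{[-,-],(1,1)}(l,\beta_{F_p,(p)}(m))$ yields $\beta_{[-,-]\circ_2 F_p,(1,p)}(l,m)$, and the operadic form of Jacobson's identity \eqref{RLeq2} (recorded in the paper as \eqref{RLeq2'}) says precisely that
\[
[-,-]\circ_2 F_p \;=\; \sum_{\sigma\in\Sigma_1\times\Sigma_p}\sigma\cdot\bigl(\underbrace{[-,-]\circ_1\cdots\circ_1[-,-]}_{p}\bigr)\in\Lie(p+1)^{\Sigma_1\times\Sigma_p}.
\]
This is a full symmetrisation over $\Sigma_p$ of a single element, so \ref{relMrepet} applies directly with the partition $\q=(1,\dots,1)$ of $p$ (which has $p\ge 2$ nonzero parts). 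No further arithmetic is required; the Lucas/Wilson arguments in \cref{sec:ExCom} were specific to the $\Com$ case and to the derivation formulae, not to the module axioms themselves.
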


When no confusion arises, we will always denote by $f \colon M\to M$ the $p$-semilinear map of a Beck $L$-module $M$.

\subsubsection*{Enveloping algebra} 
Let us now build the universal enveloping algebra of $L$, which is the representing object for the category of Beck $L$-modules. Let $L\in \RLie$ be a restricted Lie algebra and $U(L)$ its usual
enveloping algebra \cite{Jacobson62}*{\S V.1}. We first recall the construction of the restricted enveloping algebra $u(L)$ which is a representing object for restricted $L$-modules.

\begin{defi}
    We denote by $u(L)$ the quotient of the algebra $U(L)$ by the relations $l^{p}-l^{[p]}$ for $l\in L$, which we call the \Def{restricted enveloping algebra} of $L$.
\end{defi} 

This construction provides a functor 
$u \colon \RLie \to \As$. 

\begin{theo}[\cite{Jacobson62}*{\S V.7}]\label{thm:modLie}
    The category of restricted $L$-modules is equivalent to the category of $u(L)$-modules.
\end{theo}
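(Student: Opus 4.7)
The plan is to exhibit mutually inverse functors between the two categories, both acting as the identity on underlying $\F$-vector spaces. The key technical input is the universal property of the ordinary enveloping algebra $U(L)$ in the category of Lie algebras, combined with the explicit quotient description $u(L) = U(L)/I$, where $I$ is the two-sided ideal generated by $\{l^p - l^{[p]} : l \in L\}$.

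First, given a restricted $L$-module $M$, its underlying Lie $L$-module structure extends uniquely to a $U(L)$-module structure by the universal property of $U(L)$. The restricted axiom $l^{[p]} m = l(l(\cdots(l\,m)\cdots))$ with $p$ copies of $l$ states precisely that the Lie element $l^{[p]}$ acts on $M$ as the iterated composition $l \circ \cdots \circ l$, which in $U(L)$ is $l^p$. Consequently every generator $l^p - l^{[p]}$ of $I$ acts as $0$ on $M$, so $I \subseteq \mathrm{Ann}_{U(L)}(M)$, and the action descends to a $u(L)$-module structure. Conversely, a $u(L)$-module $M$ pulls back along the surjection $U(L) \twoheadrightarrow u(L)$ to a $U(L)$-module, which restricts to a Lie $L$-module via the canonical inclusion $L \hookrightarrow U(L)$; the defining relation $l^p = l^{[p]}$ in $u(L)$ then forces the restricted condition on the action.

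Since both functors preserve the underlying vector space and the operation $m \mapsto lm$ for each $l \in L$, the two composites are the identity on objects. A morphism of restricted $L$-modules is by definition an $\F$-linear map commuting with the $L$-action; this is equivalent to commuting with the $U(L)$-action by the universal property, and equivalent in turn to commuting with the $u(L)$-action, since $u(L)$ is generated as a ring by the image of $L$. Hence the functors are inverse equivalences of categories.

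The only subtle step is the well-definedness of the first functor: one needs the \emph{two-sided} ideal $I$ to annihilate $M$, not merely its generators. This is automatic because $M$ is already a $U(L)$-module and the generators $l^p - l^{[p]}$ act as $0$, so any element of $I$, being a finite sum of products $u \cdot (l^p - l^{[p]}) \cdot v$ with $u, v \in U(L)$, also acts as $0$. No further calculation is required, and the result is exactly Jacobson's classical theorem.
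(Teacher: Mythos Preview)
Your proof is correct and is the standard argument for this classical result. The paper itself does not supply a proof of this theorem: it is stated with a citation to Jacobson \cite{Jacobson62}*{\S V.7} and used as a black box, so there is no paper proof to compare against beyond noting that your argument is essentially the one found in Jacobson's text.
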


Following N.~Jacobson (see \cite{Jacobson62}*{\S V.2}), the functor $u$ is part of an adjunction
\[
u \colon \RLie \rightleftarrows \As \colon  (-)_{\RLie}
\]
Following \cite{Dokas04}, denote by $w(L)$ the $\F$-vector space $R_{f} \otimes_{\F} u(L)$, where $R_f$ is the polynomial ring on one indeterminate $f$ as in Section~\ref{sec:ExCom}. Then, $w(L)$ can be equipped with a ring structure such that
$R_{f}\to w(L)$ and $u(L)\to w(L)$ are ring homomorphisms, and such that:
\[
(f \otimes 1)(1\otimes l):=f \otimes l \quad \text{and} \quad (1 \otimes l) (f \otimes 1):=0
\]
for all $l\in L$. 
The first author obtained the following characterization of Beck modules, which yields $\UU_{\Ga(\Lie)}(L) = w(L)$.

\begin{theo}[\cite{Dokas04}*{Theorem~1.8}]\label{thm:mod}
The category of Beck $L$-modules is equivalent to the category of left $w(L)$-modules.
\end{theo}

\begin{lemm}\label{lem:MapEnvAlgLie}
The ring homomorphism $\te \colon \UU_{\Lie}(L) \to \UU_{\Ga(\Lie)}(L)$ from Proposition~\ref{prop:MapEnvAlg} is the composite
\[
U(L) \surj u(L) \to w(L)
\]
where $U(L) \surj u(L)$ is the quotient map and $u(L) \to w(L)$ is the ring homomorphism $x \mapsto 1 \otimes x$. %
\end{lemm}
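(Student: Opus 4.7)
The plan is to identify $\te(l)$ for every $l \in L$ and then invoke the fact that $L$ generates $U(L) = \UU_\Lie(L)$ as a unital ring. Under the operadic convention from \cite{LodayV12}, the element $l \in L \subset U(L)$ corresponds to the generator $[-,-](l;1)$, where $[-,-] \in \Lie(2)$ is the Lie bracket. Applying the explicit formula of \cref{prop:MapEnvAlg} gives
\[
\te(l) = \beta_{[-,-],(1,1)}(l,-) \in \UU_{\Ga(\Lie)}(L).
\]

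By \cref{thm:BeckMod} and \cref{thm:mod}, the rings $\UU_{\Ga(\Lie)}(L)$ and $w(L)$ both represent the category of Beck $L$-modules, so they are canonically isomorphic as rings. Under this isomorphism, $\beta_{[-,-],(1,1)}(l,-)$ corresponds to $1 \otimes l \in w(L)$, since both elements act on every Beck $L$-module $M$ via the restricted Lie module structure $m \mapsto l \cdot m$. Therefore $\te(l) = 1 \otimes l$ for every $l \in L$. Now the composite $\varphi \colon U(L) \surj u(L) \to w(L)$, where $u(L) \to w(L)$ is the ring homomorphism $x \mapsto 1 \otimes x$ coming from the construction of $w(L) = R_f \otimes_\F u(L)$, is itself a ring homomorphism sending each $l \in L$ to $1 \otimes l$. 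Since $\te$ and $\varphi$ are ring homomorphisms out of $U(L)$ agreeing on the generating set $L$, they coincide. As a direct verification that $\te$ kills the kernel of $U(L) \surj u(L)$, one computes
\[
\te(l^p) = (1 \otimes l)^p = 1 \otimes l^p = 1 \otimes l^{[p]} = \te(l^{[p]})
\]
in $w(L)$, using the defining relation $l^p = l^{[p]}$ of $u(L)$ at the penultimate step.

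The one delicate point is the identification $\beta_{[-,-],(1,1)}(l,-) \leftrightarrow 1 \otimes l$ under the canonical isomorphism $\UU_{\Ga(\Lie)}(L) \cong w(L)$. I do not anticipate any real obstacle here, as this identification is forced by the universal property applied to the forgetful functor from Beck $L$-modules to $\F$-vector spaces. If a more concrete argument is desired, one can use \cref{prop:EnvAlgpowerp} to describe $\UU_{\Ga(\Lie)}(L)$ as generated by the elements $\beta_{[-,-],(1,1)}(l,-)$ for $l \in L$ together with a single divided $p$-th power generator $\beta_{\pi,(p)}(-)$ for a suitable $\pi \in \Lie(p)^{\Sigma_p}$, and then check by hand that matching these to $1 \otimes l$ and $f \otimes 1$ in $w(L)$ respectively yields the desired ring isomorphism.
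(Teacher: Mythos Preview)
Your argument is correct. The paper actually states this lemma without proof, treating it as an immediate consequence of the explicit isomorphism $\UU_{\Ga(\Lie)}(L)\cong w(L)$ described in the ``Enveloping algebra'' paragraph of \S\ref{sec:LieOperadic}, which sends $\beta_{[-,-],(1,1)}(l,-)$ to $1\otimes l$ and $\beta_{F_p,(p)}(-)$ to $f\otimes 1$; your proof supplies exactly the details implicit in that remark.
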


\subsubsection*{Beck derivations} 
Let $M$ be a Lie $L$-module, a \Def{derivation} of $L$ into $M$ is a $\F$-linear map $D \colon  L\rightarrow M$ such that the Leibniz formula holds
\[
D([l,l']) = lD(l')-l'D(l)
\]
for all $l,l'\in L$. The set of such derivations is denoted by $\Der(L,M)$. Let $L' \in \RLie/L$ be a restricted Lie
algebra over $L$, and $(M,f)$ a Beck $L$-module. %
The first author obtained the following.

\begin{prop}[\cite{Dokas04}*{Lemma~1.4}]\label{prop:Beckdev}
    The abelian group of Beck derivations of $L'$ into $M$ is given by:
\[
\Der_{p}(L',(M,f)) \dfn \{ d\in \Der(L',M): d(l^{[p]}) = \underbrace{l\cdots l}_{p-1}dl+f(d(l)), \; l\in L' \}
\]
\end{prop}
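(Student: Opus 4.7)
The plan is to deduce the proposition from the general characterization of Beck derivations obtained in \cref{prop:derivation}, applied to $\P = \Lie$. A Beck derivation $d \colon L' \to M$ is equivalently a linear map such that $pr + d \colon L' \to L \ltimes M$ is a morphism of $\Gamma(\Lie)$-algebras, and by the Jacobson--Fresse characterization a $\Gamma(\Lie)$-algebra structure is entirely determined by the Lie bracket $[-,-]$ and the $p$-map $(-)^{[p]}$. Thus I need only verify the derivation condition on these two operations.

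For the bracket $[l,l'] = \beta_{\ell,(1,1)}(l,l')$, \cref{prop:derivation} specializes to the classical Leibniz formula $d([l,l']) = [dl, l'] + [l, dl']$, which in the module notation of the $w(L)$-action becomes $d([l,l']) = l\cdot dl' - l'\cdot dl$. This yields exactly the condition that $d$ lies in $\Der(L',M)$.

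For the $p$-map, the cleanest route is to compute $(pr(l), d(l))^{[p]}$ in $L \ltimes M$ directly via Jacobson's identity~\eqref{RLeq3}:
\[
(a+m)^{[p]} = a^{[p]} + m^{[p]} + \sum_{i=1}^{p-1} s_i(a,m),
\]
with $a = pr(l)$, $m = d(l)$. Since $M$ is an abelian $\Gamma(\Lie)$-algebra, $m^{[p]} = f(m)$. The iterated adjoint $\ad_{\lambda a + m}^{p-1}(a)$ in $L\ltimes M$ can be computed inductively: using that $M$ has trivial bracket and that the bracket $[b, m'] \in L\ltimes M$ for $b \in L$, $m' \in M$ is given by the module action $b\cdot m'$, one obtains $\ad_{\lambda a + m}^{k}(a) = -\lambda^{k-1} a^k \cdot m$ for all $k \geq 1$. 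Hence $\ad_{\lambda a + m}^{p-1}(a) = -\lambda^{p-2}\, a^{p-1}\cdot m$, so reading off the coefficient of $\lambda^{i-1}$ and using $-(p-1)^{-1} \equiv 1 \pmod p$ gives $s_{p-1}(a,m) = a^{p-1}\cdot m$ and $s_i(a,m) = 0$ for $1 \leq i \leq p-2$. Substituting yields
\[
(a+m)^{[p]} = a^{[p]} + f(m) + a^{p-1} \cdot m,
\]
and the condition $(pr+d)(l^{[p]}) = (pr(l), d(l))^{[p]}$ becomes exactly the formula stated.

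The only technical point requiring care is the inductive computation of $\ad_{\lambda a + m}^{k}(a)$ in $L \ltimes M$, which rests on the fact that $M$ is an \emph{abelian} restricted Lie algebra and that the restricted $L$-module axiom provides $a \cdot (a \cdot \cdots (a \cdot m))$ after $k$ applications. No deeper obstacle arises; the two conditions on the bracket and the $p$-map are independent and generate the complete derivation condition via \cref{prop:derivation}.
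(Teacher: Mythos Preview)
Your argument is correct. One small caveat: your inductive formula $\ad_{\lambda a + m}^{k}(a) = -\lambda^{k-1} a^k \cdot m$ depends on the convention for $\ad$; with the paper's convention $\ad_x(y)=[y,x]$ one obtains $(-\lambda)^{k-1} a^k m$ instead. This does not affect the conclusion, since at $k=p-1$ the two expressions agree (for odd $p$ because $p-2$ is odd, and for $p=2$ because $-1=1$).

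The paper does not prove this proposition in the classical subsection (it is cited from \cite{Dokas04}), but it recovers it in the operadic subsection by a genuinely different route. There the general formula of \cref{prop:derivation} is applied to the operation $\beta_{F_p,(p)}$, yielding
\[
d(l^{[p]})=\sum_{i=1}^{p}\beta_{F_p,(p-i,i)}(l,dl),
\]
and then each term with $2\le i\le p-1$ is shown to vanish by writing $F_p=\frac{1}{i!}\sum_{\tau}\tau F_p$ and invoking relation~\ref{relMrepet}; the surviving terms $i=1$ and $i=p$ give $l^{p-1}dl$ and $f(dl)$. Your approach instead bypasses the operadic sum entirely: you compute the $p$-map on the semidirect product $L\ltimes_f M$ directly from Jacobson's formula~\eqref{RLeq3}, which is essentially the original argument of \cite{Dokas04} and is how the paper later \emph{states} (without derivation) the $p$-map on $H\ltimes_f M$ in \cref{sec:complie}. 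The paper's route exhibits how the general $\Gamma(\P)$-machinery specializes to $\P=\Lie$; yours is shorter and self-contained but does not exercise that framework.
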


\subsubsection*{Kähler differentials} 
%
What we call the module of Kähler differentials of the Lie algebra $L$ is, by analogy with the case of commutative algebras, the $L$-Beck module $\Omega_{\RLie}(L)$ which represents the functor of Beck derivations. It follows from Theorem~\ref{thm:mod}, Proposition~\ref{prop:Beckdev} and \cite{Pareigis68}*{Lemma~2.1} that the module $\Omega_{\RLie}(L)$ of Kähler differentials is %
the $w(L)$-module $C(L)$ considered by Pareigis in \cite{Pareigis68}, cf.\ \cite{Dokas04}*{\S 1.3}.

\begin{theo}[\cite{Pareigis68}] %
    The module of Kähler differentials of the restricted Lie algebra $L$, $\Omega_{\RLie}(L)$, is the $w(L)$-module with the following presentation: the generators are the symbols $dl$ for $l\in L$, and the relations are:
    \begin{align}
        \label{RLpK1} d(\lambda l + \mu l')&=\lambda dl + \mu dl',\\
        \label{RLpK2} d\left([l,l']\right)&=ldl'-l'dl,\\
        \label{RLpK3} d\left(l^{[p]}\right)&=f(dl)+l^{p-1}dl.
    \end{align}
\end{theo}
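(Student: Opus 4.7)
The plan is to derive the presentation by specializing the general construction of $\Omega_{\Ga(\P)}(A)$ in \cref{def:Kahler} to $\P = \Lie$ and $A = L$, using the identification $\UU_{\Ga(\Lie)}(L) = w(L)$ from \cref{thm:mod}. The coequalizer definition immediately yields a $w(L)$-module with generators the symbols $dl$ for $l \in L$ together with the $\F$-linearity \eqref{RLpK1}. The remaining relations are forced by the universal derivation formula \eqref{eq:derivomega} applied to the two generating operations of a restricted Lie algebra: the bracket $[l,l'] = \beta_{\ell_2,(1,1)}(l,l')$ and the $p$-map $l^{[p]} = \beta_{\ell_p,(p)}(l)$, where $\ell_p \in \Lie(p)^{\Si_p}$ is the invariant representing the $p$-map.

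For the bracket, \eqref{eq:derivomega} immediately yields
\[
d([l,l']) = \beta_{\ell_2,(1,1)}(dl,l') + \beta_{\ell_2,(1,1)}(l,dl'),
\]
and the antisymmetry of $\ell_2$ applied via \ref{relAperm} rewrites the first summand as $-l' \, dl$, giving relation \eqref{RLpK2}. For the $p$-map, \eqref{eq:derivomega} produces
\[
d(l^{[p]}) = \sum_{k=1}^{p} \beta_{\ell_p,(p-k,k)}(l,dl).
\]
The term $k=p$ collapses via \ref{relA0} to $\beta_{\ell_p,(p)}(dl) = f(dl)$. For $2 \le k \le p-1$, the $\Si_p$-invariance of $\ell_p$ together with invertibility of $k!$ in $\F$ lets one write $\ell_p = \tfrac{1}{k!}\sum_{\tau \in \Si_k/\Si_1^{\times k}} \tau \cdot \ell_p$, after which relation \ref{relMrepet} annihilates these intermediate terms, mirroring the analogous step for $\Com$ carried out in \cref{sec:ExCom}.

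The main obstacle is the surviving term $k=1$: one must identify $\beta_{\ell_p,(p-1,1)}(l,dl)$ with $l^{p-1}\,dl$. This requires a concrete operadic decomposition of $\ell_p$, along the lines of Jacobson's realization of the $p$-map via $\ad_l^{p-1}(l)$, so that \ref{relAMcomp} rewrites the operation as a binary operation applied to a $(p-1)$-ary invariant whose evaluation computes $l^{p-1}$. Unlike in the commutative setting, $\Lie(p)^{\Si_p}$ is not spanned by an obvious monomial generator, which makes this step delicate. A cleaner path sidestepping this computation is Yoneda: by \cref{prop:Kahler}, $\Omega_{\Ga(\Lie)}(L)$ represents Beck derivations; by \cref{prop:Beckdev} and \cref{thm:mod}, that functor coincides with the one represented by the Pareigis module $C(L)$ via \cite{Pareigis68}*{Lemma~2.1}, producing the desired canonical isomorphism $\Omega_{\Ga(\Lie)}(L) \cong C(L)$ and transferring the presentation.
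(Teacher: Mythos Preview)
Your proposal is correct and mirrors the paper's approach closely: the paper itself invokes the Yoneda/representability argument you describe (citing \cref{thm:mod}, \cref{prop:Beckdev}, and \cite{Pareigis68}*{Lemma~2.1}) to identify $\Omega_{\RLie}(L)$ with Pareigis's module $C(L)$, and then in \cref{sec:LieOperadic} carries out exactly the operadic computation you sketch. The one place the paper goes further is the $k=1$ term you flagged as the obstacle: rather than bypassing it, the paper resolves it directly by citing \cite{Fresse00}*{Remark~1.2.8}, which identifies $\beta_{F_p,(p-1,1)}(l,dl)$ with the iterated bracket $[\underbrace{l,\dots[l}_{p-1},dl]\dots]$, i.e., $l^{p-1}\,dl$ in $w(L)$.
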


\subsection{Operadic point of view}\label{sec:LieOperadic}

Let us now show how we recover these notions from Sections~\ref{sec:BeckModules}, \ref{sec:Universalalgebra} and \ref{sec:derivations}. Recall that $\Lie$ is the operad generated by a binary operation $[-,-]\in\Lie(2)$ 
satisfying $(1\,2)\cdot[-,-]=-[-,-]$ and the Jacobi relation:
    \[
        [-,-]\circ_2[-,-]+(1\ 2\ 3)\cdot \left([-,-]\circ_2[-,-]\right)+(1\ 3\ 2)\cdot \left([-,-]\circ_2[-,-]\right)=0.
    \]
As before, we assume that the base field $\F$ has characteristic $p\neq 0$.

\begin{theo}[\cite{Fresse00}*{Theorem~1.2.5}]\label{thm:GammaLieAlg}
    The category $\Alg{\Gamma(\Lie)}$ coincides with the category $\RLie$ of restricted Lie algebras.
\end{theo}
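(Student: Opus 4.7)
The plan is to compare the structure encoded by Ikonicoff's characterization (\Cref{thm:inv}) in the special case $\P = \Lie$ with Jacobson's definition (\Cref{def:Jac}). By \Cref{prop:powersp}, in characteristic $p$ a $\Gamma(\Lie)$-algebra structure on $L$ is determined by operations $\beta_{x,\r}$ whose parts $r_i$ are all powers of $p$. I would first argue that the ``low-degree'' part of this data recovers the ordinary Lie algebra structure: since $\Lie(2) = \F\cdot[-,-]$ and $[-,-]$ is antisymmetric, the only relevant binary operation is $\beta_{[-,-],(1,1)}(a,b) =: [a,b]$. Relations~\ref{relperm}, \ref{relsomme}, \ref{rellin} give bilinearity and antisymmetry, while relation~\ref{relcomp} applied to the Jacobi relator in $\Lie(3)$ forces the Jacobi identity. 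Thus the underlying structure of a $\Gamma(\Lie)$-algebra includes that of a Lie algebra.

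Next, I would identify the $p$-map. The key input is the classical fact (going back to Jacobson \cite{Jacobson62}*{\S V.7}) that there exists a distinguished element $\xi \in \Lie(p)^{\Sigma_p}$ which, when evaluated in the free Lie algebra, computes the $p$-th power in the universal enveloping algebra. Setting $l^{[p]} \dfn \beta_{\xi,(p)}(l)$, relation~\ref{rellambda} with $\r=(p)$ yields immediately the semilinearity $(\alpha l)^{[p]} = \alpha^p l^{[p]}$, i.e.\ \eqref{RLeq1}. For \eqref{RLeq3}, I would apply relation~\ref{relsomme} to expand $\beta_{\xi,(p)}(l+l')$ into a sum $\sum_{i+j=p} \beta_{\xi,(i,j)}(l,l')$; the endpoint terms $(i,j)=(p,0),(0,p)$ give $l^{[p]}$ and $l'^{[p]}$, while using $\xi\in\Lie(p)^{\Sigma_p}$ one rewrites each intermediate $\beta_{\xi,(i,p-i)}(l,l')$ as an iterated bracket in $l,l'$, which a direct combinatorial comparison identifies with the Jacobson polynomial $s_i(l,l')$. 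For \eqref{RLeq2}, I would use relation~\ref{relcomp} together with an operadic identity $[-,\xi(-)] = \mathrm{ad}^{p-1}_{(-)}(-)$ in $\Lie(p+1)$, which itself reduces to a symmetric-group averaging argument on iterated brackets.

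The remaining task is to show that no other independent operations appear. I would combine \Cref{prop:powersp} with the fact that $\Lie(p^k)^{\Sigma_{p^k}}$ is, as a module over the operadic composition, generated by iterated plethysms of $\xi$ with itself (and with $[-,-]$); relation~\ref{relcomp} then forces $\beta_{y,(p^k)}$ for $y$ such an iterated composition to coincide with the corresponding iterate of $(-)^{[p]}$. This gives a well-defined functor $\Alg{\Gamma(\Lie)} \to \RLie$. Conversely, given a restricted Lie algebra $(L,[-,-],(-)^{[p]})$, one defines all operations $\beta_{x,\r}$ by choosing a presentation of $x$ as an iterated composition of $[-,-]$ and $\xi$; the axioms \eqref{RLeq1}, \eqref{RLeq2}, \eqref{RLeq3} together with the Lie axioms ensure that the resulting operations are well-defined independently of the presentation and satisfy relations~\ref{relperm}--\ref{relcomp}. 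Morphisms on both sides clearly match, giving an isomorphism of categories.

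The main obstacle is the representation-theoretic content of the second half of the argument: proving that $\Lie(p^k)^{\Sigma_{p^k}}$ is ``generated'' under operadic composition by $\xi$ and $[-,-]$, so that prescribing the $p$-map really does determine all $\Gamma(\Lie)$-operations. This is where Fresse's analysis of invariants in $\Lie$ does the heavy lifting, and is the step one cannot bypass by formal manipulations of the relations alone.
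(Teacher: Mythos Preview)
The paper does not actually prove this theorem: it is stated with a citation to \cite{Fresse00}, and the surrounding text only records, following \cite{Ikonicoff23}*{Example~6.6.c)}, which specific operations $\beta_{[-,-],(1,1)}$ and $\beta_{F_p,(p)}$ correspond to the bracket and the $p$-map. So there is no proof in the paper to compare your proposal against.

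That said, your sketch is a reasonable outline of how one might reprove Fresse's result via the monomial-operation characterization, and you correctly identify the genuine difficulty. The low-degree analysis (recovering the Lie bracket, deriving \eqref{RLeq1} from \ref{rellambda}, deriving \eqref{RLeq3} from \ref{relsomme}, and \eqref{RLeq2} from \ref{relcomp}) is essentially the content of \cite{Fresse00}*{Remark~1.2.8} and \cite{Ikonicoff23}*{Example~6.6.c)}, and your treatment of it is fine. The real work, as you say, is the claim that the invariant spaces $\Lie(p^k)^{\Sigma_{p^k}}$ are generated under operadic composition by the element you call $\xi$ (the paper's $F_p$) together with $[-,-]$, so that prescribing the bracket and the $p$-map determines all $\beta_{x,\r}$. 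This is not a formal consequence of the relations \ref{relperm}--\ref{relcomp}; it requires an actual computation of the $\Sigma_n$-invariants in $\Lie(n)$, which is what Fresse carries out in \cite{Fresse00}*{\S 1.2}. Your proposal flags this as ``the main obstacle'' but does not supply the argument, so as written it is an outline rather than a proof. If you want a self-contained argument, you would need to either reproduce Fresse's invariant computation or cite it directly at that step.
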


\begin{rema}
In characteristic $p=2$, an algebra over the Lie operad, also called \emph{operadic Lie algebra}, satisfies the antisymmetry equation $[x,y] = -[y,x]$ but might not satisfy the stronger condition $[x,x]=0$ \cite{Fresse00}*{Example~1.1.13}. However, Theorem~\ref{thm:GammaLieAlg} holds also in characteristic $p=2$, since a $\Gamma(\Lie)$-algebra %
satisfies the condition $[x,x]=0$ \cite{Fresse00}*{Remark~1.2.9}.
\end{rema}

We now use the characterisation of $\Gamma(\Lie)$-algebras using Theorem~\ref{thm:inv}. Following \cite{Ikonicoff23}*{Example~6.6.c}, if $L$ is a $\Gamma(\Lie)$-algebra, the Lie bracket on $L$ is given by $\beta_{[-,-],(1,1)}$, and the $p$-map is given by $\beta_{F_p,(p)}$, where $F_p\in \Lie(p)^{\∑_p}$ is the element:
\[
  \sum_{\sigma}\underbrace{[-,-]\circ_1[-,-]\circ_1\dots\circ_1[-,-]}_{p-1}\cdot \sigma,
\]
where the sum runs over the $\sigma\in\∑_p$ such that $\sigma(1)=1$. Note that relation~\eqref{RLeq2} then reads:
\begin{equation}\label{RLeq2'}
    \beta_{[-,-],(1,1)}(\beta_{1_\P,(1)},\beta_{F,(p)})=\beta_{\sum_{\sigma}\sigma [-,-]\circ_1\cdots\circ_1[-,-],(1,p)},
\end{equation}
where $\sigma$ ranges over $\∑_{1}\times \∑_p$ in the sum, which can also be written 
\[
	\beta_{[-,-],(1,1)}(\beta_{1_\P,(1)},\beta_{F,(p)}) = \beta_{F_{p+1},(1,p)}.
\]

\subsubsection*{Abelian algebras, Beck modules} 
	Following Definition~\ref{def:abeliangammapalg}, an abelian $\Gamma(\Lie)$-algebra $M$ is a vector space $M$ endowed with a trivial Lie bracket $[m,m']=0$ and a semilinear map $f \colon M\to M$ satisfying $f(\lambda m)=\lambda^pf(m)$.

    By Definition~\ref{def:Amodule}, an $L$-module $M$ becomes a vector space $M$ equipped with a semilinear map $f \colon M \to M$ and an action of $L$ that we denote $[l,m]$ for $l\in L$, $m\in M$ such that $[l,f(m)]=0$. 
    This last relation comes from the following computation:
    \begin{align*}
        [l,m^{[p]}] &= \beta_{[-,-],(1,1)}(l,\beta_{F,(p)}(m))\\
        &= \beta_{\sum_{\sigma}\sigma [-,-]\circ_1\cdots\circ_1[-,-],(1,p)}(l,m).
    \end{align*}
    Here we used the relation~\eqref{RLeq2'}. Now, using relation~\ref{relMrepet} of Definition~\ref{def:Amodule}, this is equal to $0$.

\subsubsection*{Enveloping algebra} 
    Using the same reasoning as in \cite{Fresse00}*{Theorem~1.2.5}, and the relations of Definition~\ref{def:UnivEnvAlg}, we can show that the universal enveloping algebra $\UU_{\Gamma(\Lie)}(L)$ is generated by symbols $\beta_{[-,-],(1,1)}(l,-)$ for $l\in L$, $\beta_{F_p,(p)}(-)$, and a unit $\beta_{1_{\Lie},(1)}(-)$. We can then build an isomorphism between $\UU_{\Gamma(\Lie)}(L)$ and the universal enveloping algebra $w(L)$ defined in Section \ref{sec:LieClassical}. This isomorphism sends $\beta_{[-,-],(1,1)}(l,-)\in \UU_{\Gamma(\Lie)}(L)$ to $1\otimes l$, and $\beta_{F_p,(p)}(-)\in \UU_{\Gamma(\Lie)}(L)$ to $f\otimes 1$.

    Note that $L$ injects into $\UU_{\Gamma(\Lie)}(L)$, by identifying each element $l\in L$ to $\beta_{[-,-],(1,1)}(l,-)$. For an $L$-module (i.e., a $\UU_{\Gamma(\Lie)}(L)$-module) $M$, we will denote by $[l,m]:=\beta_{[-,-],(1,1)}(l,m)=\beta_{[-,-],(1,1)}(l,-)\cdot m$, and $f(m):=\beta_{F_p,(p)}(m)=\beta_{F_p,(p)}(-)\cdot m$ for $l\in L$, $m\in M$.

\subsubsection*{Beck derivations} 
    By Proposition~\ref{prop:derivation}, a Beck derivation $d \colon L\to M$ is a linear map $d \colon L\to M$ satisfying 
\[
	\begin{cases}
	d\left([l,l']\right) = \beta_{[-,-],(1,1)}(l,dl')+\beta_{[-,-],(1,1)}(dl,l')=[l,dl']-[l',dl] \\
	d\left(l^{[p]}\right) = \sum_{i=1}^p \beta_{F_p,(p-i,i)}(l,dl). \\
	\end{cases}
\]    
Since $F_p\in Lie(p)^{\∑_p}$, for all $i$ such that $2\le i\le p-1$, 
\[
    F_p=\frac{1}{i!}i!F_p=\frac{1}{i!}\sum_{\tau\in\∑_{p-i}\times \∑_i/\∑_{p-i}\times \∑_1^{\times i}}\tau F_p,
\] 
and thus:
\begin{align*}
    \beta_{F_p,(p-i,i)}(l,dl) &= \beta_{\frac{1}{i!}\sum_{\tau\in\∑_{p-i}\times \∑_i/\∑_{p-i}\times \∑_1^{\times i}}\tau F_p,(p-i,i)}(l,dl)\\
    &= \frac{1}{i!}\beta_{\sum_{\tau\in\∑_{p-i}\times \∑_i/\∑_{p-i}\times \∑_1^{\times i}}\tau F_p,(p-i,i)}(l,dl),
\end{align*}
and 
by relation~\ref{relMrepet}, $\beta_{\sum_{\tau\in\∑_{p-i}\times \∑_i/\∑_{p-i}\times \∑_1^{\times i}}\tau F_p,(p-i,i)}(l,dl)=0$.

\subsubsection*{Kähler differentials} 
    By Definition~\ref{def:Kahler}, the module of Kähler differentials $\Omega_{\Gamma(\Lie)}(L)$ of $L$ is the $\UU_{\Gamma(\Lie)}(L)$-module generated by elements $dl$, $l\in L$, under the relations:
    \begin{align*}
        d(\lambda l + \mu l') &= \lambda dl + \mu dl',\\
        d([l,l']) &= d(\beta_{[-,-],(1,1)}(l,l')) \\
        &= \beta_{[-,-],(1,1)}(-,l')\otimes dl + \beta_{[-,-],(1,1)}(l,-)\otimes dl' \\
        &= [l,dl']-[l',dl],\\
        d(f(l)) &= d(\beta_{F_p,(p)}(l)) = \sum_{i=1}^p\beta_{F_p,(p-i,i)}(b,-)\otimes db.
    \end{align*}
    To compute the term corresponding to $i=1$, note that, by \cite{Fresse00}*{Remark~1.2.8}, $\beta_{F_p,(p-1,1)}(l,-)\cdot dl=\beta_{F_p,(p-1,1)}(l,dl)=[\underbrace{l,\dots[l}_{p-1},dl]\dots]$.
    the term corresponding to $i=p$ is 
    \[
         \beta_{F_p,(0,p)}(l,-)\otimes l=\beta_{F_p,(p)}(-)\otimes dl=fdl.
    \]
    For all $i\in\{2,\dots, p-1\}$, again we have
\[    
	F_p = i\frac{1}{i}F_p = \sum_{\sigma\in\∑_{p-i}\times \∑_i/\∑_{p-i}\times \∑_1\times\∑_{i-1}}\frac{1}{i}F_p.
\]
So, $\beta_{F_p,(p-i,i)}(l,-)=\beta_{\sum_{\sigma\in\∑_{p-i}\times \∑_i/\∑_{p-i}\times \∑_1\times\∑_{i-1}}\frac{1}{i}F_p,(p-i,i)}(l,-)$ is one of the elements described in Definition~\ref{def:UnivEnvAlg} in point~\eqref{item:InvariantSum}, 
and so, is equal to 0 in $\UU_{\Gamma(\Lie)}(L)$. This last relation reads:
    \[
         d(fl)=f(dl)+[\underbrace{l,\dots[l}_{p-1},dl]\dots].
    \]
    Finally, $\Omega_{\Gamma(\Lie)}(L)$ is indeed isomorphic to the module $\Omega_{\RLie}(L)$ defined in Section \ref{sec:LieClassical}.

\section{Comparisons}\label{sec:comp}

In this section, we identify some adjunctions involving $\Ga(\P)$-algebras and check that they induce comparisons on Quillen cohomology. The next sections will focus on examples.

\begin{lemm}\label{lem:ColimMonadic}
Let $\cat{C}$ be a cocomplete closed symmetric monoidal category and $\P$ an operad in $\cat{C}$.
\begin{enumerate}
\item The free $\P$-algebra monad $S(\P) \colon \cat{C} \to \cat{C}$ preserves reflexive coequalizers and filtered colimits.
\item The forgetful functor $U_{\cat{C}}^{\P} \colon \Alg{\P} \to \cat{C}$ creates reflexive coequalizers and filtered colimits.
\end{enumerate}
\end{lemm}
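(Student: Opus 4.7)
The plan is to prove part (1) by direct monoidal-categorical reasoning and then to deduce part (2) from a classical monadic creation theorem.

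For part (1), I would start from the Schur-functor formula
\[
S(\P, V) = \bigoplus_{n \geq 0} \bigl( \P(n) \otimes V^{\otimes n} \bigr)_{\Sigma_n}.
\]
The coproduct over $n$ and the coinvariants functor $(-)_{\Sigma_n}$ are themselves colimits and therefore preserve all colimits; tensoring with the fixed object $\P(n)$ preserves all colimits because closedness of $\cat{C}$ guarantees that $(-) \otimes \P(n)$ has a right adjoint. This reduces the claim to showing that the diagonal tensor power $V \mapsto V^{\otimes n}$ preserves reflexive coequalizers and filtered colimits.

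The key technical point is that the $n$-ary tensor product $(V_1, \dots, V_n) \mapsto V_1 \otimes \cdots \otimes V_n$ preserves colimits in each variable separately, again by closedness. Both reflexive coequalizers and filtered colimits are sifted, which means precisely that for any such diagram $F \colon \cat{J} \to \cat{C}$, the diagonal $\Delta \colon \cat{J} \to \cat{J}^n$ is final, and one obtains
\[
\operatorname{colim}_{j \in \cat{J}} F(j)^{\otimes n} \cong \operatorname{colim}_{(j_1, \dots, j_n) \in \cat{J}^n} F(j_1) \otimes \cdots \otimes F(j_n) \cong \bigl( \operatorname{colim}_{j \in \cat{J}} F(j) \bigr)^{\otimes n}.
\]
For a reader unfamiliar with sifted colimits, the two cases may be treated separately: filtered colimits commute with finite limits (hence with finite tensor products in this cocontinuous setting), and reflexive coequalizers can be handled by an explicit diagram chase using the common section of the reflexive pair.

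For part (2), I would invoke the standard fact that if a monad $T$ on a cocomplete category preserves reflexive coequalizers (respectively, filtered colimits), then the forgetful functor from $T$-algebras creates those colimits. Concretely, given a reflexive coequalizer diagram in $\Alg{\P}$, one forms the underlying coequalizer $q \colon B \to C$ in $\cat{C}$; applying $S(\P)$ to the diagram yields another reflexive coequalizer by part (1), and chasing the structure maps through this diagram produces a unique $S(\P)$-algebra structure on $C$ making $q$ a morphism of algebras. The argument for filtered colimits is structurally identical. The only mildly delicate step in the whole proof is the passage from separate cocontinuity of $\otimes$ to sifted cocontinuity of $V \mapsto V^{\otimes n}$; everything else is formal.
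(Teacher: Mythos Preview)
Your argument is correct and is essentially the standard proof; the paper itself does not spell it out but simply cites Rezk for part~(1) and Borceux for part~(2), which contain precisely the monoidal siftedness argument and the monadic creation result you reproduce. One small quibble: in your aside for readers unfamiliar with sifted colimits, the clause ``filtered colimits commute with finite limits (hence with finite tensor products in this cocontinuous setting)'' is muddled---tensor products are not finite limits, so that parenthetical does not justify anything; your main argument via finality of the diagonal is the right one and already covers both cases cleanly, so I would drop the aside.
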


\begin{proof}
The first part is proved in \cite{Rezk96}*{Proposition~2.3.5}. The second part follows from the first part and \cite{Borceux94v2}*{Proposition~4.3.2}.
\end{proof}

\begin{lemm}\label{lem:PreserveRegEpi}
The forgetful functor $U^{\Ga(\P)}_{\P} \colon \Alg{\Ga(\P)} \to \Alg{\P}$ preserves and reflects regular epimorphisms.
\end{lemm}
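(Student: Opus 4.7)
The plan is to reduce the claim to the standard characterization of regular epimorphisms in algebraic categories: in a one-sorted algebraic category over $\Vect{\F}$, regular epimorphisms coincide with morphisms whose underlying map of vector spaces is surjective. The paper has already noted in \cref{sec:Prelim} that both $\Alg{\P}$ and $\Alg{\Ga(\P)}$ are one-sorted algebraic categories over $\Vect{\F}$, and the forgetful functor $U^{\Ga(\P)}_{\P}$ preserves underlying vector spaces, since $U^{\P}_{\F} \circ U^{\Ga(\P)}_{\P} = U^{\Ga(\P)}_{\F}$. Granted the characterization, preservation and reflection of regular epimorphisms is then immediate.

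To make this precise I would first establish the characterization in both categories. For $\Alg{\P}$ this uses \cref{lem:ColimMonadic}: since $S(\P)$ preserves reflexive coequalizers, the functor $U^{\P}_{\F}$ creates them. Given a surjection $f \colon A \surj B$ of $\P$-algebras, the kernel pair $A \times_B A \rra A$ is reflexive (the diagonal of $A$ is a common section), so it admits a coequalizer in $\Alg{\P}$ computed on underlying vector spaces, where it recovers $B$; hence $f$ is a regular epimorphism. Conversely a regular epimorphism, being a coequalizer of its own kernel pair (a reflexive pair), has underlying map equal to a vector space coequalizer, and is therefore surjective.

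For $\Alg{\Ga(\P)}$ I would run the same argument, which requires the analogue of \cref{lem:ColimMonadic} for the monad $\Gamma(\P)$: namely that $\Gamma(\P)$ preserves reflexive coequalizers, equivalently that $U^{\Ga(\P)}_{\F} \colon \Alg{\Ga(\P)} \to \Vect{\F}$ creates them. This reduces to checking that each functor $V \mapsto (\P(n) \otimes V^{\otimes n})^{\Sy_n}$ preserves reflexive coequalizers of vector spaces, which in turn follows from the fact that $V \mapsto V^{\otimes n}$ preserves reflexive coequalizers and that taking $\Sy_n$-invariants, being a finite limit, commutes with reflexive coequalizers in $\Vect{\F}$. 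I expect this last verification to be the main technical input; once it is in hand, combining the two characterizations with the identity $U^{\P}_{\F} \circ U^{\Ga(\P)}_{\P} = U^{\Ga(\P)}_{\F}$ yields that $f$ is a regular epimorphism in $\Alg{\Ga(\P)}$ if and only if $U^{\Ga(\P)}_{\P}(f)$ is a regular epimorphism in $\Alg{\P}$, which is the statement of the lemma.
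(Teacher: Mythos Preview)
Your overall strategy --- characterize regular epimorphisms as underlying surjections in both categories and use the factorization $U^{\P}_{\F}\circ U^{\Ga(\P)}_{\P}=U^{\Ga(\P)}_{\F}$ --- is exactly the paper's, and your handling of $\Alg{\P}$ via \cref{lem:ColimMonadic} matches. The gap is in the step for $\Alg{\Ga(\P)}$: the assertion that ``taking $\Sy_n$-invariants, being a finite limit, commutes with reflexive coequalizers in $\Vect_{\F}$'' is not correct. Reflexive coequalizers commute with finite \emph{products}, but not with equalizers or group-invariants in general; in characteristic $p$ dividing $n!$ the functor $(-)^{\Sy_n}$ on $\F[\Sy_n]$-modules fails to preserve reflexive coequalizers. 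So decomposing $V\mapsto(\P(n)\otimes V^{\otimes n})^{\Sy_n}$ as $(-)^{\Sy_n}$ after $\P(n)\otimes(-)^{\otimes n}$ does not give the conclusion by that reasoning alone.

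The repair is the observation the paper actually uses: over a field, every surjection of vector spaces splits, so every reflexive coequalizer in $\Vect_{\F}$ is in fact a \emph{split} coequalizer, hence absolute and preserved by any functor --- in particular by $\Ga(\P)$. (Equivalently, the image under $\P(n)\otimes(-)^{\otimes n}$ of such a diagram is a split coequalizer of $\Sy_n$-representations, and $(-)^{\Sy_n}$ preserves those.) The paper packages this more directly: since regular epimorphisms in $\Vect_{\F}$ split, any endofunctor preserves them, and \cite{Borceux94v2}*{Theorem~4.3.5} then gives that the monadic functor $U^{\Ga(\P)}_{\F}$ preserves and reflects regular epimorphisms, bypassing the question of whether $\Ga(\P)$ preserves reflexive coequalizers. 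The remark following the lemma --- that over a general base ring $\Ga(\P)$ need not preserve regular epimorphisms --- underscores that the splitting of surjections over a field, not a general limit--colimit interchange, is the essential input.
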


\begin{proof}
Both categories are monadic over $\Vect{\F}$, as illustrated in the diagram of forgetful functors
\[
\xymatrix{
\Alg{\Ga(\P)} \ar[dr]_-{U^{\Ga(\P)}_{\F}} \ar[r]^-{U^{\Ga(\P)}_{\P}} & \Alg{\P} \ar[d]^{U^{\P}_{\F}} \\
& \Vect{\F}. \\
}
\]
A regular epimorphism $q \colon X \surj Y$ is the coequalizer of its kernel pair $X \x_Y X \rra X$, which is a reflexive pair, with common section the diagonal $X \to X \x_Y X$. %
By Lemma~\ref{lem:ColimMonadic}, the functor $U^{\P}_{\F}$ preserves and reflects %
reflexive coequalizers, hence also regular epimorphisms. 

In $\Vect{\F}$, all regular epimorphisms (namely the surjective maps) split, assuming the axiom of choice. Thus any functor $\Vect{\F} \to \Vect{\F}$ preserves regular epimorphisms, and the functor $U^{\Ga(\P)}_{\F}$ preserves and reflects regular epimorphisms, by \cite{Borceux94v2}*{Theorem~4.3.5}. 
\end{proof}

\begin{rema}
Working over a more general base commutative ring $k$ instead of a field $\F$, the endofunctor $\Ga(\P) \colon \Modd{k} \to \Modd{k}$ need \emph{not} preserve regular epimorphisms.

For example, take $k = \Z$ and the operad $\P$ in $\Modd{\Z} = \Ab$ generated by one binary operation $\mu \in \P(2)$ subject to the relation $\mu \cdot (12) = -\mu$. Then $\P$ is a reduced operad with $\P(2) = \Z_{\si}$, which denotes $\Z$ with $\Sy_2$-action by the sign. Consider the quotient map of abelian groups $q \colon \Z \surj \Z/2$. We compute the $\Sy_2$-fixed points
\begin{align*}
&(\P(2) \ot \Z^{\ot 2})^{\Sy_2} \cong (\Z_{\si} \ot \Z_{\triv})^{\Sy_2} = 0 \\
&(\P(2) \ot (\Z/2)^{\ot 2})^{\Sy_2} \cong ((\Z/2)_{\si})^{\Sy_2} = \Z/2.
\end{align*}
Thus the degree~$2$ summand of the map of abelian groups
\[
\Ga(\P,q) \colon \Ga(\P,\Z) \to \Ga(\P,\Z/2)
\]
is the map $0 \to \Z/2$, which is not surjective.
\end{rema}

Since $\Alg{\Ga(\P)}$ and $\Alg{\P}$ are algebraic categories, Lemma~\ref{lem:PreserveRegEpi} ensures that the adjunction
\begin{equation}\label{eq:FreeGammaP}
F_{\P}^{\Ga(\P)} \colon \Alg{\P} \rla \Alg{\Ga(\P)} \colon U_{\P}^{\Ga(\P)}
\end{equation}
gives rise to the comparison diagrams described in \cite{Frankland15}*{Theorem~4.7}. 
Another source of comparisons will be given by morphisms of operads, as we now describe.

\begin{lemm}
Let $f \colon \P \to \Q$ be a morphism between reduced operads in $\Vect{\F}$. Consider the diagram of four adjunctions
\[
\xymatrix @R+0.5pc @C+0.5pc {
\Alg{\P} \ar@<-0.6ex>[d]_{F_{\P}^{\Ga(\P)}} \ar@<0.6ex>[r]^-{f_!} & \Alg{\Q} \ar@<0.6ex>[l]^-{f^*} \ar@<-0.6ex>[d]_{F_{\Q}^{\Ga(\Q)}} \\
\Alg{\Ga(\P)} \ar@<-0.6ex>[u]_{U_{\P}^{\Ga(\P)}} \ar@<0.6ex>[r]^-{f_!} & \Alg{\Ga(\Q)}. \ar@<0.6ex>[l]^{f^*} \ar@<-0.6ex>[u]_-{U_{\Q}^{\Ga(\Q)}}
}
\]
\begin{enumerate}
\item \label{item:RightAdjoints} The right adjoints commute, and therefore the left adjoints commute (up to natural isomorphism).
\item \label{item:Restriction} Both restriction functors $f^*$ preserves and reflect regular epimorphisms.
\end{enumerate}
\end{lemm}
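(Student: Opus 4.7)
For part~(\ref{item:RightAdjoints}), the strategy is to identify each of the four right adjoints as restriction-of-scalars along a monad morphism on $\Vect{\F}$. Explicitly, $f^* \colon \Alg{\Q} \to \Alg{\P}$ is restriction along the induced morphism $S(f) \colon S(\P) \to S(\Q)$ on free algebra monads; $f^* \colon \Alg{\Ga(\Q)} \to \Alg{\Ga(\P)}$ is restriction along $\Ga(f) \colon \Ga(\P) \to \Ga(\Q)$; and the forgetful functors $U_{\P}^{\Ga(\P)}$, $U_{\Q}^{\Ga(\Q)}$ are restriction along the norm maps $\Tr_{\P} \colon S(\P) \to \Ga(\P)$ and $\Tr_{\Q} \colon S(\Q) \to \Ga(\Q)$ from \cite{Fresse00}*{\S 1.1.14} (as recalled in \cref{prop:MapEnvAlg}). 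The naturality of $\Tr$ in the operad variable, which follows from its pointwise definition as the canonical map from $\Sy_n$-coinvariants to $\Sy_n$-invariants, yields a strictly commutative square of monad morphisms
\[
\xymatrix @R+0.5pc @C+0.5pc {
S(\P) \ar[r]^-{S(f)} \ar[d]_{\Tr_{\P}} & S(\Q) \ar[d]^{\Tr_{\Q}} \\
\Ga(\P) \ar[r]^-{\Ga(f)} & \Ga(\Q).
}
\]
Since restriction of algebras is (contravariantly) functorial in monad morphisms, the induced square of right adjoints commutes on the nose; uniqueness of adjoints then gives the commutation of left adjoints up to canonical natural isomorphism.

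For part~(\ref{item:Restriction}), my plan is to argue via the forgetful functors to $\Vect{\F}$, exploiting the commutative triangles $U^{\P}_{\F} \circ f^* = U^{\Q}_{\F}$ on the $\P$-algebra row and $U^{\Ga(\P)}_{\F} \circ f^* = U^{\Ga(\Q)}_{\F}$ on the $\Ga(\P)$-algebra row, both of which hold because restriction along $f$ is the identity on underlying vector spaces. By the proof of \cref{lem:PreserveRegEpi}, each of the four functors $U^{\P}_{\F}$, $U^{\Q}_{\F}$, $U^{\Ga(\P)}_{\F}$, $U^{\Ga(\Q)}_{\F}$ preserves and reflects regular epimorphisms. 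Consequently, if $g$ is a regular epimorphism in $\Alg{\Q}$, then $U^{\P}_{\F}(f^*g) = U^{\Q}_{\F}(g)$ is a regular epi in $\Vect{\F}$, and since $U^{\P}_{\F}$ reflects them, $f^*g$ is a regular epi in $\Alg{\P}$. Conversely, if $f^*g$ is a regular epi in $\Alg{\P}$, then $U^{\Q}_{\F}(g) = U^{\P}_{\F}(f^*g)$ is a regular epi in $\Vect{\F}$, and $U^{\Q}_{\F}$ reflects regular epis to conclude that $g$ itself is a regular epi. The same argument works verbatim on the divided power row.

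I do not anticipate a substantive obstacle in either part. The only mildly delicate point is the naturality of $\Tr$ in part~(\ref{item:RightAdjoints}), but this is automatic from the fact that the norm map on a representation of $\Sy_n$ is natural in the representation, combined with the fact that $\Ga$ and $S$ are functorial on morphisms of operads. Part~(\ref{item:Restriction}) is then essentially formal, relying only on \cref{lem:PreserveRegEpi} and the $2$-out-of-$3$ behavior of regular epimorphisms with respect to composition with a conservative, regular-epi-preserving functor.
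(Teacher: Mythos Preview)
Your proposal is correct. Part~(\ref{item:Restriction}) matches the paper's argument essentially verbatim: both observe that regular epimorphisms in each of the four categories are detected by the forgetful functors to $\Vect{\F}$, and then the conclusion is formal.

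For part~(\ref{item:RightAdjoints}) you take a genuinely different route. The paper verifies the commutation by invoking the explicit characterisation of $\Ga(\P)$-algebras from \cref{thm:inv}: it describes the restriction functor $f^* \colon \Alg{\Ga(\Q)} \to \Alg{\Ga(\P)}$ concretely as $\be_{x,\ul{r}} \mapsto \be_{f(x),\ul{r}}$, noting that the $\Sy_n$-equivariance of $f$ ensures this is well-defined on fixed points, and leaves the commutation of the square as evident from this description. Your argument instead works one level up, identifying all four right adjoints as restriction along a commutative square of monad morphisms built from $S(f)$, $\Ga(f)$, and the norm maps $\Tr$, with the commutation reduced to naturality of $\Tr$ in the operad variable. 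Your approach is cleaner and more conceptual, and avoids appealing to the monomial-operation machinery; the paper's approach has the advantage of staying within the concrete framework used throughout the article and making the restriction functor explicit in terms of the $\be$-operations that appear later.
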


\begin{proof}
\eqref{item:RightAdjoints} Via the explicit description of $\Ga(\P)$-algebras given in Theorem~\ref{thm:Abelianization}, the restriction functor $f^* \colon \Alg{\Ga(\Q)} \to \Alg{\Ga(P)}$ can be described as follows. Consider a $\Ga(\Q)$-algebra $A$ with operations $\be_{y,\ul{r}} \colon A^{\x s} \to A$, given for all $\ul{r} = (r_1, \dots, r_s)$ and $y \in \Q(n)^{\Sy_{\ul{r}}}$ with $n = r_1 + \cdots + r_s$. Its restriction $f^* A$ has the same underlying $\F$-vector space $A$, with $\Ga(\P)$-algebra structure given by the operations
\[
\be_{x,\ul{r}} = \be_{f(x),\ul{r}}
\]
for all $x \in \P(n)^{\Sy_{\ul{r}}}$ and $\ul{r}$ as above. Note that the map on the arity $n$ parts $f \colon \P(n) \to \Q(n)$ is $\Sy_n$-equivariant, hence restricts to fixed point subspaces $f \colon \P(n)^{\Sy_{\ul{r}}} \to \Q(n)^{\Sy_{\ul{r}}}$.

\eqref{item:Restriction} As observed in the proof of Lemma~\ref{lem:PreserveRegEpi}, in all four categories, regular epimorphisms are preserved and reflected by the forgetful functor to $\Vect{\F}$. More concretely, they are the morphisms whose underlying map of vector spaces is surjective.
\end{proof}

Next, we want to compare the Quillen cohomology of $\Ga(\P)$-algebras and $\P$-algebras. Start with a $\Ga(\P)$-algebra $A$ and consider its underlying $\P$-algebra $U_{\P}^{\Ga(\P)} A$, also denoted $A$ when the context indicates the category. The diagram of adjunctions in \cite{Frankland15}*{\S 4.2.2} specializes to
\[
\xymatrix @R+0.5pc @C+0.5pc {
\Alg{\P}/A \ar@<-0.6ex>[d]_{\ep_{A!} F_{\P}^{\Ga(\P)}} \ar@<0.6ex>[r]^-{\Ab_A} & \left( \Alg{\P}/A \right)_{\ab} \ar@<0.6ex>[l]^-{U_A} \ar@<-0.6ex>[d]_{\ep_{A\#} \tilde{F}_{\P}^{\Ga(\P)}} \\
\Alg{\Ga(\P)}/A \ar@<-0.6ex>[u]_{U_{\P}^{\Ga(\P)}} \ar@<0.6ex>[r]^-{\Ab_A} & \left( \Alg{\Ga(\P)}/A \right)_{\ab}. \ar@<0.6ex>[l]^-{U_A} \ar@<-0.6ex>[u]_{U_{\P}^{\Ga(\P)}}
}
\]
Using the identification of Beck modules and K\"ahler differentials in \cite{LodayV12}*{Propositions~12.3.8 and 12.3.13} for the top row and Theorem~\ref{thm:BeckMod} and Proposition~\ref{prop:Kahler} for the bottom row, the diagram becomes
\begin{equation}\label{eq:AdjGammaP}
\xymatrix @R+1pc @C+6pc {
\Alg{\P}/A \ar@<-0.6ex>[d]_{\ep_{A!} F_{\P}^{\Ga(\P)}} \ar@<0.6ex>[r]^-{\UU_{\P}A \ot_{\UU_{\P}(-)} \Om_{\P}(-)} & \lMod{\UU_{\P}A} \ar@<0.6ex>[l]^-{A \ltimes -} \ar@<-0.6ex>[d]_{\te_!} \\
\Alg{\Ga(\P)}/A \ar@<-0.6ex>[u]_{U_{\P}^{\Ga(\P)}} \ar@<0.6ex>[r]^-{\UU_{\Ga(\P)}A \ot_{\UU_{\Ga(\P)}(-)} \Om_{\Ga(\P)}(-)} & \lMod{\UU_{\Ga(\P)}A}. \ar@<0.6ex>[l]^-{A \ltimes -} \ar@<-0.6ex>[u]_{\te^*}
}
\end{equation}

Here $\te \colon \UU_{\P}A \to \UU_{\Ga(\P)}A$ is the $\F$-linear ring homomorphism 
described in Proposition~\ref{prop:MapEnvAlg}, $\te^*$ denotes restriction of scalars along $\te$, and $\te_!$ denotes extension of scalars $\te_!(M) = \UU_{\Ga(\P)}A \ot_{\UU_{\P}A} M$. Applying \cite{Frankland15}*{Propositions~4.13 and 4.14} yields the following.

\begin{prop}\label{prop:CompHQ}
Let $A$ be a $\Ga(\P)$-algebra. 
\begin{enumerate}
\item There is a natural (up to homotopy) comparison of cotangent complexes
\begin{equation}\label{eq:Cotangent}
\LL^{\P}_{A} \to \te^* \LL^{\Ga(\P)}_A
\end{equation}
in simplicial $\UU_{\P}A$-modules.
\item For each degree $n \geq 0$, there is a natural comparison map of $\UU_{\P}A$-modules %
\[
\HQ^{\P}_n(A) \to \te^* \HQ^{\Ga(\P)}_n(A)
\]
from Quillen homology of $A$ as a $\P$-algebra to Quillen homology of $A$ as a $\Ga(P)$-algebra. 
\item \label{item:CompHQcohom} For $M$ a left $\UU_{\Ga(\P)}A$-module, there is a natural comparison map of abelian groups
\begin{equation}\label{eq:ComparisonHQ}
\HQ_{\Ga(\P)}^n(A;M) \to \HQ_{\P}^n(A; \te^* M)
\end{equation}
from Quillen cohomology of $A$ as a $\Ga(\P)$-algebra to Quillen cohomology of $A$ as a $\P$-algebra.
\item If the map of simplicial $\UU_{\Ga(\P)}A$-modules
\[
\te_! \LL^{\P}_{A} \to \LL^{\Ga(\P)}_A
\]
adjunct to the map \eqref{eq:Cotangent} is a weak equivalence, then the comparison in Quillen cohomology \eqref{eq:ComparisonHQ} is an isomorphism in all degrees $n$.
\end{enumerate}
\end{prop}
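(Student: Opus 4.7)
The plan is to view the diagram of adjunctions \eqref{eq:AdjGammaP} as an instance of the abstract comparison setup of \cite{Frankland15}*{\S 4.2} and then to appeal directly to Propositions~4.13 and 4.14 of \emph{loc.~cit.}, as indicated in the statement. The preparatory step is to check that the square of right adjoints in \eqref{eq:AdjGammaP} commutes on the nose. This is a direct comparison: the composite $\lMod{\UU_{\Ga(\P)}A} \xrightarrow{A \ltimes -} \Alg{\Ga(\P)}/A \xrightarrow{U_{\P}^{\Ga(\P)}} \Alg{\P}/A$ sends $M$ to the underlying $\P$-algebra of $A \ltimes M$. By the construction of $\te$ in \Cref{prop:MapEnvAlg}, the $\P$-algebra structure on $A \ltimes M$ factors through the $\te$-restriction of the $\UU_{\Ga(\P)}A$-action, so this composite agrees with $U_{\P}^{\Ga(\P)}A \ltimes \te^* M$, which is the other composite. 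Commutation of right adjoints forces commutation up to canonical isomorphism of the left adjoints.

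For part (1), the plan is to take a cofibrant simplicial resolution $C_\bu \to A$ of $A$ in $s(\Alg{\P}/A)$ and to apply the left adjoint $\ep_{A!} F_{\P}^{\Ga(\P)}$ degreewise to produce a cofibrant object $D_\bu \to A$ in $s(\Alg{\Ga(\P)}/A)$; a standard lifting argument then compares $D_\bu$ with a cofibrant replacement of $A$ in the larger category. Abelianizing and using the commutation of left adjoints yields a natural map $\te_! \LL^{\P}_A \to \LL^{\Ga(\P)}_A$ in $s\lMod{\UU_{\Ga(\P)}A}$, whose $(\te_! \dashv \te^*)$-adjunct is the desired comparison \eqref{eq:Cotangent}. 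Part (2) then follows by applying $\pi_n$. For (3), I would apply derived $\Hom_{\UU_{\Ga(\P)}A}(-, M)$ and use the scalar adjunction identity
\[
\Hom_{\UU_{\Ga(\P)}A}(\te_!(-), M) \cong \Hom_{\UU_{\P}A}(-, \te^* M)
\]
to produce a comparison on cochain complexes, and then take cohomology to obtain \eqref{eq:ComparisonHQ}.

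Part (4) is formal: if $\te_! \LL^{\P}_A \to \LL^{\Ga(\P)}_A$ is a weak equivalence between cofibrant simplicial $\UU_{\Ga(\P)}A$-modules, then $\Hom_{\UU_{\Ga(\P)}A}(-, M)$ sends it to a weak equivalence of cochain complexes for every $M$, and so \eqref{eq:ComparisonHQ} is an isomorphism in every degree. The one technical point to tread over carefully---and which is precisely what is abstracted in \cite{Frankland15}*{\S 4}---is that $\ep_{A!} F_{\P}^{\Ga(\P)}$ respects the relevant (trivial) cofibrancy well enough for the derived-functorial interpretation to go through. I expect this to be the main obstacle, in the sense that it requires general model-categorical arguments about algebraic categories rather than anything specific to divided powers; since all four categories in \eqref{eq:AdjGammaP} are algebraic in the sense of \cite{Frankland15}*{\S 2} and the right-hand Quillen adjunction has been established in \Cref{thm:Abelianization}, the cited propositions apply without further work.
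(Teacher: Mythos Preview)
Your proposal is correct and follows essentially the same approach as the paper: both reduce the statement to \cite{Frankland15}*{Propositions~4.13 and 4.14} applied to the square of adjunctions \eqref{eq:AdjGammaP}. The paper's own proof is in fact just the one-line citation preceding the statement; your write-up supplies the preparatory check that the right adjoints commute and unpacks how each part follows, which is more detail than the paper gives but entirely in the same spirit.
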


We can describe the comparison map of cotangent complexes \eqref{eq:Cotangent} more explicitly.

\begin{prop}\label{prop:compmaps}
\begin{enumerate}
\item For any $\Ga(\P)$-algebra $A$, there is a natural map of $\UU_{\P}A$-modules
\[
\Om_{\P}(A) \to \te^* \Om_{\Ga(\P)}(A)
\]
given by
\[
\mu(a_1, \ldots, da_i, \ldots, a_n) \mapsto \beta_{\mu(1,\ldots,1)}(a_1, \ldots, da_i, \ldots, a_n)
\]
for $\mu \in \P(n)$, $a_j \in A$. 
\item More generally, for any morphism of $\Ga(\P)$-algebras $g \colon B \to A$, there is a natural map of $\UU_{\P}A$-modules
\begin{equation}\label{eq:ComparisonKahler}
\xymatrix @R-0.5cm {
\UU_{\P}A \ot_{\UU_{\P}B} \Om_{\P}(B) \ar@{=}[d] \ar[r] & \te^* \left( \UU_{\Ga(\P)}A \ot_{\UU_{\Ga(\P)}B} \Om_{\Ga(\P)}(B) \right) \ar@{=}[d] \\
\Ab_{UA}(UB \to UA) & \te^* \Ab_{A}(B \to A) \\
}
\end{equation}
given by
\[
\mu(a_1, \ldots, db_i, \ldots, a_n) \mapsto \beta_{\mu(1,\ldots,1)}(a_1, \ldots, db_i, \ldots, a_n)
\]
for $\mu \in \P(n)$, $a_j \in A$ ($j \neq i$), and $b_i \in B$.
\item Given a cofibrant replacement $C_{\bullet} \ral{\sim} A$ in simplicial $\Ga(\P)$-algebras, the comparison maps \eqref{eq:ComparisonKahler} for $C_n \to A$ in simplicial degree $n \geq 0$ yield the comparison map of cotangent complexes $\LL^{\P}_{A} \to \te^* \LL^{\Ga(\P)}_A$ from \eqref{eq:Cotangent}.
\end{enumerate}
\end{prop}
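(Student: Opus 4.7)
Since the map $\te \colon \Om_{\P}(A) \to \Om_{\Ga(\P)}(A)$ was already constructed in the unlabeled proposition immediately preceding \Cref{sec:Quillencohomology}, most of the work lies in (a) extending it to the comma-category setting of part~(2), and (b) identifying the resulting map with the comparison of cotangent complexes from \eqref{eq:Cotangent}.

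For part~(2), my plan is to combine two natural transformations: the ring homomorphism $\te \colon \UU_{\P}(-) \to \UU_{\Ga(\P)}(-)$ from \Cref{prop:MapEnvAlg}, and the $\te$-semilinear module map $\te \colon \Om_{\P}(-) \to \Om_{\Ga(\P)}(-)$ recorded just before \Cref{sec:Quillencohomology}. Both are natural in the argument $\Ga(\P)$-algebra, so the morphism $g \colon B \to A$ induces a commutative square of ring homomorphisms among $\UU_{\P}(B)$, $\UU_{\Ga(\P)}(B)$, $\UU_{\P}(A)$ and $\UU_{\Ga(\P)}(A)$. Tensoring $\te$ on Kähler differentials with extension of scalars along $\te$ on enveloping algebras produces the desired morphism
\[
\UU_{\P}(A) \otimes_{\UU_{\P}(B)} \Om_{\P}(B) \to \te^* \bigl( \UU_{\Ga(\P)}(A) \otimes_{\UU_{\Ga(\P)}(B)} \Om_{\Ga(\P)}(B) \bigr),
\]
and the stated formula on generators follows directly from the formula for $\te$ on Kähler differentials. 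Well-definedness of the tensor products is immediate from the defining relations in \Cref{def:UnivEnvAlg} and \Cref{def:Kahler}. Part~(1) is then the special case $g = \id_A$, where the tensor products collapse to $\Om_{\P}(A)$ and $\Om_{\Ga(\P)}(A)$ respectively.

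For part~(3), I would unpack the comparison \eqref{eq:Cotangent} from \cite{Frankland15}*{Proposition~4.13}, which is built as a Beck-Chevalley mate arising from the square of adjunctions in \eqref{eq:AdjGammaP} applied to a cofibrant replacement $C_\bu \ral{\sim} A$ in simplicial $\Ga(\P)$-algebras. By \Cref{lem:PushAbel}, in each simplicial degree $n$ the abelianizations $\Ab^{\P}_{A}(C_n \to A)$ and $\Ab^{\Ga(\P)}_A(C_n \to A)$ are described by the tensor products appearing in part~(2). Applying the map of part~(2) degreewise yields a morphism of simplicial modules, and the task is to check that it agrees with the Beck-Chevalley map by evaluating both on generators $\mu(a_1,\dots,db_i,\dots,a_n)$.

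The main obstacle I expect is this last identification in part~(3). The explicit formula is transparent, but the Beck-Chevalley transformation is defined as a composite of unit and counit morphisms coming from the adjunctions in \eqref{eq:AdjGammaP}; matching this composite with the formula requires carefully tracing the universal derivation $d \colon C_n \to \Om_{\Ga(\P)}(C_n)$ through the composite, using that $\te$ on Kähler differentials is, by construction, the tensor product of $\te$ on enveloping algebras with the identity on the underlying vector space $dC_n$.
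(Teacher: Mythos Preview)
Your proposal is correct and arrives at the same map, but by a somewhat different route than the paper.

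For parts~(1) and~(2), the paper proceeds concretely: it first verifies that $\UU_{\P}A \ot_{\UU_{\P}B} \Om_{\P}(B)$ is spanned by elements of the simple form $\mu(a_1,\ldots,-,\ldots,a_n) \ot_{\UU_{\P}B} db_i$, by using the tensoring relation over $\UU_{\P}B$ to absorb any nontrivial $\nu(b_1,\ldots,db_j,\ldots,b_k)$ factor from $\Om_{\P}(B)$ into the enveloping-algebra factor via operadic composition. It then does the analogous reduction on the $\Ga(\P)$ side, after which the formula for the comparison map is simply read off from the formula for $\te$ in \Cref{prop:MapEnvAlg}. Your approach instead builds the map abstractly as the tensor product of the two natural transformations $\te$ (on enveloping algebras and on K\"ahler differentials), using naturality in $g \colon B \to A$ to ensure well-definedness over the base change. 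This is cleaner conceptually and avoids the explicit generator computation; the paper's explicit computation, on the other hand, makes the claimed formula on generators completely transparent without having to unwind the tensor product.

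For part~(3), the paper's proof does not elaborate beyond what is implicit in the framework of \cite{Frankland15}*{\S 4}. Your plan to trace the Beck--Chevalley mate through the units and counits of \eqref{eq:AdjGammaP} and compare it with the degreewise application of part~(2) is a reasonable way to fill in this detail; the key point, as you note, is that both constructions are induced by $\te \otimes \id$ on $\UU \otimes d(-)$ and hence agree.
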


\begin{proof}
First, we show that the underlying vector space of $\UU_{\P}A \ot_{\UU_{\P}B} \Om_{\P}(B)$ is spanned by the elements of the form $\mu(a_1, \ldots, db_i, \ldots, a_n)$. The tensor product $\UU_{\P}A \ot_{\UU_{\P}B} \Om_{\P}(B)$ is spanned by elements of the type:
\[
    \mu(a_1, \ldots,a_{i-1}, -,a_{i+1} \ldots, a_n)\ot_{\UU_{\P}B} \nu(b_1,\ldots,db_j,\ldots,b_k),
\]
With $\mu\in\P(n)$, $\nu\in \P(k)$, $a_u\in A$ and $b_v\in B$.
However, noting that this element is equal to:
\[
    \mu(a_1, \ldots,a_{i-1}, -,a_{i+1} \ldots, a_n)\ot_{\UU_{\P}B} \nu(b_1,\ldots,b_{j-1},-,b_{j+1},\ldots,b_k)\cdot db_j,
\]
tensoring over $\UU_{\P}B$ means that this element is identified with:
\rescale{\begin{align*}
	    \mu \left( a_1, \ldots,a_{i-1}, -,a_{i+1} \ldots, a_n \right) \cdot \nu \left( g(b_1),\ldots,g(b_{j-1}),-,g(b_{j+1}),\ldots,g(b_k) \right)\otimes_{\UU_{\P}B} db_j
\end{align*}}
which is equal to:
\rescale{\begin{align*}
    \mu\circ_i\nu(a_1, \ldots,a_{i-1},g(b_1),\ldots,g(b_{j-1}),-,g(b_{j+1}),\ldots,g(b_k),a_{i+1} \ldots, a_n)\otimes_{\UU_{\P}B} db_j.
\end{align*}}
We now see that $\UU_{\P}A \ot_{\UU_{\P}B} \Om_{\P}(B)$ is spanned by elements of the form:
 \[
     \mu(a_1, \ldots,a_{i-1}, -,a_{i+1} \ldots, a_n)\ot_{\UU_{\P}B} db_i,
 \]

Similarly, one shows that $\UU_{\Ga(\P)}A \ot_{\UU_{\Ga(\P)}B} \Om_{\Ga(\P)}(B)$ is spanned by elements of the type:
\[
    \beta_{\mu,\r}(a_1,\dots,a_{i-1},-,a_{i+1},\dots,a_s)\ot_{\UU_{\Ga(\P)}B}db_i,
\]
which we denote by $\beta_{\mu,\r}(a_1,\dots,db_i,\dots,a_s)$. The expression of the comparison map is then induced by that of the norm map $\theta$ from Proposition~\ref{prop:MapEnvAlg}.
\end{proof}
Here the left adjoint $\te_! \colon \lMod{\UU_{\P}A} \to \lMod{\UU_{\Ga(\P)}A}$ induced on Beck modules is described in terms of the ring homomorphism $\te$, but one might hope to describe it in terms of the original left adjoint $F^{\Ga(\P)}_{\P} \colon \Alg{\P} \to \Alg{\Ga(\P)}$. However, the left adjoint $F^{\Ga(\P)}_{\P}$ does \emph{not always} pass to Beck modules in the sense of \cite{Frankland15}*{Definition~3.29}. For example, %
in the case of divided power algebras, %
$F^{\Ga(\Com)}_{\Com}$ does not pass to Beck modules (see Proposition~\ref{prop:penvelopenotpass}), while in the case of restricted Lie algebras, $F^{\Ga(\Lie)}_{\Lie}$ passes to Beck modules (see Proposition~\ref{prop:Liepenvelopepasses}).

\section{Divided power algebras versus commutative algebras}\label{sec:compdiv}

In this section, we take the operad $\P = \Com$ and analyze the effect of the free-forget adjunction $\Alg{\Com} \rla \Alg{\Ga(\Com)}$. %

We denote by $\Com_{\aug}$ the category of augmented $\F$-algebras and use the equivalence $\Alg{\Com} \cong \Com_{\aug}$, as well as the equivalence $\Alg{\Ga(\Com)} \cong \pCom$ from Proposition~\ref{prop:Soublin}. P.~Berthelot defined the notion of PD envelope of an ideal. In particular, it follows from \cite{Berthelot74}*{\S 2.3} that the forgetful functor 
\[
U^{\Ga(\Com)}_{\Com}  \colon\pCom \to \mathrm{Com_{aug}}
\]
admits a left adjoint functor 
\[
F^{\Ga(\Com)}_{\Com} \colon \mathrm{Com_{\aug}} \to \pCom
\]
given by $F^{\Ga(\Com)}_{\Com}(A)=\hat{A}$, where $\hat{A}$ denotes the PD envelope of the augmentation ideal $A_{+}$ of $A$. We denote by $\eta$ the unit of this adjunction, which is a homomorphism of augmented algebras $\eta_{A} \colon A \to \hat{A}$. 

Let $A$ be a divided power algebra, equivalently, an object of $\pCom$. Since Beck modules over an augmented algebra $B$ are just $B$-modules, $A$ is the object $\UU_{\Com}(A)$ representing Beck modules over the underlying augmented algebra of $A$. Recall from Section~\ref{sec:ExCom} that $V(A) = A \ot_{\F} R_f$ represents Beck modules over $A$. The $\F$-linear ring homomorphism 
$\te \colon A \to V(A)$ is given by $\te(a) = a \otimes 1$. Denote by $\Om^1_A$ the usual Kähler differentials over the underlying augmented algebra, whereas $\Om_{\pCom}(A)$ was described in Theorem~\ref{thm:KahlerDiv}. The diagram of adjunctions \eqref{eq:AdjGammaP} specializes to
\[
\xymatrix @R+1pc @C+5pc {
\Com_{\aug}/A \ar@<-0.6ex>[d]_{\ep_{A!} F^{\Ga(\Com)}_{\Com}} \ar@<0.6ex>[r]^-{A \ot_{(-)} \Om^1_{(-)}} & \lMod{A} \ar@<0.6ex>[l]^-{A \ltimes -} \ar@<-0.6ex>[d]_{\te_!} \\
\pCom/A \ar@<-0.6ex>[u]_{U^{\Ga(\Com)}_{\Com}} \ar@<0.6ex>[r]^-{V(A) \ot_{V(-)} \Om_{\pCom}(-)} & \lMod{V(A)}. \ar@<0.6ex>[l]^-{A \ltimes -} \ar@<-0.6ex>[u]_{\te^*} \\
}
\]
Let %
By Theorem~\ref{thm:modCom}, %
a $V(A)$-module corresponds 
to a pair $(M,\pi)$, where %
$M$ is an 
$A$-module and $\pi \colon M \to M$ is a $p$-semilinear map such that $\pi(am)=0$ holds for all $a \in A_{+}$ and $m\in M$. Equivalently, the $V(A)$-module %
$(M,\pi)$ corresponds 
to an abelian group object
$A \oplus_{p}{} M \to A$ in ($\pCom/A)_{\ab}$, where $A \oplus_{p}M$ is the semidirect product in $\Com$ of $A$ and $M$ together with the map
\[
\pi(a,m) = (\pi(a),\pi(m) - a^{p-1}m), \: a \in A_{+}, \, m \in M.
\]
Via Theorem~\ref{thm:modCom}, restriction of scalars along $\te \colon A \to V(A)$ is the functor sending a pair $(M,\pi)$ to $M$, forgetting the $p$-semilinear map $\pi$. 

\begin{lemm}\label{lem:coefCom}
Let $A \in \Com_{aug}$ be an augmented algebra, and $M$ be a $V(\hat{A})$-module. Then the functor
\[
\eta^{*}_{A} U^{\Ga(\Com)}_{\Com} \colon (\pCom/\hat{A})_{\ab}\to (\Com_{\aug}/A)_{\ab}
\]
is given by
\[
\eta_{A}^{*}U^{\Ga(\Com)}_{\Com} (M)={}_{A}M
\]
where ${}_{A}M$ denotes $M$ with the $A$-module structure induced by restriction of scalars along the morphism $\eta_{A} \colon A \to \hat{A}$.
\end{lemm}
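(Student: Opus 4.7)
My plan is to trace the Beck module $(M,\pi)$ through each of the two functors using the explicit descriptions from \cref{sec:ExCom}. First, by \cref{thm:modCom}, the $V(\hat A)$-module $(M,\pi)$ is identified with the Beck module $\hat A \oplus_p M \to \hat A$ in $(\pCom/\hat A)_{\ab}$, whose underlying augmented $\F$-algebra is the square-zero extension $\hat A \oplus M$ and whose $p$-map sends $(a,m)$ to $(\pi(a), \pi(m) - a^{p-1} m)$.

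Next, applying $U^{\Ga(\Com)}_{\Com}$ simply discards the $p$-map, producing the augmented algebra homomorphism $\hat A \oplus M \surj \hat A$. Under the classical equivalence $\lMod{\hat A} \cong (\Com_{\aug}/\hat A)_{\ab}$, which sends an $\hat A$-module $N$ to the square-zero extension $\hat A \oplus N \surj \hat A$, this object corresponds to $M$ equipped with its underlying $\hat A$-module structure.

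Finally, I would compute $\eta_A^{*}$ applied to $\hat A \oplus M \surj \hat A$ as the fibre product $A \times_{\hat A}(\hat A \oplus M) \to A$ in $\Com_{\aug}$. A short check shows this fibre product is the square-zero extension $A \oplus {}_A M \surj A$, with $A$-action on $M$ obtained by restriction of scalars along $\eta_A$. Under the equivalence $\lMod{A} \cong (\Com_{\aug}/A)_{\ab}$, this corresponds to ${}_A M$, which is the desired identification.

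The main obstacle, if any, is ensuring that the three Beck-module identifications (as $V(\hat A)$-modules, $\hat A$-modules, and $A$-modules, respectively) behave naturally with respect to both $U^{\Ga(\Com)}_{\Com}$ and pullback along $\eta_A$. Since each equivalence is realised by an explicit semidirect product formula and the pullback in $\Com_{\aug}$ is computed as the ordinary fibre product of augmented algebras, the verification reduces to a direct calculation at the level of underlying augmented algebras, which presents no real difficulty.
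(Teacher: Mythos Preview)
Your proposal is correct and follows essentially the same approach as the paper. The paper's proof is more terse: it simply records that under the equivalences of \cref{thm:modCom} and the classical one for $\Com_{\aug}$, the functor $U^{\Ga(\Com)}_{\Com}$ becomes restriction of scalars along $\te\colon \hat A\to V(\hat A)$ and $\eta_A^*$ becomes restriction of scalars along $\eta_A\colon A\to\hat A$; your version unpacks both steps via the explicit semidirect products and the fibre-product description of the pullback, which amounts to the same computation.
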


\begin{proof}
In the composite of functors
\[
\xymatrix @R-0.3pc {
(\pCom/\hat{A})_{\ab} \ar[d]^{\cong}_{\blue{\text{Thm. } \ref{thm:modCom}}} \ar[r]^-{U^{\Ga(\Com)}_{\Com}} & (\Com_{\aug}/U\hat{A})_{\ab} \ar[d]^{\cong} \ar[r]^-{\eta_{A}^{*}} & (\Com_{\aug}/A)_{\ab} \ar[d]^{\cong} \\
\lMod{V(\hat{A})} \ar[r]^-{\te^*} & \lMod{\hat{A}} \ar[r]^-{\eta_{A}^{*}} & \lMod{A} \\
}
\]
the first step is restriction of scalars along the ring homomorphism $\te \colon \hat{A} \to V(\hat{A})$. The second step is restriction of scalars along the ring homomorphism $\eta_A \colon A \to \hat{A}$, since this is how pullbacks of Beck modules are computed in commutative algebras.
\end{proof}

\begin{prop}
\begin{enumerate}
\item Let $A \in \Com_{\aug}$ an augmented commutative algebra and $M$ be a $V(\hat{A})$-module. Then there is a comparison map
\[
\HQ_{\pCom}^{*}(\hat{A}; M) \to \HQ_{\Com}^{*}(A; {}_{A}M).
\]
\item Let $B \in \pCom$ be a divided power algebra and $M$ be a $V(B)$-module. Then there is a comparison map
\[
\HQ_{\pCom}^{*}(B; M) \to \HQ_{\Com}^{*}(B; \te^* M).
\]
\end{enumerate}
\end{prop}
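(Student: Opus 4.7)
Part (2) will follow essentially by specialization of \cref{prop:CompHQ}\eqref{item:CompHQcohom}. The plan is to take $\P = \Com$ and $A = B$, and invoke the identification $\Alg{\Ga(\Com)} \cong \pCom$ from \cref{prop:Soublin}. The enveloping algebras were computed in \cref{sec:ExCom}: we have $\UU_{\Com}(B) = B$ (the underlying augmented algebra) and $\UU_{\Ga(\Com)}(B) = V(B)$, and the $\F$-linear ring homomorphism $\te \colon B \to V(B)$ from \cref{prop:MapEnvAlg} sends $b$ to $b \otimes 1$. With these identifications, the comparison map \eqref{eq:ComparisonHQ} becomes the desired map $\HQ^{*}_{\pCom}(B; M) \to \HQ^{*}_{\Com}(B; \te^{*} M)$.

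For part (1), the plan is to factor the comparison through the intermediate term $\HQ^{*}_{\Com}(\hat{A}; \te^{*} M)$. First, part~(2) applied to $\hat{A}$ and the $V(\hat{A})$-module $M$ produces a natural map
\[
\HQ^{*}_{\pCom}(\hat{A}; M) \to \HQ^{*}_{\Com}(\hat{A}; \te^{*} M).
\]
Next, the unit $\eta_A \colon A \to \hat{A}$ of the PD envelope adjunction is a morphism in $\Com_{\aug}$, and the standard functoriality of Quillen cohomology along morphisms of algebras supplies a comparison
\[
\HQ^{*}_{\Com}(\hat{A}; \te^{*} M) \to \HQ^{*}_{\Com}(A; \eta_A^{*} \te^{*} M).
\]
This change-of-base map is induced by the Quillen adjunction $\eta_{A!} \dashv \eta_A^{*}$ on Beck modules: passing to cotangent complexes produces a natural morphism $\eta_{A!} \LL^{\Com}_A \to \LL^{\Com}_{\hat{A}}$ of simplicial $\hat{A}$-modules, and taking derived Hom into $\te^{*} M$ followed by adjoining yields the displayed map. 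Composing with \cref{lem:coefCom}, which identifies $\eta_A^{*} \te^{*} M = \eta_A^{*} U^{\Ga(\Com)}_{\Com}(M) = {}_A M$, delivers the desired comparison.

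The routine but slightly delicate point will be to verify that the composite above is the \emph{same} map one would obtain by applying the general Quillen-pair machinery of \cite{Frankland15} directly to the adjunction between slice categories $\Com_{\aug}/A \rla \pCom/\hat{A}$ whose left adjoint is $\eta_{A!} F^{\Ga(\Com)}_{\Com}$ (appearing in \eqref{eq:AdjGammaP}). Since both constructions arise from the same natural unit/counit transformations on abelianizations, this compatibility should be automatic; I do not anticipate a serious obstacle, only a diagram chase confirming that the two factorizations of the comparison agree. No new computation beyond \cref{prop:CompHQ}, \cref{lem:coefCom}, and the established identification of enveloping algebras is required.
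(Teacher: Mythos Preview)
Your proposal is correct. Part~(2) matches the paper's proof exactly: both specialize \cref{prop:CompHQ}\eqref{item:CompHQcohom} to $\P=\Com$. For Part~(1), the paper invokes \cite{Frankland15}*{Proposition~4.12} together with \cref{lem:coefCom} in one line, whereas you unpack this into two steps---first applying Part~(2) to $\hat{A}$, then changing base along $\eta_A$---and use \cref{lem:coefCom} only at the end to identify the coefficients. Your factored route is a valid reconstruction of what the cited proposition packages, and the diagram chase you flag as ``slightly delicate'' is indeed routine: both comparisons are induced by the same unit $\eta_A$ and the same map $\theta$ of enveloping algebras, so the composite agrees with the direct construction. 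The paper's version is simply more concise by outsourcing the assembly to the cited reference; your version has the advantage of being self-contained, but adds no new content.
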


\begin{proof}
Part~(1) follows directly from \cite{Frankland15}*{Proposition~4.12} and Lemma~\ref{lem:coefCom}. Part~(2) is a specialization of Proposition~\ref{prop:CompHQ}~\eqref{item:CompHQcohom} to the operad $\P = \Com$.
\end{proof}

We now show that the $p$-envelope functor does not pass to Beck modules:

\begin{prop}\label{prop:penvelopenotpass}
In the case $\F = \F_2$, the functor $F^{\Ga(\Com)}_{\Com}$ that freely adjoins divided power operations does \emph{not} pass to Beck modules.
\end{prop}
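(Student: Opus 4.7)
The strategy is to produce a counterexample: an abelian group object in $\Com_{\aug}/A$ whose image under $F^{\Ga(\Com)}_{\Com}$ fails to be abelian in $\pCom/\hat{A}$. The simplest choice takes $A = \F_2$, so that $\Com_{\aug}/\F_2$ is $\Com_{\aug}$ itself with its canonical augmentation; abelian group objects here are split square-zero extensions $\F_2 \ltimes M$, and I would select the one-dimensional Beck module $M = \F_2$, giving $\F_2 \ltimes M \cong \F_2[y]/(y^2)$.

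Next I would identify the PD envelope $F^{\Ga(\Com)}_{\Com}(\F_2[y]/(y^2))$ with the free divided power algebra $\F_2\langle y \rangle$ on one generator (see \cite{Berthelot74}*{\S I.2}), having $\F_2$-basis $\{\gamma_n(y)\}_{n \geq 0}$ and multiplication $\gamma_n(y) \gamma_m(y) = \binom{n+m}{n} \gamma_{n+m}(y)$. This is consistent with the relation $y^2 = 0$ in $\F_2[y]/(y^2)$, since already $y^2 = 2!\,\gamma_2(y) = 0$ in characteristic~$2$. Simultaneously $F^{\Ga(\Com)}_{\Com}(\F_2) = \F_2$, because the augmentation ideal of $\F_2$ is zero.

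I would then show that $\F_2\langle y \rangle \to \F_2$ is not an abelian group object in $\pCom/\F_2$. By \cref{thm:Amodule} applied to $A = \F_2$ (or the description of Beck modules recalled in \cref{sec:ComClassical}), every such abelian object takes the form $\F_2 \oplus_p N$, and in particular has a square-zero augmentation ideal. However, in $\F_2\langle y \rangle$ one computes
\[
y \cdot \gamma_2(y) = \binom{3}{1}\gamma_3(y) = \gamma_3(y) \neq 0,
\]
so the augmentation ideal is not square-zero. Hence $F^{\Ga(\Com)}_{\Com}$ sends the abelian object $\F_2 \ltimes \F_2$ to a non-abelian object over $\hat{\F_2}$, and does not pass to Beck modules.

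The only step requiring real care is the identification of the PD envelope of $\F_2[y]/(y^2)$ with the free divided power algebra on one generator, which is standard. Once that is granted, the obstruction reduces to the single observation that $\binom{3}{1} \equiv 1 \pmod{2}$, so divided powers in characteristic~$2$ genuinely create nontrivial multiplicative content in the augmentation ideal.
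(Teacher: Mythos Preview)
Your proof is correct and follows essentially the same approach as the paper: both take $A=\F_2$, $M=\F_2$, identify the PD envelope of $\F_2[y]/(y^2)$ with the free divided power algebra on one generator, and exhibit the same product $\gamma_1(y)\gamma_2(y)=\gamma_3(y)\neq 0$ (the paper writes this as $x*x^2=x^3$) to show the kernel is not square-zero. The only cosmetic difference is notation.
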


\begin{proof}
Take the commutative (unital) $\F_2$-algebra $A=\F_2$ and the $A$-module $M=\F_2$ whose generator (only non-zero element) we denote by $x$. Viewing the $A$-module as a square-zero split extension $pr \colon A \op M \surj A$, apply the functor $F^{\Ga(\Com)}_{\Com}$ to obtain the split epimorphism of divided power algebras
\[
\hat{pr} \colon \widehat{A \op M} \surj \hat{A}.
\]
Then, $\widehat{A\oplus M}$ is the free divided power algebra on one generator $\Gamma(x)$, that is, its underlying vector space is isomorphic to the vector space of polynomials $\F_2[x]$, but the multiplication is induced by $x^n * x^m = \binom{m+n}{n}x^{n+m}$. The divided power algebra $\hat A$ is still equal to $\F_2$. %
The kernel $K = \ker (\hat{pr})$ is equal to the subalgebra of $\Gamma(x)$ of non-constant polynomials. In $K$, we have for example, $x*x^2=3x^3=x^3$. So $K$ is not a square-zero algebra, thus the split epimorphism $\hat{pr}$ does not yield a Beck module.
\end{proof}
To conclude this section, we will specialize the comparison maps of Proposition~\ref{prop:compmaps} to the case of divided power algebras. Let $g \colon B\to A$ be a morphism of non-unital divided power algebras. On the one hand, 
\[
	\UU_{\Com} A\otimes_{\UU_{\Com} B}\Omega_{\Com}(B) = \ag{A }\otimes_{\ag{B}} \Omega^1_{\ag{B}}
\]
is spanned by the elements $a db$ for $a \in A$, $b \in B$, under relations expressing the fact that $d$ is a (linear) $\ag{A}$-derivation, that is:
\[
	\begin{cases}
		d(\lambda b+b') = \lambda db+db' \\
		d(bb') = g(b) db' + g(b') db \\
	\end{cases}
\]
and the action of $\ag{A}$ is given by the multiplication in $A$ ($a\cdot a'db = (aa')db$). On the other hand, 
\[
	\UU_{\Gamma(\Com)} A \otimes_{\UU_{\Gamma(\Com)}B} \Omega_{\Gamma(\Com)}(B) = V(\ag{A}) \otimes_{V(\ag{B})} \Omega_{\pCom}(B)
\]
is spanned by elements $af^{k}db$ for $a\in A$, $b\in B$ and $k\in \N$, under relation expressing the fact that $d$ is a Beck $A$-derivation in $\pCom$, that is, we also have
\[
	d\gamma_p(b) = fdb-g(b)^{p-1}db.
\]	
The action of $V(\ag{A})$ is again given by the multiplication in $V(\ag{A})$, in particular, $(a\otimes f^k)\cdot a'f^{k'}db = aa'^{p^k}f^{k+k'}db$. The comparison map
\[
    \UU_\Com A\otimes_{\UU_\Com B}\Omega_{\Com}(B)\to \UU_{\Gamma(\Com)} A\otimes_{\UU_{\Gamma(\Com)}B}\Omega_{\Gamma(\Com)}(B)
\]
from Proposition~\ref{prop:compmaps} is %
given 
by $adb\mapsto af^0db$.

\section{Restricted Lie algebras versus Lie algebras}\label{sec:complie}

Let $\F$ be a field of %
characteristic $p \neq 0$. Let $(H,(-)^{[p]})$ be a restricted Lie algebra. Then by Theorem~\ref{thm:mod} a $w(H)$-module $M$ is associated to a pair $({}_{u(H)}M,f)$, where ${}_{u(H)}M$ is $M$ viewed as a $u(H)$-module and $f\colon {}_{u(H)}M\to {}_{u(H)}M^{H}$ is a $p$-semilinear map. Equivalently, the $w(H)$-module $M$ is associated to the abelian group object 
\[
H\ltimes_{f} {}_{u(H)}M\to H
\]
in $(\RLie/L)_{\ab}$, where $H\ltimes_{f}{}_{u(H)}M$ denotes the semidirect product in $\RLie$ of $H$ by ${}_{u(H)}M$. In particular, 
\[
H\ltimes_{f}{}_{u(H)}M = \{(h,m),\,h \in H,\, m\in M\}.
\]
The Lie bracket is given by
\[
[(h,m),(h',m')]=([h,h'],hm'-h'm)
\]
and the $p$-map is given by
\[
(h,m)^{[p]}=(h^{[p]},\underbrace{h\cdots h}_{p-1}m+f(m)).
\]
The notion of $p$-envelope of a Lie algebra has been studied in detail in \cite{StradeF88}*{\S 2.5}. 

\begin{defi}
Let $L$ be a Lie algebra over $\F$ and $U(L)$ its enveloping algebra. The \Def{$p$-envelope} $\hat{L}$ of %
$L$ is the Lie subalgebra of $U(L)_{\Lie}$ which contains $L$ and all iterated associative 
$p$-th powers. Thus, $\hat{L}$ is a restricted Lie subalgebra of $U(L)_{\RLie}$. 
\end{defi}

Note that $L$ is an ideal in $\hat{L}$. 
In \cite{Milner75}, A. A. Mil'ner proves that $\hat{L}$ has the following universal property: for all restricted Lie algebras $A\in \RLie$ and all Lie algebra homomorphisms $f \colon  L\rightarrow A$, there is exactly one restricted Lie algebra homomorphism 
$\hat{f} \colon  \hat{L}\rightarrow A$ such that $\hat{f}\circ i=f$; see \cite{StradeF88}*{\S 2.5, Theorem~2.5.2}. We then deduce that $\hat{L}\cong F^{\Ga(\Lie)}_{\Lie}(L)$, where $L$ is a Lie algebra. It also follows from the universal properties of $U(L)$ and $u(\hat L)$ that they are isomorphic.

Denote by $\eta$ the unit of the adjunction $F^{\Ga(\Lie)}_{\Lie} \dashv U^{\Ga(\Lie)}_{\Lie}$ and $\iota$ the unit of the adjunction $u \dashv (-)_{\RLie}$. From the foregoing discussion, for $L$ a Lie algebra, we get a Lie algebra homomorphism $\eta_{L}\colon L\to \hat{L}$, and a restricted Lie homomorphism $\iota_{\hat L} \colon \hat{L} \to u(\hat{L}) \cong U(L)$.

\begin{lemm}\label{lem:CoefLie}
Let  $L$ be a Lie algebra, and $M$ be a $w(\hat{L})$-module. Then the functor
\[
\eta^{*}_{L}U^{\Ga(\Lie)}_{\Lie}\colon (\RLie/\hat{L})_{\ab}\to (\Lie/L)_{\ab}
\]
is given by
\[
\eta_{L}^{*}U^{\Ga(\Lie)}_{\Lie} (M)={}_{U(L)}M.
\]
\end{lemm}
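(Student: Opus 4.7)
The plan is to mimic the proof of \cref{lem:coefCom}, translating each functor in the composition $\eta_L^* U^{\Ga(\Lie)}_{\Lie}$ through the equivalences provided by restricted/universal enveloping algebras. Concretely, I would assemble the commutative diagram
\[
\xymatrix @R-0.3pc {
(\RLie/\hat{L})_{\ab} \ar[d]^{\cong}_{\text{Thm.~\ref{thm:mod}}} \ar[r]^-{U^{\Ga(\Lie)}_{\Lie}} & (\Lie/U^{\Ga(\Lie)}_{\Lie}\hat{L})_{\ab} \ar[d]^{\cong} \ar[r]^-{\eta_{L}^{*}} & (\Lie/L)_{\ab} \ar[d]^{\cong} \\
\lMod{w(\hat{L})} \ar[r]^-{\te^*} & \lMod{U(\hat{L})} \ar[r]^-{U(\eta_L)^*} & \lMod{U(L)}
}
\]
where the middle and right-hand vertical equivalences are the classical identification $(\Lie/K)_{\ab} \cong \lMod{U(K)}$ for any Lie algebra $K$, as recalled after \cref{def:BeckModule} and in the discussion preceding the lemma.

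Next, I would verify that each square commutes. The first square commutes by \cref{lem:MapEnvAlgLie}: the comparison $\F$-linear ring homomorphism $\te \colon U(\hat{L}) \to w(\hat{L}) = \UU_{\Ga(\Lie)}(\hat{L})$ from \cref{prop:MapEnvAlg}, which factors as $U(\hat L) \twoheadrightarrow u(\hat L) \to w(\hat L)$, realizes the forgetful functor $U^{\Ga(\Lie)}_{\Lie}$ at the level of Beck modules as restriction of scalars along $\te$. The second square commutes because pullback of Beck modules along a Lie algebra homomorphism $f \colon L \to K$ is computed, under the equivalence with enveloping algebra modules, as restriction of scalars along $U(f)$.

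Composing the two restrictions, we conclude that $\eta_L^* U^{\Ga(\Lie)}_{\Lie}(M)$ is precisely $M$ endowed with the $U(L)$-module structure obtained by restriction of scalars along the composite $U(L) \xrightarrow{U(\eta_L)} U(\hat{L}) \xrightarrow{\te} w(\hat{L})$, which is exactly ${}_{U(L)}M$. The only point that requires any care is the commutativity of the first square, which is handed to us by \cref{lem:MapEnvAlgLie}; the rest is bookkeeping through standard equivalences.
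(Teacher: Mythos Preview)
Your argument is correct and follows a genuinely different route from the paper's own proof. The paper computes the pullback $L\times_{\hat{L}}(\hat{L}\ltimes_{f}{}_{u(\hat{L})}M)$ explicitly in $\RLie$, identifies its elements as pairs $(l,(l,m))$, and reads off that the kernel of the projection to $L$ is ${}_{U(L)}M$. You instead route everything through the enveloping-algebra description of Beck modules, exactly as in \cref{lem:coefCom}: the forgetful functor on abelian group objects becomes restriction along $\te\colon U(\hat L)\to w(\hat L)$ via the diagram \eqref{eq:AdjGammaP}, and pullback along $\eta_L$ becomes restriction along $U(\eta_L)$. One small point worth making explicit in your write-up is that the composite $U(L)\xrightarrow{U(\eta_L)}U(\hat L)\twoheadrightarrow u(\hat L)\to w(\hat L)$ factors through the isomorphism $U(L)\cong u(\hat L)$ recalled just before the lemma, which is what pins down the resulting $U(L)$-action as the one denoted ${}_{U(L)}M$. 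Your approach has the virtue of being uniform with the commutative case and of making transparent that nothing beyond restriction of scalars is happening; the paper's hands-on computation, by contrast, exhibits the semidirect product and the $L$-action concretely, which some readers may find more illuminating.
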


\begin{proof}
Let
\[
\hat{L}\ltimes_{f} {}_{u(\hat{L})}M\to \hat{L}
\]
be an abelian group object in $(\RLie/\hat{L})_{\ab}$. We have the pullback diagram
\[
\begin{tikzcd}
L\times_{\hat{L}} (\hat{L}\ltimes_{f} {}_{u(\hat{L})}M) \arrow[r] \arrow[d]
& \hat{L}\ltimes_{f} {}_{u(\hat{L})}M \arrow[d] \\
L\arrow[r, "\eta_{L}"]
&\hat{L}.
\end{tikzcd}
\]
Since $L\times_{\hat{L}} (\hat{L}\ltimes_{f} {}_{u(\hat{L})}M)$ is spanned by the elements $(l,(l,m))$ with $l\in L$ and $m\in {}_{u(\hat{L})}M$, we get:
\[
\eta_{L}^{*}U^{\Ga(\Lie)}_{\Lie}(L\ltimes_{f} {}_{u(\hat{L})}M\to \hat{L})=L\times_{\hat{L}}(\hat{L}\ltimes_{f} {}_{u(\hat{L})}M)\to L,
\]
so, $\psi \colon L\times_{\hat{L}} (\hat{L}\ltimes_{f} {}_{u(\hat{L})}M) \to L$ is an abelian group object in $(\Lie/L)_{\ab}$. The kernel of $\psi$ is isomorphic to ${}_{U(L)}M$.
\end{proof}

\begin{prop}
\begin{enumerate}
\item Let $L$ be a Lie algebra and $M$ a $w(\hat{L})$-module. Then there is a comparison map
\[
\HQ_{\RLie}^{*}(\hat{L}; M) \to \HQ_{\Lie}^{*}(L; {}_{U(L)}M).
\]
\item Let $H$ be a restricted Lie algebra and $M$ a $w(H)$-module. Then there is a comparison map
\[
\HQ_{\RLie}^{*}(H; M) \to \HQ_{\Lie}^{*}(H; \te^* M)
\]
where $\te \colon U(H) \to w(H)$ is the ring homomorphism described in Lemma~\ref{lem:MapEnvAlgLie}.
\end{enumerate}
\end{prop}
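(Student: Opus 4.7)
The plan is to mirror exactly the argument given for the analogous statement in the commutative/divided power setting at the end of \cref{sec:compdiv}. Both parts are instances of the general comparison machinery from \cref{prop:CompHQ} applied to the operad $\P = \Lie$, combined with the identification of restrictions of Beck modules via \cref{lem:CoefLie}.

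For part (2), I would simply observe that it is a direct specialization of \cref{prop:CompHQ}\eqref{item:CompHQcohom} to the operad $\P = \Lie$ and the $\Ga(\Lie)$-algebra $A = H$. Indeed, under the equivalences $\Alg{\Lie} = \Lie$ and $\Alg{\Ga(\Lie)} = \RLie$ reviewed in \cref{sec:ExLie}, and the identification $\UU_{\Lie}(H) \cong U(H)$, $\UU_{\Ga(\Lie)}(H) \cong w(H)$ from \cref{thm:mod} and \cref{lem:MapEnvAlgLie}, the comparison map \eqref{eq:ComparisonHQ} of \cref{prop:CompHQ} becomes precisely the claimed map $\HQ_{\RLie}^{*}(H;M) \to \HQ_{\Lie}^{*}(H;\te^*M)$. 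No extra work is needed here beyond checking that the identifications are the ones set up earlier in the paper.

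For part (1), I would apply \cite{Frankland15}*{Proposition~4.12} to the free/forgetful adjunction $F^{\Ga(\Lie)}_{\Lie} \dashv U^{\Ga(\Lie)}_{\Lie}$ together with the unit component $\eta_L \colon L \to \hat{L}$, which exhibits $\hat{L}$ as $F^{\Ga(\Lie)}_{\Lie}(L)$. That proposition produces a natural comparison map in Quillen cohomology
\[
\HQ_{\RLie}^{*}(\hat{L}; M) \to \HQ_{\Lie}^{*}(L; \eta_L^{*} U^{\Ga(\Lie)}_{\Lie}(M)).
\]
By \cref{lem:CoefLie}, the coefficient module $\eta_L^{*} U^{\Ga(\Lie)}_{\Lie}(M)$ is exactly ${}_{U(L)}M$, where the $U(L)$-module structure is induced by restriction of scalars along the composite $U(L) \cong u(\hat L) \leftarrow \hat L \leftarrow L$. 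Substituting this identification into the target of the comparison map gives the desired map.

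The proof is essentially a bookkeeping exercise: the bulk of the content is already contained in the general machinery of \cref{prop:CompHQ}, the identifications of enveloping algebras and modules in \cref{sec:ExLie}, and the calculation of the pullback functor in \cref{lem:CoefLie}. I do not anticipate any substantive obstacle; the only thing worth being careful about is making sure that $\hat{L}$ really is $F^{\Ga(\Lie)}_{\Lie}(L)$ (this is Mil'ner's universal property, already recorded in the paper) so that the unit map $\eta_L$ to which \cite{Frankland15}*{Proposition~4.12} is applied agrees with the classical inclusion of $L$ into its $p$-envelope.
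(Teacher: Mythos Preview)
Your proposal is correct and follows essentially the same approach as the paper: part~(1) is deduced from \cite{Frankland15}*{Proposition~4.12} together with \cref{lem:CoefLie}, and part~(2) is the specialization of \cref{prop:CompHQ}\eqref{item:CompHQcohom} to $\P = \Lie$. The paper's proof is just the one-line version of what you wrote.
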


\begin{proof}
Part~(1) follows directly from \cite{Frankland15}*{Proposition~4.12} and Lemma~\ref{lem:CoefLie}. Part~(2) is a specialization of Proposition~\ref{prop:CompHQ}~\eqref{item:CompHQcohom} to the operad $\P = \Lie$.
\end{proof}

Let us now show that the $p$-envelope functor passes to Beck modules. We will need the following observation:

\begin{rema}\label{rem:p-envelope}
Let $H$ be a restricted Lie algebra and $x_{i}\in H$. By Jacobson’s formula on $p$-th powers we have:
$$\left(\sum_{i} x_{i}\right)^{[p]}-\left(\sum_{i}x_{i}^{[p]}\right)\in [H,H].$$
Let $L$ be a Lie algebra with basis $(e_{i})_{i\in I}$ over a field $\F$. From the previous formula it follows by induction that the elements of $\hat{L}$ are of the type $\sum_{i\in I,n_{i}\geq 0} \F e_{i}^{p^{n_{i}}}\in U(L)$.  
\end{rema}

\begin{prop}\label{prop:Liepenvelopepasses}
For any field $\F$ of characteristic $p$, the $p$-envelope functor $F^{\Ga(\Lie)}_{\Lie}$ that freely adjoins a $p$-map passes to Beck modules.
\end{prop}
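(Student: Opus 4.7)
The strategy is to verify directly that $F^{\Ga(\Lie)}_{\Lie}$ sends split square-zero extensions in $\Lie$ to split square-zero extensions in $\RLie$. Given a Beck $L$-module, represented by the semidirect product $\pi \colon L \ltimes N \to L$ in Lie algebras, applying the $p$-envelope functor yields a split morphism $\hat\pi \colon \widehat{L \ltimes N} \to \hat L$ of restricted Lie algebras. We must show that $\hat\pi$ is a Beck module over $\hat L$, equivalently (by \cref{thm:mod}) that its kernel carries a natural $w(\hat L)$-module structure realising $\hat\pi$ as a semidirect product $\hat L \ltimes_f K$ in $\RLie$.

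First I would describe the kernel explicitly using \cref{rem:p-envelope}. Fix bases $(e_i)_{i \in I}$ of $L$ and $(n_j)_{j \in J}$ of $N$. The PBW theorem ensures that the elements $e_i^{p^a}$ and $n_j^{p^b}$ ($a, b \geq 0$) are linearly independent in $U(L \ltimes N)$, and by the remark they span $\widehat{L \ltimes N}$, so the kernel $K = \ker \hat\pi$ is the subspace spanned by the $n_j^{p^b}$. Because $N$ is an abelian ideal of $L \ltimes N$, the $n_j$ commute pairwise in $U(L \ltimes N)$ and hence so do their powers, whence $[K, K] = 0$. Moreover $(n_j^{p^b})^{[p]} = n_j^{p^{b+1}} \in K$, so $K$ is closed under the restricted $p$-map of $\widehat{L \ltimes N}$.

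Next I would compute the $\hat L$-action on $K$ and verify the semidirect product formula for the $p$-map. Using the identity $\ad_{x^{[p]}} = \ad_x^p$ in $U(L \ltimes N)_{\RLie}$ together with the fact that $\ad_{n_j}$ kills $N$, a direct computation gives
\[
[e_i^{p^a}, n_j^{p^b}] = \begin{cases} \ad_{e_i}^{p^a}(n_j) & \text{if } b = 0, \\ 0 & \text{if } b \geq 1, \end{cases}
\]
so $[\hat L, K] \subseteq K$ and the $p$-semilinear map $f \colon K \to K$, $f(k) = k^{[p]}$, takes values in $K^{\hat L}$. For $l \in \hat L$ and $k \in K$, Jacobson's formula gives $(l + k)^{[p]} = l^{[p]} + k^{[p]} + \sum_{i=1}^{p-1} s_i(l, k)$, with each $s_i$ a universal Lie polynomial homogeneous of bidegree $(i, p-i)$ in $(l, k)$. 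Since $K$ is abelian and $\ad_K$ vanishes on $K$, any Lie monomial with two or more $K$-factors evaluates to zero in $\widehat{L \ltimes N}$, so $s_i(l, k) = 0$ for $i \leq p - 2$ while $s_{p-1}(l, k) = \ad_l^{p-1}(k)$. Hence
\[
(l + k)^{[p]} = l^{[p]} + \ad_l^{p-1}(k) + f(k),
\]
which is exactly the semidirect product formula in $\RLie$ from \cref{sec:LieClassical}.

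It follows that $\widehat{L \ltimes N} \cong \hat L \ltimes_f K$ in $\RLie$, so $\hat\pi$ is precisely the Beck module over $\hat L$ corresponding via \cref{thm:mod} to the $w(\hat L)$-module $(K, f)$. The construction is clearly natural in $(L, N)$, so $F^{\Ga(\Lie)}_{\Lie}$ passes to Beck modules. The main obstacle will be the Jacobson-formula computation in the previous paragraph: justifying rigorously that the universal Lie polynomials $s_i$ collapse as claimed when evaluated on $(l, k)$ with $k$ in the abelian ideal $K$. This requires a careful bidegree analysis of each $s_i$ together with the observation that any bracket expression with at least two $K$-factors must, at some step, contain either $[k, k'] = 0$ or a bracket $[k, \,\cdot\,]$ returning into $K$ and subsequently being killed by $\ad_K$.
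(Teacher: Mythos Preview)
Your proof is correct and follows essentially the same route as the paper: identify the kernel $K$ of $\widehat{L\ltimes N}\to\hat L$ with $\widehat{N}$ via \cref{rem:p-envelope} and PBW, check that $K$ is an abelian ideal with $p$-map landing in $K^{\hat L}$, and conclude that $\widehat{L\ltimes N}\cong \hat L\ltimes_f K$ in $\RLie$. The paper phrases the last step as building an injection $g\colon \hat L\ltimes_f \widehat{N}\hookrightarrow \widehat{L\ltimes N}$ (invoking \cite{StradeF88} for exactness of $F^{\Ga(\Lie)}_{\Lie}$) and then reading off surjectivity from \cref{rem:p-envelope}, whereas you verify the semidirect-product $p$-map formula directly via the Jacobson identity; your computation that only $s_{p-1}(l,k)=\ad_l^{\,p-1}(k)$ survives is exactly what makes $g$ a restricted homomorphism, a point the paper leaves implicit.
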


\begin{proof}
 For a Lie algebra $L$, the restricted Lie algebra $F^{\Gamma(\Lie)}_{\Lie}(L)$ is spanned by elements of the type $l^{[p^k]}$, for $l\in L$ and $k\in \N$, where $(-)^{[p^k]}$ represents the $k$-th iteration of the $p$-map in $F^{\Gamma(\Lie)}_{\Lie}(L)$.
 Let $L\ltimes M\to L$ be a Beck module over $L$, where $M$ is an $L$-module. Consider the induced split epimorphism $F^{\Gamma(\Lie)}_{\Lie}(L\ltimes M)\to F^{\Gamma(\Lie)}_{\Lie}(L)$. 
By \cite{StradeF88}*{\S 2.5, Proposition~5.3}, $F^{\Gamma(\Lie)}_{\Lie}$ preserves both surjections and injections. In particular, $l^{[p^k]}\mapsto (l,0)^{[p^k]}$ and $m^{[p^k]}\mapsto(0,m)^{[p^k]}$ induce injections $F^{\Gamma(\Lie)}_{\Lie}(L) \inj F^{\Gamma(\Lie)}_{\Lie}(L\ltimes M)$ and $F^{\Gamma(\Lie)}_{\Lie}(M)\inj F^{\Gamma(\Lie)}_{\Lie}(L\ltimes M)$. 
Under these injections, the abelian restricted Lie algebra $F^{\Gamma(\Lie)}_{\Lie}(M)$ is a restricted $F^{\Gamma(\Lie)}_{\Lie}(L)$-module. If $f$ is the $p$-map in $F^{\Gamma(\Lie)}_{\Lie}(M)$, then $(F^{\Gamma(\Lie)}_{\Lie}(M), f)$ is a Beck module over $F^{\Gamma(\Lie)}_{\Lie}(L)$. 
We then get an injection 
\begin{align*}
g \colon F^{\Gamma(\Lie)}_{\Lie}(L) \ltimes_f F^{\Gamma(\Lie)}_{\Lie}(M) &\inj F^{\Gamma(\Lie)}_{\Lie}(L\ltimes M) \\
(l^{[p^{k_1}]},m^{[p^{k_2}]}) &\mapsto (l,0)^{[p^{k_1}]}+(0,m)^{[p^{k_2}]}.
\end{align*}
From Remark~\ref{rem:p-envelope}, we deduce that $g$ is in fact an isomorphism, hence $F^{\Gamma(\Lie)}_{\Lie}(L\ltimes M\to L)$ has the structure of a Beck module over $F^{\Gamma(\Lie)}_{\Lie}(L)$.
\end{proof}

To conclude this section, we will specify the comparison maps of Proposition~\ref{prop:compmaps} in the case of restricted Lie algebra. Let $g \colon L\to H$ be a morphism of restricted lie algebra. Then, on the one hand, $\UU_{\Lie}L \otimes_{\UU_{\Lie} H} \Omega_{\Lie}(H)$ is spanned by the elements $ldh$ for $l\in L$, $h\in H$, under relations expressing the fact that $d$ is a (linear) $L$-derivation:
\[
\begin{cases}
d(\lambda h+h') = \lambda dh+dh' \\
d([h,h']) = g(h)dh' - g(h')dh \\
\end{cases}
\]
and the action of $L$ is given by the bracket in $L$ ($l\cdot l'dh=[l,l']$). On the other hand, $\UU_{\Gamma(\Lie)} L\otimes_{\UU_{\Gamma(\Lie)}H}\Omega_{\Gamma(\Lie)}(H)=w(L)\otimes_{w(H)}\Omega_{\RLie}(H)$ is spanned by elements $f^{k}ldh$ for $l\in U(L)$, $h\in H$ and $k\in \N$, under relation expressing the fact that $d$ is a Beck $L$-derivation in $\RLie$, that is, we also have:
\[
d(h^{[p]}) = \underbrace{g(h)\cdots g(h)}_{p-1}dh+fdh
\]
and the action of $w(L)$ is again given by the multiplication in $w(L)$. In particular, for $l\in L$, $(f^kl)\cdot f^{k'}l'dh=0$ as soon as $k'>0$. The comparison map
\[
    \UU_{\Lie} L\otimes_{\UU_{\Lie} H} \Omega_{\Lie}(H)\to \UU_{\Gamma(\Lie)} L\otimes_{\UU_{\Gamma(\Lie)}H} \Omega_{\Gamma(\Lie)}(H)
\]
from Proposition~\ref{prop:compmaps} is %
given 
by $ldh\mapsto f^0ldh$.

\section{Associative algebras versus restricted Lie algebras}\label{sec:compasslie}

Let $\F$ be a field of %
characteristic $p \neq 0$. Denote by $\eta$ the unit of the adjunction $u\vdash (-)_{\RLie}$.  Let $(L,(-)^{[p]})$ be a restricted Lie algebra and let $M$ be an $u(L)$-bimodule. We denote by ${}_{u(L)}M$ the left $u(L)$-module obtained from $M$ by the action $l\cdot m:=lm-ml$, where the dotless notation is for the left and right bimodule actions on $M$. Let $A\in \As$ be an associative algebra. We recall that the category of Beck $A$-modules is equivalent to the category of $A$-bimodules (see \cite{Barr96}*{\S 2.1}).  

\begin{lemm}
Let  $L$ be a restricted Lie algebra, and $M$ be a $u(L)$-bimodule. Then the functor
\[
\eta^{*}_{L}(-)_{\RLie}\colon (\As/u(L))_{\ab} \to (\RLie/L)_{\ab}
\]
is given by
\[
\eta_{L}^{*}(-)_{\RLie} (M)={}_{u(L)}M.
\]
\end{lemm}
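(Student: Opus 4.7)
The plan is to imitate the strategy of \cref{lem:coefCom} and \cref{lem:CoefLie}, replacing the adjunction at hand by $u\dashv(-)_{\RLie}$. The key input is that, since the category of Beck modules over an associative algebra $A$ is equivalent to the category of $A$-bimodules (see \cite{Barr96}*{\S 2.1}), an abelian group object in $\As/u(L)$ can be written as the square-zero extension
\[
\pi\colon u(L)\ltimes M\twoheadrightarrow u(L),\qquad (a,m)(a',m')=(aa',\,am'+ma'),
\]
for a $u(L)$-bimodule $M$. So the proof reduces to an explicit computation of the composite functor applied to such a square-zero extension.

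First I would apply $(-)_{\RLie}$ to $u(L)\ltimes M$. The bracket becomes
\[
\bigl[(a,m),(a',m')\bigr]=\bigl([a,a'],\,am'-m'a+ma'-a'm\bigr),
\]
and the $p$-map is the associative $p$-th power in $u(L)\ltimes M$. Next I would form the pullback
\[
\begin{tikzcd}
L\times_{u(L)_{\RLie}}(u(L)\ltimes M)_{\RLie}\ar[r]\ar[d,"\psi"']
& (u(L)\ltimes M)_{\RLie}\ar[d,"\pi_{\RLie}"]\\
L\ar[r,"\eta_L"]
& u(L)_{\RLie},
\end{tikzcd}
\]
whose elements are pairs $\bigl(l,(\eta_L(l),m)\bigr)$ with $l\in L$ and $m\in M$. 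In particular, $\ker(\psi)\cong M$ as a vector space.

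Then I would read off the induced restricted Lie structure on this pullback. The bracket computation gives
\[
\bigl[(l_1,m_1),(l_2,m_2)\bigr]=\bigl([l_1,l_2],\,\eta_L(l_1)\cdot m_2-\eta_L(l_2)\cdot m_1\bigr),
\]
where $a\cdot m:=am-ma$ is precisely the left action defining ${}_{u(L)}M$. For the $p$-map, when $l=0$ the element $(0,m)\in u(L)\ltimes M$ squares to zero, hence $(0,m)^{[p]}=0$ for $p\geq 2$; thus the associated $p$-semilinear map $f\colon {}_{u(L)}M\to{}_{u(L)}M^L$ of \cref{thm:mod} is trivial. Putting it together, the pullback is exactly the Beck module $L\ltimes_0{}_{u(L)}M\to L$ corresponding to ${}_{u(L)}M$ with trivial $p$-semilinear part.

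I expect the main obstacle to be purely bookkeeping: confirming that the bimodule action $am-ma$ really yields the expected $u(L)$-module $_{u(L)}M$ (which is standard) and checking that the commutator/$p$-th power formulas on the square-zero extension restrict correctly to the pullback. There is no conceptual subtlety, so the proof should be a short verification along these lines; for expositional consistency one may simply write ``the proof is analogous to that of \cref{lem:CoefLie}, using that Beck modules over an associative algebra are bimodules, and that $(0,m)^{[p]}=0$ in $u(L)\ltimes M$.''
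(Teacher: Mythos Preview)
Your proposal is correct and follows essentially the same approach as the paper: both write the Beck $u(L)$-module as the square-zero extension $u(L)\ltimes M$, apply $(-)_{\RLie}$, form the pullback along $\eta_L$, and then read off the induced $L$-action $l\cdot m=lm-ml$ and the vanishing of the $p$-map on $(0,m)$. Your explicit identification of the trivial $p$-semilinear map via \cref{thm:mod} is a nice touch that the paper leaves more implicit.
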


\begin{proof}
Consider the associative algebra obtained by semidirect product $u(L)\ltimes M$, whose multiplication is given by $(u,m)(u',m')=(uu',um'+mu')$. The category of Beck $u(L)$-modules is equivalent to the category of $u(L)$-bimodules. Under this equivalence, the $u(L)$-bimodule $M$ is associated to the abelian group object 
$u(L)\ltimes M\to u(L)$ in  $(As/u(L))_{\ab}$. The Lie bracket in $(u(L)\ltimes M)_{\RLie}$ is given by
\[
[(u,m),(u',m')]=(uu'-u'u, um'+mu'-u'm-m'u),
\]
and the $p$-map is given by
\[
(u,m)\mapsto (u,m)^{p}.
\]
Therefore the $p$-map on elements of the form $(0,m)$ is zero. We have a restricted Lie algebra homomorphism $\eta_{L}\colon L\to u(L)_{\RLie}$, and a pullback functor $\eta_{L}^{*} \colon \RLie/u(L)_{\RLie} \to \RLie/L$.
We get a pullback diagram
\[
\begin{tikzcd}
L\times_{u(L)_{\RLie}}(u(L)\ltimes M)_{\RLie} \arrow[r] \arrow[d]
& (u(L)\ltimes M)_{\RLie}  \arrow[d] \\
L\arrow[r, "\eta_{L}"]
& u(L)_{\RLie},
\end{tikzcd}
\]
and $L\times_{u(L)_{\RLie}}(u(L)\ltimes M)_{\RLie}$ is spanned by elements $(l,(l,m))$ for $l\in L$ and $m\in M$. The Lie bracket of this restricted Lie algebra is given by
\[
[(l,(l,m)),(l',(l',m)]=\left([l,l'], [(l,m),(l',m')]\right),
\]
and the  $p$-map is given by
\[
(l,(l,m))\mapsto (l^{[p]}, (l,m)^{p}).
\]
Applying the functor $\eta_{L}^{*}(-)_{\RLie}$ to the map $u(L)\ltimes M\to u(L)$ yields a map
\[
\phi \colon L\times_{u(L)_{\RLie}}(u(L)\ltimes M)_{\RLie} \to L
\]
which 
is an abelian group object in $(\RLie/L)_{\ab}$. The action of $L$ on $M:=\ker \phi$ is given by
\rescale{\begin{align*}
	l\cdot (0,(0,m))=[(l,(l,0)),(0,(0,m))]=\left([l,0], [(l,0),(0,m)]\right)=(0, (0, lm-ml)).
\end{align*}}
With this action, the module $M$ is the restricted Lie module ${}_{u(L)}M$. The $p$-map on $M$ is trivial, $m\mapsto 0$.
\end{proof}
\begin{theo}\label{thm:AssLie}
Let $L$ be a restricted Lie algebra and $M$ a $u(L)$-bimodule. Then there is an isomorphism
\[
\HQ_{\As}^{*}(u(L); M) \cong  \HQ_{\RLie}^{*}(L; {}_{u(L)}M).
\]
\end{theo}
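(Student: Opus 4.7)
The plan is to mirror the classical Cartan--Eilenberg argument: exhibit a Quillen adjunction between the simplicial categories of resolutions, and then identify the derivation functors on both sides. First, I would verify that $u \colon s\RLie \rightleftarrows s\As \colon (-)_{\RLie}$ is a Quillen adjunction. The functor $u$ sends $F^{\RLie}_{\F}(V)$ to the tensor algebra $T(V) = F^{\As}_{\F}(V)$ (by composition of adjunctions), so $u$ preserves levelwise-free simplicial objects, and in particular cofibrations in the standard resolution model structure on $s\RLie$. Preservation of weak equivalences will follow from the restricted Poincar\'e--Birkhoff--Witt theorem: $u(L)$ carries a natural filtration whose associated graded is a truncated polynomial algebra on $L$, so $u$ behaves like a polynomial endofunctor on underlying vector spaces and preserves $\pi_*$-isomorphisms of simplicial objects.

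Consequently, for any cofibrant replacement $C_\bullet \ral{\sim} L$ in $s\RLie$, the induced map $u(C_\bullet) \ral{\sim} u(L)$ is a cofibrant replacement in $s\As$.

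The heart of the proof will be establishing a natural isomorphism of abelian groups
\[
\Der^{\As}_{u(L)}\bigl(u(L'), M\bigr) \;\cong\; \Der^{\RLie}_L\bigl(L', {}_{u(L)}M\bigr)
\]
for every $L' \in \RLie/L$ and every $u(L)$-bimodule $M$. Recall that ${}_{u(L)}M$ carries the trivial $p$-semilinear map $f = 0$, as computed in the lemma preceding the theorem. I would build the forward map by restriction: an associative bimodule derivation $D \colon u(L') \to M$ restricts to $D|_{L'}$, which is a restricted Lie derivation because $D([l,l']) = D(ll'-l'l) = \bar{l}\cdot D(l') - \bar{l}'\cdot D(l)$ (with $\bar{l}$ the image of $l$ in $u(L)$ and $\cdot$ the commutator action), and because $D(l^{[p]}) = D(l^p) = \sum_{i=0}^{p-1} \bar{l}^i D(l) \bar{l}^{p-1-i}$ coincides with $\ad(\bar{l})^{p-1}(D(l)) = \bar{l}^{p-1}\cdot D(l)$ by the mod-$p$ congruence $\binom{p-1}{i} \equiv (-1)^i$. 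The inverse map extends a restricted Lie derivation $d \colon L' \to M$ first to a bimodule derivation on the tensor algebra $T(L')$ via the Leibniz rule, then descends it to $u(L') = T(L')/\langle xy - yx - [x,y],\, x^p - x^{[p]}\rangle$: the commutator relation is annihilated by the Lie-derivation property of $d$, while the relation $x^p - x^{[p]}$ is annihilated by the restricted-derivation property, again via the same binomial identity.

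Applying this natural isomorphism levelwise to $C_\bullet$ yields an isomorphism of cosimplicial abelian groups $\Der^{\As}_{u(L)}(u(C_\bullet), M) \cong \Der^{\RLie}_L(C_\bullet, {}_{u(L)}M)$, whose cohomology is the desired isomorphism. The main obstacle will be the derivation identification --- specifically checking that the Leibniz extension to $T(L')$ descends through the relation $x^p - x^{[p]}$ --- and everything hinges on the mod-$p$ binomial identity $\binom{p-1}{i} \equiv (-1)^i$.
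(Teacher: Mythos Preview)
Your proposal is correct and follows essentially the same strategy as the paper's proof: both rest on the fact that the restricted enveloping algebra functor $u$ preserves weak equivalences, and both then use the adjunction $u \dashv (-)_{\RLie}$ to transport a cofibrant replacement of $L$ to one of $u(L)$ and match the coefficient functors. The paper's proof is terser because it outsources both steps to citations---Priddy for the preservation of weak equivalences (which is exactly your PBW filtration argument) and Frankland's Proposition~4.12 for the abstract comparison mechanism---whereas you unpack the latter by hand, constructing the natural isomorphism $\Der^{\As}_{u(L)}(u(L'),M)\cong\Der^{\RLie}_{L}(L',{}_{u(L)}M)$ explicitly via restriction and Leibniz extension. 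Your version has the advantage of being self-contained and of making visible the one nontrivial computation (that $D(l^{p})=\sum_{i}l^{i}D(l)l^{p-1-i}=\ad(l)^{p-1}D(l)$ via $\binom{p-1}{i}\equiv(-1)^{i}$); the paper's version has the advantage of showing that the result is a formal consequence of a general comparison principle once one knows $u$ preserves weak equivalences.
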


\begin{proof}
The restricted enveloping algebra functor $u$ preserves weak equivalences (see \cite{Priddy70pri}*{2.8}). Thus by \cite{Frankland15}*{Proposition~4.12} the comparison map between Quillen cohomology in both categories is an isomorphism.
\end{proof}

For an associative algebra $A$ over a field, Quillen cohomology agrees with Hochschild cohomology up to a shift:
\[
\HQ_{\As}^{n}(A;M) \cong \HH^{n+1}(A;M) \quad \text{for } n > 0
\]
and a small change in degree $0$ \cite{Quillen70}*{Proposition~3.6}.

\section{Good triples of operads and comparison maps in Quillen (co)homology}\label{sec:Good}

J.-L. Loday defined and studied generalized bialgebras and triples of operads in \cite{Loday08}. In this section we prove a comparison isomorphism theorem for Quillen cohomology in the context of good triples of operads. 
Only in this section 
we suppose that the ground field $\F$ has characteristic zero. In that case, if $\P$ is an operad then the norm %
map $\Tr \colon  S(\P) \rightarrow \Ga(\P)$ is a natural isomorphism \cite{Fresse00}. Therefore a $\Gamma(\P)$-algebra doesn't carry more structure than a $\P$-algebra.

\begin{defi}
Let $\CC$ be an algebraic operad. The \Def{primitive part} of a coalgebra $C$ over the operad $\CC$ is defined by 
\[
\Prim\,C:=\{x\in C\;|\; \delta(x)=0,\,\text{for any generating cooperation}\, \delta \}.
\]
There is a filtration on $C$ given by 
\[
F_{r}(C)=\{x\in C\;|\; \delta(x)=0,\,\text{for any}\; \delta \in \CC(n),\;n>r\}.
\]
We note  that $F_{1}(C)$ is the primitive part of $C$. The above filtration is called the \Def{primitive filtration}. A coalgebra $C$ is \Def{connected} (or \Def{conilpotent}) if $C = \bigcup_{r\geq 1}F_{r}C$.
\end{defi}

Let $\A$ and $\CC$ be two algebraic operads. 
A $\CC^{c}$-$\A$-bialgebra  $H$ is a vector space which is a $\A$-algebra and $\CC$-coalgebra such that the operations of $\A$ and cooperations of $\CC$ acting on $H$ satisfy some compatibility relations.

Let $(\CC,\A,\P)$ be a good triple of operads, as defined in \cite{Loday08}*{2.4.1, 2.5.6}. Then $\P$ is the primitive operad $\Prim_{\CC}\, \A$, which %
is a suboperad of $\A$. The inclusion $\Prim_{\CC}\, \A \subseteq \A$ induces a functor 
\[
G \colon \Alg{\A} \to \Alg{\P}
\]
which is a forgetful functor in a sense that the composition 
$\Alg{\A} \xrightarrow{G} \Alg{\P} \to \Vect{\F}$ 
is the forgetful functor 
$\Alg{\A} \to \Vect{\F}$. 
The functor $G$ admits a left adjoint functor 
\[
U \colon \Alg{\P} \to \Alg{\A}
\]
called the universal enveloping functor. Let $\eta$ be the unit of this adjunction. If $P$ is a $\P$-algebra  then there is a $\P$-algebra  morphism $\eta_{P}\colon P\to GU(P)$. Moreover, the universal enveloping algebra $U(L)$ of a $\P$-algebra $L$ is a connected $\CC^{c}$-$\A$-bialgebra and $\Prim\, U(L)=L$. 

Let $\ZZ=\A/(\bar{\P})$ be the quotient operad of $\A$ by the ideal generated by the (nontrivial) primitive operations. Then J.-L. Loday proved a generalised Poincaré--Birkhoff--Witt theorem. In particular, it is proved in \cite{Loday08}*{Theorem~3.1.4} that for any $\P$-algebra $L$ there is an isomorphism of $\ZZ$-algebras 
\[
\ZZ(L)\to gr\, U(L).
\]

\begin{defi}
A morphism 
of simplicial $\P$-algebras 
$f \colon L \to L'$ is a \Def{weak equivalence} if it 
is a weak equivalence of underlying simplicial $\F$-vector spaces, i.e., the induced map on homotopy groups 
$\pi_{*}(f) \colon \pi_{*}(L) \ral{\cong} \pi_{*}(L')$ 
is an isomorphism.
\end{defi}

The following generalizes an argument used in \cite{Priddy70pri}*{2.8}.

\begin{prop}\label{prop:weak}
Let $(\CC,\A,\P)$ be a good triple of operads. Then the universal enveloping functor $U\colon \Alg{\P} \to \Alg{\A}$ preserves weak equivalences.
\end{prop}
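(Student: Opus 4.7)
The plan is to reduce the statement to the fact that Schur functors preserve weak equivalences in characteristic zero, via the generalized PBW theorem cited just above. Let $f \colon L \to L'$ be a weak equivalence of simplicial $\P$-algebras. Since $U$ is a functor, we obtain a map $U(f) \colon U(L) \to U(L')$ in simplicial $\A$-algebras, and we must show it is a quasi-isomorphism on underlying simplicial vector spaces.

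The first step is to install, in each simplicial degree, the primitive filtration $F_r U(L_n)$ coming from the $\CC^c$-$\A$-bialgebra structure of $U(L_n)$. This filtration is natural in $L$, so $U(f)$ is filtration-preserving, yielding short exact sequences of simplicial vector spaces
\[
0 \to F_{r-1} U(L) \to F_r U(L) \to \mathrm{gr}_r U(L) \to 0,
\]
and similarly for $L'$. By the generalized PBW theorem \cite{Loday08}*{Theorem~3.1.4}, $\mathrm{gr}_r U(L) \cong \ZZ_r(L) := (\ZZ(r) \otimes L^{\otimes r})_{\Sy_r}$ naturally in $L$, so on associated gradeds the map $U(f)$ identifies with the Schur-functor map $\ZZ(f)$.

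The second step is to show that each $\ZZ_r$ preserves weak equivalences of simplicial vector spaces. Over a field, tensor powers of quasi-isomorphisms of simplicial vector spaces are again quasi-isomorphisms (Künneth), so $f^{\otimes r}$ is a weak equivalence; tensoring with the fixed $\Sy_r$-representation $\ZZ(r)$ does not affect this. Since $\F$ has characteristic zero, Maschke's theorem ensures that $\Sy_r$-coinvariants is an exact functor on $\F[\Sy_r]$-modules, hence it commutes with taking homotopy groups of simplicial vector spaces. Therefore $\ZZ_r(f)$ is a weak equivalence for every $r$.

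The third step is an induction on the filtration degree using the five lemma on the long exact sequences of homotopy groups associated to the short exact sequences above: starting from $F_0 U(L) = F_0 U(L') = 0$, the inductive step gives that $F_r U(f)$ is a weak equivalence for every $r$. Finally, since $U(L) = \mathrm{colim}_r F_r U(L)$ with each $F_{r-1} U(L) \hookrightarrow F_r U(L)$ a degreewise inclusion, and filtered colimits along monomorphisms commute with homotopy in simplicial vector spaces, $U(f)$ is a weak equivalence. The only subtlety to verify carefully is the naturality of the primitive filtration and of the PBW isomorphism with respect to morphisms of $\P$-algebras; once this is in hand, the argument is formal.
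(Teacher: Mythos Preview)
Your proof is correct and follows essentially the same strategy as the paper: install the primitive filtration on $U(L)$, use the generalized PBW theorem to identify the associated graded with the Schur functor $\ZZ(-)$, check that $\ZZ$ preserves weak equivalences, and then pass from the graded pieces back to $U(L)$.

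The only notable difference is in packaging. The paper organizes the last two steps as a spectral sequence argument: it passes to chain complexes via Dold--Kan, observes that the primitive filtration yields a spectral sequence with $E_0 \cong \ZZ(L)$, invokes a theorem of Dold \cite{Dold58} to conclude $E_1 U(f)$ is an isomorphism, and then appeals to completeness and boundedness below for convergence. You instead do a direct five-lemma induction up the filtration and then a filtered-colimit argument; you also unpack the Dold step explicitly via K\"unneth and Maschke (using the characteristic-zero hypothesis). Your version is slightly more elementary and makes the use of $\operatorname{char}\F = 0$ transparent, while the paper's spectral-sequence phrasing is more compact and invokes Dold's theorem as a black box. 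The naturality of the primitive filtration and of the PBW isomorphism, which you correctly flag as the only real thing to check, is implicit in both arguments.
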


\begin{proof}
If $L$ is a simplicial $\P$-algebra then $U(L)$ is a simplicial $\A$-algebra. Using the Dold--Kan correspondence, we denote by $U(L)$ the associated chain complex. The primitive filtration on the universal enveloping algebra makes $U(L)$ a filtered complex.
By Loday's generalised Poincaré--Birkhoff--Witt  theorem the associated spectral sequence $E_{r}(U(L))$ of $U(L)$ satisfies 
\[
\begin{cases}
E_{0}(U(L)) \cong \ZZ(L)\\
E_{1}(U(L)) \cong H_{*}(\ZZ(L)).
\end{cases}
\]
Let $L,L'$ be two simplicial $\P$-algebras and $f \colon L \to L'$ a weak equivalence. The morphism $U(f) \colon U(L) \to U(L')$ preserves filtrations and it induces a morphism of spectral sequences 
$E_{r}(U(L))\to E_{r}(U(L'))$. 
Since $f$ is a weak equivalence, it follows by a theorem of Dold \cite{Dold58} that 
\[
H_{*}(\ZZ(L)) \cong H_{*}(\ZZ(L')).
\]
Therefore 
\[
E_{1}U(f) \colon E_{1}(U(L)) \to E_{1}(U(L'))
\]
is an isomorphism. The filtrations of $U(L)$ and $U(L')$ 
are complete and bounded below. Hence the spectral sequences converge and the induced map $E_{\infty}U(f)$ is an isomorphism. It follows that $U$ preserves weak equivalences.
\end{proof}

Let $\mathcal{T}$ be a operad and $S$ a $\mathcal{T}$-algebra. The category of $S$-modules over $\mathcal{T}$ is equivalent to the category of abelian group objects of $\Alg{\mathcal{T}}/S$, i.e., the category of Beck $S$-modules in the category of $\mathcal{T}$-algebras (see \cites{LodayV12,GoerssH00}). Hence the category of $U(P)$-modules over the operad $\A$ is equivalent to the category of Beck $U(P)$-modules. Under this equivalence the $U(P)$-module $M$  is associated to the abelian group object $U(P)\ltimes M\to U(P)$, where by $U(P)\ltimes M$ we denote the semidirect product of $U(P)$ by $M$ in the category of $\A$-algebras. We have the following pullback diagram in the category of $\P$-algebras,
\[
\begin{tikzcd}
P\times_{GU(P)}G(U(P)\ltimes M) \arrow[r] \arrow[d]
& G(U(P)\ltimes M) \arrow[d] \\
P\arrow[r, "\eta_{P}"]
& GU(P)
\end{tikzcd}
\]
and 
\[
0\to M\to P\times_{GU(P)}G(U(P)\ltimes M)\to P\to 0
\]
is an abelian extension in the category of $\P$-algebras. %
This induces 
on $M$ a structure of $P$-module over the operad $\P$ which we denote by ${}_{P}M$.

\begin{theo}\label{thm:good}
Let $(\CC,\A,\P)$ be a good triple of operads. Let $P$ be an $\P$-algebra and $M$ a Beck $U(P)$-module. Then we have the following isomorphism 
\[
\HQ_{\Alg{\A}}^{*}(U(P); M)\simeq  \HQ_{\Alg{\P}}^{*}(P; {}_{P}M).
\]
\end{theo}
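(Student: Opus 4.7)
The plan is to follow the same template as the proof of \Cref{thm:AssLie}, replacing the pair $(\RLie, \As)$ with the pair $(\Alg{\P}, \Alg{\A})$ arising from the good triple $(\CC, \A, \P)$. The main ingredients are already assembled: an adjunction
\[
U \colon \Alg{\P} \rla \Alg{\A} \colon G,
\]
a description of how pullback of Beck modules along the unit $\eta_P \colon P \to GU(P)$ behaves, and the key homotopical input \Cref{prop:weak}, which says that $U$ preserves weak equivalences.

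First I would record the identification of coefficients. The discussion immediately preceding the theorem already does essentially this: given a Beck $U(P)$-module $M$, the abelian group object $U(P) \ltimes M \to U(P)$ in $\Alg{\A}/U(P)$ gets sent by $G$ to $G(U(P) \ltimes M) \to GU(P)$ in $\Alg{\P}/GU(P)$, and pulling back along $\eta_P$ produces exactly the $P$-module structure ${}_P M$ on $M$. I would state this as a short lemma of the same shape as \Cref{lem:CoefLie}, namely
\[
\eta_P^* \, G \, ( U(P) \ltimes M \to U(P) ) \;=\; ( P \ltimes {}_P M \to P ).
\]
The verification is formal: the kernel of the projection is $M$, and the $\P$-action recovered from the bracket-like operations on $G(U(P) \ltimes M)$ coincides with ${}_P M$ by definition.

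Next I would invoke the general comparison machinery. Because $(U, G)$ is a free/forgetful style adjunction between algebraic categories, \cite{Frankland15}*{Proposition~4.12} applies: if the left adjoint $U$ preserves weak equivalences of simplicial objects, then for any $\P$-algebra $P$ and any Beck module $N$ over $U(P)$ the natural comparison map
\[
\HQ_{\Alg{\A}}^{*}(U(P); N) \longrightarrow \HQ_{\Alg{\P}}^{*}(P; \eta_P^* G N)
\]
is an isomorphism. Applying the lemma above with $N = M$ identifies the target with $\HQ_{\Alg{\P}}^{*}(P; {}_P M)$, giving the desired isomorphism.

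The only non-formal step is checking that $U$ preserves weak equivalences, but this is precisely \Cref{prop:weak}, so there is no substantive obstacle here. The main conceptual point worth emphasizing is that this is the place where the characteristic zero hypothesis enters, via the Poincaré--Birkhoff--Witt isomorphism $E_0(U(L)) \cong \ZZ(L)$ of the primitive filtration that underlies \Cref{prop:weak}. Once \Cref{prop:weak} is in hand, the rest of the proof is essentially a citation.
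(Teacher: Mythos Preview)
Your proposal is correct and matches the paper's proof essentially verbatim: the paper's argument is precisely to invoke \Cref{prop:weak} for the fact that $U$ preserves weak equivalences and then cite \cite{Frankland15}*{Proposition~4.12}, with the coefficient identification ${}_P M = \eta_P^* G(M)$ already handled in the discussion preceding the theorem. Your only addition is to repackage that coefficient identification as a separate lemma in the style of \Cref{lem:CoefLie}, which is a reasonable expository choice but not a mathematical difference.
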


\begin{proof}
By Proposition~\ref{prop:weak} the enveloping algebra functor $U$ preserves weak equivalences. The claim then follows from \cite{Frankland15}*{Proposition~4.12}.
\end{proof}

\begin{ex}\label{ex:GoodCom}
If we consider the good triple $(\Com, \As, \Lie)$, then by Theorem~\ref{thm:good} we recover in characteristic zero a well known result \cite{CartanE56}*{\S XIII, Theorem~5.1}. In particular, let $L$ be a Lie algebra over $\F$ and $M$ a $U(L)$-bimodule. By Theorem~\ref{thm:good} we have the following isomorphism 
\[
\HQ_{\Alg{\As}}^{*}(U(L);M) \cong  \HQ_{\Alg{\Lie}}^{*}(L;{}_{L}M)
\]
where ${}_{L}M$ is $M$ viewed as a left Lie $L$-module via the action $l\cdot m:=lm-ml$ for all $l\in L$ and $m\in M$.
Moreover, Quillen cohomology for the category of associative algebras is shifted Hochschild cohomology of associative algebras and  
Quillen cohomology of Lie algebras is shifted Chevalley--Eilenberg cohomology of Lie algebras (see \cite{Barr96}). In other words, we have 
\[
\HH_{Hoch}^{*}(U(L); M) \cong  H_{\Lie}^{*}(L; {}_{L}M).
\]
\end{ex}

As another example, let us consider dendriform algebras, defined by J.-L.~Loday in \cite{Loday01}.

\begin{defi}
A \Def{dendriform algebra} $H$ is a vector space over $\F$ equipped with two binary operations $\prec,\succ \colon  H\otimes H \rightarrow H$  such that 
\begin{align*}
(x\prec y)\prec z &=x\prec(y\ast z)\\
(x\succ y)\prec z&= x\succ(y\prec z)\\
(x\ast y)\succ z &=x\succ(y\succ z)
\end{align*}
where $x\ast y = x\succ y + x\prec y$ and $x,y,z \in H$. 
\end{defi}

The identities make the product $\ast$ associative. %
Notice that a dendriform algebra is an associative algebra whose product splits into two binary operations which satisfy the above identities. Dendriform algebras are Koszul dual to diassociative algebras; see \cite{Loday01}*{\S 8} and \cite{LodayV12}*{\S 13.6}. 
Besides, M.~Gerstenhaber and A.~Voronov introduced the notion of brace algebras in \cite{GerstenhaberV95hom}. We denote by $\mathcal{D}$ the operad associated to dendriform algebras and by $\mathcal{B}$ the operad associated to brace algebras. 

M.~Ronco in \cite{Ronco02} proved a Milnor--Moore type theorem for dendriform algebras. In particular, there is a good triple of operads $(\As,\mathcal{D},\mathcal{B})$. The concept of bimodule over a dendriform algebra was defined by M.~Aguiar in \cite{Aguiar04}. Cohomology of dendriform algebras with coefficients in bimodules has been studied by A.~Das in \cite{Das22}. 

Let $D$ be a dendriform algebra. The notion of Beck $D$-module is equivalent to the notion of bimodule over $D$. Applying Theorem~\ref{thm:good} to the good triple $(\As,\mathcal{D},\mathcal{B})$ yields the following.

\begin{prop}
Let $B$ be a $\mathcal{B}$-algebra, and let $M$ be a $U_{dend}(B)$-bimodule, where $U_{dend}(B)$ denotes the enveloping dendriform algebra of the brace algebra $B$. We have the following isomorphism:
\[
\HQ_{\Alg{\mathcal{D}}}^{*}(U_{dend}(B); M) \cong \HQ_{\Alg{\mathcal{B}}}^{*}(B; {}_{B}M).
\]
\end{prop}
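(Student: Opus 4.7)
The plan is to view the statement as a direct specialisation of \Cref{thm:good} to the good triple of operads $(\As,\mathcal{D},\mathcal{B})$. First I would record that, by Ronco's Milnor--Moore type theorem \cite{Ronco02}, the triple $(\As,\mathcal{D},\mathcal{B})$ is indeed a good triple of operads in the sense of Loday \cite{Loday08}, with primitive operad $\Prim_{\As}\mathcal{D}=\mathcal{B}$ and quotient operad $\mathcal{D}/(\overline{\mathcal{B}})=\As$. In particular, the inclusion of operads $\mathcal{B}\hookrightarrow \mathcal{D}$ induces a forgetful functor $G \colon \Alg{\mathcal{D}}\to \Alg{\mathcal{B}}$ whose left adjoint is precisely the enveloping dendriform algebra functor $U_{dend} \colon \Alg{\mathcal{B}}\to\Alg{\mathcal{D}}$.

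Next I would align the notion of coefficients. On the dendriform side, the category of Beck $U_{dend}(B)$-modules in $\Alg{\mathcal{D}}$ is equivalent to the category of $U_{dend}(B)$-bimodules in the sense of Aguiar \cite{Aguiar04} via the usual semidirect product construction; this is the operadic Beck-module description of \cite{LodayV12}*{\S 12.3} applied to $\mathcal{D}$. Given such a bimodule $M$, the abelian group object $U_{dend}(B)\ltimes M\to U_{dend}(B)$ in $\Alg{\mathcal{D}}/U_{dend}(B)$ pulls back along the unit $\eta_{B}\colon B\to GU_{dend}(B)$ to an abelian extension
\[
0\to M\to B\times_{GU_{dend}(B)} G(U_{dend}(B)\ltimes M)\to B\to 0
\]
in $\Alg{\mathcal{B}}/B$, which provides $M$ with a canonical structure of Beck $B$-module over the operad $\mathcal{B}$; this is by definition the module ${}_{B}M$ appearing in the statement.

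With these identifications in place, \Cref{thm:good} applied to $(\CC,\A,\P)=(\As,\mathcal{D},\mathcal{B})$ yields the desired isomorphism
\[
\HQ_{\Alg{\mathcal{D}}}^{*}(U_{dend}(B);M)\cong \HQ_{\Alg{\mathcal{B}}}^{*}(B;{}_{B}M).
\]
The substantive input that \Cref{thm:good} needs --- namely that $U_{dend}$ preserves weak equivalences of simplicial algebras --- is supplied by \cref{prop:weak}, which relies only on the fact that $(\As,\mathcal{D},\mathcal{B})$ is a good triple and on Loday's generalised PBW theorem \cite{Loday08}*{Theorem~3.1.4}. The only delicate point in carrying out this plan is to check that the two candidate notions of ``$B$-module'' agree: the intrinsic Beck module structure produced by pullback along $\eta_{B}$, and the brace-algebra module structure that Aguiar's bimodule $M$ inherits from its dendriform module structure via the primitive operations. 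This verification is a direct computation with the compatibility relations between brace operations and dendriform operations in the $\As^{c}$-$\mathcal{D}$-bialgebra $U_{dend}(B)\ltimes M$, and it is the only step that is not purely formal.
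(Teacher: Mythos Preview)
Your proposal is correct and matches the paper's approach: the proposition is obtained as a direct application of \Cref{thm:good} to the good triple $(\As,\mathcal{D},\mathcal{B})$, with Beck $U_{dend}(B)$-modules identified with dendriform bimodules and the coefficient module ${}_{B}M$ defined via the pullback construction preceding \Cref{thm:good}. Your write-up is more detailed than the paper's one-line invocation of \Cref{thm:good}, but the underlying argument is the same.
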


\begin{rema}
In %
positive 
characteristic, for classical bialgebras we proved Theorem~\ref{thm:AssLie}. 
We notice that the primitives of a classical bialgebra form a restricted Lie algebra, i.e., a $\Ga(\Lie)$-algebra. If one wants to extend this result to the context of generalized bialgebras, the right framework seems to be the category of $\Gamma(\P)$-algebras.
\end{rema}

\begin{bibdiv}
\begin{biblist}*{labels={alphabetic}}

\bib{Aguiar04}{article}{
  author={Aguiar, Marcelo},
  title={Infinitesimal bialgebras, pre-Lie and dendriform algebras},
  conference={ title={Hopf algebras}, },
  book={ series={Lecture Notes in Pure and Appl. Math.}, volume={237}, publisher={Dekker, New York}, },
  isbn={0-8247-5566-9},
  date={2004},
  pages={1--33},
  review={\MR {2051728}},
}

\bib{Andre67}{book}{
  author={Andr{\'e}, Michel},
  title={M\'ethode simpliciale en alg\`ebre homologique et alg\`ebre commutative},
  language={French},
  series={Lecture Notes in Mathematics, Vol. 32},
  publisher={Springer-Verlag, Berlin-New York},
  date={1967},
  pages={iii+122},
  review={\MR {0214644 (35 \#5493)}},
}

\bib{Andre74}{book}{
  author={Andr\'e, Michel},
  title={Homologie des alg\`ebres commutatives},
  series={Die Grundlehren der mathematischen Wissenschaften},
  volume={206},
  publisher={Springer-Verlag},
  date={1974},
}

\bib{Balavoine97}{article}{
  author={Balavoine, David},
  title={Deformations of algebras over a quadratic operad},
  conference={ title={Operads: Proceedings of Renaissance Conferences}, address={Hartford, CT/Luminy}, date={1995}, },
  book={ series={Contemp. Math.}, volume={202}, publisher={Amer. Math. Soc., Providence, RI}, },
  isbn={0-8218-0513-4},
  date={1997},
  pages={207--234},
  review={\MR {1436922}},
  doi={10.1090/conm/202/02581},
}

\bib{Barr96}{article}{
  author={Barr, Michael},
  title={Cartan-Eilenberg cohomology and triples},
  journal={J. Pure Appl. Algebra},
  volume={112},
  date={1996},
  number={3},
  pages={219--238},
  issn={0022-4049},
  review={\MR {1410176}},
  doi={10.1016/0022-4049(95)00138-7},
}

\bib{BarrB69}{article}{
  author={Barr, Michael},
  author={Beck, Jon},
  title={Homology and standard constructions},
  conference={ title={Sem. on Triples and Categorical Homology Theory (ETH, Z\"{u}rich, 1966/67)}, },
  book={ publisher={Springer, Berlin}, },
  date={1969},
  pages={245--335},
  review={\MR {0258917}},
}

\bib{BauesB11}{article}{
  author={Baues, Hans-Joachim},
  author={Blanc, David},
  title={Comparing cohomology obstructions},
  journal={J. Pure Appl. Algebra},
  volume={215},
  date={2011},
  number={6},
  pages={1420--1439},
  issn={0022-4049},
  review={\MR {2769241 (2012f:55022)}},
  doi={10.1016/j.jpaa.2010.09.003},
}

\bib{Beck67}{book}{
  author={Beck, Jonathan Mock},
  title={TRIPLES, ALGEBRAS AND COHOMOLOGY},
  note={Thesis (Ph.D.)--Columbia University},
  publisher={ProQuest LLC, Ann Arbor, MI},
  date={1967},
  pages={115},
  review={\MR {2616383}},
}

\bib{Berthelot74}{book}{
  author={Berthelot, Pierre},
  title={Cohomologie cristalline des sch\'{e}mas de caract\'{e}ristique $p>0$},
  language={French},
  series={Lecture Notes in Mathematics, Vol. 407},
  publisher={Springer-Verlag, Berlin-New York},
  date={1974},
  pages={604},
  review={\MR {0384804}},
}

\bib{BlancDG04}{article}{
  author={Blanc, David},
  author={Dwyer, William G.},
  author={Goerss, Paul G.},
  title={The realization space of a $\Pi $-algebra: a moduli problem in algebraic topology},
  journal={Topology},
  volume={43},
  number={4},
  pages={857 \ndash 892},
  date={2004},
}

\bib{BlancJT12}{article}{
  author={Blanc, David},
  author={Johnson, Mark W.},
  author={Turner, James M.},
  title={Higher homotopy operations and Andr\'{e}-Quillen cohomology},
  journal={Adv. Math.},
  volume={230},
  date={2012},
  number={2},
  pages={777--817},
  issn={0001-8708},
  review={\MR {2914966}},
  doi={10.1016/j.aim.2012.02.009},
}

\bib{Borceux94v2}{book}{
  author={Borceux, Francis},
  title={Handbook of Categorical Algebra 2: Categories and Structures},
  series={Encyclopedia of Mathematics and its Applications},
  volume={51},
  publisher={Cambridge University Press},
  date={1994},
}

\bib{Borel91}{book}{
  author={Borel, Armand},
  title={Linear algebraic groups},
  series={Graduate Texts in Mathematics},
  volume={126},
  edition={2},
  publisher={Springer-Verlag, New York},
  date={1991},
  pages={xii+288},
  isbn={0-387-97370-2},
  review={\MR {1102012}},
  doi={10.1007/978-1-4612-0941-6},
}

\bib{Cartan56exp7}{article}{
  author={Cartan, Henri},
  title={Puissances divis\'ees},
  book={ title={Alg\`ebres d'Eilenberg-MacLane et homotopie}, series={S\'eminaire Henri Cartan, 1954/1955}, volume={7}, number={1}, language={French}, edition={2}, date={1956}, review={\MR {0087935 (19,439a)}}, },
  date={1956},
  part={Expos\'e 7},
  pages={1 \ndash 11},
}

\bib{CartanE56}{book}{
  author={Cartan, Henri},
  author={Eilenberg, Samuel},
  title={Homological algebra},
  publisher={Princeton University Press, Princeton, NJ},
  date={1956},
  pages={xv+390},
  review={\MR {0077480}},
}

\bib{CegarraA86}{article}{
  author={Cegarra, Antonio M.},
  author={Aznar, Enrique R.},
  title={An exact sequence in the first variable for torsor cohomology: the $2$-dimensional theory of obstructions},
  journal={J. Pure Appl. Algebra},
  volume={39},
  date={1986},
  number={3},
  pages={197--250},
  issn={0022-4049},
  review={\MR {0821890}},
  doi={10.1016/0022-4049(86)90145-3},
}

\bib{Das22}{article}{
  author={Das, Apurba},
  title={Cohomology and deformations of dendriform algebras, and ${\rm Dend}_{\infty }$-algebras},
  journal={Comm. Algebra},
  volume={50},
  date={2022},
  number={4},
  pages={1544--1567},
  issn={0092-7872},
  review={\MR {4391505}},
  doi={10.1080/00927872.2021.1985130},
}

\bib{Dokas04}{article}{
  author={Dokas, Ioannis},
  title={Quillen-Barr-Beck (co-) homology for restricted Lie algebras},
  journal={J. Pure Appl. Algebra},
  volume={186},
  date={2004},
  number={1},
  pages={33--42},
  issn={0022-4049},
  review={\MR {2025911}},
  doi={10.1016/S0022-4049(03)00120-8},
}

\bib{Dokas09}{article}{
  author={Dokas, Ioannis},
  title={Triple cohomology and divided powers algebras in prime characteristic},
  journal={J. Aust. Math. Soc.},
  volume={87},
  date={2009},
  number={2},
  pages={161--173},
  issn={1446-7887},
  review={\MR {2551116}},
  doi={10.1017/S1446788708081019},
}

\bib{Dokas15}{article}{
  author={Dokas, Ioannis},
  title={Torsors and the Quillen-Barr-Beck cohomology for restricted Lie algebras},
  journal={Homology Homotopy Appl.},
  volume={17},
  date={2015},
  number={1},
  pages={203--234},
  issn={1532-0073},
  review={\MR {3338548}},
  doi={10.4310/HHA.2015.v17.n1.a10},
}

\bib{Dokas23}{article}{
  author={Dokas, Ioannis},
  title={On K\"{a}hler differentials of divided power algebras},
  journal={J. Homotopy Relat. Struct.},
  volume={18},
  date={2023},
  number={2-3},
  pages={153--176},
  issn={2193-8407},
  review={\MR {4645755}},
  doi={10.1007/s40062-023-00325-2},
}

\bib{Dold58}{article}{
  author={Dold, Albrecht},
  title={Homology of symmetric products and other functors of complexes},
  journal={Ann. of Math. (2)},
  volume={68},
  date={1958},
  pages={54--80},
  issn={0003-486X},
  review={\MR {0097057}},
  doi={10.2307/1970043},
}

\bib{DummitF04}{book}{
  author={Dummit, David S.},
  author={Foote, Richard M.},
  title={Abstract algebra},
  edition={3},
  publisher={John Wiley \& Sons, Inc., Hoboken, NJ},
  date={2004},
  pages={xii+932},
  isbn={0-471-43334-9},
  review={\MR {2286236}},
}

\bib{Duskin75}{article}{
  author={Duskin, J.},
  title={Simplicial methods and the interpretation of ``triple'' cohomology},
  journal={Mem. Amer. Math. Soc.},
  volume={3},
  date={1975},
  pages={v+135},
  issn={0065-9266},
  review={\MR {0393196}},
  doi={10.1090/memo/0163},
}

\bib{Frankland10qui}{book}{
  author={Frankland, Martin},
  title={Quillen Cohomology of Pi-Algebras and Application to their Realization},
  note={Thesis (Ph.D.)--Massachusetts Institute of Technology},
  publisher={ProQuest LLC, Ann Arbor, MI},
  date={2010},
  pages={(no paging)},
  review={\MR {2801771}},
}

\bib{Frankland11}{article}{
  author={Frankland, Martin},
  title={Moduli spaces of 2-stage Postnikov systems},
  journal={Topology Appl.},
  volume={158},
  date={2011},
  number={11},
  pages={1296--1306},
  issn={0166-8641},
  review={\MR {2806362}},
  doi={10.1016/j.topol.2011.05.002},
}

\bib{Frankland15}{article}{
  author={Frankland, Martin},
  title={Behavior of Quillen (co)homology with respect to adjunctions},
  journal={Homology Homotopy Appl.},
  volume={17},
  date={2015},
  number={1},
  pages={67--109},
  issn={1532-0073},
  review={\MR {3338541}},
  doi={10.4310/HHA.2015.v17.n1.a3},
}

\bib{Fresse98}{article}{
  author={Fresse, Benoit},
  title={Homologie de Quillen pour les alg\`ebres de Poisson},
  language={French, with English and French summaries},
  journal={C. R. Acad. Sci. Paris S\'er. I Math.},
  volume={326},
  date={1998},
  number={9},
  pages={1053--1058},
  issn={0764-4442},
  review={\MR {1647186}},
}

\bib{Fresse00}{article}{
  author={Fresse, Benoit},
  title={On the homotopy of simplicial algebras over an operad},
  journal={Trans. Amer. Math. Soc.},
  volume={352},
  date={2000},
  number={9},
  pages={4113 \ndash 4141},
  issn={0002-9947},
  review={\MR {1665330 (2000m:18015)}},
  doi={10.1090/S0002-9947-99-02489-7},
}

\bib{Fresse04}{article}{
  author={Fresse, Benoit},
  title={Koszul duality of operads and homology of partition posets},
  conference={ title={Homotopy theory: relations with algebraic geometry, group cohomology, and algebraic $K$-theory}, },
  book={ series={Contemp. Math.}, volume={346}, publisher={Amer. Math. Soc., Providence, RI}, },
  isbn={0-8218-3285-9},
  date={2004},
  pages={115--215},
  review={\MR {2066499}},
  doi={10.1090/conm/346/06287},
}

\bib{Gerstenhaber64}{article}{
  author={Gerstenhaber, Murray},
  title={On the deformation of rings and algebras},
  journal={Ann. of Math. (2)},
  volume={79},
  date={1964},
  pages={59--103},
  issn={0003-486X},
  review={\MR {0171807}},
  doi={10.2307/1970484},
}

\bib{GerstenhaberV95hom}{article}{
  author={Gerstenhaber, Murray},
  author={Voronov, Alexander A.},
  title={Homotopy $G$-algebras and moduli space operad},
  journal={Internat. Math. Res. Notices},
  date={1995},
  number={3},
  pages={141--153},
  issn={1073-7928},
  review={\MR {1321701}},
  doi={10.1155/S1073792895000110},
}

\bib{GetzlerJ94}{article}{
  author={Getzler, Ezra},
  author={Jones, J.D.S.},
  title={Operads, homotopy algebra and iterated integrals for double loop spaces},
  eprint={arXiv:hep-th/9403055},
  date={1994},
  status={Preprint},
}

\bib{GinzburgK94}{article}{
  author={Ginzburg, Victor},
  author={Kapranov, Mikhail},
  title={Koszul duality for operads},
  journal={Duke Math. J.},
  volume={76},
  date={1994},
  number={1},
  pages={203--272},
  issn={0012-7094},
  review={\MR {1301191}},
  doi={10.1215/S0012-7094-94-07608-4},
}

\bib{Glenn82}{article}{
  author={Glenn, Paul G.},
  title={Realization of cohomology classes in arbitrary exact categories},
  journal={J. Pure Appl. Algebra},
  volume={25},
  date={1982},
  number={1},
  pages={33--105},
  issn={0022-4049},
  review={\MR {0660389}},
  doi={10.1016/0022-4049(82)90094-9},
}

\bib{Goerss90and}{article}{
  author={Goerss, Paul G.},
  title={Andr\'e-Quillen cohomology and the homotopy groups of mapping spaces: understanding the $E_2$-term of the Bousfield-Kan spectral sequence},
  journal={J. Pure Appl. Algebra},
  volume={63},
  date={1990},
  number={2},
  pages={113--153},
  issn={0022-4049},
  review={\MR {1043745}},
}

\bib{GoerssH00}{article}{
  author={Goerss, Paul G.},
  author={Hopkins, Michael J.},
  title={Andr\'e-Quillen (co)-homology for simplicial algebras over simplicial operads},
  conference={ title={Une d\'egustation topologique [Topological morsels]: homotopy theory in the Swiss Alps}, address={Arolla}, date={1999}, },
  book={ series={Contemp. Math.}, volume={265}, publisher={Amer. Math. Soc., Providence, RI}, },
  date={2000},
  pages={41--85},
  review={\MR {1803952}},
}

\bib{GoerssH04spa}{article}{
  author={Goerss, P. G.},
  author={Hopkins, M. J.},
  title={Moduli spaces of commutative ring spectra},
  conference={ title={Structured ring spectra}, },
  book={ series={London Math. Soc. Lecture Note Ser.}, volume={315}, publisher={Cambridge Univ. Press}, place={Cambridge}, },
  date={2004},
  pages={151 \ndash 200},
  review={\MR {2125040 (2006b:55010)}},
  doi={10.1017/CBO9780511529955.009},
}

\bib{GoerssJ09}{book}{
  author={Goerss, Paul G.},
  author={Jardine, John F.},
  title={Simplicial homotopy theory},
  series={Modern Birkh\"auser Classics},
  note={Reprint of the 1999 edition [MR1711612]},
  publisher={Birkh\"auser Verlag, Basel},
  date={2009},
  pages={xvi+510},
  isbn={978-3-0346-0188-7},
  review={\MR {2840650}},
  doi={10.1007/978-3-0346-0189-4},
}

\bib{Grothendieck68}{article}{
  author={Grothendieck, A.},
  title={Crystals and the de Rham cohomology of schemes},
  note={Notes by I. Coates and O. Jussila},
  conference={ title={Dix expos\'{e}s sur la cohomologie des sch\'{e}mas}, },
  book={ series={Adv. Stud. Pure Math.}, volume={3}, publisher={North-Holland, Amsterdam}, },
  date={1968},
  pages={306--358},
  review={\MR {0269663}},
}

\bib{Hinich03}{article}{
  author={Hinich, Vladimir},
  title={Tamarkin's proof of Kontsevich formality theorem},
  journal={Forum Math.},
  volume={15},
  date={2003},
  number={4},
  pages={591--614},
  issn={0933-7741},
  review={\MR {1978336}},
  doi={10.1515/form.2003.032},
}

\bib{Hochschild54}{article}{
  author={Hochschild, G.},
  title={Cohomology of restricted Lie algebras},
  journal={Amer. J. Math.},
  volume={76},
  date={1954},
  pages={555--580},
  issn={0002-9327},
  review={\MR {0063361}},
  doi={10.2307/2372701},
}

\bib{Ikonicoff20}{article}{
  author={Ikonicoff, Sacha},
  title={Divided power algebras over an operad},
  journal={Glasg. Math. J.},
  volume={62},
  date={2020},
  number={3},
  pages={477--517},
  issn={0017-0895},
  review={\MR {4133334}},
  doi={10.1017/s0017089519000223},
}

\bib{Ikonicoff23}{article}{
  author={Ikonicoff, Sacha},
  title={Divided power algebras and distributive laws},
  journal={J. Pure Appl. Algebra},
  volume={227},
  date={2023},
  number={8},
  pages={Paper No. 107356},
  issn={0022-4049},
  review={\MR {4549007}},
  doi={10.1016/j.jpaa.2023.107356},
}

\bib{Jacobson37}{article}{
  author={Jacobson, Nathan},
  title={Abstract derivation and Lie algebras},
  journal={Trans. Amer. Math. Soc.},
  volume={42},
  date={1937},
  number={2},
  pages={206--224},
  issn={0002-9947},
  review={\MR {1501922}},
  doi={10.2307/1989656},
}

\bib{Jacobson62}{book}{
  author={Jacobson, Nathan},
  title={Lie algebras},
  series={Interscience Tracts in Pure and Applied Mathematics, No. 10},
  publisher={Interscience Publishers (a division of John Wiley \& Sons, Inc.), New York-London},
  date={1962},
  pages={ix+331},
  review={\MR {0143793}},
}

\bib{Keller05}{article}{
  author={Keller, B.},
  title={Deformation quantization after Kontsevich and Tamarkin},
  conference={ title={D\'{e}formation, quantification, th\'{e}orie de Lie}, },
  book={ series={Panor. Synth\`eses}, volume={20}, publisher={Soc. Math. France, Paris}, },
  isbn={2-85629-183-X},
  date={2005},
  pages={19--62},
  review={\MR {2274224}},
}

\bib{KontsevichS00}{article}{
  author={Kontsevich, Maxim},
  author={Soibelman, Yan},
  title={Deformations of algebras over operads and the Deligne conjecture},
  conference={ title={Conf\'{e}rence Mosh\'{e} Flato 1999, Vol. I (Dijon)}, },
  book={ series={Math. Phys. Stud.}, volume={21}, publisher={Kluwer Acad. Publ., Dordrecht}, },
  date={2000},
  pages={255--307},
  review={\MR {1805894}},
}

\bib{Livernet98}{book}{
  author={Livernet, Muriel},
  title={Homotopie rationnelle des alg\`ebres sur une op\'{e}rade},
  language={French, with French summary},
  series={Pr\'{e}publication de l'Institut de Recherche Math\'{e}matique Avanc\'{e}e [Prepublication of the Institute of Advanced Mathematical Research]},
  volume={1998/32},
  note={Th\`ese, Universit\'{e} Louis Pasteur (Strasbourg I), Strasbourg, 1998},
  publisher={Universit\'{e} Louis Pasteur, D\'{e}partement de Math\'{e}matique, Institut de Recherche Math\'{e}matique Avanc\'{e}e, Strasbourg},
  date={1998},
  pages={vi+88},
  review={\MR {1695322}},
}

\bib{Loday01}{article}{
  author={Loday, Jean-Louis},
  title={Dialgebras},
  conference={ title={Dialgebras and related operads}, },
  book={ series={Lecture Notes in Math.}, volume={1763}, publisher={Springer, Berlin}, },
  isbn={3-540-42194-7},
  date={2001},
  pages={7--66},
  review={\MR {1860994}},
  doi={10.1007/3-540-45328-8\_2},
}

\bib{Loday08}{article}{
  author={Loday, Jean-Louis},
  title={Generalized bialgebras and triples of operads},
  language={English, with English and French summaries},
  journal={Ast\'{e}risque},
  number={320},
  date={2008},
  pages={x+116},
  issn={0303-1179},
  isbn={978-2-85629-257-0},
  review={\MR {2504663}},
}

\bib{LodayV12}{book}{
  author={Loday, Jean-Louis},
  author={Vallette, Bruno},
  title={Algebraic operads},
  series={Grundlehren der Mathematischen Wissenschaften [Fundamental Principles of Mathematical Sciences]},
  volume={346},
  publisher={Springer, Heidelberg},
  date={2012},
  pages={xxiv+634},
  isbn={978-3-642-30361-6},
  review={\MR {2954392}},
  doi={10.1007/978-3-642-30362-3},
}

\bib{May66res}{article}{
  author={May, J. P.},
  title={The cohomology of restricted Lie algebras and of Hopf algebras},
  journal={J. Algebra},
  volume={3},
  date={1966},
  pages={123--146},
  issn={0021-8693},
  review={\MR {193126}},
  doi={10.1016/0021-8693(66)90009-3},
}

\bib{Miller84}{article}{
  author={Miller, Haynes},
  title={The Sullivan conjecture on maps from classifying spaces},
  journal={Ann. of Math. (2)},
  volume={120},
  date={1984},
  number={1},
  pages={39--87},
  issn={0003-486X},
  review={\MR {750716}},
}

\bib{Milles11}{article}{
  author={Mill\`es, Joan},
  title={Andr\'{e}-Quillen cohomology of algebras over an operad},
  journal={Adv. Math.},
  volume={226},
  date={2011},
  number={6},
  pages={5120--5164},
  issn={0001-8708},
  review={\MR {2775896}},
  doi={10.1016/j.aim.2011.01.002},
}

\bib{Milner75}{article}{
  author={Mil\cprime ner, A. A.},
  title={Irreducible representations of modular Lie algebras},
  language={Russian},
  journal={Izv. Akad. Nauk SSSR Ser. Mat.},
  volume={39},
  date={1975},
  number={6},
  pages={1240--1259, 1437},
  issn={0373-2436},
  review={\MR {0419546}},
}

\bib{Pareigis68}{article}{
  author={Pareigis, Bodo},
  title={Kohomologie von $p$-Lie-Algebren},
  language={German},
  journal={Math. Z.},
  volume={104},
  date={1968},
  pages={281--336},
  issn={0025-5874},
  review={\MR {289591}},
  doi={10.1007/BF01110334},
}

\bib{Priddy70pri}{article}{
  author={Priddy, Stewart B.},
  title={Primary cohomology operations for simplicial Lie algebras},
  journal={Illinois J. Math.},
  volume={14},
  date={1970},
  pages={585--612},
  issn={0019-2082},
  review={\MR {270364}},
}

\bib{Quillen67}{book}{
  author={Quillen, Daniel G.},
  title={Homotopical algebra},
  series={Lecture Notes in Mathematics},
  number={43},
  publisher={Springer-Verlag, Berlin-New York},
  date={1967},
  pages={iv+156 pp. (not consecutively paged)},
  review={\MR {0223432}},
}

\bib{Quillen70}{article}{
  author={Quillen, Daniel},
  title={On the (co-) homology of commutative rings},
  conference={ title={Applications of Categorical Algebra}, address={Proc. Sympos. Pure Math., Vol. XVII, New York}, date={1968}, },
  book={ publisher={Amer. Math. Soc., Providence, R.I.}, },
  date={1970},
  pages={65--87},
  review={\MR {0257068}},
}

\bib{Rezk96}{book}{
  author={Rezk, Charles W.},
  title={Spaces of algebra structures and cohomology of operads},
  note={Thesis (Ph.D.)--Massachusetts Institute of Technology},
  publisher={ProQuest LLC, Ann Arbor, MI},
  date={1996},
  pages={(no paging)},
  review={\MR {2716655}},
}

\bib{Roby65}{article}{
  author={Roby, Norbert},
  title={Les alg\`ebres \`a puissances divis\'{e}es},
  language={French},
  journal={Bull. Sci. Math. (2)},
  volume={89},
  date={1965},
  pages={75--91},
  issn={0007-4497},
  review={\MR {193127}},
}

\bib{Ronco02}{article}{
  author={Ronco, Mar\'{\i }a},
  title={Eulerian idempotents and Milnor-Moore theorem for certain non-cocommutative Hopf algebras},
  journal={J. Algebra},
  volume={254},
  date={2002},
  number={1},
  pages={152--172},
  issn={0021-8693},
  review={\MR {1927436}},
  doi={10.1016/S0021-8693(02)00097-2},
}

\bib{Soublin87}{article}{
  author={Soublin, Jean-Pierre},
  title={Puissances divis\'{e}es en caract\'{e}ristique non nulle},
  language={French},
  journal={J. Algebra},
  volume={110},
  date={1987},
  number={2},
  pages={523--529},
  issn={0021-8693},
  review={\MR {910402}},
  doi={10.1016/0021-8693(87)90064-0},
}

\bib{StradeF88}{book}{
  author={Strade, Helmut},
  author={Farnsteiner, Rolf},
  title={Modular Lie algebras and their representations},
  series={Monographs and Textbooks in Pure and Applied Mathematics},
  volume={116},
  publisher={Marcel Dekker, Inc., New York},
  date={1988},
  pages={x+301},
  isbn={0-8247-7594-5},
  review={\MR {0929682}},
}

\bib{Tamarkin98}{article}{
  author={Tamarkin, Dmitry E.},
  title={Another proof of M. Kontsevich formality theorem},
  date={1998},
  eprint={arXiv:math/9803025},
  status={Preprint},
}

\bib{Waterhouse79}{book}{
  author={Waterhouse, William C.},
  title={Introduction to affine group schemes},
  series={Graduate Texts in Mathematics},
  volume={66},
  publisher={Springer-Verlag, New York-Berlin},
  date={1979},
  pages={xi+164},
  isbn={0-387-90421-2},
  review={\MR {547117}},
}

\end{biblist}
\end{bibdiv}

\vspace*{6pt}

\end{document}